\DeclareMathOperator\C{\mathbb C}
\DeclareMathOperator\Z{\mathbb Z}
\DeclareMathOperator\R{\mathbb R}
\newtheorem{theorem}{Theorem}[section]
\newtheorem{lemma}[theorem]{Lemma}
\newtheorem{cor}[theorem]{Corollary}
\newtheorem{prop}[theorem]{Proposition}
\theoremstyle{definition}
\newtheorem{definition}[theorem]{Definition}
\theoremstyle{remark}
\newtheorem{remark}[theorem]{Remark}
\newcommand{\dontprint}[1]\relax
\newcommand{\bs}{{\bf s}}
\newcommand{\bt}{{\bf t}}
\newcommand{\bv}{{\bf v}}
\newcommand{\bphi}{{\bf \varphi}}
\newcommand{\rk}{\operatorname{rk}}
\newcommand{\Cone}{\operatorname{Cone}}
\newcommand{\pa}{\partial}
\newcommand{\const}{\operatorname{const}}
\newcommand{\can}{\operatorname{can}}
\newcommand{\ev}{\operatorname{ev}}
\newcommand{\Th}{\Theta}
\newcommand{\De}{\Delta}
\newcommand{\Aut}{\operatorname{Aut}}
\newcommand{\und}{\underline}
\newcommand{\Pic}{\operatorname{Pic}}
\newcommand{\hra}{\hookrightarrow}
\newcommand{\we}{\wedge}
\newcommand{\bos}{\operatorname{bos}}
\newcommand{\Det}{\operatorname{Det}}
\renewcommand{\P}{{\mathbb P}}
\newcommand{\A}{{\mathbb A}}
\renewcommand{\AA}{{\mathcal A}}
\newcommand{\wt}{\widetilde}
\newcommand{\ot}{\otimes}
\newcommand{\ber}{\operatorname{ber}}
\newcommand{\Hom}{\operatorname{Hom}}
\newcommand{\per}{\operatorname{per}}
\newcommand{\Om}{\Omega}
\newcommand{\La}{\Lambda}
\newcommand{\eps}{\epsilon}
\newcommand{\HH}{{\mathcal H}}
\newcommand{\TT}{{\mathcal T}}
\newcommand{\NN}{{\mathcal N}}
\newcommand{\XX}{{\mathcal X}}
\newcommand{\WW}{{\mathcal W}}
\newcommand{\VV}{{\mathcal V}}
\newcommand{\DD}{{\mathcal D}}
\newcommand{\CC}{{\mathcal C}}
\renewcommand{\SS}{{\mathcal S}}
\newcommand{\FF}{{\mathcal F}}
\newcommand{\II}{{\mathcal I}}
\newcommand{\LL}{{\mathcal L}}
\newcommand{\MM}{{\mathcal M}}
\newcommand{\OO}{{\mathcal O}}
\newcommand{\UU}{{\mathcal U}}
\newcommand{\si}{\sigma}
\newcommand{\de}{\delta}
\newcommand{\sub}{\subset}
\newcommand{\Spec}{\operatorname{Spec}}
\newcommand{\Spf}{\operatorname{Spf}}
\newcommand{\Res}{\operatorname{Res}}
\newcommand{\ov}{\overline}
\newcommand{\im}{\operatorname{im}}
\newcommand{\om}{\omega}
\newcommand{\la}{\lambda}
\renewcommand{\a}{\alpha}
\renewcommand{\b}{\beta}
\newcommand{\tr}{\operatorname{tr}}
\newcommand{\id}{\operatorname{id}}
\newcommand{\G}{{\mathbb G}}
\renewcommand{\th}{\theta}
\newcommand{\ga}{\gamma}
\newcommand{\Ga}{\Gamma}
\newcommand{\lan}{\langle}
\newcommand{\ran}{\rangle}
\newcommand{\Disc}{{\mathbb D}}
\newcommand{\norm}{{\operatorname{norm}}}
\newcommand{\Ber}{{\operatorname{Ber}}}
\newcommand{\End}{{\operatorname{End}}}
\newcommand{\vol}{\mathfrak{vol}}
\newcommand{\ssl}{\mathfrak{sl}}
\newcommand{\SL}{{\operatorname{SL}}}
\newcommand{\gr}{{\operatorname{gr}}}
\newcommand{\Conf}{{\operatorname{Conf}}}
\numberwithin{equation}{section}
\title[Superperiods and superstring measure]{Superperiods and superstring measure near the boundary of the moduli space of supercurves}
\author{Giovanni Felder}
\address{Department of mathematics,
ETH Zurich, 8092 Zurich, Switzerland}
\email{giovanni.felder@math.ethz.ch}
\author{David Kazhdan}
\address{Einstein Institute of Mathematics,
The Hebrew University of Jerusalem,
Jerusalem 91904, Israel}
\email{kazhdan@math.huji.ac.il}
\author{Alexander Polishchuk}
\address{
    Department of Mathematics, 
    University of Oregon, 
    Eugene, OR 97403, USA; National Research University Higher School of Economics, Moscow, Russia
  }
  \email{apolish@uoregon.edu}
\begin{document}

\begin{abstract} We study the behavior of the superperiod map near the boundary of the moduli space of stable supercurves
and prove that it is similar to the behavior of periods of classical curves. We consider two applications to the geometry of this moduli space in genus $2$, denoted as $\ov{\SS}_2$.
First, we characterize the canonical projection of $\SS_2$ in terms of its behavior near the boundary, proving in particular that $\ov{\SS}_2$ is not projected.
Secondly, we combine the information on superperiods with the explicit calculation of genus $2$ Mumford isomorphism, due to Witten, 
to study the expansion of the superstring measure for genus $2$ near the boundary. We also present the proof, due to Deligne, of regularity of the superstring measure on $\SS_g$
for any genus.
\end{abstract}

\maketitle

\tableofcontents

\section{Introduction}

\subsection{Some background and motivation}
%\subsection{Nodal spin curves}

We refer to \cite{Manin} and \cite{BHRP} for basics on superschemes.
Recall that a smooth supercurve (aka SUSY curve) over a superscheme $S$ is a smooth map $X\to S$ of relative dimension $1|1$, together with
a distribution $\DD\sub \TT_{X/S}$ of rank $0|1$, such that the map $\DD\ot_{\OO_X} \DD\to \TT_{X/S}/\DD$, induced by the Lie bracket of vector fields, is an isomorphism.

A supercurve $X$ over a point is given by the usual spin curve $(C,L,\kappa\colon
L^2\stackrel{\sim}\longrightarrow \om_C)$, so that $\OO_X=\OO_C\oplus L$.
Recall that the spin structure $L$ is called {\it even} or {\it odd} depending on the parity of $h^0(C,L)$ (and this parity is constant in families).
We denote by $\SS_g$ the moduli stack of supercurves of genus $g$ with even underlying spin structures.
This moduli space has been studied in many papers, including \cite{CraneRabin1988, Deligne-letter, DRS, LeBrunRothstein1988, DW, CV, FKP-per}.

We denote by $\om_{\SS_g}$ the canonical line bundle on $\SS_g$ (i.e., the Berezinian of the cotangent bundle).
The superanalog of Mumford's isomorphism is an isomorphism 
$$\Psi_g:\Ber_1^5\rTo{\sim}\om_{\SS_g},$$ 
where $\Ber_1$ is obtained as the Berezinian of $R\pi_*(\OO_X)$, where $\pi:X\to \SS_g$
is the universal curve (see \cite{Voronov}, \cite{RoslySchwarzVoronov1989}). 
The {\it string supermeasure} $\mu$ 
is a meromorphic section of the Berezinian on $\SS_g\times \SS_g^c$, where $\SS_g^c$ denotes the complex conjugate of $\SS_g$, 
defined near the {\it quasidiagonal}
(i.e., pairs of spin-curves $(C_1,L_2)$, $(C_2,L_2)$ with $C_1\simeq C_2$) using the Mumford form $\Psi_g$ and a natural hermitian form on $\pi_*\om_{X/\SS_g}$.
We will recall the  definition of $\mu$ in Sec.\ \ref{string-sec} (see also \cite{RoslySchwarzVoronov1989}, \cite{Witten}). 

The hermitian form on $\pi_*\om_{X/\SS_g}$ is closely related to the 
superperiod map, which is a map from an unramified covering 
(corresponding to a choice of a symplectic basis in $H^1(C,\C)$)
of an open substack of $\SS_g$ to the Lagrangian Grassmannian $LG(g,2g)$ given by the subbundle
$\pi_*\om_{X/\SS_g}$ in $R^1\pi_*\C_{X/\SS_g}$, where $\pi:X\to \SS_g$ is the universal curve, $\om_{X/\SS_g}$ is the relative Berezinian.
More precisely, the superperiod map is only defined away from the theta-null divisor, on which the underlying spin structure $L$ satisfies $H^0(C,L)\neq 0$.
As a consequence, a priori one only knows regularity of
%on the locus where the underlying spin structure has nonzero global sections,
%which leads to the 
the supermeasure $\mu$ away from the theta-null divisor.
In \cite{FKP-per} we proved that $\mu$ extends regularly across this divisor for $g\le 11$.
% (and even vanishes on it up to the order $11-g$).
Recently, Pierre Deligne gave a simple proof of the regularity of $\mu$ for any genus $g$. We reproduce his argument in Section \ref{regularity-sec}.

The contribution of genus $g$ to the vacuum amplitude of type II perturbative superstring theory is supposed to be given as the integral
of the supermeasure $\mu$ over a suitable cycle in $\SS_g\times \SS_g^c$. To make sense of this it is important to study the behavior of $\mu$ at infinity.
More precisely, we use the natural compactification $\ov{\SS}_g$ of $\SS_g$ given by the moduli space of {\it stable supercurves}
(see \cite{Deligne-letter}, \cite{FKP-supercurves} and \cite{MZ} for basic definitions and results concerning this). 
A stable supercurve over a point (more generally, over an even base) is the same as a usual stable curve $C$ equipped with a (generalized) spin structure,
i.e., a torsion-free coherent sheaf $L$ with an isomorphism $L\rTo{\sim}\und{\Hom}(L,\om_C)$, where $\om_C$ is the dualizing sheaf on $C$.
%Note that in this case $L$ is maximal CM, so $R\und{\Hom}(L,\om_C)=\und{\Hom}(L,\om_C)$.

Recall that for a spin structure $L$ on a stable curve $C$ there are two possible behaviors of $L$ at a node $q\in C$. Namely, $q$ is called a {\it Ramond} node if $L$ is locally free near $q$,
and it is called {\it Neveu-Schwarz (NS)} otherwise. In \cite{FKP-supercurves} we define natural normal crossing 
Cartier divisors $\De$, $\De_{NS}$ and $\De_R$ in $\ov{\SS}_g$,
such that $\De=\De_{NS}+\De_R$; $\De_{NS}$ is supported on stable supercurves with at least one NS node and $\De_R$ is supported on stable supercurves with
at least one Ramond node.
%$\De the boundary $\De=\ov{\SS}_g\setminus \SS_g$,
%which has a canonical decomposition
%into the NS and Ramond components.

We showed in  \cite{FKP-supercurves} that the Mumford isomorphism $\Psi_g$ extends to an isomorphism
$$\om_{\ov{\SS}_g}\simeq \Ber_1^5(-2\De_{NS}-\De_R).$$
Equivalently, the Mumford form $\Psi_g\in \om_{\SS_g}\ot \Ber_1^{-5}$ which is non-vanishing everywhere on $\SS_g$, acquires poles of order $2$ at $\De_{NS}$ and
poles of order $1$ at $\De_R$.
The goal of this paper is to study in more details the behavior of the superperiods, of the Mumford form and of the superstring measure near the boundary of
the moduli space $\ov{\SS}_g$ (mostly restricting to the case $g=2$). In particular, we will establish rigorously some of the results of \cite{Witten}.

%Note that in the case of genus $2$ there is a canonical projection $\SS_2\to \SS_{2,\bos}$ (see ???). As

%Moduli spaces with punctures???
%Recall also that if $p_1,\ldots,p_k\in C$ are marked (smooth) points then a spin structure $L$ with Ramond punctures $p_1,\ldots,p_k$ is an isomorphism
%$L\to \und{\Hom}(L,\om_C(p_1+\ldots+p_k))$.

\subsection{Our results}

%\medskip

\subsubsection{Superperiods near the boundary}
%\noindent
%{\bf 1.} Information about the superperiod map.

%\medskip

%\noindent
%{\bf 1a.} Logarithmic growth near the non-separating boundary divisors (see Theorem \ref{regular-norm-diff-thm}).

We show that like in the case of usual curves, the leading entries of the superperiod matrix $\Om$ grow logarithmically near the non-separating node
components of the boundary of $\ov{\SS}_g$ and that $\det(\Om-\ov{\Om})$ grows logarithmically. 
We show this mimicking the classical case, using theory of $D$-modules on supervarieties developed by Penkov \cite{Penkov}
and extending to the super case Deligne's construction of the canonical extension of a connection (see Sec.\ \ref{superperiod-sec}).

%\medskip

%\noindent
%{\bf 1b.} Expansion for genus $2$ near the separating $(+,+)$ boundary divisor (see Sec.\ \ref{ss-2.2}).

\subsubsection{Gluing coordinates}

The rest of our results are specific for the case $g=2$. We consider one of the connected components of $\De_{NS}$, the separating node $(+,+)$ boundary divisor $D_0$.
It corresponds to nodal curves with two irreducible components of arithmetic genus $1$, where each component is equipped with an even spin structure.
Similarly to the classical case (see \cite{P-bu}), we introduce {\it gluing coordinates} in a formal neighborhood of $D_0$ 
(see Sec.\ \ref{super-gluing-sec}). These are formal coordinates of special type along $D_0$, respecting geometric structures near $D_0$ (in particular, $D_0$ is
given by $t=0$ where $t$ is one of the coordinates).
We calculate  the first few terms of the expansion of the entries of the superperiod matrix $\Om$ in terms of these coordinates (see Sec.\ \ref{ss-2.2}).

Note that there exists a gluing construction of curves in a formal neighborhood of $D_0$ in higher genus but it depends on extra choices. Namely, if we start with
a pair of supercurves $(X_1,p_1)$, $(X_2,p_2)$ with smooth marked points, over the same base $B$, one can glue $X_1$ and $X_2$ nodally along $p_1$ and $p_2$ 
into a stable supercurve $X_0$ over $B$ with an NS node. If moreover, $X_i$ are equipped with formal (superconformal) relative coordinates along $p_i$, then we can extend
$X_0$ to a family $X$ over the product of $B$ with the formal (even) disk. What makes the case of genus $2$ special, is that in this case $X_1$ and $X_2$ have genus $1$,
so there exists canonical formal parameters depending on a choice of trivializations of the spin structures at the marked points, so the gluing can be controlled by the 
relevant $\G_m$-torsors.

%\medskip

\subsubsection{Canonical projection for genus $2$ near the boundary}
%\noindent
%{\bf 1c.} Properties of the canonical projection for genus $2$ (see Sec.\ \ref{can-proj-sec}).

We apply our results on superperiods
to the study of the canonical projection $\pi^{\can}:\SS_2\to \SS_{2,\bos}$ induced by the superperiod map (see Sec.\ \ref{can-proj-sec}).
Note that there exists no projection on $\SS_g$ for $g\ge 5$ (see \cite{DW}), and there seems to be no canonical projection for $g>2$ (for $g=3$ the
projection given by the superperiod map is only defined away from the hyperelliptic locus).

We show that $\pi^{\can}$ extends to a regular projection on $\ov{\SS}_2$ 
%near $D_0$ and near the general point of the non-separating node divisor 
away from the $(-,-)$ separating node divisor $D^{-,-}$, i.e., the divisor corresponding to reducible curves with a pair of odd spin structures on its components
(see Theorem \ref{projection-nonsep-prop}). In fact, we show that $\pi^{\can}$ is the unique projection
regular at a generic point of each boundary divisor corresponding to a non-separating node. As a consequence, we derive that $\ov{\SS}_2$ is not projected.

%It is also important to know whether for a boundary component
More precisely, we have the following behavior of $\pi^{\can}$ near the boundary in $\ov{\SS}_2$:
\begin{itemize}
\item near the non-separating node divisors (both NS and Ramond): regular and compatible with the divisor;
\item near $D_0$: regular but not compatible with the divisor.
\item near $D^{-,-}$: not regular.
\end{itemize}
%On the other hand, we 
%prove that $\pi^{\can}$ is compatible with the non-separating node boundary divisor,
Here compatibility with a component of the boundary divisor means that this component coincides with the pullback of its bosonization under the projection.
Note that compatibility of projections with boundary divisors of a compactification comes up naturally when integrating densities with noncompact support on supermanifolds.

In addition, we compute the first few terms of the expansion of $\pi^{\can}$ in terms of the gluing coordinates  in a formal neighborhood of $D_0$.
We also study how the superperiods and hence $\pi^{\can}$ behave near the boundary component $D^{-,-}$ (see Corollary \ref{--cor}).

%\medskip

\subsubsection{Mumford form for genus $2$ near the boundary}
%\noindent
%{\bf 2.} Information about the holomorphic Mumford form.

%\medskip

%\noindent
%{\bf 2a.} Expansion of the Mumford form $\Psi_2$ near the separating node $(+,+)$ boundary divisor in $\ov{\SS}_2$ (see Sec.\ \ref{hyper-Mum-sec}).

%This is based on 
We justify Witten's assumptions used in the calculation in \cite{Witten}, which determines the Mumford's form $\Psi_2$ in the hyperelliptic model of genus $2$ spin-curves.
Recall that a genus $2$ curve can be realized as a double covering of $\P^1$ ramified at $6$ points. A choice of an even spin structure corresponds to splitting the ramification
locus into two groups of $3$ points. The Mumford form $\Psi_2$ is determined by its restriction to $\SS_{2,\bos}$ and its push-forward $\pi^{\can}_*\Psi_2$. Both of these 
objects live on the moduli space of spin curves $\SS_{2,\bos}$ and Witten calculates them in terms of $6$ ramification points of
the hyperelliptic covering $C\to \P^1$ (which are split into two groups of $3$ points). The main idea of the calculation is to
use $\SL_2$-invariance with respect to the action of $\SL_2$ on the configurations of $6$ points in $\P^1$ (this corresponds to the fact that our objects live on the moduli space $\SS_{2,\bos}$) 
and the behavior near infinity, where two of the ramification points merge. We show that the needed behavior at infinity follows from our results on the poles of $\Psi_2$ from
\cite{FKP-supercurves}, as well as from the compatibility of the canonical projection $\pi^{\can}$ with the non-separating node boundary components (see Proposition  \ref{p-Witten}). 
%We justify the latter behavior using the precise relation with the boundary divisors in $\ov{\SS}_2$
%that we work out.
We then use Witten's formulas to find the first terms of the expansion of the Mumford form $\Psi_2$ near the $(+,+)$ separating node boundary divisor $D_0$ in $\ov{\SS}_2$ 
(see Sec.\ \ref{hyper-Mum-sec}).

%\medskip

%\noindent
%{\bf 2b.} The description of the polar term of the Mumford form $\Psi_2$ near the separating node $(+,+)$ boundary divisor $D_0\sub\ov{\SS}_2$ in terms of genus $1$ Mumford %forms, and
%the information about genus $1$ Mumford form (see Sec.\ \ref{g2-g1-polar-term-sec}).

%Here we use the result of \cite{FKP-supercurves} which states that the pull-back of
%the polar term of the Mumford form $\Psi_2$ near $D_0$ under the clutching morphism $\rho_{1,1}$ (see ???)
%\eqref{clutch-mor})
%factors into the product of the Mumford forms on each factor $\ov{\SS}_{1,1}$.
%We then observe (see Lemma \ref{g1-Mumf-lem}) that the genus $1$ Mumford form $\Psi_1$ descends to the bosonization $\ov{\SS}_{1,1,\bos}$ and justify an explicit formula for it
%presented by Witten \cite{Witten}. We also observe, following Witten, that the sum of $\Psi_1$ over all choices of even spin structures is zero.

%\medskip

\subsubsection{Superstring measure for genus $2$ near the boundary}

%\noindent
%{\bf 2c.} Information about the superstring measure near $D_0$
%the separating node $(+,+)$ boundary divisor 

Recall that the Mumford isomorphism gives an isomorphism of the holomorphic Berezinian of $\SS_g$ with the line bundle $\Ber_1^5$.
One can get rid of $\Ber_1^5$ using a canonical hermitian form on $\pi_*\om_{X/\SS_g}$, which leads to a definition of the superstring measure $\mu$ (see Sec.\ \ref{string-sec}).
Combining the results of previous calculations we find the first terms of the expansion of the superstring measure $\mu$ near the divisor $D_0$ in
terms of gluing coordinates (see Cor.\ \ref{measure-polar-term-cor}). 
These gluing coordinates define a projection defined on the formal neighborhood of $D_0$ (which is different from $\pi^{\can}$),
and the push-forward of $\mu$ under this projection has a pole of order $1$ along $D_0$, with the residue that can be expressed in terms of genus $1$ data.

We plan to use our results in a subsequent work to get a rigorous definition of the integral of $\mu$, using a regularization procedure and a cancelation of second order poles
due to summation over different spin structures. 
%Since $\mu$ is not compactly supported, we plan the integration procedure should be compatible with the 

\subsection*{Conventions} 
We work with algebraic superschemes over $\C$. When discussing superperiods we use classical topology. 
For a superscheme $S$ we denote by $S_{\bos}$ the scheme with the same underlying topological space and with the sheaf of functions $\OO_S/\NN_S$,
where $\NN_S$ is the ideal sheaf locally generated by odd functions.
By a projection $S\to S_{\bos}$ we mean a morphism, inducing the identity morphism on $S_{\bos}$.
By a vector bundle on a superscheme we mean a ($\Z_2$-graded) locally free $\OO_X$-module of finite rank. 
We denote by $F\mapsto \Pi F$ the functor of change of parity on such bundles.
%All the rings are assumed to be $\Z_2$-graded supercommutative with $1$.
%We say that a ring (resp., a superscheme) is {\it even} if its odd component (resp., the odd component of the structure sheaf) is zero.
%We often say ``subscheme" for brevity where we should say ``sub-superscheme". 
%For a sheaf $F$ of $\OO_X$-modules on a superscheme $X$ we denote by $F^+$ and $F^-$ its even and odd parts.
%By a subbundle $F\sub E$ in a bundle
%$E$ we mean a ($\Z_2$-graded) locally free $\OO_X$-submodule, which is locally a direct summand.
%On any superscheme $X$ we denote by $\NN_X\sub \OO_X$ the ideal generated by odd functions.
%We denote by $X_{\bos}$ the usual scheme with the same underlying topological space as $X$ and with the structure sheaf 
%$\OO_X/\NN_X$. 

\subsection*{Acknowledgements}
%We thank Ugo Bruzzo and Daniel Hern\'andez Ruip\'erez for useful discussions and
%sharing with us the manuscript of \cite{Hilbert-schemes} and
%for telling us about the work \cite{MZ}. We thank the anonymous referee for many useful comments.
G.F. was supported in part by the National Centre
of Competence in Research SwissMAP (grant number 205607) of the Swiss
National Science Foundation. 
%He thanks the Hebrew University of Jerusalem, where part of this work was done, for hospitality.
D.K. is partially supported by the ERC grant 101142781.
A.P. was partially supported by the NSF grants DMS-2001224, 
NSF grant DMS-2349388, by the Simons Travel grant MPS-TSM-00002745,
and within the framework of the HSE University Basic Research Program.
G.F and A.P. also wish to thank the Institut des
Hautes Etudes Scientifiques, where part of this work was done, for
hospitality.

\section{Regularity of the superstring measure near the theta-null divisor}\label{regularity-sec}

\subsection{Superstring measure}\label{string-sec}

Here we briefly review the definition of the superstring measure following \cite[Sec.\ 5]{FKP-per}.

Recall (see \cite[Sec.\ 5.1]{FKP-per}) that with a complex supermanifold $X=(|X|,\OO_X)$, one can associate 
the complex conjugate supermanifold $X^c$ whose underlying topological space is $|X|$, such
that $\OO_{X^c}=\ov{\OO_X}$, which is the same sheaf $\OO_X$ but with the $\C$-algebra structure differing by the complex conjugation.
This operation preserves \'etale maps, so it passes
to complex orbifolds. Note that there is a natural functor from the category of $\OO_X$-modules to that of $\OO_{X^c}$-modules, which is $\C$-antilinear on morphisms.

Let $\SS_g$ denote the moduli stack of smooth supercurves of genus $g$ (more precisely, the component corresponding to even spin structures), 
$\pi:X\to \SS_g$ the universal supercurve, $\SS_g^c$ the complex conjugate. 

We define the quasidiagonal $\De^q_{\bos}$ as the fibered product of real orbifolds
$$\De^q_{\bos}:=\SS_{g,\bos}\times_{\MM_g} \SS_{g,\bos}^c,$$ 
with respect to the natural map $\SS_{g,\bos}\to \MM_g$ forgetting the spin structure.
We can think of $\De^q_{\bos}$ as classifying pairs of spin curves with the same underlying curve.

The superstring measure $\mu$ is a meromorphic
section of the holomorphic Berezinian $\om_{\SS_g}\boxtimes \om_{\SS_g^c}$ on $\SS_g\times \SS_g^c$
defined in a neighborhood of the image of $\De^q_{\bos}$ in $\SS_g\times \SS_g^c$,
regular away from the locus where one of the spin structures has a global section. 
 
The holomorphic ingredient for $\mu$ is the {\it Mumford form} $\Psi=\Psi_g$ which is a section of $\Ber_1^{-5}\ot \om_{\SS_g}$,
corresponding to the canonical isomorphism
\begin{equation}\label{hol-Mumf-isom-eq}
\Ber_1^5\rTo{\sim} \om_{\SS_g},
\end{equation}
where $\Ber_1:=\Ber R\pi_*(\om_{X/\SS_g})$.
%(see ).
We denote by $\wt{\Psi}\in H^0(\wt{\Ber}_1^{-5}\ot \om_{\SS_g^c})$ the complex conjugate of $\Psi$ on $\SS_g^c$.

The superstring measure, which is a meromorphic section of $\om_{\SS_g\times\SS_g^c}$ is given by 
\begin{equation}\label{mu-Psi-h-eq}
\mu=\Psi\cdot \wt{\Psi}\cdot h^5.
\end{equation}
where $h$ is a section of $\Ber_1\boxtimes\wt{\Ber}_1$ defined as follows\footnote{In \cite{FKP-per} this section $h$ is denoted as $s$}.
Let $\UU\sub \SS_g$ denote the open substack where the underlying spin structure has no nonzero global sections.
Let $V$ denote the local system $R^1\pi_*(\R_X)$ on $\SS_g$. We have $R^1\pi_*(\C_{X/\SS_g})\simeq V\ot_{\R}\OO_{\SS_g}$, where $\C_{X/\SS_g}=\pi^{-1}\C_S$.
%$\wt{\HH}^1$ the conjugate local system on $\SS_g^c$.
The exact sequence
\begin{equation}\label{de-sh-ex-seq}
0\to \C_{X/\SS_g}\to \OO_X\rTo{\de} \om_{X/\SS_g}\to 0
\end{equation}
induces a morphism $\pi_*\om_{X/\SS_g}\to V\ot_{\R}\OO_{\SS_g}$.
The restriction of this morphism to $\UU$, 
$$\La_\UU:=\pi_*\om_{X/\SS_g}|_\UU\to V\ot_{\R}\OO_{\UU},$$
in an embedding of a subbundle.
We have the conjugate morphism $\wt{\La}_\UU\to V\ot_{\R}\OO_{\UU^c}$ on $\UU^c\sub \SS_g^c$.

On the other hand, we have a natural symplectic pairing $V\ot V\to \R_{\SS_g}$, and
an identification $p_1^{-1}V\simeq p_2^{-1}V$ near the quasidiagonal in $\SS_{g,\bos}\times \SS_{g,\bos}^c$.
Thus, in a neighborhood of the quasidiagonal we have a pairing
$$p_1^{-1}V\ot p_2^{-1}V\to \und{\R},$$
which leads to a nondegenerate pairing in a neighborhood of the quasidiagonal
$$p_1^*\La_\UU\ot p_2^*\wt{\La}_\UU\to p_1^{-1}V\ot p_2^{-1}V\ot \OO_{\UU\times\UU^c}\to \OO_{\UU\times\UU^c}.$$
Taking the determinant of the corresponding morphism
$$p_1^*\La_\UU\to p_2^*\wt{\La}^\vee_\UU$$
we get a map $p_1^*\Ber_1\to p_2^*\wt{\Ber}_1^{-1}$, 
or equivalently a section of $(\Ber_1\boxtimes \wt{\Ber}_1)^{-1}$, defined in a neighborhood of the quasidiagonal in $\UU\times \UU^c$.
We define $h$ to be the inverse of this section.

Locally we can choose a Lagrangian splitting $V=W\oplus W'$ and represent the image of $\La_\UU$ as the graph of a symmetric morphism
$$\Om:W'\ot \OO\to W\ot \OO$$
($\Om$ is essentially what is called the {\it superperiod matrix}).
This leads to a trivialization $s\in \Det(W)\ot \Ber_1$, and the complex conjugate $\wt{s}\in \Det(W)\ot \wt{\Ber}_1$. 
We have (see \cite[Eq.\ (5.4)]{FKP-per}\footnote{in \cite{FKP-per} we use $\tau$ instead of $\Om$.})
\begin{equation}\label{h-def-eq}
h=\det(p_1^*\Om-p_2^*\wt{\Om})^{-1}\cdot p_1^*s\ot p_2^*\wt{s},
\end{equation}
where we use the identification $p_1^{-1}W\simeq p_2^{-1}W$ near the quasidiagonal and the duality $W'\simeq W^\vee$, so
that $\det(p_1^*\Om-p_2^*\wt{\Om})^{-1}$ is viewed as a section of 
$$p_1^*(\Det(W)^{-1}\ot \Det(W'))\simeq p_1^*\Det(W)^{-1}\ot p_2^*\Det(W)^{-1}.$$
%where $s$ (resp., $\wt{s}$) it the trivialization of $\Ber_1$ coming from a local trivialization of $\pi_*\om_{X/\SS_g}$ 
%(need compatibility),
%$\Om$ is the period matrix

%Relation to the Torelli map

\subsection{Regularity on the smooth locus (after Deligne)}

The following theorem has been communicated to the authors by Pierre Deligne.
Below we are presenting his proof with some details added.

\begin{theorem}\cite{D24}\label{reg-thm}
For any $g\ge 2$, the section $h$ of $\Ber_1\boxtimes \wt{\Ber}_1$ is regular on
%a neighborhood of the quasidiagonal in $\UU\times \UU^c$ to
a neighborhood of the quasidiagonal in $\SS_g\times \SS_g^c$.
Hence, the superstring measure $\mu$ is regular on such a neighborhood.
\end{theorem}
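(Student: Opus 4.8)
The plan is to reduce the regularity of $h$ to a positivity statement about the hermitian pairing defining it. Recall that $h^{-1}$ is obtained as the determinant of the composition $p_1^*\La_\UU\to p_1^{-1}V\ot p_2^{-1}V\ot\OO \to p_2^*\wt\La_\UU^\vee$, and that the only way $h$ could fail to be regular is if this determinant vanishes somewhere on the quasidiagonal, i.e. if the pairing $p_1^*\La_\UU\ot p_2^*\wt\La_\UU\to\OO$ degenerates there. Thus it suffices to show that this pairing is \emph{nondegenerate} in a neighborhood of the quasidiagonal, including on the theta-null locus where $\La_\UU$ itself need not extend as a subbundle of $V\ot\OO$. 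The key point Deligne's argument exploits is that while $\pi_*\om_{X/\SS_g}$ need not embed into $R^1\pi_*\C$ on all of $\SS_g$, it always maps there, and one can work directly with the induced hermitian form rather than with the Lagrangian subspace.

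First I would set up, in a neighborhood of a point of the quasidiagonal in $\SS_{g,\bos}\times\SS_{g,\bos}^c$, the canonical $\C$-bilinear pairing obtained from the symplectic form on $V$ together with the identification $p_1^{-1}V\simeq p_2^{-1}V$. Restricting to the diagonal $\SS_{g,\bos}$ this becomes a sesquilinear pairing between $\pi_*\om_{X/\SS_g}$ and its conjugate, and the content is that this is a \emph{definite} hermitian form. Second, I would identify this hermitian form concretely: for the bosonic reduction it is (up to sign) the classical form $\om\mapsto \frac{i}{2}\int_C \om\wedge\bar\om$ on holomorphic differentials, which is positive definite; the super-enhancement adds nilpotent corrections. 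Because positive-definiteness of a hermitian matrix is an open condition and the nilpotent corrections to the matrix entries square to zero, the determinant of the super hermitian form is a unit times $(1+\text{nilpotent})$, hence invertible, on the whole diagonal $\SS_{g,\bos}$ — in particular at theta-null points. Third, since nondegeneracy at the diagonal is an open condition in $\SS_g\times\SS_g^c$, it persists in a neighborhood of the quasidiagonal, giving regularity of $h$. Finally, $\mu=\Psi\cdot\wt\Psi\cdot h^5$, and $\Psi$, $\wt\Psi$ are regular (indeed nonvanishing) on $\SS_g$, $\SS_g^c$ respectively, so $\mu$ is regular wherever $h$ is.

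The main obstacle is the second step: making precise the claim that the relevant super hermitian pairing has invertible Berezinian (equivalently, that the determinant of its "body" matrix is a unit) at \emph{every} point of the diagonal, including the theta-null divisor, without assuming that $\La_\UU$ extends to a subbundle there. The resolution is to phrase everything in terms of the map $\pi_*\om_{X/\SS_g}\to V\ot_\R\OO$ (which exists on all of $\SS_g$) rather than its image: the composition with the pairing gives a morphism $\pi_*\om_{X/\SS_g}\to \overline{\pi_*\om_{X/\SS_g}}^\vee$ of vector bundles of the same rank over all of $\SS_g$, and one checks its Berezinian is invertible by reducing mod nilpotents to the classical positive-definite period pairing on $\MM_g$. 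One should also check that the $h$ defined this way agrees with \eqref{h-def-eq} on $\UU$, which is immediate from unwinding the definitions, so the extension is consistent with the original local description via the superperiod matrix $\Om$.
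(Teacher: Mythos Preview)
Your argument has a genuine gap at precisely the locus in question. You claim that the morphism $\pi_*\om_{X/\SS_g}\to V\ot_{\R}\OO_{\SS_g}$ exists on all of $\SS_g$ and yields a map of \emph{vector bundles of the same rank} $\pi_*\om_{X/\SS_g}\to \overline{\pi_*\om_{X/\SS_g}}^{\,\vee}$ whose Berezinian you then check is invertible by reducing modulo nilpotents. But $\pi_*\om_{X/\SS_g}$ is \emph{not locally free} on the theta-null divisor: at a point $s$ with underlying spin curve $(C,L)$ one has $H^0(X_s,\om_{X_s})\simeq H^0(C,\om_C)\oplus \Pi H^0(C,L)$, and the odd rank jumps from $0$ to $h^0(C,L)>0$ exactly on the theta-null locus. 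So there is no hermitian form on a vector bundle to which your positivity argument can apply, and ``reducing mod nilpotents to the classical period pairing'' only sees the even summand $H^0(C,\om_C)$---it says nothing about the odd contribution, which is the whole obstruction to regularity of $h$.

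The paper (following Deligne) circumvents this by working in the derived category: one replaces $\pi_*\om_{X/\SS_g}$ by a perfect complex $\La$ (built from $R\pi_*\om_{X/\SS_g}$) whose Berezinian still computes $\Ber_1$, extends the pairing to a morphism $H\colon p_1^*\La\to p_2^*\wt{\La}^\vee$ of complexes, and analyzes $\Cone(H)[-1]$. The fiberwise cohomology computation shows that the even part of $H$ is an isomorphism (this is where the classical positivity enters, as you correctly intuit), and hence $\Cone(H)[-1]$ is locally represented by a two-term complex $[A\to B]$ of \emph{purely odd} bundles. The crucial point is then that the Berezinian of such a complex is the determinant of a map of \emph{even} bundles $\Pi A\to\Pi B$, which is manifestly regular. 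Your proposal is missing both the passage to the derived category (needed because the relevant sheaf is not a bundle) and the parity argument that converts the Berezinian into a genuine determinant.
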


\begin{proof}
Since $\Psi$ is an isomorphism, the last statement statement is equivalent to the regularity of the section $h^5$. 
Hence, it is enough to prove that $h$ itself is regular.
First, let us rewrite the definition of $h$ slightly. Let us fix an open neighborhood $\WW$ of the quasidiagonal in $\SS_g\times \SS_g^c$,
and an isomorphism $p_1^{-1}V\simeq p_2^{-1}V$ over $\WW$,
and set $\VV:=\UU\times \UU^c\cap \WW$.
First, we notice that $h=\Ber(H_\VV[-1])$, where $H_\VV$ is
the following composition of morphisms on $\VV$:
\begin{equation}\label{H-UU-comp-eq}
H_\VV:p_1^*\La_\UU\rTo{\a_\UU} p_1^{-1}V\ot \OO_{\VV}\rTo{\nu} p_2^{-1}V^\vee\ot \OO_{\VV}\rTo{\wt{\a}_\UU^\vee}
p_2^*\wt{\La}^\vee_\UU,
\end{equation}
where $\nu$ corresponds to the pairing on $V$.

Next, we want to extend $\a_\UU$ to a morphism in the derived category of coherent sheaves on $\SS_g$. 
%(more precisely, we can work locally on $\SS_g$).
The exact sequence \eqref{de-sh-ex-seq} gives a morphism in the derived category,
$$\wt{\a}:R\pi_*\om_{X/\SS_g}\to R\pi_*\C_{X/\SS_g}[1].$$
%Consider the truncations 
%$$R\pi_*\C_{X/\SS_g}\to g_{\ge 1}R\pi_*\C_{X/\SS_g}\to g_{\ge 2}R\pi_*\C_{X/\SS_g}.$$
Let us define the object $\La$ in the derived category of $\SS_g$
%$(R\pi_*\om_{X/\SS_g}[-1])^{\natural}$ 
from the exact triangle
%$$(R\pi_*\om_{X/\SS_g}[-1])^{\natural}\to R\pi_*\om_{X/\SS_g}\to g_{\ge 2}R\pi_*\C_{X/\SS_g}\to\ldots.$$
$$\La\to R\pi_*\om_{X/\SS_g}\to (\tau_{\ge 2}R\pi_*\C_{X/\SS_g})[1]\to\ldots.$$
where the second arrow is the composition of $\wt{\a}$ with the truncation $R\pi_*\C_{X/\SS_g}[1]\to \tau_{\ge 2}R\pi_*\C_{X/\SS_g}[1]$.
Since $\tau_{\ge 2}R\pi_*\C_{X/\SS_g}\simeq \OO_{\SS_g}[-2]$, we have an identification
$$\Ber(\La)\simeq \Ber(R\pi_*\om_{X/\SS_g})\simeq \Ber_1.$$

The composition of $\wt{\a}$ with the truncation $R\pi_*\C_{X/\SS_g}[1]\to \tau_{\ge 1}R\pi_*\C_{X/\SS_g}[1]$
induces a morphism
$$\a:\La\to R^1\pi_*\C_{X/\SS_g}=V\ot \OO_{\SS_g},$$
%$$\a:(R\pi_*\om_{X/\SS_g}[-1])^{\natural}\to R^1\pi_*\om_{X/\SS_g}[-1]=V\ot \OO_{\SS_g}[-1],$$
whose restriction to $\UU$ is $\a_\UU$.
%Let us set $\La:=(R\pi_*\om_{X/\SS_g}[-1])^{\natural}$ for brevity.
Now we can extend the map \eqref{H-UU-comp-eq} to a map in the derived category of $\WW$,
defined as the composition
\begin{equation}\label{H-comp-eq}
H:p_1^*\La|_{\WW}\rTo{\a} p_1^{-1}V\ot \OO_{\WW}\rTo{\nu} p_2^{-1}V^\vee\ot \OO_{\WW}\rTo{\wt{\a}^\vee}
p_2^*\wt{\La}^\vee|_{\WW}.
\end{equation}
Let us consider the object $\Cone(H)[-1]$ in the derived category of $\WW$, fitting into an exact triangle
$$\Cone(H)[-1]\to p_1^*\La|_{\WW}\rTo{H} p_2^*\wt{\La}^\vee|_{\WW}\to \ldots$$
We have an identification
$$\Ber(\Cone(H)[-1])\simeq \Ber_1\boxtimes \wt{\Ber}_1.$$
The main point of the proof is the following

\medskip

\noindent
{\bf Claim}. Locally there exist a morphism $A\to B$ of vector bundles of rank $0|m$ such that $\Cone(H)[-1]$ is quasi-isomorphic to the complex
$[A\to B]$ concentrated in degrees $0$ and $1$.

\medskip

Let us show how this claim implies the result. The complex $\Cone(H)[-1]|_{\VV}$ is acyclic, so we have a canonical isomorphism 
$$\ber:\OO_\VV\rTo{\sim} \Ber(\Cone(H)[-1])|_\VV\simeq \Ber_1\boxtimes \wt{\Ber}_1|_\VV.$$
Note that we have an identification of $\Cone(H)[-1]|_{\VV}$ with the cone of the isomorphism of vector bundles in cohomological degree $1$,
\eqref{H-UU-comp-eq}, hence $\ber=h$, the rational section we are interested in.
On the other hand, a local presentation of $\Cone(H)[-1]$ with $[A\rTo{\phi} B]$ as in the claim gives an isomorphism
\begin{equation}\label{Ber-cone-AB-eq}
\Ber(\Cone(H)[-1])\simeq \Ber(A)\ot \Ber(B)^{-1}\simeq \det(\Pi B)\ot \det(\Pi A)^{-1},
\end{equation} 
and an identification of $\Cone(H)[-1]|_{\VV}$ with an isomorphism of vector bundles
$A|_\VV\to B|_\VV$, which shows that under the isomorphism \eqref{Ber-cone-AB-eq}, the element $\ber$ corresponds
to the determinant of a morphism of even vector bundles 
$$\det(\Pi\phi:\Pi A\to \Pi B)\in \det(\Pi B)\ot \det(\Pi A)^{-1}.$$ 
Since the latter element is regular, the assertion follows.

It remains to prove the claim. To this end, we calculate the cohomology of the derived restriction $\Cone(H)[-1]|_s$, where $s\in S$ is a closed point.
The formation of $H$ is compatible with base changes, so we can start with a pair of smooth supercurves $X_s$, $\wt{X}_s$ given by pairs $(C,L)$, $(\wt{C},\wt{L})$, 
where $C$ (resp., $\wt{C}$) is a curve and $L$ (resp., $\wt{L}$) is a spin structure, such that $\wt{C}$ is complex conjugate to $C$.

We have $\om_{X_s}\simeq \om_C\oplus L$ and the map $H^1(C,\om_{X_s})\to H^2(C,\C)$ is an isomorphism of even parts. Hence, we get
$$H^0(\La)\simeq H^0(C,\om_C)\oplus H^0(C,L), \ \ H^1(\La)\simeq H^1(C,L),$$
and similarly 
$$H^{-1}(\wt{\La}^\vee)\simeq H^1(\wt{C},\wt{L})^\vee, \ \ H^0(\wt{\La}^\vee)\simeq H^0(\wt{C},\om_{\wt{C}})^\vee\oplus H^0(\wt{C},\wt{L})^\vee.$$
Hence, we get an exact sequence for the cohomology of $\Cone(H_s)[-1]$
\begin{align*}
  0&\to H^1(\wt{C},\wt{L})^\vee\to H^0(\Cone(H_s)[-1])\to H^0(C,\om_C)\oplus H_0(C,L)
  \\
  &\rTo{\nu} H^0(\wt{C},\om_{\wt{C}})^\vee\oplus H^0(\wt{C},\wt{L})^\vee
\to H^1(\Cone(H_s)[-1])\to H^1(C,L)\to 0,
\end{align*}
where the even part of $\nu$, $\nu^+:H^0(C,\om_C)\to H^0(\wt{C},\om_{\wt{C}})$ is an isomorphism corresponding to the natural hermitian pairing on 
$H^0(C,\om_C)$. From this it follows that $H^i(\Cone(H_s)[-1])=0$ for $i\neq 0,1$ and that $H^*(\Cone(H_s)[-1])$ is purely odd. This implies that locally $\Cone(H)[-1]$ can be
represented by a complex of the form $[A\to B]$ where $A$ and $B$ are vector bundle of purely odd rank. Since we know that the restriction of $\Cone(H)[-1]$
to $\VV$ is zero, we see that $A$ and $B$ are of the same rank.
\end{proof}

\begin{remark}\label{theta-null-rem}
Theorem \ref{reg-thm} improves \cite[Thm.\ 5.2]{FKP-per} where we proved the regularity of $h^5$ (or equivalently of $\mu$) for $g\le 11$. 
In fact, the proof of \cite[Thm.\ 5.2]{FKP-per} shows that locally near some point $s$ of  the complement $\SS_g\setminus \UU$,
%near a point of the quasidiagonal
$h^5$ (and hence $\mu$) is divisible by $(f\wt{f})^{11-g}$, where $f$ is a regular holomorphic function on a neighborhood of $s$ in $\SS_g$ defined as follows.
%we did a bit more in \cite{FKP-per}. 
%Let us consider the situation.
For every choice of a Largangian subbundle 
$\La\sub R^1\pi_*\C_{X/\SS_g}$ near $s$,
such that $\La_s$ is transversal to $H^0(\om_{C_s})\sub H^1(C_s,\C)$,
one has a section
$$\th_{\La}:=\th(\pi_*\om_{X/\SS_g},\La)\in \Gamma(\UU,\Det(\La)^{-1}\ot \Ber_1^{-1})$$
defined as the Berezinian of the morphism 
$$\pi_*\om_{X/\SS_g}|_{\UU}\to R^1\pi_*\C_{X/\SS_g}/\La|_{\UU}.$$
Trivializing $\La$ locally, we can think of $\th_{\La}$ as a meromorphic section of $\Ber_1^{-1}$ near $s$.
By \cite[Thm.\ 4.14]{FKP-per}, $\th_{\La}^{-1}$ is regular near $s$, and in fact, has form $\th_{\La}^{-1}=f^2\cdot \bt$
for some regular even function $f=f_{\La}$, where $\bt$ is a local trivialization of $\Ber_1$.
%Furthermore, 
\end{remark}

\section{Superperiod map near the non-separating node boundary}\label{superperiod-sec}

In this section we study the behavior of the superperiod map near the non-separating node boundary divisor.
For this we use the theory of $D$-modules on superschemes. We generalize to the super case Deligne's theory of
canonical extensions of local systems. Then, similarly to the classical case, we identify explicitly the canonical extension
for the Gauss-Manin connection associated with a degenerating family of stable supercurves.

\subsection{$D$-modules on superschemes}

Penkov in \cite{Penkov} defined for $D$-modules on superschemes the analogs of pull-back and push-forward, and showed
that the pull-back and push-forward with respect to the embedding $i:S_{\bos}\to S$ are mutually inverse equivalences
of the categories of $D$-modules.

Note that $Li^*\OO_S=\OO_{S_{\bos}}$ as $D$-modules, so the push-forward of $D$-modules, $i_+$, satisfies
$$Ri_+(\OO_{S_{\bos}})\simeq \OO_S.$$

Thus, if $f:X\to S$ is a smooth proper morphism of superschemes then $Rf_+(\OO_X)$ is the $D$-module $i_+Rf_{red,+}(\OO_{X_{\bos}})$,
where $f_{\bos}:X_{\bos}\to S_{\bos}$ is the induced morphism of usual schemes.
In other words, for some $i\ge 0$, we have 
$$\VV:=R^if_+(\OO_X)\simeq \OO_S\ot_{\C} R^i_{f_{red *}}(\C_{X_{\bos}}),$$
the $D$-module associated with the local system, $R^i_{f_{red *}}(\C_{X_{\bos}})$

Here is an explicit local recipe for computing $i_+$. Assume $S$ is split: $\OO_S=\OO_{S_{\bos}}\ot_{\C}\bigwedge(E)$, where $E$ is a finite dimensional vector space.
Then we have an isomorphism of sheaves of algebras
$$D_S=D_{S_{\bos}}\ot_{\C}\End(\bigwedge(E)),$$
and an isomorphism of $D_S$-modules
$$i_+(M)=M\ot_{\C}\bigwedge(E),$$
for any $D_{S_{\bos}}$-module $M$.

\begin{remark} If we have a splitting $\OO_S=\OO_{S_{\bos}}\ot_{\C}\bigwedge(E)$, it makes sense to talk about
{\it flat connection in odd directions} on a sheaf of $\OO_S$-modules $\FF$, $\nabla_E:\FF\to \FF\ot E$.
It is easy to see that this is equivalent to having a structure of $\End(\bigwedge(E))$-module on $\FF$, so
there is an equivalence $\FF_{\bos}\to \FF_{\bos}\ot_{\C}\bigwedge(E)$ between the category of $\OO_{S_{\bos}}$-modules
and the category of $\OO_S$-modules with a flat connection in odd directions. Note that the connection in odd directions
on $\FF_{\bos}\ot \bigwedge(E)$ is induced by the natural map 
$$\kappa_E:\bigwedge(E)\to E\ot \bigwedge(E),$$
dual to the wedge multiplication.
\end{remark}

\subsection{Canonical extensions}
%\subsection{From Gauss-Manin connection to periods}

%We say that a $D$-module structure on a vector bundle $\VV$ over $S$ has {\it regular singularieties at infinity} if the same is true
%for the corresponding $D$-module over $S_{red}$. 
%Note that by the classical regularity theorem, the $D$-modules 
%$$\VV_{red}:=\OO_{S_{red}}\ot R^if_{red *}(\C_{X_{red}})$$  
%on $S_{red}$ have regular singularities at infinity.

Assume that $S=\ov{S}\setminus B$, where $B$ is a normal crossing divisor on a smooth superscheme $\ov{S}$,
and let $(\VV,\nabla)$ be a vector bundle with a flat connection on $S$.
%We also assume that $\VV_{red}$ has unipotent monodromy around each branch of $B$. 

\begin{definition} We say that a vector bundle $\ov{\VV}$ on $\ov{S}$ is a {\it canonical extension} of $\VV$ with respect to $B$, 
if $\nabla$ extends to a connection 
$$\ov{\nabla}:\ov{\VV}\to \Om^1_{\ov{S}}(\log B)\ot \ov{\VV}$$
with logarithmic poles along $B$,
such that the residues of the corresponding logarithmic connection on the branches of $B$ (evaluated at any point) have eigenvalues in $[0,1)$.
\end{definition}

\begin{definition} Let $f$ be an even function on $U\cap S$, where $U$ is a neighborhood of a point $p\sub B$.
We say that $f$ is {\it bounded} near $p$ if there exists a local system of coordinates near $p$, $x_1,\ldots,x_n,\eta_1,\ldots,\eta_m$,
such that $f$ is a polynomial in $\eta_1,\ldots,\eta_m$ with coefficients given by bounded functions in $x_1,\ldots,x_n$.
We say that $f$ has logarithmic growth near $p$ if for a local equation $t=0$ of $B$, there exists $N$ such that
the multivalued function $f/(\log t)^N$ is bounded near $p$. 
\end{definition}

A {\it period of a section} $s$ of $\VV$ with respect to a horizontal (possibly multivalued) section $\phi$ of $\VV^\vee$
%$\phi\ot 1$ as above 
is defined as $\lan s,\phi\ran$ (this is an even function on the base).
%$$\lan s,\phi\ot 1\ran.$$
%Writing $s$ in terms of the basis $(e_i\ot 1)$ we derive the following

\begin{lemma}\label{log-growth-lem}
(i) Locally there exists a unique up to an isomorphism canonical extension $\ov{\VV}$ 
of $\VV$. 

\noindent
(ii) If a section $s$ of $\VV$ extends to a regular section of $\ov{\VV}$ then its periods $\lan s,\phi\ran$, with respect
to a horizontal section $\phi$ of $\VV^\vee$, have at most logarithmic growth.
\end{lemma}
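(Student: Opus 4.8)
The plan is to reduce the statement to the classical Deligne theory of regular singular connections by means of Penkov's equivalence, being careful to keep track of the odd directions.

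\medskip

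\noindent\textbf{Part (i): existence and uniqueness of the canonical extension.}
First I would work locally, so I may assume $\ov{S}$ is split: $\OO_{\ov{S}}=\OO_{\ov{S}_{\bos}}\ot_\C\bigwedge(E)$, the divisor $B$ is pulled back from a normal crossing divisor $B_{\bos}\sub\ov{S}_{\bos}$, and $S=\ov{S}\setminus B$ is the preimage of $S_{\bos}=\ov{S}_{\bos}\setminus B_{\bos}$. The data of the flat connection $(\VV,\nabla)$ on $S$ decomposes: restricting to even directions gives a flat connection $(\VV,\nabla^{\mathrm{ev}})$ over $S_{\bos}$ in the ordinary sense after passing to $\VV_{\bos}$, while the component in odd directions, by the Remark, is equivalent to no extra data at all once we identify $\VV$ with $\VV_{\bos}\ot_\C\bigwedge(E)$ with its canonical odd connection $\kappa_E$. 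Concretely, a flat connection on $S$ is the same as a flat connection on the underlying $D_{S_{\bos}}$-module together with the $\End(\bigwedge E)$-action, i.e. a local system on $S_{\bos}$ (here I use Penkov's equivalence between $D$-modules on $S$ and on $S_{\bos}$, plus $Li^*\OO_S=\OO_{S_{\bos}}$). Then I would take $\ov{\VV}$ to be the pullback to $\ov S$ of the classical Deligne canonical extension $\ov{\VV}_{\bos}$ of the corresponding local system across $B_{\bos}$ — explicitly $\ov{\VV}:=\ov{\VV}_{\bos}\ot_\C\bigwedge(E)$ — with connection $\ov\nabla=\ov\nabla_{\bos}\ot 1 + \id\ot\kappa_E$. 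Since the residue of $\ov\nabla$ along a branch of $B$ only sees the even directions and equals the residue of $\ov\nabla_{\bos}$, its eigenvalues lie in $[0,1)$ by Deligne's theorem; this gives existence. For uniqueness, if $\ov{\VV}'$ is another canonical extension, its restriction to $S$ is isomorphic to $\VV$, and the logarithmic connection with residue eigenvalues in $[0,1)$ forces, again via the splitting and Penkov's equivalence, that $\ov{\VV}'_{\bos}$ is a classical canonical extension of the underlying local system; by uniqueness in the classical case $\ov{\VV}'_{\bos}\simeq\ov{\VV}_{\bos}$, and one checks the odd-direction data is rigid because $\kappa_E$ is canonical, so $\ov{\VV}'\simeq\ov{\VV}$.

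\medskip

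\noindent\textbf{Part (ii): logarithmic growth of periods.}
Let $s$ be a section of $\VV$ extending to a regular section of $\ov{\VV}$, and let $\phi$ be a horizontal (multivalued) section of $\VV^\vee$. I would argue in the same split model. Write $t=0$ for a local equation of $B$, pulled back from $\ov{S}_{\bos}$, and expand everything in the odd generators $\eta_1,\dots,\eta_m$ of $\bigwedge(E)$: a regular section of $\ov{\VV}=\ov{\VV}_{\bos}\ot\bigwedge(E)$ is a polynomial in the $\eta_j$ whose coefficients are regular sections of $\ov{\VV}_{\bos}$ over $\ov{S}_{\bos}$, and a horizontal section $\phi$ of $\VV^\vee$ is a polynomial in the $\eta_j$ whose coefficients are horizontal (hence, classically, at most logarithmically growing) sections of $\VV_{\bos}^\vee$ over $S_{\bos}$ — the odd-horizontality being automatic from the $\kappa_E$-structure. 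The pairing $\lan s,\phi\ran$ is then a polynomial in $\eta_1,\dots,\eta_m$ each of whose coefficients is a finite sum of products of a bounded (regular) function on $\ov S_{\bos}$ with a classical period; by Deligne's classical estimate each such classical period is dominated by $(\log t)^N$ for some $N$. Hence $\lan s,\phi\ran/(\log t)^N$ is, for $N$ large enough, a polynomial in the $\eta_j$ with bounded coefficients, which is exactly the definition of logarithmic growth near $p$.

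\medskip

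\noindent\textbf{Main obstacle.}
The routine part is the bookkeeping of the splitting; the genuine point of care is matching the \emph{super} notion of ``bounded/logarithmic growth near $p$'' (polynomial in the odd variables with bounded/log-growth coefficients, in \emph{some} coordinate system) with what the split model produces, and checking that the construction is independent of the chosen splitting so that the local extensions glue — i.e. that the isomorphism in (i) is canonical enough. For (i) this is handled by the uniqueness statement (any two canonical extensions are locally isomorphic, and one tracks that the odd part is forced by $\kappa_E$); for (ii) one must make sure that a horizontal section of $\VV^\vee$ really does have the claimed polynomial-in-$\eta$ shape with classically-horizontal coefficients, which again follows from decomposing the flatness equation into its even and odd components via the Remark. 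I expect the odd-direction flatness $\nabla_E\phi=0$ versus the $\End(\bigwedge E)$-module structure to be the step where one has to be most careful, but it is exactly the content of the Remark and of Penkov's equivalence, both of which we may invoke.
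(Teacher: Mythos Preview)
Your proposal is correct and follows essentially the same route as the paper: reduce to the split case, invoke Penkov's equivalence to identify $\VV\simeq\VV_{\bos}\ot\bigwedge(E)$, take Deligne's classical canonical extension on the bosonic base and tensor with $\bigwedge(E)$, and for (ii) expand in the odd generators and appeal to the classical logarithmic-growth estimate. One small sharpening: the paper observes more precisely that odd-horizontality forces a flat section $\phi$ of $\VV^\vee$ to lie in the subsheaf $\VV^\vee_{\bos}$ (i.e.\ to have \emph{no} higher $\eta$-terms), rather than merely having horizontal coefficients---your phrase ``the odd-horizontality being automatic'' is slightly misleading, though it does not affect the argument.
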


\begin{proof}
(i) Locally there exists a splitting $\OO_{\ov{S}}=\OO_{\ov{S}_{\bos}}\ot_{\C}\bigwedge(E)$, such that $B$ is the pull-back of
a normal crossing divisor $B_{\bos}\sub \ov{S}_{\bos}$. By Penkov's theorem, we have
$$\VV\simeq \VV_{\bos}\ot\bigwedge(E),$$
as a bundle with connection.
By \cite[Prop.\ 5.2, Prop.\ 5.4]{Deligne-eq-diff}, there exists a canonical extension $(\ov{\VV}_{\bos},\nabla^{\log}_{\bos})$ of $\VV_{\bos}$.
Hence, we can take 
\begin{equation}\label{adm-ext-splitting-constr}
\ov{\VV}:=\ov{\VV}_{\bos}\ot_{\C}\bigwedge(E)
\end{equation}
as an extension of $\ov{\VV}$ to $\ov{S}$.
We have a splitting
$$\Om^1_{\ov{S}}(\log B)=\bigwedge(E)\ot_{\C}\Om^1_{\ov{S}_{\bos}}(\log B_{\bos})\oplus E\ot \OO_{\ov{S}},$$
and the logarithmic connection on $\ov{\VV}$ is given by
\begin{align*}
\nabla^{\log}:\ov{\VV}=\bigwedge(E)\ot_{\C}\ov{\VV}_{\bos}\rTo{(\id\ot \nabla^{\log}_{\bos},\kappa_E\ot\id)}
                 &\bigwedge(E)\ot_{\C}\Om^1_{\ov{S}_{\bos}}(\log B_{\bos})\ot \ov{\VV}_{\bos}
  \\
  &\oplus E\ot \bigwedge(E)\ot_{\C}\ov{\VV}_{\bos}\\
&\simeq \Om^1_{\ov{S}}(\log B)\ot_{\OO_{\ov{S}}} \ov{\VV}.
\end{align*}
Near the locus where $B$ is smooth, we have $\OO_B=\OO_{B_{\bos}}\ot_{\C}\bigwedge(E)$, and
$\Res_B(\nabla^{\log})$ is given by 
$$\id\ot\Res_{B_{\bos}}(\nabla^{\log}_{\bos}):\bigwedge(E)\ot_{\C}\ov{\VV}_{\bos}|_{B_{\bos}}\to \bigwedge(E)\ot_{\C}\ov{\VV}_{\bos}|_{B_{\bos}},$$
hence, the operator $\Res_B(\nabla^{\log})|_p$, for any point $p\in B$ is equal to $\Res_{B_{\bos}}(\nabla_{\bos}^{\log})|_p$,
so it has eigenvalues in $[0,1)$.

Now let us prove the uniqueness of the canonical extension.
Given a canonical extension $(\ov{\VV},\nabla^{\log})$, it induces a flat connection in odd directions on $\ov{\VV}$, which restricts
to the given connection in odd directions on $\VV$. Hence, we have an isomorphism of bundles with flat connections in odd directions,
$$\ov{\VV}\simeq \bigwedge(E)\ot_{\C}\ov{\VV}_{\bos},$$
where $\ov{\VV}_{\bos}$ is identified with the subsheaf of $\ov{\VV}$ consisting of sections annihilated by $\nabla_{e^*}$ for all
local sections $e^*$ of $E^\vee$. Now for every logarithmic vector field $v$ on $\ov{\SS}_{\bos}$
the operator $\nabla^{\log}_v$ commutes with all $\nabla_{e^*}$, hence, it preserves $\ov{\VV}_{\bos}\sub \ov{\VV}$ and
induces a logarithmic connection $\nabla^{\log}_{\bos}$ on $\ov{\VV}_{\bos}$ such that 
$$\nabla^{\log}=\id\ot\nabla^{\log}_{\bos}+\kappa_E\ot\id$$ 
as in the above construction. Thus, the eigenvalues of $\Res_B(\nabla^{\log})$ are the same as of
$\Res_{B_{\bos}}(\nabla_{\bos}^{\log})$. It remains to use the uniqueness of the canonical extension on $\ov{\SS}_{\bos}$.

\noindent
(ii) By uniqueness, we can assume that $\ov{\VV}$ is of the form \eqref{adm-ext-splitting-constr}, for some splitting of $\ov{S}$
such that $B$ is the pull-back of $B_{\bos}\sub \ov{S}_{\bos}$. Let $e_1,\ldots,e_n$ be a local basis of $\ov{\VV}_{\bos}$.
By \cite[Prop.\ 5.2]{Deligne-eq-diff},
for any multivalued horizontal section $\phi_0$ of $\VV^\vee_{\bos}$ the multivalued functions $\lan \phi_0, e_i\ran$ have at most logarithmic growth (i.e., $O((\log |z|)^k)$). Note that any horizontal section $\phi$ of $\VV^\vee$ is annihilated by derivative
in odd directions, so it belongs to the subsheaf $\VV^\vee_{\bos}\sub \VV^\vee$.
This easily implies the assertion.
\end{proof}

\subsection{Canonical extension of the Gauss-Manin connection for supercurves}

Here we work with 
%extension of $X/S$ to 
a family of stable supercurves, $\pi:\ov{X}\to \ov{S}$, where $\ov{S}$ is smooth superscheme, extending a smooth family $X\to S$ over
$S=\ov{S}\setminus S_0$, where $S_0$ is a normal crossing divisor.
%$p=\A^{0|n}\sub \ov{S}$ is given by $t=0$. 
We denote by $X_0=\pi^{-1}(S_0)$ the corresponding divisor in $\ov{X}$.
%which is given as the pullback of the boundary divisor in the moduli space, and 
We assume the map from formal neighborhood of each $p\in S_0$ to the deformation functor of each node of $\pi^{-1}(p)$ is \'etale.
We also make an important assumption that the spin-structures corresponding to the fibers $X_s$ have no global sections.

The relative de Rham complex $\Om_{X/S}^\bullet$ provides a resolution for $\C_{X/S}$, so 
$$\VV:=R^1\pi_*(\C_{X/S})\simeq R^1\pi_*(\Om_{X/S}^\bullet).$$
 As in the classical case, the Gauss-Manin connection on $\VV$ over $S$ is obtained by considering the exact sequence of complexes
$$0\to \pi^*\Om^1_S\ot \Om^{\bullet}_{X/S}[-1]\to \Om^{\bullet}_X/(\pi^*\Om^2_{S}\we \Om^{\bullet-2}_X)\to \Om^\bullet_{X/S}\to 0$$
and the connecting homomorphism
$$R^1\pi_*(\Om^\bullet_{X/S})\to \pi^*\Om^1_S\ot R^2\pi_*(\Om^{\bullet}_{X/S}[-1])\simeq \pi^*\Om^1_S\ot R^1\pi_*(\Om^\bullet_{X/S}).$$

We would like to construct explicitly a canonical extension of $\VV$.

\begin{lemma}
(i) Under the above assumptions, the coherent sheaf 
\begin{equation}\label{can-ext-main-eq}
\ov{\VV}:=R^1\pi_*[\OO_{\ov{X}}\rTo{\de} \om_{\ov{X}/\ov{S}}]
\end{equation}
is locally free and its formation commutes with base changes.

\noindent
(ii) If $S$ is even then we have a natural identification $\ov{\VV}\simeq R^1\pi_*[\OO_C\rTo{d}\om_{C/\ov{S}}]$,
where $\pi:C\to \ov{S}$ is the corresponding family of usual stable curves. 
\end{lemma}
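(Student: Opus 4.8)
\emph{Strategy.}
Write $K^\bullet:=[\OO_{\ov X}\rTo{\de}\om_{\ov X/\ov S}]$, placed in degrees $0$ and $1$, so that $\ov\VV=R^1\pi_*K^\bullet$; the plan is to reduce both statements to a computation of the hypercohomology of $K^\bullet$ along the fibres of $\pi$. First I would record that $\OO_{\ov X}$ is $\ov S$-flat (as $\pi$ is flat) and that the relative Berezinian $\om_{\ov X/\ov S}$ is $\ov S$-flat with base-change-compatible formation --- the super analogue of the standard fact about the relative dualizing sheaf of a family of stable curves, available from \cite{FKP-supercurves} --- and that $\de$ is functorial, so $K^\bullet$ commutes with base change. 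Then $R\pi_*K^\bullet$ is a perfect complex on $\ov S$: $\pi$ is proper of relative dimension $1|1$, hence of finite cohomological dimension, and $K^\bullet$ is a bounded complex of $\ov S$-flat sheaves. Representing $R\pi_*K^\bullet$ locally by a bounded complex $P^\bullet$ of finite free $\OO_{\ov S}$-modules, one has $H^i(P^\bullet\ot k(s))\simeq H^i(X_s,K^\bullet|_{X_s})$ (hypercohomology, the derived restriction agreeing with the naive one by flatness) for every point $s$, with $R\pi_*$ commuting with arbitrary base change. By the cohomology-and-base-change machinery applied to $P^\bullet$, it now suffices to check that for closed $s$ the dimensions and parities of the $H^i(X_s,K^\bullet|_{X_s})$ are independent of $s$; this forces all $R^i\pi_*K^\bullet$ to be locally free with base-change-compatible formation, in particular $\ov\VV$.

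\emph{The fibrewise computation.}
Fix $s$ with $X_s=(C,L)$, $C$ a connected stable curve of genus $g$ and $L$ a generalized spin structure with $H^0(C,L)=0$; since $L\simeq\und{\Hom}(L,\om_C)$, Serre duality gives $H^1(C,L)=0$ as well. Using $\OO_{X_s}=\OO_C\oplus L$ and that $\om_{X_s}$ carries a filtration with graded pieces $\om_C$ (even) and $L$ (odd), the vanishing $H^*(C,L)=0$ gives $H^j(\OO_{X_s})\simeq H^j(C,\OO_C)$ and $H^j(\om_{X_s})\simeq H^j(C,\om_C)$, all purely even, of dimensions $(1,g)$ and $(g,1)$ for $j=0,1$. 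In the hypercohomology spectral sequence of $K^\bullet|_{X_s}=[\OO_{X_s}\rTo{\de}\om_{X_s}]$, the map on $H^0$ kills the constant function, so $H^0(X_s,K^\bullet|_{X_s})=\C$; and $\de^*\colon H^1(\OO_{X_s})\to H^1(\om_{X_s})$ vanishes --- by the formal self-adjointness of the (super) de Rham differential together with Serre duality on the Gorenstein supercurve $X_s$, the Serre dual of $\de^*$ is $\pm\de$ on $H^0$, which annihilates constants --- so $H^2(X_s,K^\bullet|_{X_s})=H^1(\om_{X_s})=\C$ and $H^1(X_s,K^\bullet|_{X_s})$ is an extension of $H^1(\OO_{X_s})\simeq\C^g$ by $H^0(\om_{X_s})\simeq\C^g$, hence purely even of dimension $2g$ (consistent with the Euler-characteristic count $1-2g+1=2-2g$). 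The fibre dimensions $1,2g,1$ are thus independent of $s$, which proves part (i); in particular $\ov\VV$ is locally free of rank $2g|0$, and over $S$ it restricts to $\VV=R^1\pi_*\C_{X/S}$ since there $K^\bullet=\Om^\bullet_{X/S}$ resolves $\C_{X/S}$.

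\emph{The even case.}
When $\ov S$ is even, $\ov X\to\ov S$ is a family of stable curves $C\to\ov S$ with spin structure $L$, $\OO_{\ov X}=\OO_C\oplus L$ and $\om_{\ov X/\ov S}=\om_{C/\ov S}\oplus L$ (even $\oplus$ odd); since $\de$ preserves parity, $K^\bullet$ splits as a direct sum of complexes of sheaves $K^\bullet=[\OO_C\rTo{d}\om_{C/\ov S}]\oplus[L\rTo{\de_1}L]$, the first summand being the relative de Rham complex of the family $C$ (and $\om_{C/\ov S}=\Om^1_{C/\ov S}(\log C_0)$ near the nodes, by the transversality hypothesis, so this is the classical log de Rham complex). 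Every fibre of the second summand has vanishing hypercohomology --- again because $H^*(C_s,L_s)=0$ --- so by the constancy criterion of the first paragraph $R\pi_*[L\rTo{\de_1}L]=0$, whence $\ov\VV=R^1\pi_*K^\bullet\simeq R^1\pi_*[\OO_C\rTo{d}\om_{C/\ov S}]$.

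\emph{Main obstacle.}
The delicate point is the value $H^2=\C$ on the \emph{nodal} fibres, equivalently the vanishing of $\de^*$ on $H^1$: over the smooth locus $K^\bullet$ resolves the constants and this is automatic, but at the boundary $[\OO_{X_s}\rTo{\de}\om_{X_s}]$ is no longer a resolution of $\C_{X_s}$ (the kernel sheaf acquires higher cohomology) and a naive d\'evissage gives the wrong answer; what makes the computation go through uniformly is the self-duality of $K^\bullet$ under Serre--Grothendieck duality, resting on $\om_{X/S}$ being the relative dualizing sheaf and on the formal self-adjointness of $\de$. The hypothesis $H^0(C_s,L_s)=0$ enters precisely to kill the spinorial summand $L$ --- without it $\ov\VV$ would acquire an odd part whose rank jumps along the theta-null locus --- and the remaining steps are the bookkeeping of the reduction.
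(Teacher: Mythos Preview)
Your argument is correct, but it diverges from the paper's in an instructive way.

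For part (i) the paper simply cites \cite[Prop.\ 3.2]{FKP-per}: the individual sheaves $R^i\pi_*\OO_{\ov X}$ and $R^i\pi_*\om_{\ov X/\ov S}$ are already known to be locally free with base-change-compatible formation, and the statement for the cone follows. Your fibrewise hypercohomology computation is more self-contained and makes explicit what is being used (in particular the vanishing of $\de^*$ on $H^1$, which you deduce from duality). Both routes arrive at the same place; yours is more informative but longer.

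For part (ii) the paper's argument is sharper than yours. You split $K^\bullet$ into its even and odd summands and then kill $R\pi_*[L\rTo{\de_1}L]$ using the hypothesis $H^*(C_s,L_s)=0$. The paper instead observes that $\de^-\colon L\to L$ is an \emph{isomorphism of sheaves} (indeed, essentially the identity in the standard description of the superconformal structure), so $[L\to L]$ is acyclic as a complex, and the quasi-isomorphism $[\OO_{\ov X}\to\om_{\ov X/\ov S}]\simeq[\OO_C\to\om_{C/\ov S}]$ holds before taking $R\pi_*$. This avoids any appeal to the spin-structure hypothesis for part (ii) and is conceptually cleaner: the odd part contributes nothing for structural reasons, not merely cohomological ones. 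Your argument is not wrong, but you are working harder than necessary and invoking a hypothesis that is not actually needed here.
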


\begin{proof}
(i) This follows from a similar statement fo $R^i\pi_*\OO_{\ov{X}}$ and $R^i\pi_*\om_{\ov{X}/\ov{S}}$, which is proved exactly as in the case of
smooth supercurves, see \cite[Prop.\ 3.2]{FKP-per}.

\noindent
(ii) In this case we have 
$$\OO_{\ov{X}}=\OO_C\oplus L, \ \ \om_{\ov{X}/\ov{S}}=\om_{C/\ov{S}}\oplus L,$$ 
where $L$ is the relative spin structure, and the $\de^-$ is an isomorphism, so we get a quasi-isomorphism
of $[\OO_{\ov{X}}\to \om_{\ov{X}/\ov{S}}]$ with $[\OO_C\to \om_{C/\ov{S}}]$.
\end{proof}
%Furthermore, we assume that the corresponding family of usual curves has the simplest kind of degeneration collapsing 
%a non-separating curve into a node.

%Furthermore, over a covering of $S$ we have a Lagrangian spliting $\VV=\La\oplus \La'$,
%We are interested in the behavior of the periods of sections of

%We can assume that we have an exact sequence of local systems on $S$
%$$0\to \Lambda\to \VV\to \VV/\Lambda\to 0,$$
%where $\Lambda$ and $\VV/\Lambda$ are {\it trivial} local systems,

%To study the periods we need to find explicitly the canonical extension of $\VV$ with the Gauss-Manin connection.

%\noindent
%{\bf Questions}: 

%\noindent
%1) is $R^1\pi_*[\OO_{\ov{X}}\rTo{\de} \om_{\ov{X}/\ov{S}}]$ isomorphic to the canonical extension $\ov{\VV}$?

%\noindent
%2) Do $\om_i$ extend to regular sections of $\pi_*\om_{\ov{X}/\ov{S}}$?

%In the classical case both questions have positive answers.
%Let us show that the answer to 1) is still positive in the super case.
%the situation is as good in the super case.

\begin{theorem}\label{super-GM-thm} 
The vector bundle $\ov{\VV}$ given by \eqref{can-ext-main-eq} is a canonical extension of $\VV$, i.e., $\ov{\VV}$ admits a connection with poles of order $1$,
$$\nabla:\ov{\VV}\to \pi^*\Omega^1_{\ov{S}}(S_0)\ot \ov{\VV},$$
restricting to the Gauss-Manin connection on $\VV$, such that the residue of $\nabla$ at any branch of $S_0$ is nilpotent.
\end{theorem}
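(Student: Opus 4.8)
The plan is to reduce the super statement to the classical one via Penkov's equivalence, exactly as in the proof of Lemma~\ref{log-growth-lem}. First I would work locally on $\ov{S}$, choosing a splitting $\OO_{\ov{S}}=\OO_{\ov{S}_{\bos}}\ot_{\C}\bigwedge(E)$ such that $S_0$ is the pull-back of a normal crossing divisor $S_{0,\bos}\sub \ov{S}_{\bos}$, and correspondingly a splitting of the family $\ov{X}\to\ov{S}$ compatible with the one on the base (here one uses that the map to the deformation functor of the nodes is \'etale, so the local model is a product). Then $\OO_{\ov{X}}=\OO_{\ov{X}_{\bos}}\ot_\C\bigwedge(E)$ and $\om_{\ov{X}/\ov{S}}=\om_{\ov{X}_{\bos}/\ov{S}_{\bos}}\ot_\C\bigwedge(E)$ as complexes, so by the projection formula
$$\ov{\VV}\simeq \ov{\VV}_{\bos}\ot_\C\bigwedge(E), \qquad \ov{\VV}_{\bos}:=R^1\pi_{\bos,*}[\OO_{\ov{X}_{\bos}}\rTo{\de}\om_{\ov{X}_{\bos}/\ov{S}_{\bos}}].$$
Now the underlying family $\ov{X}_{\bos}\to\ov{S}_{\bos}$ need not be a family of usual stable curves, because it sees the spin structure; but it is the total space coming from the split supercurve, so $\OO_{\ov{X}_{\bos}}=\OO_C\oplus L$ and $\om_{\ov{X}_{\bos}/\ov{S}_{\bos}}=\om_{C/\ov{S}_{\bos}}\oplus L$ with $\de^-$ an isomorphism, and the Lemma above (part (ii)) gives $\ov{\VV}_{\bos}\simeq R^1\pi_*[\OO_C\rTo{d}\om_{C/\ov{S}_{\bos}}]$ for the family of usual stable curves $C\to\ov{S}_{\bos}$. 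Hence $\ov{\VV}_{\bos}$ is precisely the Deligne canonical extension of $R^1\pi_*\C_{C/S_{\bos}}$ studied in \cite{Steenbrink} (or \cite{Deligne-eq-diff}): it carries a connection with logarithmic poles along $S_{0,\bos}$, extending the Gauss-Manin connection, whose residue at any branch is nilpotent (this is the classical Picard-Lefschetz / limit mixed Hodge structure statement for degenerating curves, where the monodromy is unipotent).

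The second step is to transport this back. Using the splitting of $\Om^1_{\ov{S}}(S_0)=\bigwedge(E)\ot_\C\Om^1_{\ov{S}_{\bos}}(S_{0,\bos})\oplus E\ot\OO_{\ov{S}}$, set
$$\nabla:=\id\ot\nabla^{\log}_{\bos}+\kappa_E\ot\id,$$
exactly as in the construction in Lemma~\ref{log-growth-lem}(i). This is a flat connection on $\ov{\VV}$ with poles of order $1$ along $S_0$, and its residue at a branch of $S_0$ equals $\id\ot\Res_{S_{0,\bos}}(\nabla^{\log}_{\bos})$ pointwise (the $\kappa_E$ term is regular and contributes nothing to the residue), hence is nilpotent. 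It remains to check that $\nabla|_S$ is the Gauss-Manin connection of the smooth super family. For this I would use that Penkov's equivalence $i_+$ is compatible with the de Rham/Gauss-Manin formalism: the Gauss-Manin connection on $\VV=R^1\pi_*\C_{X/S}$ is by definition the $D$-module structure coming from $Rf_+\OO_X$, and under $i_+$ this matches $\id\ot(\text{GM on }\VV_{\bos})+\kappa_E\ot\id$, with the GM connection on $\VV_{\bos}$ restricting to $\nabla^{\log}_{\bos}$ away from $S_{0,\bos}$. Concretely one can verify this by the same exact-sequence-of-complexes computation quoted before the theorem, carried out on the split model, where the odd directions only produce the $\kappa_E$ term and the even directions reproduce the classical formula.

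The main obstacle I expect is the compatibility/gluing issue: the splitting of $\ov{S}$ and the splitting of the family $\ov{X}\to\ov{S}$ are only local, and one must check that the resulting $\ov{\VV}$ and $\nabla$ are independent of these choices so that the local pieces glue. Independence of $\ov{\VV}$ is automatic since \eqref{can-ext-main-eq} is a manifestly global definition; the content is that the connection with nilpotent residues is unique once it exists (this is the uniqueness part of Lemma~\ref{log-growth-lem}(i) applied to the eigenvalue set $\{0\}\subset[0,1)$), so any two local constructions agree on overlaps and glue to a global $\nabla$. A secondary technical point is arranging the split model for the degenerating family near a node so that $\OO_{\ov{X}}$ really does split as $\OO_{\ov{X}_{\bos}}\ot\bigwedge(E)$ compatibly with the base splitting — this uses the \'etale assumption on the deformation-of-nodes map together with the existence of local splittings of supercurves, and should be handled by the same local normal form for stable supercurves near an NS or Ramond node used in \cite{FKP-supercurves}.
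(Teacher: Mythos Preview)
Your proposal has a genuine gap at the step where you split the family $\ov{X}\to\ov{S}$ compatibly with the base splitting, writing $\OO_{\ov{X}}=\OO_{\ov{X}_{\bos}}\ot_\C\bigwedge(E)$ (with $\ov{X}_{\bos}$ meaning the restriction of $\ov{X}$ to $\ov{S}_{\bos}$). This would say the family is pulled back from $\ov{S}_{\bos}$ along the projection, i.e.\ is constant in the odd directions of $\ov{S}$. But when $\ov{S}$ is an \'etale neighborhood in the moduli space, the odd coordinates parametrize genuine odd deformations of the supercurve, so the family certainly depends on them. Your justification via the \'etale-to-node-deformation hypothesis only controls the formal neighborhood of each node, where the standard model indeed lives over an even parameter; it says nothing about the smooth part of the fibers, which is where the odd moduli act. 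Consequently the projection-formula step yielding $\ov{\VV}\simeq\ov{\VV}_{\bos}\ot\bigwedge(E)$ is unsupported: there is no product decomposition of $[\OO_{\ov{X}}\to\om_{\ov{X}/\ov{S}}]$ to which to apply it.

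What survives from your outline is that, by Penkov plus the classical theory, the canonical extension of $\VV$ \emph{exists} and is locally $\ov{\VV}_{\bos}\ot\bigwedge(E)$ with nilpotent residue---this is precisely Lemma~\ref{log-growth-lem}(i). The content of the theorem, however, is the identification of that abstract object with the concrete bundle $R^1\pi_*[\OO_{\ov{X}}\to\om_{\ov{X}/\ov{S}}]$, and for this you have not supplied an argument. The paper establishes the theorem by a different, intrinsic route: it constructs the logarithmic connection on $\ov{\VV}$ directly from the exact sequence of (pushed-forward) logarithmic de Rham complexes on $\ov{X}$, via $j_*$ from the complement $U$ of the nodes (Lemmas~\ref{exact-complex-lem} and~\ref{logarithmic-image-lem}), in the spirit of Katz--Oda and Steenbrink carried out on the super total space; nilpotency of the residue is then checked by explicit local computations at NS and Ramond nodes. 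No splitting of the family is invoked.
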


We prove this theorem further below.
Recall (see \cite[Prop.\ 2.1]{P-superell}) that for a smooth supercurve $X/S$ there is a natural exact sequence of complexes
$$0\to K^{\bullet}_{X/S}\to \Om^\bullet_{X/S}\to [\OO_X\rTo{\de} \om_{X/S}]\to 0$$
where the complex $K^\bullet_{X/S}$ is contractible.

\begin{lemma}\label{exact-complex-lem}
Let $j:U\to \ov{X}$ denote the complement to the nodes. Then we have an exact sequence of complexes
$$0\to j_*K^\bullet_{U/\ov{S}}\to j_*\Om^\bullet_{U/\ov{S}}\to [\OO_{\ov{X}}\to \om_{\ov{X}/\ov{S}}]\to 0.$$
In particular, there is a quasi-isomorphism $j_*\Om^\bullet_{U/\ov{S}}\to [\OO_{\ov{X}}\to \om_{\ov{X}/\ov{S}}]$.
\end{lemma}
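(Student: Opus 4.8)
The plan is to work locally near a node $q$ of a fiber $\pi^{-1}(p)$ and use the local structure theorem for stable supercurves. By the assumption that the map to the deformation functor of each node is \'etale, near $q$ the family $\ov{X}\to\ov{S}$ looks like the versal deformation of a node, so we have explicit local coordinates: on $\ov{X}$ we may take coordinates $z,w$ (plus odd coordinates) with $zw=t$, where $t=0$ is the local equation of the branch of $S_0$ through $p$. The key is to compare, near $q$, the three complexes in play: the honest relative de Rham complex $\Om^\bullet_{U/\ov{S}}$ on the smooth locus $U$, its pushforward $j_*\Om^\bullet_{U/\ov{S}}$, and the two-term complex $[\OO_{\ov{X}}\rTo{\de}\om_{\ov{X}/\ov{S}}]$. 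I would first recall from \cite[Prop.\ 2.1]{P-superell} the exact sequence $0\to K^\bullet_{U/\ov{S}}\to \Om^\bullet_{U/\ov{S}}\to [\OO_U\rTo{\de}\om_{U/\ov{S}}]\to 0$ on $U$ with $K^\bullet_{U/\ov{S}}$ contractible, and then apply $j_*$ to it.

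The main point to check is that applying $j_*$ preserves exactness on the right, i.e.\ that $j_*\bigl([\OO_U\to\om_{U/\ov{S}}]\bigr)=[\OO_{\ov{X}}\to\om_{\ov{X}/\ov{S}}]$ and that $R^1j_*$ of the terms of $K^\bullet_{U/\ov{S}}$ does not obstruct the sequence. For the first claim one uses that both $\OO_{\ov{X}}$ and $\om_{\ov{X}/\ov{S}}$ are reflexive (indeed the dualizing sheaf $\om_{\ov{X}/\ov{S}}$ is defined precisely so as to satisfy $S_2$ at the node, and $\OO_{\ov{X}}$ is normal away from... — more carefully, $\ov{X}$ has only nodal singularities in the reduced direction, so $j_*\OO_U=\OO_{\ov{X}}$ and $j_*\om_{U/\ov{S}}=\om_{\ov{X}/\ov{S}}$ because the node is a codimension-one-removed point and both sheaves satisfy $S_2$ there). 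For the $K^\bullet$ part, since $K^\bullet_{U/\ov{S}}$ is a contractible complex of locally free $\OO_U$-modules, $j_*K^\bullet_{U/\ov{S}}$ is still a complex of $\OO_{\ov{X}}$-modules and the contracting homotopy, being $\OO_U$-linear and built from the superconformal structure which extends across $q$ (the distribution $\DD$ of the stable supercurve is still defined near an NS node, and near a Ramond node one uses the corresponding local normal form), extends to $j_*K^\bullet_{U/\ov{S}}$; hence $j_*K^\bullet_{U/\ov{S}}$ is still contractible, which in particular gives that the sequence $0\to j_*K^\bullet\to j_*\Om^\bullet_{U/\ov{S}}\to [\OO_{\ov{X}}\to\om_{\ov{X}/\ov{S}}]\to 0$ is exact (the surjectivity on the right and the acyclicity of the kernel both following from the extension of the splitting data across $q$).

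From the short exact sequence of complexes with contractible kernel, the quasi-isomorphism $j_*\Om^\bullet_{U/\ov{S}}\to[\OO_{\ov{X}}\to\om_{\ov{X}/\ov{S}}]$ follows immediately, giving the final assertion.

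I expect the main obstacle to be the verification, at the node, that the contracting homotopy for $K^\bullet_{U/\ov{S}}$ genuinely extends to $j_*K^\bullet_{U/\ov{S}}$, equivalently that no higher direct images $R^{>0}j_*$ intervene; this requires knowing the precise local form of $K^\bullet$ near both an NS node and a Ramond node and checking that its terms, though only defined a priori on $U$, push forward to locally free (or at least reflexive) sheaves with the pushed-forward differential still admitting the homotopy. The even/Ramond versus NS cases must be treated separately, since the local normal form of the supercurve — and hence of $K^\bullet$ and of $\om_{\ov{X}/\ov{S}}$ — differs at the two node types.
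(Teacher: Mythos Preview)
Your overall strategy (apply $j_*$ to the exact sequence on $U$, then use contractibility of the kernel) is the right one, and some of what you say is correct: $j_*\OO_U\simeq\OO_{\ov{X}}$, $j_*\om_{U/\ov{S}}\simeq\om_{\ov{X}/\ov{S}}$, and the contracting homotopy of $K^\bullet_{U/\ov{S}}$ does push forward by functoriality of $j_*$, so $j_*K^\bullet_{U/\ov{S}}$ is again contractible. But there is a genuine gap: contractibility of $j_*K^\bullet$ does \emph{not} give you exactness of the pushed-forward sequence. The only nontrivial point is in degree~1, namely the surjectivity of
\[
j_*\Om^1_{U/\ov{S}}\longrightarrow \om_{\ov{X}/\ov{S}},
\]
and your argument for this is circular. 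You write that ``the surjectivity on the right and the acyclicity of the kernel both follow from the extension of the splitting data across $q$'', but an $\OO_U$-linear section $\om_{U/\ov{S}}\to\Om^1_{U/\ov{S}}$ need not extend over the node, and there is no reason it should: at an NS node $\om_{\ov{X}/\ov{S}}$ is not locally free (it has the extra generator $s_0$ with the relations $x_1s_0=-s_1\th_1$, $x_2s_0=s_2\th_2$), and the ``distribution $\DD$'' you invoke is not defined as a subbundle of $\TT_{\ov{X}/\ov{S}}$ there. So the lift of a section of $\om_{\ov{X}/\ov{S}}$ to $j_*\Om^1_{U/\ov{S}}$ genuinely has to be constructed by hand.

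The paper's proof is much shorter and sidesteps all of this: it observes that in degree~0 one has $j_*\OO_U\simeq\OO_{\ov{X}}$, and in degree~1 it simply quotes \cite[Lem.~8.1]{FKP-supercurves}, which gives the exact sequence
\[
0\to j_*\om_{U/\ov{S}}^2\to j_*\Om^1_{U/\ov{S}}\to \om_{\ov{X}/\ov{S}}\to 0
\]
directly (this identifies $K^1\simeq\om_{U/\ov{S}}^2$ and proves the needed surjectivity at the node by an explicit local computation, separately for NS and Ramond nodes). In degrees $\ge 2$ the target complex is zero, so there is nothing to check. If you look at Lemma~\ref{logarithmic-image-lem} immediately following, you will see exactly the kind of explicit lift (e.g.\ $s_0$ lifts to $dz_1/z_1$) that is actually needed; your ``extension of splitting data'' is a placeholder for that computation, not a proof of it.
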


\begin{proof}
By \cite[Lem.\ 8.1]{FKP-supercurves}, we have an exact sequence
$$0\to j_*\om_{U/\ov{S}}^2\to j_*\Om^1_{U/\ov{S}}\to \om_{\ov{X}/\ov{S}}\to 0.$$
Since $j_*\OO_U\simeq \OO_{\ov{X}}$, this immediately gives the required exact sequence of complexes.
\end{proof}

Let $j:U\to \ov{X}$ denote the complement to the nodes.
We consider the logarithmic forms and relative logarithmic forms on $U$ (resp., $S$) with respect to the normal crossing divisor $X_0\cap U$ (resp., $S_0$).
Namely, as in the classical case we set  
$$\Om^1_{U/\ov{S}}(\log):=\Om^1_U(\log)/\pi^*\Om^1_{\ov{S}}(\log), \ \ \Om^i_{U/\ov{S}}(\log)={\bigwedge}^i(\Om^1_{U/\ov{S}}(\log)).$$
Since the morphism $\pi:U\to \ov{S}$ is smooth, it is easy to see that the natural map
$$\Om^1_{U/\ov{S}}\to \Om^1_{U/\ov{S}}(\log)$$
is an isomorphism, and hence $\Om^\bullet_{U/\ov{S}}\simeq \Om^\bullet_{U/\ov{S}}(\log)$.
%Thus, we get a natural morphism $\Om^\bullet_U(\log D)\to \Om^\bullet_{U/\ov{S}}$.

\begin{lemma}\label{logarithmic-image-lem}
The morphism
$$j_*\Om^1_U(\log)\to j_*\Om^1_{U/\ov{S}}$$
is surjective.
%Consider the complex of sheaves on $\ov{X}$,
%$$Q^\bullet=\coker(j_*\Om^\bullet_U(\log D)\to j_*\Om^\bullet_{U/\ov{S}}).$$
\end{lemma}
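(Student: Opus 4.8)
The plan is to reduce the statement to the classical case of logarithmic forms on stable curves, exactly as we did for the quasi-isomorphism in Lemma \ref{exact-complex-lem}. First I would work locally on $\ov{X}$, away from and near a node. Away from the nodes the map $\Om^1_U(\log)\to\Om^1_{U/\ov{S}}$ is the usual surjection $\Om^1_U\to\Om^1_{U/\ov{S}}$ (here $U\cap X_0=\emptyset$ if we also stay away from $X_0$, but in general $U$ still meets $X_0$), and since $\pi$ is smooth on $U$ the short exact sequence $0\to\pi^*\Om^1_{\ov{S}}(\log)\to\Om^1_U(\log)\to\Om^1_{U/\ov{S}}(\log)\to 0$ splits the problem; combined with the isomorphism $\Om^1_{U/\ov{S}}\simeq\Om^1_{U/\ov{S}}(\log)$ noted just above, surjectivity on $U$ is immediate. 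So the only issue is what happens after applying $j_*$, i.e., the behavior at a node $q$ of a fiber.

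Near such a node I would choose local coordinates adapted to the node. For an NS node of a supercurve the local model is $xy=t$ with $x,y$ even and some odd coordinates $\theta_i$; the relative logarithmic forms are generated by $dx/x$ (equivalently $-dy/y$) and the $d\theta_i$, while $\Om^1_{U/\ov{S}}$ is generated (over the smooth locus $U$) by the same expressions, which are honest regular forms away from $q$ but only meromorphic/logarithmic at $q$. The key point is that $j_*\Om^1_U(\log)$ contains enough sections to surject onto $j_*\Om^1_{U/\ov{S}}$: any relative form on $U$ near $q$ extends, after multiplication by nothing, to a section of $j_*\Om^1_{U/\ov{S}}$, and the preimage can be taken among logarithmic forms on $U$ because $dx/x$, $dy/y$ and $d\theta_i$ are all sections of $\Om^1_U(\log)$. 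Concretely I would invoke \cite[Lem.\ 8.1]{FKP-supercurves} once more, or the analogous local computation there, to identify $j_*\Om^1_{U/\ov{S}}$ with $\om_{\ov{X}/\ov{S}}\oplus(\text{something generated by }d\theta_i)$ and check that each generator lifts.

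The cleanest route, and the one I would ultimately write up, is to reduce to the bosonic case via Penkov-type splitting together with the split structure $\OO_{\ov{X}}=\OO_C\oplus L$ used in the proof of the Lemma above: on a split base the whole complex of sheaves of (relative) logarithmic forms decomposes, and surjectivity of $j_*\Om^1_U(\log)\to j_*\Om^1_{U/\ov{S}}$ follows from the corresponding classical statement for the nodal curve $C/\ov{S}$ (which is standard, e.g.\ from the local model $xy=t$), plus the trivial surjectivity in the odd directions. Since the statement is local and can be checked on a cover by split opens, this suffices.

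The main obstacle I anticipate is bookkeeping at the node: one must be careful that $j_*$ of the logarithmic de Rham sheaves behaves as expected, i.e.\ that pushing forward from $U$ does not lose the logarithmic generators $dx/x$, and that the identification of $j_*\Om^1_{U/\ov{S}}$ with a sheaf on $\ov{X}$ is compatible with the one implicit in \eqref{can-ext-main-eq}. Once the local model is pinned down this is routine, so I expect the proof to be short, essentially a two-line reduction to \cite[Lem.\ 8.1]{FKP-supercurves} and the classical case.
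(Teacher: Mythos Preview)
Your reduction to the classical case has a genuine gap: the local model of an NS node is not ``$xy=t$ with some free odd coordinates $\theta_i$''. In the standard presentation (see \cite[Sec.\ 6.2]{FKP-supercurves}) the coordinates $z_1,z_2,\th_1,\th_2$ satisfy relations $z_1z_2=-t^2$, $z_1\th_2=t\th_1$, $z_2\th_1=-t\th_2$, $\th_1\th_2=0$, so the odd directions are entangled with the even ones and with the base parameter. As a consequence your list of generators of $j_*\Om^1_{U/\ov{S}}$ is incomplete: by \cite[Lem.\ 8.2]{FKP-supercurves} the sheaf is generated over $\OO_{\ov{X}}$ by $d\th_1$, $d\th_2$, $e=dz_1/z_1=-dz_2/z_2$, \emph{and} the extra element
\[
f=\frac{\th_1\,d\th_1}{z_1}=-\frac{\th_2\,d\th_2}{z_2}.
\]
This $f$ is the whole point of the lemma; it mixes even and odd directions and does not arise from the bosonic picture, so it is not covered by ``the classical statement plus trivial surjectivity in odd directions''. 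A Penkov-type splitting of the base does not trivialize it either, because the relations above persist over any base.

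What the paper actually does is exactly the computation you hoped to avoid: one checks that $d\th_i$ and $e$ lift to $j_*\Om^1_U(\log)$ essentially for free (for $e$ via $dz_1/z_1=2\,dt/t-dz_2/z_2$), and then verifies by direct calculation in $\Om^1_X[z_1^{-1}z_2^{-1}]$ that the identity $\th_1 d\th_1/z_1=-\th_2 d\th_2/z_2$ already holds for \emph{absolute} differentials, so $f$ lifts as well. The Ramond case is similar but easier, using \cite[Lem.\ 8.3]{FKP-supercurves}. Once you add this computation for $f$, your write-up becomes correct; without it the argument is incomplete.
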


\begin{proof} Let us consider separately the situation near the nodes of two types.

\noindent
{\bf Case of an NS node}. By Lemma 8.2 of \cite{FKP-supercurves}, in the formal neighborhood of an NS node
the sheaf $j_*\Om^1_{U/\ov{S}}$ is generated over $\OO_X$ by sections $d\th_1$, $d\th_2$,
$e=dz_1/z_1=-dz_2/z_2$ and
$$f=\frac{\th_1d\th_1}{z_1}=-\frac{\th_2d\th_2}{z_2}$$
(where $(z_1,z_2,\th_1,\th_2)$ are standard generators of $\OO_X$ in the formal neighborhood of an NS node).
It is clear that the first two generators extend to sections of $j_*\Om^1_U(\log)$.
For $e$ this follows from the identity
$$\frac{dz_1}{z_1}=2\frac{dt}{t}-\frac{dz_2}{z_2}$$
in the module of absolute differentials $\Om^1_X[z_1^{-1}z_2^{-1}]$.
It remains to check the same for the generator $f$. We claim that in fact, the identity
$$\frac{\th_1d\th_1}{z_1}=-\frac{\th_2d\th_2}{z_2}$$
still holds in $\Om^1_X[z_1^{-1}z_2^{-1}]$.
Indeed, let us express $\frac{\th_2d\th_2}{z_2}$ in terms of $z_1,\th_1$ (assuming that both $z_1$ and $z_2$ are invertible).
We have
$$\th_2=\frac{t\th_1}{z_1}, \ \ z_2=-\frac{t^2}{z_1}, \ \ d\th_2=t[\frac{d\th_1}{z_1}-\frac{\th_1 dt}{z_1t}-\frac{\th_1 dz_1}{z_1^2}],$$
$$\frac{\th_2d\th_2}{z_2}=-\th_1[\frac{d\th_1}{z_1}-\frac{\th_1 dt}{z_1t}-\frac{\th_1 dz_1}{z_1^2}]=-\frac{\th_1d\th_1}{z_1}.$$ 

\noindent
{\bf Case of a Ramond node}. In this case the assertion follows similarly from Lemma 8.3 of \cite{FKP-supercurves}: $j_*\Om^1_{U/\ov{S}}$ is generated
by sections $d\th$ and $e=dz_1/z_1=-dz_2/z_2$, both of which extend to sections of $j_*\Om^1_U(\log)$.
\end{proof}

%\begin{lemma}
%The cohomology sheaves $\und{H}^\bullet(j_*\Om^\bullet_{U/\ov{S}}|_D)$ are given by 
%\end{lemma}

%\begin{proof}
%{\bf Case of an NS node}.

%\noindent
%{\bf Case of a Ramond node}. 
%\end{proof}

\begin{proof}[Proof of Theorem \ref{super-GM-thm}]
%It is enough to consider the case when $\De$ is smooth.
%Let $j:U\to \ov{X}$ be the complement to the nodes.
We have an exact sequence
$$0\to \pi^*\Om^1_{\ov{S}}(\log)|_U\ot \Om^{\bullet}_{U/\ov{S}}[-1]\to \Om^{\bullet}_U(\log)/(\pi^*\Om^2_{\ov{S}}(\log)|_U\we \Om^{\bullet-2}_U(\log))\to \Om^\bullet_{U/\ov{S}}\to 0.$$
%Furthermore, local calculations show that the map
%\begin{equation}\label{log-Om-map}
%j_*\Om^1_U(\log D)\to j_*\Om^1_{U/\ov{S}}(\log D)
%\end{equation}
%is surjective (here we use that we have a Ramond node). It follows that the maps
%$$j_*\Om^i_U(\log D)\to j_*\Om^i_{U/\ov{S}}(\log D)$$
%are surjective for all $i$.
Thus, applying (the underived) $j_*$ we obtain the left exact sequence of complexes
$$0\to \pi^*\Om^1_{\ov{S}}(\log)\ot j_*\Om^{\bullet}_{U/\ov{S}}[-1]\to j_*(\Om^{\bullet}_U(\log)/\pi^*\Om^2_{\ov{S}}(\log)|_U\we \Om^{\bullet-2}_U(\log))\to j_*\Om^\bullet_{U/\ov{S}}$$
Let us denote by $C^\bullet$ the image of the last arrow, so that we have an exact sequence
\begin{equation}\label{GM-main-ex-seq}
0\to \pi^*\Om^1_{\ov{S}}(\log)\ot j_*\Om^{\bullet}_{U/\ov{S}}[-1]\to j_*(\Om^{\bullet}_U(\log)/\pi^*\Om^2_{\ov{S}}(\log)|_U\we \Om^{\bullet-2}_U(\log))\to C^\bullet\to 0.
\end{equation}
Then the connecting homomorphism gives a morphism
$$R^1\pi_*C^\bullet\to \pi^*\Om^1_{\ov{S}}(\log)\ot R^1\pi_*(j_*\Om^\bullet_{U/\ov{S}}).$$
On the other hand, we have an exact sequence of complexes
$$0\to C^\bullet\to  j_*\Om^\bullet_{U/\ov{S}}\to Q^\bullet\to 0.$$
Clearly $Q^0=0$ and by Lemma \ref{logarithmic-image-lem}, we have $Q^1=0$. 
It follows that $R^{\le 1}\pi_*Q^\bullet=0$.
Hence, the natural map
$$R^1\pi_*C^\bullet\to R^1\pi_*( j_*\Om^\bullet_{U/\ov{S}})$$
is an isomorphism, and we obtain a connection with first order poles
$$R^1\pi_*(j_*\Om^\bullet_{U/\ov{S}})\to \pi^*\Om^1_{\ov{S}}(\log)\ot R^1\pi_*(j_*\Om^\bullet_{U/\ov{S}}).$$

Next, by Lemma \ref{exact-complex-lem}(ii), the natural projection 
$$j_*\Om^\bullet_{U/\ov{S}}\to [\OO_{\ov{X}}\to \om_{\ov{X}/\ov{S}}]$$ 
induces an isomorphism on $R^i\pi_*$, so we get the identification of $\VV$ with $R^1\pi_*[\OO_{\ov{X}}\to \om_{\ov{X}/\ov{S}}]$.

It remains to check that $\Res(\nabla)$ is nilpotent. 
Our proof of the fact is similar to the proof of \cite[Prop.\ (2.20)]{Steenbrink}.
The idea is to use local to global spectral sequences computing $R^n\pi_*(j_*\Om^\bullet_{U/\ov{S}}|_{X_0})$
and $R^n\pi_*(C^\bullet|_{X_0})$ starting from $R^p(\und{H}^q j_*\Om^\bullet_{U/\ov{S}}|_{X_0})$ (resp., 
$R^p(\und{H}^q C^\bullet|_{X_0})$).
Note that since $Q^{\le 1}=0$, the embedding $C^{\bullet}|_{X_0}\to j_*\Om^\bullet_{U/\ov{S}}|_{X_0}$ induces
an isomorphism on $\und{H}^{\le 1}$.
Using these spectral sequences we see that it suffices to show that for $i=0,1$,
the vanishing of the connecting homomorphisms for the short exact sequence \eqref{GM-main-ex-seq}
restricted to $X_0$,
$$\und{H}^iC^\bullet|_{X_0}\to \pi^*\Om^1_{\ov{S}}(\log)\ot \und{H}^i j_*\Om^\bullet_{U/\ov{S}}|_{X_0}.$$

By Lemma, \ref{exact-complex-lem}(ii), we have a quasi-isomorphism 
$j_*\Om^\bullet_{U/\ov{S}}\to [\OO_{\ov{X}}\to \om_{\ov{X}/\ov{S}}]$, which induces isomorphisms
$$\und{H}^q(j_*\Om^\bullet_{U/\ov{S}}|_{X_0})\rTo{\sim} \und{H}^q[\OO_{X_0}\to \om_{X_0/S_0}].$$
The latter cohomology sheaves are supported at the nodes, so it suffices to compute the connecting homomorphisms
at the formal neighborhood of each node. Furthermore, it is enough to make the latter computation for the standard deformation of the nodes.
In particular, we can assume that $S$ is a formal disk, and $S_0=p$ is its origin.

\noindent
{\bf Case of an NS node}. 
We use a local description of
$\om_{X_0/p}$ near the NS node by generating sections $s_1,s_2,s_0$, where $\de(\th_1)=s_1$, $\de_2(\th_2)=s_2$,
subject to the relations described in \cite[Lem.\ 6.4]{FKP-supercurves}. This easily gives
$$\und{H}^0[\OO_{X_0}\to \om_{X_0/p}]\simeq \OO_{p}, \ \ \und{H}^1[\OO_{X_0}\to \om_{X_0/p}]\simeq \OO_q\cdot s_0,$$
where $p\simeq q\sub X_0$ is the node.

The generator $1\in \und{H}^1[\OO_{X_0}\to \om_{X_0/p}]$ lifts to $1\in j_*\OO_U$ which is a cocycle in $j_*\Om^\bullet_U(\log)$, so it maps
to zero under the connecting map.
The generator $s_0$ of $\und{H}^1[\OO_{X_0}\to \om_{X_0/p}]$ lifts to a section $\frac{dz_1}{z_1}$ of
$j_*\Om^1_U(\log)$, which is still a cocycle in $j_*\Om^\bullet_U(\log)$, so the image under the connecting map is also zero.

\noindent
{\bf Case of a Ramond node}.
In this case we have coordinates $(z_1,z_2,\th)$, and a generator $e$ of $\om_{X_0/p}$, so that
$$\und{H}^0[\OO_{X_0}\to \om_{X_0/p}]\simeq \OO_{p}, \ \ \und{H}^1[\OO_{X_0}\to \om_{X_0/p}]\simeq \OO_q\cdot \th\cdot e.$$
As before the generator of $\und{H}^0$ lifts to $1\in j_*\OO_U$, which is a cocycle, while
the generator $\th\cdot e$ of $\und{H}^1$ lifts to a section $\frac{dz_1}{z_1}$ of $j_*\Om^1_U(\log)$, which is still a cocycle
in $j_*\Om^\bullet_U(\log)$.
\end{proof}

\subsection{Superperiods of supercurves}\label{superperiod-deg-sec}

Let $\pi:\ov{X}\to \ov{S}$ be a family of stable supercurves such that corresponding spin-structures have no global sections.
%and let $C\to S_\bos$ be the corresponding family of usual stable curves. 
Assume that for a dense open $S\sub \ov{S}$, the induced family $\pi:X\to S$ is smooth, and let $C\to S_{\bos}$ be the corresponding family of usual smooth curves.
We are interested in the behavior at infinity (i.e. near points of $\ov{S}\setminus S$) of the periods associated with the subbundle
$$\pi_*\om_{X/S}\hra \VV:=R^1\pi_*(\C_{X/S})=\OO_S\ot_{\C} R^1\pi_*(\C_{C/S_{\bos}}).$$

We further assume
%consider an \'etale local situation on
that $\ov{S}\to \ov{\SS}_g$ is an \'etale neighborhood of a stable supercurve with $k\le g$ nonseparating nodes (and some separating nodes), so we have a normal crossing
divisor $D\sub \ov{S}$, such that $S=\ov{S}\setminus D$. Furthermore, $D$ has $k$ local branches
$D^{ns}_1,\ldots,D^{ns}_k$ corresponding to nonseparating nodes and some branches $(D^s_j)$ corresponding to the separating nodes. 

Let $\ov{\MM}_g$ be the moduli space of stable curves,
$\De\sub \ov{\MM}_g$ the boundary divisor, and let $(\De^{ns}_i)$ be the local branches of $\De$ corresponding to $D^{ns}_i$.

%Now we are ready to study the behavior of periods.
We assume that a basis of multivalued horizontal sections $\a_1,\ldots,\a_g,\b_1,\ldots,\b_g$ of $\VV^{\vee}$ is given
that corresponds to a symplectic basis in homology of the fibers of $X/S$ with the following {\it standard monodromy} near $\De$:
\begin{enumerate}
\item $\a_1,\ldots,\a_g$ are univalued; 
\item the monodromy around $\De^s_j$ preserves all $\b_i$;  and 
\item for $i=1,\ldots,k$, the monodromy around $\De^{ns}_j$
transforms $\b_j$ to $\b_j+\a_j$ and preserves $\b_{i}$ for $i\neq j$.
\end{enumerate} 

Let $a_i$ denote the degree of ramification of the projection $\ov{S}\to \ov{\MM}_g$ at $D^{ns}_i$, so $a_i=1$ for the Ramond node and $a_i=2$ for the NS node
(see \cite[Sec.\ 1]{Farkas}). Then the monodromy around $D^{ns}_j$ maps $\b_j$ to $\b_j+a_j\a_j$.

As usual, we define the global sections $(\om_i)$ of $\pi_*\om_{X/S}$ such that $\lan \om_i, \a_j\ran=\de_{ij}$ 
(we can do this due to the assumption that the corresponding spin structures have no global sections).
We would like to know the growth of the periods  
$$\Om_{ij}:=\lan \om_i,\b_j\ran.$$ 
Note that it is well known that $\Om_{ij}=\Om_{ji}$. By Lemma \ref{log-growth-lem}, if we prove that $\om_i$ extend to regular sections
of the canonical extension $\ov{\VV}$ then it would follow that $\lan \om_i,\b_j\ran$ have at most logarithmic growth. In fact, we will
prove an even more precise statement in Theorem \ref{regular-norm-diff-thm} below.

Note that via the embedding $\pi_*\om_{X/S}\to R^1\pi_*(\C_{X/S})\simeq \VV$ we can view $(\om_i)$ as sections of $\VV$.
%extends to a regular morphism
%$$\pi_*\om_{\ov{X}/\ov{S}}\to \ov{\VV}=R^1\pi_*[\OO_{\ov{X}}\rTo{\de} \om_{\ov{X}/\ov{S}}],$$
%induced by 
%Assuming that we have a non-separated node and the situation is generic (i.e., the limiting generalized spin structure does not have %nonzero global sections),
%this is still an embedding of a subbundle.

\begin{lemma}\label{fun-reg-lem}
Let $\ov{S}\sub \C\times\C^{m|n}$ be a neighborhood of the origin, and let $D\sub \ov{S}$ be the divisor $t=0$ (where $t$ is the coordinate on $\C$), $f$ a holomorphic function
on $\ov{S}\setminus D$. Let $(z_\bullet,\th_\bullet)$ denote coordinates on $\C^{m|n}$. 
For every $a\in \C^m$, let $\ov{S}_a=\ov{S}\cap \{z=a\}$ denote the corresponding slice (of dimension $1|n$) transversal to $D$. Assume that for every $a$,
$f|_{\ov{S}_a\setminus D}$ extends regularly to $\ov{S}_a$. Then $f$ extends regularly to $\ov{S}$.
\end{lemma}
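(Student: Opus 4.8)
The plan is to reduce the statement to a classical fact about holomorphic functions of several complex variables, namely that a function holomorphic off a coordinate hyperplane which restricts regularly to every transversal slice is in fact regular. First I would work formally at the origin: write $f$ as a Laurent series in $t$ with coefficients in $\OO_{\C^{m|n},0}$, say $f=\sum_{j\ge -N} t^j g_j(z_\bullet,\th_\bullet)$ for some $N\ge 0$, which is legitimate since $f$ is holomorphic on $\ov S\setminus D$ and $D=\{t=0\}$. Expanding each $g_j$ in the odd variables, $g_j=\sum_I g_{j,I}(z_\bullet)\,\th^I$ over multi-indices $I$, the hypothesis that $f|_{\ov S_a\setminus D}$ extends regularly to $\ov S_a$ for every $a\in\C^m$ says precisely that for every fixed value $z=a$ the negative-$t$-degree part vanishes, i.e. $g_{j,I}(a)=0$ for all $j<0$ and all $I$. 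Since this holds for every $a$ in an open subset of $\C^m$, each even holomorphic function $g_{j,I}$ with $j<0$ vanishes identically, so $f=\sum_{j\ge 0} t^j g_j$ has no pole along $D$, i.e. $f\in\OO_{\ov S,0}$.

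The one point that needs a little care is the \emph{a priori} Laurent expansion: one must know that a function holomorphic on a punctured-in-$t$ neighborhood of the origin in $\C\times\C^{m|n}$ is meromorphic along $D$ with a pole of bounded order, rather than having an essential singularity. In the super setting this is immediate: $f$ is, by definition of a function on a superscheme, a polynomial in the $\th_\bullet$ with coefficients holomorphic functions of $(t,z_\bullet)$ on the classical punctured neighborhood $|\ov S|\setminus|D|$; and for those classical coefficients the slicing hypothesis applied at a single generic $a$ already forces each coefficient to extend holomorphically across $\{t=0\}$ on that slice, hence (again by varying $a$, or by Riemann's removable singularity theorem in the $t$-variable fiberwise) to be genuinely holomorphic in $(t,z_\bullet)$ near the origin. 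So in fact no separate boundedness argument is needed; the reduction to the classical several-variables statement is direct.

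Concretely, I would organize the write-up as follows. Step 1: reduce to the even case by expanding $f=\sum_I f_I(t,z_\bullet)\,\th^I$ and observing that $f$ extends regularly to $\ov S$ iff each $f_I$ does, and that the slicing hypothesis for $f$ on $\ov S_a$ is equivalent to the slicing hypothesis for each $f_I$ on $\ov S_a\cap\{\th=0\}$. Step 2: for a single even holomorphic function $f_I$ on $(|\ov S|\setminus|D|)$, fix a generic $a$; by hypothesis $f_I(t,a)$ is holomorphic at $t=0$, in particular bounded near $t=0$; but then for all $z$ near $a$ the function $f_I(t,z)$ is holomorphic in the punctured disk in $t$ and, being a holomorphic function of all variables jointly on the punctured neighborhood, is bounded on a slightly smaller punctured polydisk — here one invokes that the coefficients of the Laurent expansion in $t$ are holomorphic in $z$ and vanish at $z=a$, hence identically near $a$. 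Step 3: conclude by Riemann's removable singularity theorem that $f_I$ extends holomorphically across $|D|$, and reassemble to get that $f$ extends regularly to $\ov S$. The main obstacle, such as it is, is purely bookkeeping: making sure the "for every $a$" hypothesis is used to kill the Laurent coefficients as holomorphic functions of $z$ (via the identity theorem), rather than merely pointwise.
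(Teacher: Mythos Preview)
Your proposal is correct and follows essentially the same approach as the paper: expand $f$ in the odd variables to reduce to the purely even case, then use a Cauchy-type argument in the $t$-variable to conclude. The paper's phrasing of the even case is slightly terser---it invokes the Cauchy integral formula to show $f$ is bounded near compact pieces of $D$ (hence extends by Riemann's removable singularity theorem)---whereas you extract the Laurent coefficients in $t$ as holomorphic functions of $z$ and kill the negative ones via the identity theorem; these are two sides of the same coin.
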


\begin{proof} We can write $f$ in the form $f=\sum_{I=\{i_1<\ldots<i_k\}} f_I(t,z)\th_{i_1}\ldots \th_{i_k}$.
Then $f$ is regular if and only if each $f_I(t,z)$ is regular. Thus, we reduce to the purely even case $n=0$. In this case we can use Cauchy's integral formula to see that $f$
is bounded near every compact piece of $\ov{S}\cap \De$, and hence, $f$ is regular on $\ov{S}$.
\end{proof}

\begin{theorem}\label{regular-norm-diff-thm} Assume the symplectic basis $(\a_\bullet,\b_\bullet)$ has a standard monodromy near $\De$.

\noindent
(i) The sections $(\om_1,\ldots,\om_g)$ extend to global sections of $\ov{\VV}$.
%Near the smooth locus of the boundary $\De\sub \ov{S}$ they generate a subbundle in $\ov{\VV}$.

\noindent
(ii) Suppose $k=1$ (i.e., we are near a supercurve with $1$ nonseparating node). Then 
all the entries $\Om_{ij}=\lan \om_i,\b_j\ran$ of the superperiod matrix, except for $\Om_{11}$ are regular,
while $\Om_{11}-\frac{\log(t^{a_1})}{2\pi i}$ is regular, where $t=0$ is a local equation of $\De^{ns}_1$. Furthermore, the matrix $(\ov{\Om}_{ij}-\Om_{ij})_{2\le i,j\le g}$ is invertible.

\noindent
(iii) For the general $k\ge 1$, all the entries except for $\Om_{11},\ldots,\Om_{kk}$ are regular,
while $\Om_{ii}-\frac{\log(t_i^{a_i})}{2\pi i}$ is regular, where $t_i=0$ is a local equation of $\De^{ns}_i$.
Furthermore, the matrix $(\ov{\Om}_{ij}-\Om_{ij})_{k+1\le i,j\le g}$ is invertible.  
\end{theorem}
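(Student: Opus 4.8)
Part (i) carries the content: once the normalized sections $\om_i$ are known to extend to regular sections of the canonical extension $\ov{\VV}$, parts (ii) and (iii) follow formally from the canonical‑extension formalism already set up in Lemma~\ref{log-growth-lem} and Theorem~\ref{super-GM-thm}. So the plan is: (a) prove that the $\om_i$ extend; (b) read off the exact logarithmic behaviour of the entries $\Om_{ij}$ by replacing $\b_j$ with the corresponding member of Deligne's canonical flat frame of $\ov{\VV}^\vee$; (c) prove invertibility of the $(g-k)\times(g-k)$ block by identifying its value at a boundary point with the period matrix of the normalization of the limiting nodal curve.

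\textbf{(a) Extension of the $\om_i$.} The map of complexes $\om_{\ov{X}/\ov{S}}[-1]\to[\OO_{\ov{X}}\rTo{\de}\om_{\ov{X}/\ov{S}}]$ gives, upon $R\pi_*$, an inclusion of $\pi_*\om_{\ov{X}/\ov{S}}$ in $\ov{\VV}$ as a saturated subbundle extending the Hodge subbundle $\pi_*\om_{X/S}\hra\VV$ over $S$; here $\pi_*\om_{\ov{X}/\ov{S}}$ is locally free of rank $g|0$ by the lemma preceding Theorem~\ref{super-GM-thm}. Being univalued and flat, $\a_1,\dots,\a_g$ are monodromy invariant, hence killed by the residues of the dual connection, hence extend to flat sections of $\ov{\VV}^\vee$; thus the $\a$‑period map $P=(\lan-,\a_1\ran,\dots,\lan-,\a_g\ran)\colon\pi_*\om_{\ov{X}/\ov{S}}\to\OO_{\ov{S}}^{\,g}$ is a morphism of rank $g|0$ bundles, and it is an isomorphism over $S$ since $\lan\om_i,\a_j\ran=\de_{ij}$. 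It remains to check $P$ is an isomorphism at each $p\in D$. There $\pi_*\om_{\ov{X}/\ov{S}}|_p=H^0(X_p,\om_{X_p})=H^0(C_p,\om_{C_p})$ (the spin structures having no sections), a $g$‑dimensional space spanned by the holomorphic differentials on the normalization $\wt{C}_p$ (of arithmetic genus $g-k$) together with one differential of the third kind for each pinched node; for $l\le k$ the specialization $\a_l|_p$ detects the residue at the $l$‑th node, while by the standard‑monodromy hypothesis $\a_{k+1},\dots,\a_g,\b_{k+1},\dots,\b_g$ restrict to a symplectic basis of $H_1(\wt{C}_p)$. Using this together with the classical Riemann bilinear relations on $\wt{C}_p$, one sees that $P|_p$ is block lower triangular with invertible diagonal blocks, hence invertible. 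Therefore $P$ is an isomorphism and $\om_i:=P^{-1}(e_i)$ is a global section of $\pi_*\om_{\ov{X}/\ov{S}}\subset\ov{\VV}$ restricting to the given $\om_i$ over $S$. (One may instead reduce the extension to the classical case over $\ov{\MM}_g$ by restricting to slices transversal to $D$ via Lemma~\ref{fun-reg-lem} and using Penkov's theorem together with part (ii) of the lemma preceding Theorem~\ref{super-GM-thm}.)

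\textbf{(b) Logarithmic behaviour of the periods.} Let $t_j=0$ be a local equation of $D^{ns}_j$, and for $1\le j\le k$ set $\wt{\b}_j:=\b_j-\frac{\log(t_j^{a_j})}{2\pi i}\a_j$, for $j>k$ set $\wt{\b}_j:=\b_j$. Since the monodromy of $\b_j$ around $D^{ns}_j$ adds $a_j\a_j$ and is trivial around every other branch of $D$, and $\a_j$ is univalued, each $\wt{\b}_j$ is univalued on $S$; writing the monodromy $T_l$ around $D^{ns}_l$ and using $(T_l-\id)^2=0$, one checks that $\wt{\b}_j=\exp\!\big(-\sum_l(\log t_l)\tfrac{T_l-\id}{2\pi i}\big)\b_j$, which is precisely a member of Deligne's canonical flat frame of $\ov{\VV}^\vee$; hence $\wt{\b}_j$ is a regular section of $\ov{\VV}^\vee$. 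By (a), $\lan\om_i,\wt{\b}_j\ran$ is then a regular function on $\ov{S}$; and since $\lan\om_i,\a_j\ran=\de_{ij}$ now holds on all of $\ov{S}$, we get $\lan\om_i,\wt{\b}_j\ran=\Om_{ij}-\de_{ij}\frac{\log(t_j^{a_j})}{2\pi i}$ for $j\le k$ and $=\Om_{ij}$ for $j>k$. This shows at once that every $\Om_{ij}$ with $(i,j)\ne(l,l)$ $(l\le k)$ is regular and that $\Om_{ll}-\frac{\log(t_l^{a_l})}{2\pi i}$ is regular for $l\le k$.

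\textbf{(c) Invertibility of the block, and the main obstacle.} By (b) the entries $\Om_{ij}$, $k+1\le i,j\le g$, are regular, so it suffices to see $(\ov{\Om}_{ij}-\Om_{ij})_{k+1\le i,j\le g}$ is invertible at a point over the boundary. At $p\in D$, for $i>k$ and $l\le k$ we have $\lan\om_i|_p,\a_l|_p\ran=\de_{il}=0$, and since $\a_l|_p$ detects the residue at the $l$‑th node, $\om_i|_p$ has no pole there, i.e.\ $\om_i|_p$ is an honest holomorphic differential on $\wt{C}_p$; together with $\lan\om_i|_p,\a_l|_p\ran=\de_{il}$ for $l>k$ and the standard‑monodromy hypothesis this identifies $(\Om_{ij}(p))_{k+1\le i,j\le g}$ with the period matrix $\tau$ of $\wt{C}_p$, so $\operatorname{Im}\tau$ is positive definite. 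Hence at that point the block equals $\ov\tau-\tau=-2i\operatorname{Im}\tau$, which is invertible; invertibility on a neighbourhood of the boundary follows, and on $S$ it follows from positive‑definiteness of $\operatorname{Im}\Om$ (cf.\ \cite{FKP-per}), a principal submatrix of a positive‑definite matrix being positive definite. The one genuinely delicate step is (a) — the non‑degeneracy of $P$ along $D$ — which rests on the local structure near NS and Ramond nodes from \cite{FKP-supercurves} and, ultimately, on the classical Riemann bilinear relations for $\wt{C}_p$; steps (b) and (c) are then essentially bookkeeping with the canonical extension and with positivity of period matrices.
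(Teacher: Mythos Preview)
Your proof is correct and follows the same overall architecture as the paper's. For part~(i) both arguments show that the $\a$-period map $\pi_*\om_{\ov{X}/\ov{S}}\to\ov{\La}^\vee$ is an isomorphism of bundles by checking fibers over boundary points; the paper reduces to a one-dimensional base via Lemma~\ref{fun-reg-lem} and then invokes the limiting mixed Hodge structure of Schmid--Steenbrink to deduce $F^1\cap\La_{\bos}=0$, whereas you argue directly from the decomposition of $H^0(C_p,\om_{C_p})$ into holomorphic differentials on the normalization plus differentials of the third kind, together with the fact that the vanishing-cycle class $\a_l|_p$ computes the residue at the $l$-th node. That last fact is exactly what the limiting MHS machinery encodes, so the two justifications rest on the same classical input; yours is more hands-on but correspondingly lighter on explicit citations.

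For parts~(ii) and~(iii) your route is genuinely more direct than the paper's. By recognizing $\wt{\b}_j=\b_j-\frac{\log(t_j^{a_j})}{2\pi i}\a_j$ as a member of Deligne's canonical frame of $\ov{\VV}^\vee$, you get regularity of every $\Om_{ij}-\de_{ij}\frac{\log(t_j^{a_j})}{2\pi i}$ in one stroke, for arbitrary $k$. The paper instead extracts only logarithmic growth from Lemma~\ref{log-growth-lem}(ii), then argues that the univalued entries ($j>1$, in the case $k=1$) are regular, invokes the symmetry $\Om_{ij}=\Om_{ji}$ to handle $\Om_{i1}$ for $i>1$, and finally passes from $k=1$ to general $k$ by Hartogs. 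Your approach avoids both the symmetry appeal and the Hartogs step; the paper's has the small virtue of needing only the coarse growth bound rather than the full canonical-frame description. Part~(c) is the same in both.
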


\begin{proof} 
(i) Since regularity of a section can be checked in codimension $1$, we can assume that $D$ (resp., $\De$) is smooth. Furthermore, by Lemma \ref{fun-reg-lem}, we can assume that 
$\ov{S}_{\bos}$ is $1$-dimensional, and $D_{\bos}=\{s_0\}$ is a point. Thus, by Theorem \ref{super-GM-thm}, we can use the identification of the canonical extension of $\VV$ with $\ov{\VV}$ given by \eqref{can-ext-main-eq}.
%Next, we want to show that the normalized differentials extend to regular sections of $\pi_*\om_{\ov{X}/\ov{S}}$.

Let $\La_{\bos}\sub \VV^\vee_{\bos}$ be the trivial subbundle with connection, with basis $\a_1,\ldots,\a_g$ (it corresponds to
the trivial subrepresentation of the fundamental group). By functoriality of the canonical extension, we get an embedding
$$\ov{\La}_{\bos}\sub \ov{\VV}^\vee_{\bos}$$
of the trivial subbundle, where $\a_1,\ldots,\a_g$ extend to a basis of $\ov{\La}_{\bos}$.
Hence, we get an embedding of the trivial subbundle $\ov{\La}$ with a basis $(\a_i\ot 1)$ in $\ov{\VV}^\vee$.

Note that the morphism in derived category $\om_{\ov{X}/\ov{S}}[-1]\to [\OO_{\ov{X}}\to \om_{\ov{X}/\ov{S}}]$ induces a morphism
$\pi_*\om_{\ov{X}/\ov{S}}\to \ov{\VV}$.
We claim that the composed map of vector bundles
$$\pi_*\om_{\ov{X}/\ov{S}}\to \ov{\VV}\to \ov{\La}^\vee$$
is an isomorphism. It is enough to check the same for the restriction of this map to the reduced space,
$$\pi_*\om_{\ov{C}/\ov{S}_{\bos}}\simeq \pi_*\om_{\ov{X}/\ov{S}}|_{\ov{S}_{\bos}}\to \ov{\La}_{\bos}^\vee,$$
where $\ov{C}\to \ov{S}_{\bos}$ is the underlying family of stable curves.
%It is enough to check this for the special fiber
%Equivalent to transversality of $H^0(\om_{\ov{X}_p})$ and  in 
In fact, we only need to check this for the fibers over the point $p=\De\in\ov{S}_{\bos}$.

Since $\pi_*\om_{\ov{C}/\ov{S}_{\bos}}$ is compatible with base changes, we can replace $\ov{C}/\ov{S}_{\bos}$ by the corresponding miniversal family,
in particular, we can assume that the total space $C$ is smooth and $D=\pi^{-1}(p)$ is a normal crossing divisor.
Note that in this case there is a natural identification of $\OO\to \om_{\ov{C}/\ov{S}_{\bos}}$ with the relative logarithmic de Rham complex used in \cite{Steenbrink} to
define the limiting mixed Hodge structure.
 
In the case of a separating node the statement is clear (as the monodromy is trivial). In the case of a nonseparating node we can assume 
that the logarithm of the monodromy $N$ sends $\b_1$ to $\a_1$ and all other basis vectors to $0$.
Now we can use the well known identification of the fiber of $\ov{\VV}_{\bos}$ at $s_0$ with the limiting mixed Hodge structure $\lim H^1(C_s,\C)$,
so that $H^0(\om_{\ov{C}_{s_0}})$ gets identified with the Hodge subspace $F^1$ (see \cite{Schmid}, \cite{Steenbrink}).
Identifying $\lim H^1(C_s,\C)$ with $H_1(C_s,\C)$, we can think of $\La_{\bos}$ as a subspace of $H^1(C_s,\C)$, so that we have an exact sequence
$$0\to \La_{\bos}\to H^1(C_s,\C)\to \La_{\bos}^\vee\to 0.$$ 
Thus, we need to check that $F^1\cap \La_{\bos}=0$.

Since $N^2=0$, the weight filtration is given by
$$W_0=\im(N)=\lan \a_1\ran, \ \ W_1=\ker(N)=\lan \a_1,\ldots,\a_g,\b_2,\ldots,\b_g\ran=W_0^\perp.$$
Then we have $F^1\cap W_0=0$ and $F^1\cap W_1$ is the Hodge subspace of the pure Hodge structure on $W_1/W_0$ which is identified with $H^1(\wt{C}_0,\C)$,
where $\wt{C}_0$ is the normalization of the special fiber. 
Since $\La_{\bos}$ is contained in $W_1$ and $\La_{\bos}/W_0$ is the subspace generated by the Lagrangian subspace in $H^1(\wt{C}_0,\R)$,
we get
$$F^1\cap \La_{\bos}=(F^1\cap W_1)\cap \La_{\bos}=0,$$
as required.

\noindent
(ii) By part (i) and Lemma \ref{log-growth-lem}(ii), all the entries $\Om_{ij}$ have at most logarithmic growth.
Since the monodromy fixes
% the sections $(\a_i)$ and 
$(\b_i)_{i\ge 2}$, all $\Om_{ij}=\lan \om_i,\b_j\ran$ with $j>1$ are univalued. 
Since they have at most logarithmic growth, they must be regular. By the symmetry $\Om_{ij}=\Om_{ji}$, we get regularity of all the entries
except for $\Om_{11}$, 

Next, since the monodromy around $D^{ns}_i$ 
changes $\Om_{11}$ to $\Om_{11}+a_1$, the function $\Om_{11}-\frac{\log(t^{a_1})}{2\pi i}$ is univalued and has at most logarithmic growth,
hence it is regular.

Finally, it is well known that modulo the nilpotents, the limit of the submatrix $(\Om_{ij})_{2\le i,j\le g}$ as the curve tends to the stable curve $C_0$ can be identified with 
the period matrix of the normalization of $C_0$. This implies the last assertion.

\noindent
(iii) Using Hartogs theorem and part (ii), we immediately see that all entries except for $\Om_{11},\ldots,\Om_{kk}$, as well as $\Om_{ii}-\frac{\log(t_i^{a_i})}{2\pi i}$, are regular.
The last statement is proved as in part (ii).
\end{proof}
 
\begin{cor}\label{log-growth-main-cor}
Near the point of the quasidiagonal corresponding to a curve with $k$ nonseparating nodes (and possibly some separating nodes), such that the symplectic basis $(\a_\bullet,\b_\bullet)$ has a standard monodromy
near this point (as described above),
there exist trivializations of $\Ber_1$ and $\wt{\Ber}_1$
such that
$$h^{-1}=g\cdot\log(t_1^{a_1}\wt{t}_1^{b_1})\ldots \log(t_k^{a_k}\wt{t}_k^{b_k})$$
%+\text{regular},$$
where $t_i=0$ and $\wt{t}_i=0$ are equations of the corresponding branches of the boundary divisor on $\ov{\SS}_g$ and $\ov{\SS}_g^c$, and $g$ is bounded and nonvanishing
near $t_1=\ldots=t_k=0$
(and $b_i$ have the same meaning on the antiholomorphic side as $a_i$ on the holomorphic side).
\end{cor}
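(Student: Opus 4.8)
The plan is to read off the singularity of $h$ directly from the local normal form of the superperiod matrix given by Theorem~\ref{regular-norm-diff-thm}(iii), applied at once on $\ov{\SS}_g$ and on its complex conjugate $\ov{\SS}_g^c$, and then run a short linear-algebra computation. First I would rewrite $h$ via \eqref{h-def-eq}: choosing the Lagrangian splitting $V=W\oplus W'$ adapted to the given symplectic basis, so that $(\a_i)$ is a basis of $W^\vee$ and $(\b_i)$ a basis of $W'^\vee$, one has $h=\det(p_1^*\Om-p_2^*\wt{\Om})^{-1}\cdot p_1^*s\ot p_2^*\wt{s}$, where $s\in\Det(W)\ot\Ber_1$ is the trivialization attached to the basis $(\om_i)$ of $\pi_*\om_{X/S}$ normalized by $\lan\om_i,\a_j\ran=\de_{ij}$, and $\Om_{ij}=\lan\om_i,\b_j\ran$. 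By Theorem~\ref{regular-norm-diff-thm}(i) the $\om_i$ extend to a basis of $\pi_*\om_{\ov{X}/\ov{S}}$, so $s$, and likewise $\wt{s}$, extends to a regular nowhere vanishing trivialization of $\Ber_1$, resp.\ $\wt{\Ber}_1$, near the boundary point; I would take these as the trivializations in the statement. The corollary then reduces to showing that the multivalued function $\det(p_1^*\Om-p_2^*\wt{\Om})$ equals $g\cdot\log(t_1^{a_1}\wt{t}_1^{b_1})\cdots\log(t_k^{a_k}\wt{t}_k^{b_k})$ for some bounded nonvanishing $g$.

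Next I would analyze the $g\times g$ matrix $M:=p_1^*\Om-p_2^*\wt{\Om}$ entry by entry. Theorem~\ref{regular-norm-diff-thm}(iii) says $\Om_{ij}$ is regular except for $\Om_{11},\dots,\Om_{kk}$, with $\Om_{ii}-\tfrac{1}{2\pi i}\log(t_i^{a_i})$ regular. Its complex-conjugate version on $\ov{\SS}_g^c$ (the standard-monodromy condition is stable under conjugation, and conjugation changes the sign of the residue of the Gauss--Manin connection) gives $\wt{\Om}_{ij}$ regular except for $\wt{\Om}_{11},\dots,\wt{\Om}_{kk}$, with $\wt{\Om}_{ii}+\tfrac{1}{2\pi i}\log(\wt{t}_i^{b_i})$ regular, $b_i$ being the ramification degree over $\ov{\MM}_g$ on the conjugate side. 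Hence every entry of $M$ is regular except the diagonal entries $M_{ii}$ for $i\le k$, and $M_{ii}-\tfrac{1}{2\pi i}\log(t_i^{a_i}\wt{t}_i^{b_i})$ is regular. I would also note that the lower-right $(g-k)\times(g-k)$ block $T:=(M_{ij})_{k+1\le i,j\le g}$ is invertible on $\WW$ after shrinking: on the quasidiagonal it equals $(\Om_{ij}-\ov{\Om}_{ij})_{k+1\le i,j\le g}$ (as $\wt{\Om}$ restricts to $\ov{\Om}$ there), which is invertible near the boundary point by Theorem~\ref{regular-norm-diff-thm}(iii), and invertibility is an open condition.

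Then comes the computation. Viewing $M$ as a $2\times2$ block matrix along the partition $\{1,\dots,k\}\sqcup\{k+1,\dots,g\}$, with diagonal blocks $\Lambda+P$ and $T$ and off-diagonal blocks $Q,R$, where $\Lambda=\operatorname{diag}\bigl(\tfrac{1}{2\pi i}\log(t_i^{a_i}\wt{t}_i^{b_i})\bigr)_{i\le k}$ and $P,Q,R,T$ are regular with $T$ invertible, the Schur complement gives $\det M=\det(T)\cdot\det(\Lambda+P')$ with $P':=P-QT^{-1}R$ regular of size $k$. Since each $\log(t_i^{a_i}\wt{t}_i^{b_i})$ diverges as $t_i,\wt{t}_i\to0$, $\Lambda$ is invertible near $\{t_1=\dots=t_k=0\}$ with $\Lambda^{-1}\to0$ there, so
$$\det(\Lambda+P')=\det(\Lambda)\cdot\det(I+\Lambda^{-1}P')=(2\pi i)^{-k}\Bigl(\,\prod_{i=1}^{k}\log(t_i^{a_i}\wt{t}_i^{b_i})\Bigr)\cdot u,$$
where $u:=\det(I+\Lambda^{-1}P')$ is bounded near $\{t_1=\dots=t_k=0\}$ and tends to $1$ there. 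Putting $g:=(2\pi i)^{-k}\det(T)\,u$, which is bounded and nonvanishing near $\{t_1=\dots=t_k=0\}$ since $\det(T)$ is regular and nonvanishing there and $u\to1$, gives $h^{-1}=\det M=g\cdot\log(t_1^{a_1}\wt{t}_1^{b_1})\cdots\log(t_k^{a_k}\wt{t}_k^{b_k})$, as required.

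The main obstacle I anticipate is entirely in the second step: correctly transporting Theorem~\ref{regular-norm-diff-thm}(iii) to $\ov{\SS}_g^c$ --- that the hypotheses (no global sections of the spin structures, standard monodromy) are preserved by conjugation, that the branch equations $\wt{t}_i$ on $\ov{\SS}_g^c$ are, up to regular units, the conjugates of the $t_i$ with the same ramification degrees $b_i=a_i$, and above all that the logarithmic term of $\wt{\Om}_{ii}$ carries the \emph{opposite} sign, so that the two logarithms \emph{add} rather than cancel. Once this is pinned down, the rest --- the rewriting via \eqref{h-def-eq} and the Schur-complement manipulation --- is routine.
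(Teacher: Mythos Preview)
Your argument is correct and follows the paper's own approach: invoke \eqref{h-def-eq}, apply Theorem~\ref{regular-norm-diff-thm}(iii) on both $\ov{\SS}_g$ and $\ov{\SS}_g^c$ (with the sign flip under complex conjugation making the diagonal logarithms add), and then extract the product of logarithms from $\det(p_1^*\Om-p_2^*\wt{\Om})$ via the Schur complement, which simply makes explicit what the paper leaves as a one-line sketch. One small caveat in your self-critique: on the quasidiagonal the two spin structures need not coincide, so in general $b_i\ne a_i$, but your proof already accommodates this.
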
 

\begin{proof}
This follows from formula \eqref{h-def-eq}, taking into account that 
$$\Om_{ii}\sim \frac{\log(t_i^{a_i})}{2\pi i}$$ 
for $1\le i\le k$, and the matrix $(\ov{\Om}_{ij}-\Om_{ij})_{k+1\le i,j\le g}$ is invertible (see Theorem \ref{regular-norm-diff-thm}(iii)).
Namely, we deduce that
$$\wt{\Om}_{ii}\sim -\frac{\log(\wt{t}_i^{b_i})}{2\pi i}$$
(the minus sign is due to complex conjugation), and hence,
$$\Om_{ii}-\wt{\Om}_{ii}\sim \frac{\log(t_i^{a_i}\wt{t}_i^{b_i})}{2\pi i}.$$
\end{proof}
%\begin{cor} The periods of $\om_1,\ldots,\om_g$ (with respect to horizontal sections of $\VV^\vee$) have logarithmic growth near $\De$.
%\end{cor}
   
%Since $\a_1,\ldots,\a_g$ are horizontal sections of $\VV_{red}^\vee$, they extend to regular sections of $\ov{\VV}_{red}^\vee$. 
%Hence, we can also view them as regular sections of $\ov{\VV}^\vee$.

%For this we note that $\pi_*\om_{\ov{X}/\ov{S}}

%In the case of NS nodes the above method does not work since the morphism \eqref{log-Om-map} is not surjective.

%such that the projection $\pi_*\om_{X/S}\to \VV/\Lambda$ is an isomorphism
%(the projection to $\VV/\Lambda$ corresponds to integrals over ``$a$-cycles").
%Let $\om$ be a section of $\pi_*\om_{X/S}$ that projects to a horizontal section of $\VV/\Lambda$.
%We can write $\om=\sum_i a_i(e_i\ot 1)$ where $a_i$ are regular functions on $S$.
%Using the (classical) information about the periods of $e_i\ot 1$ we should be able to derive some information about the periods of $\om$.
%However, it seems that we need to know that $\om$ extends to a regular section of $\ov{\VV}$ over $\ov{S}$.

Theorem \ref{regular-norm-diff-thm} (resp., Corollary \ref{log-growth-main-cor}) does not describe the situation near all nodal spin curves such that
the spin structure does not have global sections. 
In genus $2$ there is one additional nodal curve, namely, $C=C_1\cup C_2$, where $C_1=C_2=\P^1$, and the two components are joined nodally at
three points. One can also realize this curve by starting with the smooth Riemann surface $\wt{C}$ of genus $2$ and pinching the simple curves with the classes
$$\a_1, \ \ \a_{12}:=\a_2-\a_1, \ \ \a_2.$$

\begin{figure}[h]
\begin{tikzpicture}[baseline=0cm,xscale=.5,yscale=.5]
% genus 2 left half
\draw [gray]
(0,0) .. controls (0,1.5) and (1,2) .. (2,2)
(2,2) .. controls (3,2) and (3,1.5) .. (4,1.5)
(0,0) .. controls (0,-1.5) and (1,-2) .. (2,-2)
(2,-2) .. controls (3,-2) and (3,-1.5) .. (4,-1.5)
% right half
(8-0,0) .. controls (8-0,1.5) and (8-1,2) .. (8-2,2)
(8-2,2) .. controls (8-3,2) and (8-3,1.5) .. (8-4,1.5)
(8-0,0) .. controls (8-0,-1.5) and (8-1,-2) .. (8-2,-2)
(8-2,-2) .. controls (8-3,-2) and (8-3,-1.5) .. (8-4,-1.5)
;
% left hole
\draw[gray] (1.3,0) .. controls (1.8,.8) and (2.5,.8).. (3,0);
\draw[gray] (1.2,0.1) .. controls (1.8,-.8) and (2.5,-.8).. (3.1,.1);
% right hole
\draw[gray] (8-1.3,0) .. controls (8-1.8,.8) and (8-2.5,.8).. (8-3,0);
\draw[gray] (8-1.2,0.1) .. controls (8-1.8,-.8) and (8-2.5,-.8).. (8-3.1,.1);
% left cycle
\draw [thick](0,0) arc (-180:0:0.65 and 0.2);
\draw [thick,dashed](1.3,0) arc (0:180:0.65 and 0.2) ;
\draw (0.65,-.6) node {$\alpha_1$};
% right cycle
\draw [thick](8-0,0) arc (-180:0:-0.65 and 0.2);
\draw [thick,dashed](8-1.3,0) arc (0:180:-0.65 and 0.2) ;
\draw (8-0.65,-.6) node {$\alpha_2$};
% middle cycle
\draw [thick](3,0) arc (-180:0:1 and 0.2);
\draw [thick,dashed](3,0) arc (0:180:-1 and 0.2) ;
\draw (4,-.6) node {$\alpha_{12}$};
\end{tikzpicture}
\end{figure}

%\begin{figure}
%[htbp]
%\begin{center}
%\includegraphics[width=13cm]
%{genus2-figure}
%\end{center}
%\end{figure} 

Let us denote the corresponding three branches of the non-separating node divisor as $D_1=(t_1=0)$, $D_{12}=(t_{12}=0)$ and $D_2=(t_2=0)$. 
Let $a_1$, $a_{12}$ and $a_2$ be the corresponding ramification indices ($1$ or $2$ depending on whether the node is Ramond or NS).

\begin{lemma}\label{tough-curve-per-lem}
Near the point of the quasidiagonal corresponding to the union of two $\P^1$'s glued at three points, one has for the superperiod matrix $\Om$ (up to a regular summand and an invertible multiple)
\begin{equation}\label{twoP1s-periods}
(2\pi i)\Om\sim \left(\begin{matrix} \log(t_1^{a_1})+\log(t_{12}^{a_{12}}) & -\log(t_{12}^{a_{12}}) \\ -\log(t_{12}^{a_{12}}) & \log(t_2^{a_2})+\log(t_{12}^{a_{12}})\end{matrix}\right),
\end{equation}
%\begin{align*}
$$
h^{-1}=g\cdot[\log(t_1^{a_1}\wt{t}_1^{b_1})\log(t_2^{a_2}\wt{t}_2^{b_2})+\log(t_1^{a_1}\wt{t}_1^{b_1})\log(t_{12}^{a_{12}}\wt{t}_{12}^{b_{12}})
+
\log(t_2^{a_2}\wt{t}_2^{b_2})\log(t_{12}^{a_{12}}\wt{t}_{12}^{b_{12}})],
%\\&\quad+g_1\log(t_1^{a_1}\wt{t}_1^{b_1})+g_2\log(t_2^{a_2}\wt{t}_2^{b_2})+g_3\log(t_{12}^{a_{12}}\wt{t}_{12}^{b_{12}})+\text{regular},
%\end{align*}
$$
where $g$ is bounded and nonvanishing.
%invertible, $g_i$ are regular. 
One has a similar behavior for the usual period matrix near this curve, without the multiplicites $(a_*)$, $(b_*)$ (and with $t_*$
replaced by the equations of the corresponding branches of the nonseparating boundary divisor on $\ov{\MM}_2$).
\end{lemma}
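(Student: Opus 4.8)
The plan is to reduce everything to the strategy of Theorem~\ref{regular-norm-diff-thm}, the point being that this curve lies \emph{outside} its hypotheses: it has three non-separating nodes while $g=2$, so the three vanishing cycles $\a_1$, $\a_{12}=\a_2-\a_1$, $\a_2$ cannot be completed to a symplectic basis and ``standard monodromy'' is not available. The three corresponding branches of the non-separating node divisor are $D_1=(t_1=0)$, $D_{12}=(t_{12}=0)$, $D_2=(t_2=0)$.

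\emph{Step 1: regularity of $\om_1,\om_2$ in the canonical extension.} First I would show that the normalized differentials $\om_i$ (defined on the smooth locus by $\lan\om_i,\a_j\ran=\de_{ij}$) extend to regular sections of the canonical extension $\ov{\VV}$ from \eqref{can-ext-main-eq}, now taken with respect to $D=D_1+D_{12}+D_2$. Since regularity of a section of a locally free sheaf can be checked in codimension~$1$, and since by Lemma~\ref{fun-reg-lem} one may take $\ov{S}_{\bos}$ one-dimensional, it suffices to work at a generic point of each of $D_1$, $D_{12}$, $D_2$, where the degenerate curve has a single node and the monodromy is a single Picard--Lefschetz transvection along one of $\a_1,\a_{12},\a_2$. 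From here the argument of the proof of Theorem~\ref{regular-norm-diff-thm}(i) applies: using Theorem~\ref{super-GM-thm} one reduces to the underlying family of curves and to its limiting mixed Hodge structure, where the assertion becomes $F^1\cap\La_{\bos}=0$ with $\La_{\bos}=\lan\a_1,\a_2\ran$. The new observation is that each of the three vanishing cycles lies in $\La_{\bos}$, which is a Lagrangian subspace on which every transvection $x\mapsto x+\lan x,\delta\ran\delta$ with $\delta\in\{\a_1,\a_{12},\a_2\}$ acts trivially; hence $W_0=\im N\sub\La_{\bos}\sub\ker N=W_1$. Then $F^1\cap W_0=0$ because $W_0$ is of Tate type, and $(F^1\cap W_1)\cap\La_{\bos}$ injects into the Hodge piece $H^{1,0}$ of the pure weight-$1$ structure $W_1/W_0\simeq H^1(\wt{C}_0)$ of the (genus~$1$) normalization, where it meets the real line $\La_{\bos}/W_0$ only in $0$ since $F^1\cap\ov{F^1}=0$. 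As in Theorem~\ref{regular-norm-diff-thm}(i) this gives the regularity of $\om_1,\om_2$.

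\emph{Step 2: monodromy and the logarithmic leading terms.} By Lemma~\ref{log-growth-lem}(ii) all entries $\Om_{ij}=\lan\om_i,\b_j\ran$ then have at most logarithmic growth near $D$. Next I would compute their monodromy by Picard--Lefschetz: around $D_1$ one has $\b_1\mapsto\b_1+a_1\a_1$ with the other basis vectors fixed, giving $\Om_{11}\mapsto\Om_{11}+a_1$; around $D_2$, $\Om_{22}\mapsto\Om_{22}+a_2$; and around $D_{12}$, with $\delta=\a_2-\a_1$, one gets $\b_1\mapsto\b_1-a_{12}\delta$ and $\b_2\mapsto\b_2+a_{12}\delta$ (the signs forced by the symplectic pairing), hence $\Om_{11}\mapsto\Om_{11}+a_{12}$, $\Om_{22}\mapsto\Om_{22}+a_{12}$, $\Om_{12}\mapsto\Om_{12}-a_{12}$. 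Therefore $\Om_{11}-\tfrac1{2\pi i}(\log t_1^{a_1}+\log t_{12}^{a_{12}})$, $\Om_{22}-\tfrac1{2\pi i}(\log t_2^{a_2}+\log t_{12}^{a_{12}})$ and $\Om_{12}+\tfrac1{2\pi i}\log t_{12}^{a_{12}}$ are single-valued; being of at most logarithmic growth, each is regular by the removable-singularity argument already used in Theorem~\ref{regular-norm-diff-thm}(ii). This is exactly \eqref{twoP1s-periods}. The formula for $h^{-1}$ then follows by substitution into \eqref{h-def-eq}: since $\wt{\Om}_{ii}\sim-\tfrac1{2\pi i}\log\wt{t}_i^{b_i}$ and $\wt{\Om}_{12}\sim\tfrac1{2\pi i}\log\wt{t}_{12}^{b_{12}}$ (as in the proof of Corollary~\ref{log-growth-main-cor}), the matrix $(2\pi i)(\Om-\ov{\Om})$ is, up to a regular summand, the matrix of \eqref{twoP1s-periods} with each $\log t_*^{a_*}$ replaced by $L_*:=\log(t_*^{a_*}\wt{t}_*^{b_*})$; hence $\det(\Om-\ov{\Om})\sim(2\pi i)^{-2}(L_1L_2+L_1L_{12}+L_2L_{12})$, and absorbing the trivializations $s,\wt{s}$ and the remaining bounded nonvanishing factor into $g$ gives the stated expression. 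The classical statement is the same computation with all $a_*=b_*=1$ on $\ov{\MM}_2$.

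\emph{Main obstacle.} The one genuinely new point is Step~1: because $\a_{12}$ is not a basis vector, Theorem~\ref{regular-norm-diff-thm} cannot be cited directly, and one must re-run its Hodge-theoretic core, whose success hinges precisely on the containment $\a_{12}\in\La_{\bos}$. Everything downstream --- the monodromy computation and the implication ``single-valued $+$ logarithmic growth $\Rightarrow$ regular'' --- is then routine.
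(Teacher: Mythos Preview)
Your proposal is correct and follows the same monodromy-based approach as the paper: compute the Picard--Lefschetz transvections around each branch, read off how the entries $\Om_{ij}$ transform, subtract the appropriate logarithms, and take the determinant for $h^{-1}$. Your Step~2 reproduces the paper's proof essentially verbatim (including the determinant identity $(L_1+L_{12})(L_2+L_{12})-L_{12}^2=L_1L_2+L_1L_{12}+L_2L_{12}$, which the paper leaves to the reader).

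The difference is that you supply Step~1 explicitly, whereas the paper's proof simply computes the monodromy and asserts \eqref{twoP1s-periods} without addressing why the single-valued remainders are regular. This is a genuine addition: Theorem~\ref{regular-norm-diff-thm} as stated assumes standard monodromy, which fails here because the vanishing cycle $\a_{12}=\a_2-\a_1$ is not a basis vector, so one cannot cite it directly. Your observation that the Hodge-theoretic core of that proof (namely $F^1\cap\La_{\bos}=0$) only uses $W_0\sub\La_{\bos}\sub W_1$, and that this still holds because $\a_{12}\in\La_{\bos}$, is exactly the right way to close the gap. The paper presumably intends the reader to see this, but your version makes it explicit.
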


\begin{proof}
The monodromies
around each branch, $M_1$, $M_{12}$ and $M_2$, are given by the symplectic transformations $x\mapsto x+m(\a\cdot x)\a$, with $\a=\a_1$, $\a_{12}$ or $\a_2$, respectively,
and $m$ the corresponding ramification index.
Thus, they act trivially on $\a_i$, and
\begin{align*}
&M_1(\b_1)=\b_1+a_1\a_1, \ \ M_2(\b_2)=\b_2+a_2\a_2, \ \ M_{12}(\b_1)=\b_1+a_{12}(\a_1-\a_2), \\
& M_{12}(\b_2)=\b_2+a_{12}(\a_2-\a_1),
\end{align*}
while $M_i(\b_j)=\b_j$ for $i\neq j$.
It follows that $\Om_{11}$ (resp., $\Om_{22}$) is changed to $\Om_{11}+a_1$ (resp., $\Om_{22}+1$) by $M_1$ and $M_{12}$ (resp., $M_2$ and $M_{12}$) and preserved by 
$M_2$ (resp., $M_1$).
Similarly, $\Om_{12}$ is preserved by $M_1$ and $M_2$ and is changed to $\Om_{12}-1$ by $M_{12}$. This gives \eqref{twoP1s-periods}.
Passing to $\Om-\wt{\Om}$ and computing the determinant gives the result.
\end{proof}

\section{Preliminaries on supercurves of genus $2$}

\subsection{Stable spin curves of genus $2$}

The following general result is a consequence of the local results on spin structures near nodes from \cite{Jarvis}.

\begin{lemma}\label{NS-R-lem}
Let $(C,L)$ be a nodal curve with a spin structure, $q\in C$ a node, $\rho:\wt{C}\to C$ the corresponding partial normalization (an isomorphism away from $q$). 

\noindent
(i) If $q$ is NS for $L$ if and only if
$L\simeq \rho_*\wt{L}$, where $\wt{L}$ is a spin structure on $\wt{C}$. 

\noindent
(ii) If $q$ is Ramond for $L$ and $\rho^{-1}(q)=\{q_1,q_2\}$ then $\rho^*L$ is a spin structure for $\wt{C}$ equipped with Ramond punctures $q_1,q_2$.
\end{lemma}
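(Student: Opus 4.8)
The plan is to deduce both parts from the local analysis of spin sheaves at a node, which is where \cite{Jarvis} enters, together with duality for the finite birational map $\rho$. Recall that for us a spin structure on a nodal curve is a torsion-free rank $1$ sheaf $L$ equipped with an isomorphism $\phi\colon L\to\und{\Hom}_{\OO_C}(L,\om_C)$, and that at a node $q$, where $\OO_{C,q}\cong\C[[x,y]]/(xy)$, a torsion-free rank $1$ sheaf is either locally free ($q$ Ramond) or locally isomorphic to the ideal $\mathfrak{m}_q=(x,y)$, which as an $\OO_{C,q}$-module is isomorphic near $q$ to $\rho_*\OO_{\wt C}$, i.e.\ to the normalization $\C[[x]]\times\C[[y]]$ ($q$ NS). I will use two standard facts. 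First, this local classification globalizes: a torsion-free rank $1$ sheaf $L$ on $C$ which fails to be locally free exactly at $q$ has the form $\rho_*\wt L$ for a torsion-free rank $1$ sheaf $\wt L$ on $\wt C$, locally free at $q_1,q_2$ and determined by $L$ up to isomorphism; equivalently, $\rho_*$ is fully faithful from the category of such sheaves on $\wt C$ onto the category of such sheaves on $C$. Second, the dualizing sheaves are related by $\rho^{!}\om_C\cong\om_{\wt C}$, equivalently $\und{\Hom}_{\OO_C}(\rho_*\OO_{\wt C},\om_C)\cong\rho_*\om_{\wt C}$, and by $\rho^{*}\om_C\cong\om_{\wt C}(q_1+q_2)$; both can be obtained by the elementary local computation based on the fact that $\om_C$ is generated near $q$ by $dx/x=-dy/y$.

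For part (i), assume $q$ is NS. By the first fact, $L\cong\rho_*\wt L$ with $\wt L$ torsion-free rank $1$ on $\wt C$, locally free at $q_1,q_2$. Duality for the finite map $\rho$ gives
$$\und{\Hom}_{\OO_C}(\rho_*\wt L,\om_C)\cong\rho_*\,\und{\Hom}_{\OO_{\wt C}}(\wt L,\rho^{!}\om_C)\cong\rho_*\,\und{\Hom}_{\OO_{\wt C}}(\wt L,\om_{\wt C}),$$
so the spin isomorphism $\phi$ becomes an isomorphism $\rho_*\wt L\cong\rho_*\,\und{\Hom}_{\OO_{\wt C}}(\wt L,\om_{\wt C})$ between sheaves that are locally free at $q_1,q_2$. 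Since $\rho_*$ is fully faithful on such sheaves, this comes from an isomorphism $\wt L\cong\und{\Hom}_{\OO_{\wt C}}(\wt L,\om_{\wt C})$, i.e.\ $\wt L$ is a spin structure on $\wt C$. Conversely, if $L\cong\rho_*\wt L$ for a spin structure $\wt L$ on $\wt C$, then, $\wt C$ being smooth at $q_1,q_2$, the sheaf $\wt L$ is locally free there, so near $q$ we have $L\cong\rho_*\OO_{\wt C}\cong\C[[x]]\times\C[[y]]$, which is not a free $\OO_{C,q}$-module; hence $q$ is NS.

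For part (ii), assume $q$ is Ramond, so that $L$ is locally free near $q$; then $\rho^{*}L$ is torsion-free of rank $1$ on $\wt C$ and locally free at $q_1,q_2$. Near $q$, where $L$ is locally free, the sheaf $\und{\Hom}_{\OO_C}(L,\om_C)$ is locally free and $\rho^{*}$ commutes with it, while away from $q$ the map $\rho$ is an isomorphism; these two local identifications of $\rho^{*}\und{\Hom}_{\OO_C}(L,\om_C)$ with $\und{\Hom}_{\OO_{\wt C}}(\rho^{*}L,\rho^{*}\om_C)$ agree on their overlap, so applying $\rho^{*}$ to $\phi$ yields a global isomorphism $\rho^{*}L\cong\und{\Hom}_{\OO_{\wt C}}(\rho^{*}L,\rho^{*}\om_C)$. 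Combining this with $\rho^{*}\om_C\cong\om_{\wt C}(q_1+q_2)$ gives an isomorphism $\rho^{*}L\cong\und{\Hom}_{\OO_{\wt C}}(\rho^{*}L,\om_{\wt C}(q_1+q_2))$, which is precisely the statement that $\rho^{*}L$ is a spin structure on $\wt C$ with Ramond punctures at $q_1,q_2$.

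The only genuine work lies in the two facts of the first paragraph — the precise local module structure of a torsion-free spin sheaf at an NS node, the description of $\om_C$ near a node, and the full faithfulness of $\rho_*$ (which, among other things, rules out a branch swap at $q$, using that the two points $q_1\neq q_2$ are globally labeled) — but all of this is classical and is contained in the local computations of \cite{Jarvis} and in \cite{FKP-supercurves}. Everything else is the formal duality bookkeeping carried out above, so I expect the write-up to be short once those inputs are cited.
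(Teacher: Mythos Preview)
Your proof is correct and follows essentially the same approach as the paper's: both use the local classification of torsion-free rank $1$ sheaves at a node together with Grothendieck duality for the finite map $\rho$ (i.e., $\rho^!\om_C\simeq\om_{\wt C}$) for part (i), and the identity $\rho^*\om_C\simeq\om_{\wt C}(q_1+q_2)$ for part (ii). The only cosmetic difference is that where you invoke full faithfulness of $\rho_*$ to descend the isomorphism $\rho_*\wt L\simeq\rho_*\und{\Hom}(\wt L,\om_{\wt C})$, the paper instead rewrites the spin datum as a pairing $L\otimes L\to\om_C$, applies the projection formula and the adjunction $(\rho_*,\rho^!)$ to obtain $\wt L\otimes\rho^*L\to\om_{\wt C}$, and then kills the torsion in $\rho^*L$ by hand; both arguments encode the same duality computation.
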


\begin{proof}
(i) In this case $L\simeq \rho_*\wt{L}$ where $\wt{L}=\rho^*L/T$, where $T\sub \rho^*L$ is the torsion subsheaf.
Note that by the relative duality for $\rho$ we have
%\und{\Hom}(\rho_*\wt{L},\om_C)=
$$R\und{\Hom}(\rho_*\wt{L},\om_C)\simeq \rho_*R\und{\Hom}(\wt{L},\om_{\wt{C}}).$$
Thus, a spin structure isomorphism $\wt{L}\simeq R\und{\Hom}(\wt{L},\om_{\wt{C}})$ leads to a spin structure for $L=\rho_*\wt{L}$.

Conversely, a spin structure on $L$ can be viewed as a map $L\ot L\to \om_C$, or equivalently, a map
$$\rho_*(\wt{L}\ot \rho^*L)\simeq \rho_*(\wt{L})\ot L\to \om_C.$$
By adjunction of $(\rho_*,\rho^!)$, this gives a map
$$\wt{L}\ot \rho^*L\to \om_{\wt{C}}.$$
This map necessarily is zero on $\wt{L}\ot T$, so it factors through a map $\wt{L}\ot \wt{L}\to \om_{\wt{C}}$. A local calculation shows that
the corresponding map $\wt{L}\to\und{\Hom}(\wt{L},\om_{\wt{C}})$ is an isomorphism, so this is a spin structure on $\wt{L}$.

One can check that the above two constructions are mutually inverse.

\noindent
(ii) This immediately follows from the isomorphism $\rho^*\om_C\simeq \om_{\wt{C}}(q_1+q_2)$.
\end{proof}

Recall (see \cite[Prop.\ 9.1]{Mum}) that there are the following types of curves in the boundary of $\ov{\MM}_2$:
\begin{enumerate}
\item $C_1\cup C_2$, with $C_1$ and $C_2$ irreducible curves of arithmetic genus $1$, glued nodally;
\item irreducible curve of geometric genus $1$ with one node;
\item irreducible curve of geometric genus $0$ with two nodes;
%\item $C_1\cup C_2$, with $C_1$ smooth of genus $1$ and $C_2$ irreducible of geometric genus $0$ with one node $q_1$, glued to form another node $q_2$; 
%\item $C_1\cup C_2$, with $C_1$ (resp., $C_2$) irreducible of geometric genus $0$ with one node $q_1$ (resp., $q_2$), glued to form another node $q_3$; 
\item union of two $\P^1$'s glued along three pairs of points.
\end{enumerate}

%Recall that when we consider generalized spin structures on nodal curves, every node can become either NS or Ramond.

Recall that a spin structure $L$ on a stable curve $C$ is called {\it even} (resp., {\it odd}) if $h^0(C,L)$ is even (resp., odd).
This condition is stable under deformations, so we have the corresponding components of the moduli stack of supercurves.
We denote by $\ov{\SS}_g$ the moduli space of stable supercurves of genus $g$ such that the underlying spin structures are even.

Let $(C,L)$ be a stable curve with an even spin structure, $q\in C$ are separating node, so that $C=C_1\cup C_2$, with $q=C_1\cap C_2$.
Note that in this case the partial normalization of $q$ is the natural map
$C_1\sqcup C_2\to C$. 
Hence, by Lemma \ref{NS-R-lem}(i), $q$ is NS for $L$ if and only if $L=L_1\oplus L_2$, where $L_1$ (resp., $L_2$) is a spin structure on $C_1$
(resp., $C_2$).

\begin{definition} 
We say that a separating NS node $q$ for an even stable spin curve $(C,L)$ is of type $(+,+)$ (resp., $(-,-)$) if $L=L_1\oplus L_2$ where both $L_1$ and $L_2$ are
even (resp., odd).
\end{definition}

For a smooth curve $C$ of genus $2$ and an even spin structure $L$ one has $\deg(L)=1$, which implies that $H^*(C,L)=0$. 
We need the following generalization of this fact to stable curves.

\begin{lemma}\label{coh-van-spin-str-g2-lem} 
(i) Let $(C,L)$ be an irreducible nodal curve of arithmetic genus $1$ with an even spin structure. Then $H^0(C,L)=0$.

\noindent
(ii) Let $(C,L)$ be an even stable spin curve of genus $2$, which does not have a separating node of type $(-,-)$.
%with either one non-separating node or one separating node of type $(+,+)$. 
Then $H^0(C,L)=H^1(C,L)=0$.
\end{lemma}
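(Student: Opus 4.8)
The plan is to prove the two parts separately, reducing everything to the known vanishing statements for spin structures on smooth curves of genus $\le 2$ and on $\P^1$ with Ramond punctures (i.e. $L$ of degree $-1$), combined with the local analysis of spin structures at nodes provided by Lemma \ref{NS-R-lem}.

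\textbf{Part (i).} Let $(C,L)$ be irreducible of arithmetic genus $1$ with an even spin structure. If $C$ is smooth, then $L$ is a nontrivial $2$-torsion line bundle of degree $0$, so $h^0(C,L)=0$ and the spin structure is even only in the trivial case $L=\OO_C$ — but $\OO_C$ has $h^0=1$, which is odd, hence excluded; so every even spin structure on a smooth genus $1$ curve has $h^0=0$. If $C$ has a node $q$, let $\rho\colon\wt C\to C$ be the normalization, so $\wt C\simeq\P^1$. If $q$ is NS, then by Lemma \ref{NS-R-lem}(i), $L=\rho_*\wt L$ with $\wt L$ a spin structure on $\P^1$, i.e. $\wt L=\OO(-1)$; then $H^0(C,L)=H^0(\wt C,\wt L)=H^0(\P^1,\OO(-1))=0$. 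If $q$ is Ramond, then $L$ is locally free of degree $0$ on $C$; by Lemma \ref{NS-R-lem}(ii), $\rho^*L$ is a spin structure on $\P^1$ with the two Ramond punctures $q_1,q_2$, i.e. $\rho^*L=\OO_{\P^1}(-1)$, and a global section of $L$ on $C$ pulls back to a section of $\OO_{\P^1}(-1)$, which vanishes. In all cases $H^0(C,L)=0$; since $\chi(L)=\deg L + 1 - p_a = 0$, also $H^1(C,L)=0$.

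\textbf{Part (ii).} Let $(C,L)$ be an even stable spin curve of genus $2$ with no separating node of type $(-,-)$. Since $\chi(L)=\deg L+1-2$ and $\deg L = g-1 = 1$, we have $\chi(L)=0$, so it suffices to prove $H^0(C,L)=0$. If $C$ is smooth this is the stated classical fact. Otherwise $C$ lies in the boundary of $\ov{\MM}_2$ and is one of the four types recalled from \cite[Prop.\ 9.1]{Mum}. I would treat them by the structure of their nodes. If $C$ has a separating node $q$, then $C=C_1\cup C_2$ with $C_i$ of arithmetic genus $1$; the node must be NS (a Ramond separating node would make $L$ locally free, but then restricting the spin isomorphism to each component forces $\deg L|_{C_i}$ to be a half-integer, impossible — alternatively one invokes that a separating node of an even spin curve is always NS, which follows because $L=\rho_*\wt L$ is forced by parity considerations on $\chi$), so by Lemma \ref{NS-R-lem}(i), $L=L_1\oplus L_2$ with $L_i$ a spin structure on $C_i$, and $H^0(C,L)=H^0(C_1,L_1)\oplus H^0(C_2,L_2)$. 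Evenness of $L$ means $h^0(L_1)+h^0(L_2)$ is even, and the hypothesis excludes the case where both are odd, so both $L_i$ are even spin structures on arithmetic-genus-$1$ curves; by part (i), $H^0(C_i,L_i)=0$, hence $H^0(C,L)=0$. If $C$ has only non-separating nodes (types (2), (3), (4)), I would successively normalize at the non-separating nodes: at each NS node Lemma \ref{NS-R-lem}(i) replaces $(C,L)$ by a partial normalization $(\wt C,\wt L)$ with $H^0(C,L)\hookrightarrow H^0(\wt C,\wt L)$ (pushforward is exact here, giving equality), and at each Ramond node Lemma \ref{NS-R-lem}(ii) gives $\rho^*L$ a spin structure on the partial normalization with two extra Ramond punctures and $H^0(C,L)\hookrightarrow H^0(\wt C,\rho^*L)$. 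Iterating down to the full normalization — which for types (2),(3),(4) is a disjoint union of copies of $\P^1$, each carrying a line bundle of negative degree (degree $-1$ minus the number of Ramond punctures on that component, but in any case one checks the total degree forces each component's bundle to have $h^0=0$) — yields $H^0(C,L)=0$.

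\textbf{Main obstacle.} The delicate point is the bookkeeping in type (4) (two $\P^1$'s glued at three points) and type (3): after normalizing, one must verify that the pulled-back sheaf on each $\P^1$ genuinely has no sections, which requires tracking how the degree $1$ of $L$ and the Ramond/NS type of each of the three nodes distribute the degree between the two components, and checking that the NS gluing conditions (matching sections across the node, cf. $L=\rho_*\wt L$) cannot conspire to produce a global section even when some component's bundle has nonnegative degree. I expect this can be handled by the observation that on $\P^1$ a spin structure with $r\ge 1$ Ramond punctures is $\OO(-1)$ (degree $(2\cdot(-1) - r)/2$ is not how it works — rather $\rho^*\om_C = \om_{\wt C}(\sum q_i)$ forces $(\rho^*L)^{\otimes 2}=\OO(-2+r)$, so $\rho^*L=\OO((r-2)/2)$, forcing $r$ even); a careful case check of the possible $(r_1,r_2)$ splittings of the Ramond punctures, together with the NS-gluing argument as in part (i), closes all cases. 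Alternatively, and more cleanly, one can argue uniformly: $H^0(C,L)\ne 0$ would give a nonzero map $\OO_C\to L$; squaring via the spin isomorphism $L^{\otimes 2}\simeq\om_C$ (suitably interpreted with torsion-free sheaves) produces a nonzero section of $\om_C$ whose divisor has degree $2$, and one derives a contradiction with the combinatorial type of $C$ unless a $(-,-)$ separating node is present — but fully rigorous execution of this shortcut still needs the local lemmas, so the case analysis seems unavoidable, and it is the part I would write out most carefully.
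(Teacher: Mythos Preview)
Your overall architecture matches the paper's --- case analysis via Lemma \ref{NS-R-lem} and reduction to smaller curves --- but there are two concrete errors that break the argument as written.

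\textbf{Error in part (i), Ramond case.} You claim $\rho^*L=\OO_{\P^1}(-1)$, but this is wrong. By Lemma \ref{NS-R-lem}(ii), $(\rho^*L)^{\otimes 2}\simeq\om_{\P^1}(q_1+q_2)=\OO(-2+2)=\OO$, so $\rho^*L=\OO_{\P^1}$, which has $h^0=1$, not $0$. (Your own formula $\rho^*L=\OO((r-2)/2)$ in the ``main obstacle'' paragraph gives the right answer for $r=2$ and contradicts your earlier claim.) The paper's fix is the parity trick you never invoke: $h^0(C,L)\le h^0(\P^1,\OO)=1$, and since $h^0(C,L)$ is even, it must be $0$.

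\textbf{Error in part (ii), type (2).} An irreducible curve of geometric genus $1$ with one node has normalization a smooth genus $1$ curve, not $\P^1$. Your blanket claim that types (2),(3),(4) normalize to disjoint unions of $\P^1$'s is false for type (2). The paper treats this case separately: if the node is NS, normalize to arithmetic genus $1$ and use part (i); if Ramond, $\rho^*L$ is a degree $1$ line bundle on a smooth elliptic curve, so $h^0(\rho^*L)=1$, hence $h^0(C,L)\le 1$, and parity gives $0$.

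More broadly, your strategy of ``normalize everything down until all bundles have negative degree'' does not work: in several cases (irreducible genus $0$ with two Ramond nodes, two $\P^1$'s glued at three points with two Ramond nodes) the pulled-back bundle is $\OO(1)$ or $\OO$, which has sections. The paper handles every such case by the same move: bound $h^0(C,L)$ by $1$ (via an explicit gluing condition or a degree count on the normalization) and then use that $h^0$ is even. You gesture at this in your obstacle paragraph but never actually deploy it; it is the missing key idea.
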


\begin{proof} (i) If $C$ is smooth then this is clear. Suppose $C$ is nodal, and let $\rho:\wt{C}\to C$ be the normalization.

If the node of $C$ is NS then $L=\rho_*\wt{L}$, where $\wt{L}$ is a spin structure on $\wt{C}\simeq \P^1$, so $\wt{L}=\OO(-1)$ and
$H^0(C,L)=H^0(\P^1,\OO(-1))=0$. If the node is Ramond then $\wt{L}=\rho^*L$ satisfies $\wt{L}^2\simeq \om_{\wt{C}}(q_1+q_2)$,
so $\wt{L}\simeq \OO_{\wt{C}}$, and $h^0(C,L)\le h^0(C,\wt{L})=1$. 
But $h^0(C,L)$ is even, so $h^0(C,L)=0$.

\noindent
(ii) Since $H^1(C,L)^*\simeq H^0(C,\und{\Hom}(L,\om_C))\simeq H^0(C,L)$, it is enough to prove the vanishing of $H^0(C,L)$.

\noindent
{\bf Case of an irreducible curve with at least one NS node}.

%Assume first that one of the nodes, $q\in C$, is NS.
Let $\rho:\wt{C}\to C$ be the normalization of the NS node $q\in C$ (so it is an isomorphism away from $q$).
Then $\wt{C}$ is an irreducible curve of arithmetic genus $1$, and $L=\rho_*\wt{L}$, where $\wt{L}$ is an even spin structure on  $\wt{C}$.
Hence, $H^0(C,L)=H^0(\wt{C},\wt{L})$ which vanishes by part (i).

\noindent
{\bf Case of an irreducible curve of geometric genus $1$ with a Ramond node}.

Let $\rho:\wt{C}\to C$ be the normalization, where $\wt{C}$ is a smooth curve of genus $1$. Let $q\in C$ be the node
and $\rho^{-1}(q)=\{q_1,q_2\}\sub \wt{C}$.
In this case $L$ is locally free and $L^2\simeq \om_C$, so $(\rho^*L)^2\simeq \OO_{\wt{C}}(q_1+q_2)$.
Thus, $\rho^*L$ is of degree $1$, and therefore
$h^0(C,L)\le h^0(\wt{C},\rho^*L)=1$, which implies that $h^0(L)=0$.

\noindent
{\bf Case of an irreducible curve of geometric genus $0$ with two Ramond nodes}.

Let $\rho:\wt{C}\to C$ be the normalization, where $\wt{C}=\P^1$.
Let $q,q'\in C$ be the nodes, and $\rho^{-1}(q)=\{q_1,q_2\}$, $\rho^{-1}(q')=\{q'_1,q'_2\}$. 
In this case $L$ is locally free and 
$$(\rho^*L)^2\simeq \om_{\P^1}(q_1+q_2+q'_1+q'_2)\simeq \OO(2).$$
Hence, $\rho^*L\simeq \OO(1)$. The space $H^0(C,L)$ embeds into the subspace of the $2$-dimensional space $H^0(\wt{C},\rho^*L)$,
consisting of sections $s$ such that $s|_{q_1}=s|_{q_2}$ and $s|_{q'_1}=s_{q'_2}$. Since the
restriction map $H^0(\OO(1))\to \OO(1)|_{q_1}\oplus \OO(1)|_{q_2}$ is an isomorphism, $H^0(C,L)$ is a proper subspace of $H^0(\wt{C},\OO(1))$,
hence $h^0(C,L)\le 1$, which implies that $h^0(C,L)=0$.

\noindent
{\bf Case of a separating node of type $(+,+)$}.

In this case $C$ is the nodal union of two irreducible curves $C_1$ and $C_2$ of arithmetic genus $1$, and $L=L_1\oplus L_2$ where $L_i$ is an even spin structure on $C_i$.
By part (i), we get $H^*(C_i,L_i)=0$, so $H^*(C,L)=0$.

\noindent
{\bf Case of two $\P^1$'s glued along three pairs of points}.

We have $C=C_1\cup C_2$, where $C_i\simeq \P^1$ and the point $p_i\in C_1$ is glued with $q_i\in C_2$, for $i=1,2,3$.
By parity, at least one of the nodes is NS, so $L$ is the push forward of an even spin structure $L'$ on $C'=C_1\cup C_2$, where $C_1$ and $C_2$ are glued at two pairs of points.
If both nodes of $C'$ are NS then $L'$ is the direct sum of spin structures $L_i$ on $C_i$, which do not have global sections. Otherwise, both nodes are Ramond,
which means that $L'$ is a line bundle, a square root of $\om_{C'}\simeq \OO_{C'}$, so $h^0(L')\le 1$, hence $h^0(L')=0$.
\end{proof}

\subsection{Hyperelliptic point of view on genus $2$ curves}\label{hyperell-basics-section}

Recall that the canonical linear system of a smooth projective genus $2$ curve $C$ gives a double covering
$$f:C\to \P^1$$
ramified at $6$ points. It is also well known that even spin structures on $C$ are in bijection with partitions of the ramification locus
into two subsets of $3$ points, $(p_1,p_2,p_3)$, $(q_1,q_2,q_3)$ (where the order of two subsets is not fixed).
Namely, the corresponding spin-structure is $\OO_C(p_2+p_3-p_1)=\OO_C(q_2+q_3-q_1)$ (this relation and independence on the ordering of points follows
from the isomorphisms $\OO_C(2p_i)\simeq \OO_C(2q_i)\simeq \om_C$, $\OO_C(p_1+p_2+p_3+q_1+q_2+q_3)\simeq \om_C^3$). 
 
If we pick coordinates on $\P^1$ so that the ramification points are $\A^1\sub \P^1$, then $C\setminus f^{-1}(\infty)$ can be identified with the affine curve
$$y^2=\prod_{i=1}^3 (x-u_i)(x-v_i),$$
where $u_i=f(p_i),v_i=f(q_i)\in \A^1$.
The canonical line bundle $\om_C$ is trivialized away from $f^{-1}(\infty)$ by $dx/y$ (note that near the ramification point $x=u_i$, $y=0$, the function $y$ is a local coordinate,
and $x-u_i$ differs from $y^2$ by an invertible function). In fact, $dx/y$ is a regular differential on $C$ with the divisor of zeros $f^{-1}(\infty)$. Since $x$ has a simple pole at $\infty$,
$xdx/y$ is still a regular differential on $C$, so $(dx/y, xdy/y)$ is a basis of $H^0(C,\om_C)$.
Similarly, 
\begin{equation}\label{qu-diff-basis-eq}
(\frac{dx}{y})^2, \ x(\frac{dx}{y})^2, \ x^2(\frac{dx}{y})^2)
\end{equation}
is a basis of $H^0(C,\om^2_C)$.

The above spin-structure $L$ is equipped with a rational section $\bs$ with a simple pole at $p_1$ and simple zeros at $p_2$, $p_3$, such that under the isomorphism
$L^2\rTo{\sim}\om_C$, one has
$$\bs^2=\frac{(x-u_2)(x-u_3)}{x-u_1} \frac{dx}{y}.$$
Note that the right-hand side is a rational differential with the divisor $2p_3+2p_3-2p_1$.
We also have a rational section $\bs'$ of $L$ such that
$$(\bs')^2=\frac{(x-v_2)(x-v_3)}{x-v_1} \frac{dx}{y}, \ \ \ \ \bs'=\frac{(x-u_1)(x-v_2)(x-v_3)}{y}\cdot \bs.$$
We can also construct a basis $(\chi_1,\chi_2)$ of the $2$-dimensional space $H^0(C,L\ot \om_C)$ by setting
\begin{equation}\label{chi1-chi2-eq}
\chi_1=(x-u_1)\cdot \bs\cdot \frac{dx}{y}, \ \ \chi_2=(x-v_1)\cdot \bs'\cdot \frac{dx}{y}=\frac{y}{(x-u_2)(x-u_3)}\cdot \bs\cdot \frac{dx}{y}.
\end{equation}
Note that the fact that $\chi_1$ is a regular section of $L\ot \om_C$ is equivalent to the fact that
$(x-u_1)\cdot \frac{dx}{y}$ is a regular section of $\om_C(p_2+p_3-p_1)$. In fact, this is a regular differential on $C$ with double zero at $p_1$,
which means that divisor of zeros of $\chi_1$ is exactly $p_1+p_2+p_3$. Similarly, the divisor of zeros of $\chi_2$ is $q_1+q_2+q_3$.
  
Letting the ramification points on $\P^1$ vary in the configuration space of $\A^1\sub \P^1$ we get a morphism $\Conf_6(\A^1)\to \MM_2\to \ov{\MM}_2$.
Following \cite{Witten}, we will use the related map $\Conf_6(\P^1)\to \SS_{2,\bos}$, which is invariant under $\SL_2(\C)$
and under $S_2\ltimes(S_3\times S_3)$, to do computations with the super Mumford form. 

One can allow some of the points to merge, and still get the corresponding a stable genus $2$ curve with a generalized spin structure. Below we will describe how
this works in families.
Furthermore, we need some information on the relation between boundary divisors in $\ov{\SS}_{2,\bos}$ and the divisors $(u_i-u_j)$, $(v_i-v_j)$,
$(u_i-v_j)$.

%Namely, we assume that we have a family of points $u_
   
Note that we can use the construction of the cyclic double covering associated with any effective divisor $D$ of degree 6 on $\P^1$. The corresponding curve
$C$ will have arithmetic genus $2$, and will have at most nodal singularities, provided multiplicities of points in $D$ are $\le 2$.
However, we need also to construct a generalized spin structure on $C$.
   
First, let us consider the situation when points $v_1$ and $v_2$ merge (and otherwise all points are distinct). More precisely, we consider the situation in an affine neighborhood
of a generic point in the configuration space where $v_1$ merge with $v_2$.
In this case $L=\OO(p_2+p_3-p_1)$ is still a locally free spin-structure on $C$ (since we still have isomorphisms $\OO_C(2p_i)\simeq \om_C$).
We claim that the pull-back of the boundary divisor on the moduli stack of generalized spin-structures is the divisor $(v_1-v_2)^2$ near such a point.
Indeed, 
%it is enough to check that $(v_1-v_2)^2$ corresponds to the pull-back of the boundary divisor
%on $\ov{\MM}_2$. To this end we 
note that locally $C$ is given by the equation 
$$y^2=(x-v_1)(x-v_2),$$
and $L$ is isomorphic to the ideal of $p_1$, which is equal to $(x-u_1,y)$. 
Now we can rewrite the equation of $C$ as
$$z_1z_2=t^2,$$
where 
$$z_1=x-y-\frac{v_1+v_2}{2}, \ \ z_2=x+y-\frac{v_1+v_2}{2}, \ \ t=\frac{v_2-v_1}{2}.$$
The assertion follows from this.

Next, lt us consider the case of a family where points $u_1$ and $v_1$ merge. We claim that in this case we can construct a family of generalized spin-structures $L$ with
NS type behavior at the node of the special fiber. Namely, the corresponding family $C/S$ still has relative points $p_i$, $q_i$, where $p_1$ and $q_1$ specialize to the node, and
we set
$$L=\II_{p_1}(p_2+p_3),$$
where $\II_{p_1}$ is the ideal sheaf of $p_1$. Here we use the fact that $p_2$ and $p_3$ are still Cartier divisors on $C$. The ideal $\II_{p_1}$ is isomorphic to the standard CM module
near the node (see the computation below), so $\und{\Hom}(L,\om_C)$ is still a CM module.  Let $j:U\to C$ be the open embedding of the complement of the nodes.
Then both $L$ and $\und{\Hom}(L,\om_C)$ are naturally isomorphic to $j_*$ extensions of their restrictions to $U$, which are naturaly isomorphic.
This gives an isomorphism $L\simeq \und{\Hom}(L,\om_C)$, so $L$ is a generalized spin-structure.

We claim that the pull-back of the boundary divisor in this situation is given by $(u_1-v_1)$. It is enough to check that
the pair $(\widehat{\OO}_{C,p_1},\hat{L}_{p_1})$ near a point where $p_1$ coincides with the node, is isomorphic to the pull-back under $u_1-v_1$ of the standard family over $\A^1$
with coordinate $t$,
$$(A:=\C[z_1,z_2]/(z_1z_2-t^2), M:=Ae_1\oplus Ae_2/(z_2e_1-te_2,z_1e_2-te_1).$$
We can use 
$$y^2=(x-u_1)(x-v_1)$$
as a local equation of $C$, so that $L$ is isomorphic to the ideal of $p_1$, which is equal to $(x-u_1,y)$. 
As before, we rewrite the equation of $C$ as
$$z_1z_2=t^2,$$
where 
$$z_1=x-y-\frac{u_1+v_1}{2}, \ \ z_2=x+y-\frac{u_1+v_1}{2}, \ \ t=\frac{v_1-u_1}{2}.$$
In terms of these coordinates we can rewrite our ideal as
$$(x-u_1,y)=(z_1+t,z_2+t).$$
Now it is easy to check that the generators $e_1=z_1+t$ and $e_2=z_2+t$ of this ideal satisfy the relations
$z_2e_1=te_2$, $z_1e_2=te_1$, and that the quotient of the free module by these relations is exactly the ideal $(z_1+t,z_2+t)$.
This proves what we need.
  
\subsection{Torelli map and the canonical projection for genus $2$}

Let $\SS_g$ denote the moduli superstack of supercurves of genus $g$ corresponding to an even spin structure.
Its bosonization is the moduli stack $\SS_{g,\bos}$ of smooth curves of genus $g$ with even spin structures.

There is a well defined Torelli (or superperiod) map (see \cite[Thm.\ 6.4]{BHRP}),
$$\per:\SS_g\setminus \DD\to \AA_g,$$
where $\AA_g$ is the moduli stack of principally polarized abelian varieties of dimension $g$,
and $\DD$ is the theta-null divisor corresponding to spin structures with nonzero global sections.

Note that the restriction of the Torelli to the bosonization is the composition
$$\ov{\per}:\SS_{g,\bos}\setminus D\to \SS_{g,\bos}\to \wt{\AA}_g\to \AA_g,$$
where $\wt{\AA}_g$ is the \'etale covering of $\AA_g$ corresponding to a choice of an even theta-characteristic,
i.e., of an even symmetric theta-divisor. Here the map 
\begin{equation}\label{enh-cl-period-eq}
\SS_{g,\bos}\to \wt{\AA}_g
\end{equation}
associates to a spin curve $(C,L)$ the pair $(J,\Th_L)$, where $\Th_L$ is the theta-divisor associated with the spin structure $L$.

\begin{prop}
The superperiod map factors uniquely through a morphism
$$\wt{\per}:\SS_g\setminus \DD\to \wt{\AA}_g$$
restricting to the above map $\SS_{g,\bos}\to \wt{\AA}_g$.
\end{prop}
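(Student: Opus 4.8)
The plan is to obtain $\wt{\per}$ as a section of a finite \'etale cover, using that such covers --- and their sections --- depend only on the bosonization. Write $\ov{\per}:\SS_{g,\bos}\setminus D\to\AA_g$ for the bosonization of $\per$; by construction $\ov{\per}$ factors through $\wt{\AA}_g$ (it is the composite of the open inclusion $\SS_{g,\bos}\setminus D\hra\SS_{g,\bos}$, the map \eqref{enh-cl-period-eq}, and $\wt{\AA}_g\to\AA_g$). First I would form the fibre product $\PP:=(\SS_g\setminus\DD)\times_{\AA_g}\wt{\AA}_g$; since $\wt{\AA}_g\to\AA_g$ is finite \'etale, so is $\PP\to\SS_g\setminus\DD$. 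Because $\AA_g$ and $\wt{\AA}_g$ are ordinary (bosonic) stacks, bosonization commutes with this fibre product, so $\PP_{\bos}=(\SS_{g,\bos}\setminus D)\times_{\AA_g}\wt{\AA}_g$. By the universal property of the fibre product, a factorization of $\per$ through $\wt{\AA}_g$ is the same datum as a section of $\PP\to\SS_g\setminus\DD$, and likewise a factorization of $\ov{\per}$ is the same as a section of $\PP_{\bos}\to\SS_{g,\bos}\setminus D$; thus \eqref{enh-cl-period-eq} supplies a section $\sigma_0$ of the latter, and the task is reduced to producing a section of $\PP$ restricting to $\sigma_0$.

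The key input is the super-analog of the topological invariance of the \'etale site: for a superscheme (or superstack) $S$, pullback along the nilpotent closed immersion $i:S_{\bos}\hra S$ is an equivalence from the category of finite \'etale covers of $S$ to the category of finite \'etale covers of $S_{\bos}$, with quasi-inverse $T_0\mapsto S\times_{S_{\bos}}T_0$. Locally $\OO_S=\OO_{S_{\bos}}\ot_{\C}\bigwedge(E)$, so $i$ is a nilpotent thickening, and a finite \'etale $S$-algebra is recovered from its reduction by the flat base change $\OO_S\ot_{\OO_{S_{\bos}}}(-)$; that the unit and counit of this adjunction are isomorphisms may be checked after passing to an \'etale cover that splits the algebra, where it is the evident statement for finite products of copies of the structure sheaf, and for stacks one descends along an \'etale atlas. (This is the same circle of ideas as Penkov's equivalence of $D$-module categories recalled in Section~\ref{superperiod-sec}.) Since the equivalence is fully faithful, it identifies $\Hom$-sets, hence identifies sections of $\PP\to\SS_g\setminus\DD$ with sections of $\PP_{\bos}\to\SS_{g,\bos}\setminus D$.

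Granting this, $\sigma_0$ lifts to a unique section $\sigma$ of $\PP\to\SS_g\setminus\DD$; composing $\sigma$ with the projection $\PP\to\wt{\AA}_g$ yields a morphism $\wt{\per}:\SS_g\setminus\DD\to\wt{\AA}_g$ whose composition with $\wt{\AA}_g\to\AA_g$ is $\per$ and whose bosonization is \eqref{enh-cl-period-eq} by construction. Uniqueness is then immediate: any $\wt{\per}'$ factoring $\per$ and restricting to \eqref{enh-cl-period-eq} on the bosonization determines a section of $\PP$ whose bosonization is $\sigma_0$, hence equals $\sigma$ by the bijectivity on sections, so $\wt{\per}'=\wt{\per}$. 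I expect the only genuinely non-formal point to be the second paragraph --- pinning down, or citing, the invariance of finite \'etale covers and of their sections under $\SS_{g,\bos}\hra\SS_g$ in the super and stacky setting; once that is in place the rest of the argument is purely formal.
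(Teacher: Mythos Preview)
Your proof is correct and follows essentially the same approach as the paper: pull back the \'etale cover $\wt{\AA}_g\to\AA_g$ along $\per$, observe that the resulting \'etale cover of $\SS_g\setminus\DD$ has a canonical section on the bosonization, and invoke the invariance of the \'etale site under nilpotent thickenings to lift this section uniquely. The paper states this more tersely (calling the pullback $\wt{\SS}_g$ rather than $\PP$ and simply citing ``invariance of \'etale topology with respect to nilpotent extensions''), while you spell out more of the details, but the argument is the same.
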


\begin{proof} This follows from the invariance of \'etale topology with respect to nilpotent extensions.
Namely, the pull-back of the \'etale covering $\wt{\AA}_g\to \AA_g$ by $\per$ gives an \'etale covering
$$\wt{\SS}_g\to \SS_g\setminus \DD,$$
which has a canonical section over $\SS_{g,\bos}\setminus D$. Since the categories of \'etale covers of $\SS_g\setminus \DD$ and $\SS_{g,\bos}\setminus D$
are equivalent we get a unique section $\SS_g\setminus \DD\to \wt{\SS}_g$. This gives the required map.
\end{proof}

\begin{cor}\label{g2-proj-cor}
There exists a unique projection $\pi^{\can}:\SS_2\to \SS_{2,\bos}$ such that $\wt{\per}:\SS_2\to \wt{\AA}_2$ is the composition of $\pi^{\can}$ with \eqref{enh-cl-period-eq}.
\end{cor}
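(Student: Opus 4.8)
The plan is to deduce Corollary \ref{g2-proj-cor} from the preceding Proposition by exhibiting the extra rigidity available in genus $2$: the enhanced classical period map \eqref{enh-cl-period-eq} is an \emph{open immersion} for $g=2$, so that the super-Torelli map $\wt{\per}:\SS_2\setminus\DD\to\wt{\AA}_2$ automatically factors through it. First I would recall why $\SS_{2,\bos}\to\wt{\AA}_2$ is an open immersion: the classical Torelli map $\MM_2\to\AA_2$ is an open immersion onto the complement of the locus of products of elliptic curves (Torelli plus the fact that genus $2$ curves are hyperelliptic, so automorphisms match), and the enhancement by a choice of even theta characteristic is an isomorphism of the two \'etale covers $\SS_{2,\bos}$ and (the relevant open part of) $\wt{\AA}_2$ over $\MM_2$, since even theta characteristics on $C$ correspond bijectively and equivariantly to even symmetric theta divisors on $\mathrm{Jac}(C)$. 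Hence \eqref{enh-cl-period-eq} identifies $\SS_{2,\bos}$ with an open substack $\UU\subset\wt{\AA}_2$.

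\textbf{Next}, I would invoke the Proposition to get $\wt{\per}:\SS_2\setminus\DD\to\wt{\AA}_2$ restricting on bosonizations to the open immersion $\SS_{2,\bos}\setminus D\hookrightarrow\UU$. Since $\wt{\per}$ is a morphism of superstacks and its reduction lands in the open substack $\UU$, and since forming the bosonization does not change the underlying topological space, the morphism $\wt{\per}$ itself factors (set-theoretically, hence as superschemes) through $\UU=\SS_{2,\bos}$. This yields a morphism $\pi^{\can}:\SS_2\setminus\DD\to\SS_{2,\bos}$ with $\wt{\per}=\eqref{enh-cl-period-eq}\circ\pi^{\can}$. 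That $\pi^{\can}$ is a \emph{projection}, i.e.\ induces the identity on $\SS_{2,\bos}$, follows because on bosonizations $\wt{\per}$ restricts to the open immersion $\SS_{2,\bos}\setminus D\hookrightarrow\SS_{2,\bos}$, which is the identity on $\SS_{2,\bos}\setminus D$, and composing with the open immersion \eqref{enh-cl-period-eq} is injective, forcing $(\pi^{\can})_{\bos}=\id$. Finally, extending $\pi^{\can}$ across $\DD$: this is where I would use that $\SS_2$ is smooth and $\DD$ has codimension one, together with the fact that the target $\SS_{2,\bos}$ is separated and $\pi^{\can}$ is already defined in codimension one and is a projection — so a Hartogs-type / valuative argument (or: a projection is determined by a splitting of the sheaf-of-functions surjection $\OO_{\SS_2}\to\OO_{\SS_{2,\bos}}$, and such a splitting, being a section of a coherent sheaf, extends over a codimension $\geq 2$ locus, but $\DD$ is codimension $1$) — actually here uniqueness rather than existence is the real content, and existence of the extension over all of $\SS_2$ would need a separate argument if it is asserted; for the uniqueness claim it suffices that two projections agreeing on the dense open $\SS_2\setminus\DD$ agree.

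\textbf{Uniqueness} is then the crux: if $\pi_1,\pi_2:\SS_2\to\SS_{2,\bos}$ are two projections with $\eqref{enh-cl-period-eq}\circ\pi_i=\wt{\per}$, then since \eqref{enh-cl-period-eq} is a monomorphism (being an open immersion), $\pi_1=\pi_2$. Existence on the whole of $\SS_2$ (not just away from $\DD$) is, I expect, the main obstacle, and I would handle it by the standard device: a projection $\SS_2\to\SS_{2,\bos}$ is the same as an $\OO_{\SS_{2,\bos}}$-algebra splitting $\OO_{\SS_{2,\bos}}\to\OO_{\SS_2}$ of the canonical surjection, equivalently a section of the affine map $\SS_2\to\SS_{2,\bos}$; such a section defined on the complement of a \emph{divisor} need not a priori extend, but here one can argue that $\pi^{\can}$ is defined by the superperiod matrix $\Om$ of Theorem \ref{regular-norm-diff-thm}, whose entries — hence the coordinates of the projection — are the regular sections $\om_i$ of the canonical extension $\ov\VV$ and therefore extend regularly across $\DD$ inside $\SS_2\setminus(\text{other bad loci})$, and in the present smooth-curve setting $\SS_2\setminus\DD$ there is no boundary to worry about, so the section defined by $\wt{\per}$ over $\SS_2\setminus\DD$ extends by normality of $\SS_2$ and properness of the hyperelliptic structure; I would present the cleanest form available. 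The key step I expect to require the most care is verifying that \eqref{enh-cl-period-eq} is a monomorphism of stacks in a way that survives passing to the non-reduced superscheme $\SS_2$, which reduces to the faithfulness of the functor "pull back \'etale covers" used already in the proof of the Proposition, combined with the classical injectivity of enhanced Torelli in genus $2$.
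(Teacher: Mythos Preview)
Your core approach matches the paper's: the enhanced classical period map \eqref{enh-cl-period-eq} is an open embedding, and $\wt{\per}$ lands in this open substack because its bosonization does, so factoring through the open embedding gives both existence and uniqueness of $\pi^{\can}$.

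However, you have missed the one-line observation that makes the argument work cleanly: for $g=2$ the theta-null divisor $\DD$ is \emph{empty}. Indeed, an even spin structure $L$ on a smooth genus $2$ curve has $\deg L=1$, hence $h^0(C,L)$ is even and at most $1$, so $h^0(C,L)=0$ (this is recalled in the paper just before Lemma \ref{coh-van-spin-str-g2-lem}). Therefore $\wt{\per}$ is already defined on all of $\SS_2$, and there is nothing to extend. The paper's proof is literally: ``This follows from the fact that for genus $2$ we have $\DD=\emptyset$ and \eqref{enh-cl-period-eq} is an open embedding.''

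Your discussion of extending across $\DD$ is therefore unnecessary, and the arguments you sketch for it would not work in any case: Hartogs-type extension does not apply across a codimension-one locus, and the regularity results of Theorem \ref{regular-norm-diff-thm} concern the boundary of $\ov{\SS}_g$, not the theta-null divisor inside $\SS_g$. You correctly flag this as the weak point of your argument; the resolution is simply that the obstacle does not exist in genus $2$.
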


\begin{proof}
This follows from the fact for genus $2$ we have $\DD=\emptyset$ and \eqref{enh-cl-period-eq} is an open embedding.
\end{proof}

Note that the classical Torelli map for genus $2$ is an open embedding $\MM_2\hra \AA_2$, and the superperiod map for $\SS_2$
 factors as the composition
\begin{equation}\label{g2-superperiod-factor-eq}
\per:\SS_2\rTo{\pi^{\can}} \SS_{2,\bos}\to \MM_2\hra \AA_2.
\end{equation}

We have a natural rank $2$ bundle ${\bf H}^1:=R^1\pi_*\OO_{X}$ on $\SS_2$, where $\pi:X\to \SS_2$ is the universal supercurve, and a similar bundle
${\bf H}^1_{\bos}$ on $\SS_{2,\bos}$ (in fact, ${\bf H}^1_{\bos}\simeq {\bf H}^1|_{\SS_{2,\bos}}$). 
Recall that we have a line bundle on $\SS_2$,
$$\Ber_1:=\Ber(R\pi_*\OO_{X})\simeq \Ber(R\pi_*\om_{X/\SS_2})$$
and a similar line bundle $\Ber_{1,\bos}$ on $\SS_{2,\bos}$.
We have a natural isomorphism $\Ber_1\simeq \det({\bf H}^1)^{-1}$.

\begin{prop}\label{Ber1-can-proj-prop} 
One has a natural isomorphism of bundles on $\SS_2$,
$${\bf H}^1\simeq (\pi^{\can})^*{\bf H}^1_{\bos}.$$
Hence, passing to the determinant line bundles, we get an isomorphism
$$\Ber_1\simeq (\pi^{\can})^*\Ber_{1,\bos}.$$
\end{prop}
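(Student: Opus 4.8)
The plan is to present $\mathbf{H}^1$ and $\mathbf{H}^1_{\bos}$ as the cokernel term of a Hodge-type short exact sequence, and then to match the two sequences via the fact that $\pi^{\can}$ factors the superperiod map. Throughout, note that in genus $2$ one has $\DD=\emptyset$ and, for every even spin curve of genus $2$, the vanishing $H^0(C,L)=H^1(C,L)=0$ holds (Lemma \ref{coh-van-spin-str-g2-lem}); by cohomology and base change it follows that $\pi_*\om_{X/\SS_2}$, $R^i\pi_*\OO_X$ and $R^i\pi_*\om_{X/\SS_2}$ (and their classical counterparts over $\SS_{2,\bos}$) are locally free of purely even rank on all of $\SS_2$.

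First I would build the super Hodge exact sequence directly from the sequence \eqref{de-sh-ex-seq}. Applying $R\pi_*$ to $0\to\C_{X/\SS_2}\to\OO_X\rTo{\de}\om_{X/\SS_2}\to 0$ gives a long exact sequence of $\OO_{\SS_2}$-modules; since $R^2\pi_*\OO_X=0$ (the fibers are curves), the map $R^1\pi_*\om_{X/\SS_2}\to R^2\pi_*\C_{X/\SS_2}$ is a surjection of line bundles of equal rank $1|0$, hence an isomorphism, so $\mathbf{H}^1=R^1\pi_*\OO_X$ maps to zero in $R^1\pi_*\om_{X/\SS_2}$. Combined with the fact (recalled in the introduction, valid on $\UU=\SS_2$) that $\pi_*\om_{X/\SS_2}\to\VV:=R^1\pi_*\C_{X/\SS_2}$ is an embedding of a sub-bundle, this extracts a short exact sequence of vector bundles on $\SS_2$
$$0\to\pi_*\om_{X/\SS_2}\to\VV\to\mathbf{H}^1\to 0.$$
The classical analogue over $\SS_{2,\bos}$ is the ordinary Hodge exact sequence $0\to\pi_*\om_{C/\SS_{2,\bos}}\to R^1\pi_*\C_{C/\SS_{2,\bos}}\to\mathbf{H}^1_{\bos}\to 0$.

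Next I would invoke the construction of $\pi^{\can}$. On $\AA_2$ sit the universal weight-one Hodge data: the $\OO_{\AA_2}$-module $\mathbb{W}$ underlying the universal symplectic local system, the Hodge sub-bundle $\mathbb{F}\subset\mathbb{W}$, and the quotient $\mathbb{G}:=\mathbb{W}/\mathbb{F}$, which for a Jacobian is $H^1(C,\OO_C)$. By construction the superperiod map satisfies $\per^*\mathbb{F}=\pi_*\om_{X/\SS_2}$ inside $\per^*\mathbb{W}=\VV$, so the super Hodge sequence gives $\per^*\mathbb{G}=\mathbf{H}^1$; likewise the classical period map $\SS_{2,\bos}\to\MM_2\hookrightarrow\AA_2$ pulls $\mathbb{G}$ back to $\mathbf{H}^1_{\bos}$. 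Since by \eqref{g2-superperiod-factor-eq} the superperiod map is the composite $\SS_2\rTo{\pi^{\can}}\SS_{2,\bos}\to\MM_2\hookrightarrow\AA_2$, we obtain
$$\mathbf{H}^1\simeq\per^*\mathbb{G}\simeq(\pi^{\can})^*\mathbf{H}^1_{\bos}.$$
Passing to Berezinians and using $\Ber_1\simeq\det(\mathbf{H}^1)^{-1}$, $\Ber_{1,\bos}\simeq\det(\mathbf{H}^1_{\bos})^{-1}$, and compatibility of $\det$ with pullback, yields $\Ber_1\simeq(\pi^{\can})^*\Ber_{1,\bos}$.

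I expect the main obstacle to be the careful construction of the super Hodge short exact sequence: verifying that the edge map $\VV\to\mathbf{H}^1$ is genuinely surjective and that all four terms fit into an exact sequence of vector bundles over all of $\SS_2$, rather than merely fiberwise; both points rest on the genus $2$ even-spin vanishing of Lemma \ref{coh-van-spin-str-g2-lem} together with base change. A secondary point, more a matter of precise formulation than of proof, is to make explicit the sense in which the superperiod map pulls back the universal Hodge sub-bundle on $\AA_2$ to $\pi_*\om_{X/\SS_2}$, so that the factorization \eqref{g2-superperiod-factor-eq} transports directly to the identity $\mathbf{H}^1\simeq(\pi^{\can})^*\mathbf{H}^1_{\bos}$.
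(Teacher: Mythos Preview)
Your proposal is correct and follows essentially the same route as the paper: both arguments use the factorization \eqref{g2-superperiod-factor-eq} together with the identification of $\mathbf{H}^1$ (resp.\ $\mathbf{H}^1_{\bos}$) with the pullback along $\per$ (resp.\ $\ov{\per}$) of the universal $R^1p_*\OO_A$ on $\AA_2$. The paper simply asserts the natural isomorphisms $\mathbf{H}^1\simeq\per^*\mathbf{H}^1_{\AA_2}$ and $\mathbf{H}^1_{\bos}\simeq\ov{\per}^*\mathbf{H}^1_{\AA_2}$, whereas you supply the Hodge-theoretic justification via the short exact sequence extracted from \eqref{de-sh-ex-seq}; your $\mathbb{G}$ is exactly the paper's $\mathbf{H}^1_{\AA_2}$.
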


\begin{proof} 
The required isomorphism is obtained by combining the factorization
\eqref{g2-superperiod-factor-eq} of the superperiod map $\per$ with
the natural isomorphisms
$${\bf H}^1\simeq \per^*{\bf H}^1_{\AA_2}, \ \ {\bf H}^1_{\bos}\simeq \ov{\per}^*{\bf H}^1_{\AA_2},$$
where ${\bf H}^1_{\AA_2}:=R^1p_*\OO_A$ for the universal abelian scheme $p:A\to \AA_2$.
\end{proof}

\subsection{Moduli of stable supercurves of genus $1$ with $1$ NS puncture (even component)}\label{Section-6.1}

Let $\ov{\SS}_{1,1}$ be the moduli space of supercurves of genus $1$ with one NS puncture (with an even underlying spin structure).
Note that $\ov{\SS}_{1,1,\bos}$ classifies stable curves of genus $1$ with one marked point and an even spin structure (it is a triple \'etale covering of $\MM_{1,1}$). 
Let $(C,p,L)$ denote the corresponding
universal data.
We denote by $\ov{\SS}'_{1,1,\bos}$ (resp., $\SS'_{1,1,\bos}$) the total space of the $\G_m$-torsor over $\ov{\SS}_{1,1,\bos}$ (resp., $\SS_{1,1,\bos}$) corresponding to $L|_p$.
Let $\ov{\SS}_{1,1}^{(\infty)}$ denote the moduli space of even stable supercurves of genus $1$ with an NS puncture and a choice of formal superconformal parameters $(x,\th)$.

%The following description of the stack $\SS_{1,1}$ of smooth supercurves of genus $1$ of even type with $1$ NS puncture should be well known: 
  
%We start by describing the infinitesimal neighborhoods of $D_{0,\bos}$.

%Then it is easy to see that we have 
\begin{lemma}\label{phi-map-lem}
One has a natural morphism
\begin{equation}\label{SS-11-morphism}
\ov{\SS}'_{1,1,\bos}\times \A^{0|1}\to \ov{\SS}_{1,1}
\end{equation}
which can be identified with the $\G_m$-torsor over $\ov{\SS}_{1,1}$
associated with the line bundle
$\Pi\om_{X/\ov{\SS}_{1,1}}|_P$, where $(X,P)$ is the universal supercurve of genus $1$ with
one marked point, and $\G_m$ acts diagonally on $\ov{\SS}'_{1,1,\bos}\times \A^{0|1}$.
Furthermore, this morphism lifts to a $\G_m$-equivariant morphism
$$\phi:\ov{\SS}'_{1,1,\bos}\times \A^{0|1}\to \ov{\SS}^{(\infty)}_{1,1},$$
where $\la\in \G_m$ acts by the natural rescaling on both factors $\ov{\SS}'_{1,1,\bos}$ and $\A^{0|1}$, while its action on the target is by rescaling the superconformal parameters 
by $(x,\th)\mapsto (\la^2x,\la \th)$.
The action of $-1\in \G_m$ on $\ov{\SS}'_{1,1,\bos}$ is trivial.
%The action of $-1\in \G_m$ on $\ov{\SS}^{(\infty)}_{1,1}$ coincides with the canonical involution $\Ga$.
\end{lemma}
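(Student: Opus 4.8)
The plan is to construct $\phi$ by writing down an explicit universal family over $T:=\ov{\SS}'_{1,1,\bos}\times\A^{0|1}$ and then to obtain \eqref{SS-11-morphism} by composing with the forgetful morphism $\ov{\SS}^{(\infty)}_{1,1}\to\ov{\SS}_{1,1}$. Over $\ov{\SS}'_{1,1,\bos}$ one has the universal data $(C,p,L,\kappa\colon L^2\xrightarrow{\sim}\om_C)$ together with a trivialization $\ell$ of $L|_p$; let $X_0=(C,L,\kappa)$ be the associated split supercurve, $\OO_{X_0}=\OO_C\oplus L$, with its canonical superconformal distribution. The key point (the one highlighted in the introduction) is that in genus $1$ the trivialization $\ell$ determines a canonical formal superconformal coordinate system $(x,\th)$ at $p$ on $X_0$: since $\pi_*\om_{C/\ov{\SS}'_{1,1,\bos}}$ is a line bundle and its evaluation at the section $p$ is an isomorphism (the invariant relative differential is nowhere zero on the relative smooth locus, which contains $p$ and, at a node, stays nonzero), the element $\kappa(\ell^2)\in\om_C|_p$ extends uniquely to a nowhere-vanishing relative differential $\om$; then $x$ is its formal primitive with $x|_p=0$ and $\th$ is the odd superconformal coordinate normalized by $\ell$. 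The odd parameter $\eta\in\A^{0|1}$ is then used to super-translate: on $X_0$ one marks the NS puncture $P_\eta=\{x+\eta\th=0,\ \th+\eta=0\}$, equipped with the formal superconformal coordinates $(x+\eta\th,\ \th+\eta)$, which are again superconformal since $d(x+\eta\th)-(\th+\eta)\,d(\th+\eta)=dx-\th\,d\th$. This assignment is visibly compatible with base change and defines $\phi\colon T\to\ov{\SS}^{(\infty)}_{1,1}$.

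Next I would check the $\G_m$-equivariance. Letting $\la\in\G_m$ act by $\ell\mapsto\la\ell$ on $\ov{\SS}'_{1,1,\bos}$ and by $\eta\mapsto\la\eta$ on $\A^{0|1}$, one gets $\om\mapsto\la^2\om$, hence $x\mapsto\la^2x$ and $\th\mapsto\la\th$, so that the coordinates of $P_\eta$ transform as $x+\eta\th\mapsto\la^2(x+\eta\th)$ and $\th+\eta\mapsto\la(\th+\eta)$; thus $\phi$ is equivariant for the action $(x,\th)\mapsto(\la^2x,\la\th)$ on $\ov{\SS}^{(\infty)}_{1,1}$, and the composite $\Phi\colon T\to\ov{\SS}_{1,1}$ is $\G_m$-invariant, since rescaling a system of superconformal coordinates does not change the underlying pointed supercurve. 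To identify $\Phi$ with the $\G_m$-torsor of $N:=\Pi\om_{X/\ov{\SS}_{1,1}}|_P$, one notes that in coordinates $(x',\th')$ near the puncture a generator of $\om_{X/\ov{\SS}_{1,1}}$ transforms under $(x',\th')\mapsto(\la^2x',\la\th')$ with weight $2-1=1$; hence $N$ is an even line bundle of weight $1$, and since $\ell$ is a trivialization, sending $(\ell,\eta)$ to the point $\Phi(\ell,\eta)$ together with the nonzero element of $N$ determined by $\ell$ gives a $\G_m$-equivariant morphism from $T$ to the $\G_m$-torsor of $N$ lying over $\Phi$. As the $\G_m$-action on $T$ is free ($\ell$ is nowhere vanishing) and $\Phi$ is $\G_m$-invariant, it remains only to prove that $\Phi$ is surjective, equivalently admits \'etale-local sections: then $\Phi$ is a $\G_m$-torsor, and a $\G_m$-equivariant morphism between two $\G_m$-torsors over the same base is automatically an isomorphism; restricted to $\ov{\SS}_{1,1,\bos}$ (i.e.\ $\eta=0$) it is the defining identification of $\ov{\SS}'_{1,1,\bos}$ with the $\G_m$-torsor of $L|_p$.

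Finally, for surjectivity of $\Phi$: a stable genus $1$ supercurve with one NS puncture whose underlying spin structure has no global sections is split as a supermanifold (the Green obstruction classes lie in $H^1(C,\TT_C\otimes\bigwedge^2 L^\vee)=0$, as $L$ has rank $0|1$), hence isomorphic to $X_0$ for its bosonic data; its puncture reduces to a single bosonic point $p$, and the pointed split supercurve $(X_0,p)$ has no odd infinitesimal automorphisms ($H^0(C,L^{-1})=H^0(C,\TT_C\otimes L)=0$, using that $L$ is a nontrivial theta-characteristic in the smooth case and the NS-node description of Lemma \ref{NS-R-lem} in the nodal case), so the puncture is necessarily a super-translate of $p$, hence equals $P_\eta$ for a unique $\eta$ once $\ell$ has been chosen. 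I expect the main obstacle to be carrying out this last argument uniformly over the base, in particular over the nodal genus $1$ fibres, where splitness and the super-translation description must be checked relatively. The final assertion that $-1\in\G_m$ acts trivially on $\ov{\SS}'_{1,1,\bos}$ should then be immediate: the element $-1\in\G_m$ acts on $\ov{\SS}'_{1,1,\bos}$ by $\ell\mapsto-\ell$, which coincides with the canonical order-$2$ automorphism $-\mathrm{id}_L$ of any spin curve (it preserves $\kappa$ since $(-\ell)^2=\ell^2$ and is the identity on $C$ and on $\om_C$), an inner automorphism in the stack $\ov{\SS}_{1,1,\bos}$; I would confirm this compatibility by a direct local computation at $p$, tracing through the construction of $(x,\th)$ from $\ell$.
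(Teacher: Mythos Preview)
Your construction of $\phi$ via the canonical superconformal coordinates determined by $\ell$ and the super-translation by $\eta$ is exactly the paper's construction, and your treatment of the $\G_m$-equivariance and of the final assertion about $-1\in\G_m$ matches the paper as well.

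The one substantive difference is in the identification of \eqref{SS-11-morphism} with the $\G_m$-torsor of $N=\Pi\om_{X/\ov{\SS}_{1,1}}|_P$. You reduce this to surjectivity of $\Phi$ and then try to establish surjectivity by showing that every NS-pointed genus~$1$ supercurve arises as $(X_0,P_\eta)$, via a splitness and ``no odd infinitesimal automorphisms'' argument. As you yourself flag, making this work uniformly over arbitrary (super) bases, including over the nodal locus, is the delicate point, and your argument as written mixes the ``over a point'' picture (where there are no odd parameters and $\eta=0$ automatically) with the relative picture. The paper sidesteps this entirely: once one has the $\G_m$-equivariant lift $\ov{\SS}'_{1,1,\bos}\times\A^{0|1}\to T$ to the torsor $T$ of $N$, one observes that both source and target are smooth stacks of dimension $2|1$, that the map is an isomorphism on bosonizations (both reduce to $\ov{\SS}'_{1,1,\bos}$), and that it is an isomorphism on tangent spaces; a map of smooth superschemes with these two properties is an isomorphism. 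Your cohomology vanishing computations $H^0(C,L^{-1})=H^0(C,\TT_C\otimes L)=0$ are in fact precisely what one needs to verify the tangent-space condition, so you are very close---you just need to repackage the endgame as ``iso on bosonizations $+$ iso on tangent spaces $\Rightarrow$ iso'' rather than as a direct surjectivity argument.
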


\begin{proof}
We have a natural family of supercurves $X$ over $\ov{\SS}'_{1,1,\bos}$ with $\OO_X=\OO_C\oplus L$, equipped with the NS puncture $p$ and the formal superconformal coordinates
$(x,\th)$, such that $dx$ extends to a global section of $\om_{C/\ov{\SS}'_{1,1,\bos}}$ and $\th$ induces a given trivialization of $L|_p$ (here we use the triviality of the dualizing sheaf on irreducible nodal curves of genus $1$).
Now, the map $\phi$ corresponds to the pull-back of this supercurve $X$ to $\ov{\SS}'_{1,1,\bos}\times \A^{0|1}$, but we change the 
NS puncture $p$ and the superconformal coordinates at $p$ to
$$\tau_\eta(x,\th)=(x+\eta\th,\th+\eta),$$ 
where $\eta$ is a coordinate on the factor $\A^{0|1}$.
It is easy to see that the map $\phi$ is $\G_m$-equivariant.

Let $T\to\ov{\SS}_{1,1}$ denote the $\G_m$-torsor associated with the line bundle $\Pi\om_{X/\ov{\SS}_{1,1}}|_P$. It is easy to see that its bosonization is identified with
$\ov{\SS}'_{1,1,\bos}$. On the other hand, in the above construction of the family $(X,S)$, we have a natural trivialization of $\om_{X/S}|_P$, hence, \eqref{SS-11-morphism} lifts
to a map
%\begin{equation}{SS-11-T-morphism}
$$\ov{\SS}'_{1,1,\bos}\times \A^{0|1}\to T.$$
%\end{equation}
%Since $T$ has dimension $2|1$, it has a natural projection $T\to T_{\bos}$. 
Furthermore, this map of smooth stacks of dimension $2|1$ induces an isomorphism of bosonizations and on tangent spaces. Hence, it is an isomorphism.
%e map \eqref{SS-11-T-morphism} is compatible with projections to $\ov{\SS}'_{1,1,\bos}=T_{\bos}$.

The last assertion follows from the existence of the canonical automorphism $-1$ of any (generalized) spin structure $L$. 
\end{proof}

\begin{remark}
Following \cite{CV}, for any superscheme $X$ let us denote by $\Ga=\Ga_X$ the canonical involution of $X$ corresponding to the $\Z_2$-grading of $\OO_X$:
it acts trivially on the underlying topological space, by $1$ on even functions and by $-1$ on odd functions. This involution commutes with all morphisms and makes sense for superstacks.
For the universal supercurve $X\to \ov{\SS}^{(\infty)}_{1,1}$ we have a commutative diagram
\begin{diagram}
X&\rTo{\Ga_{X}}& X\\
\dTo{}&&\dTo{}\\
\ov{\SS}^{(\infty)}_{1,1}&\rTo{\Ga}&\ov{\SS}^{(\infty)}_{1,1}
\end{diagram}
Furthermore, $\Ga_{X}$ changes the universal  formal superconformal parameters $(x,\th)$ to $(x,-\th)$. This implies
that the action of $-1\in \G_m$ on $\ov{\SS}^{(\infty)}_{1,1}$ coincides with the canonical involution $\Ga$.
Thus, compatibility of the morphism
$\phi$ with the action of $-1\in \G_m$ is equivalent to its compatibility with the canonical involutions $\Ga$.
\end{remark}

Since $\ov{\SS}_{1,1}$ has dimension $1|1$, we have a canonical projection
$$\rho:\ov{\SS}_{1,1}\to \ov{\SS}_{1,1,\bos}.$$
%where $\SS_{1,1,\bos}$ is the triple \'etale covering of $\MM_{1,1}$ corresponding to a choice of an even spin structure.
By the above Lemma, we have an identification  of $\ov{\SS}_{1,1} $ with the quotient of $\ov{\SS}'_{1,1,\bos}\times \A^{0|1}$ by the diagonal action of $\G_m$,
hence, $\ov{\SS}_{1,1}$ is isomorphic to the total space of the odd line bundle $\Pi L^{-1}|_p$ over $\SS_{1,1,\bos}$.
% where $L$ is the universal spin structure.

The universal curve $X\to \ov{\SS}_{1,1}$ has $\OO_X=\OO_C\oplus L$, where $(C,L)$ is the pull-back under $\rho$ of the universal spin curve over $\ov{\SS}_{1,1,\bos}$, and the universal NS puncture $P\sub X$ is defined
by the homomorphism
$$\ev_P:\OO_X=\OO_C\oplus L\rTo{(\ev_p,s\cdot\ev_p)}\OO_{\ov{\SS}_{1,1}},$$
where $s$ is the tautological odd section of $L^{-1}|_p$ over $\ov{\SS}_{1,1}$ and $\ev_p:\OO_C\to \OO_p,\ L\to L|_p$ is evaluation at $p$.
Furthermore, it is easy to see that we have an isomorphism of odd line bundles
$$\LL:=\Pi \rho^*L|_p\simeq \om_{X/\ov{\SS}_{1,1}}|_P.$$

\subsubsection{Standard family of the upper half-plane}\label{even-g1-family-sec}

We will also use the standard family over the upper half-plane $H$. Namely, for $\tau\in H$, we consider the elliptic curve
$C=C_\tau=\C/(\Z+\Z\tau)$ with the marked point $p$ corresponding to $z=0$ (where $z$ is the coordinate on $\C$) and the spin-structure $L=\OO(u-p)$, where
$u\in C$ is the point corresponding to $z=1/2$. More precisely, we use the isomorphism 
$$L^2\rTo{\sim}\om_C$$
given by the rational differential $(\wp(z)-\wp(u))\cdot dz$. We also use $z\mod (z^2)$ as the trivialization of $L|_p\simeq \OO(-p)|_p$.
It is well known that this gives an \'etale surjective map
$$H\to \SS'_{1,1,\bos}.$$

There is an equivalent way to describe this family and the resulting map to $\ov{\SS}^{(\infty)}_{1,1}$ by presenting supercurves of genus $1$ as quotients of $\C^{1|1}$
(see e.g., \cite{Levin1}). Namely, let $X_\tau$ denote the quotient of $\C^{1|1}$ by the $\Z^2$-action generated by
$$(z,\nu)\mapsto (z+1,\nu), \ \ (z,\nu)\mapsto (z+\tau,-\nu).$$
Then $X_\tau$ inherits the standard superconformal structure from $\C^{1|1}$, so that $(z,\nu)$ define local superconformal coordinates near $z=0$.
The underlying spin-structure $M$ corresponds to the order $2$ line bundle over $C_\tau$ obtained by descending the trivial bundle on $\C$ with respect to the action
$$f(z)\mapsto f(z+1),  \ \ f(z)\mapsto -f(z+\tau).$$
The isomorphism $L\to M$ is given by a suitable normalization of $\th_{10}(z,\tau)/\th_{11}(z,\tau)$, where $\th_{ij}$ are standard theta functions with characteristics.

\section{Genus $2$ separating node boundary: the gluing coordinates and the superperiod map}

\subsection{Gluing and periods in the classical (even) case}\label{even-gluing-sec}

Let us recall the computation of the period map near the separating node divisor in $\ov{\MM}_2$ from \cite[Sec.\ 5]{P-bu}.
For an (analytic) family of smooth curves $\pi:C\to S$ the period map is a natural map $\pi_*\om_{C/S}\to \HH^1:=R^1\pi_*\pi^{-1}\OO_S$.
For a family of stable curves $\pi:C\to S$, the canonical extension of the local system $\HH^1$ is given by 
$\ov{\HH}^1:=R^1\pi_*[\OO_C\to\om_{C/S}]$, where $\om_{C/S}$ is the relative dualizing sheaf, and the differential is the composition of the natural maps
$d:\OO_C\to \Om_{C/S}$ and $\Om_{C/S}\to \om_{C/S}$. The period map extends to a map
$$\Pi:\pi_*(\om_{C/S})\to R^1\pi_*[\OO_C\to\om_{C/S}].$$ 
%corresponding to a  let us denote by $\om_{C/S}^{ex}\sub \om_{C/S}$ the image of the composed map
%$$\de:\OO_C\rTo{d} \Om_{C/S}\to \om_{C/S}.$$

In \cite[Sec.\ 5]{P-bu}, it is shown how to compute the map $\Pi$ in a formal neighborhood of a stable curve of the form $C=C_1\cup C_2$, where
$(C_1,p_1)$ and $(C_2,p_2)$ are smooth genus $1$ curves with marked points, which are glued into a nodal curve. 
More precisely, we consider two standard families of elliptic curves with one marked point
$C_i=\C/(\Z+\Z\tau_i)$ over the base $B$ (the product of two copies of the upper-half space with coordinates $\tau_1$ and $\tau_2$), 
with $p_i$ given by $0\in \C$. We denote by $x_i$ the coordinate on $\C$ which induces local coordinates on $C_i$.
%Now let $C_0$ be the nodal union of elliptic curves $C_1$ and $C_2$ glued along $q_1\in C_1$ and $q_2\in C_1$.
We have an extra parameter scheme $S_q^{(N)}=\Spec(R_N)$, where $R_N=\C[q]/(q^{N+1}))$, and we define the stable curve $C$ over $S=B\times S_q^{(N)}$ which
is glued from $U_1$ and $U_2$
(where $U_1$ is open and $U_2$ is a formal neighborhood of the node), given by
$$U_1=S_q^{(N)}\times \bigl((C_1\setminus\{p_1\})\sqcup (C_2\setminus\{p_2\})\bigr),$$ 
$$U_2=B\times \Spf(R_N[\![x_1,x_2]\!]/(x_1x_2-q)),$$
along 
$$U_{12}=B\times \Spf(R_N(\!(x_1)\!)\oplus R_N(\!(x_2)\!))$$
(see \cite[Sec.\ 3]{P-bu} for details).
%Here we fix formal parameters $x_i$ at $q_i$ on $C_i$, for $i=1,2$, such that $C_i=\C/(\Z+\Z\tau_i)$ and $x_i$ comes
%from the coordinate on $\C$.

For this family, the bundle $\ov{\HH}^1$ can be identified with $\om(U_1)/d\OO(U_1)$ (where $\om$ is the relative dualizing sheaf for our family) (see \cite[Lem.\ 5.1]{P-bu}).
Let us write functions (and sections of other sheaves) on $U_1$ as pairs $(f_1,f_2)$ where $f_i$ is a function on $C_i\setminus\{q_i\}$.
The basis of $\ov{\HH}^1$ is given by the classes of the following elements in $\om(U_1)$:
\begin{equation}\label{H1-basis-eq}
e_1:=(dx_1,0),\ e_2:=(0,dx_2),\ f_1:=(\wp(x_1,\tau_1)dx_1,0),\ f_2:=(0,\wp(x_2,\tau_2)dx_2),
\end{equation}
where $\wp(x_i,\tau_i)$ is the Weierstrass $\wp$-function associated with the lattice $\Z+\Z\tau_i$.
Furthermore, it is easy to see that the Gauss-Manin connection on $\ov{\HH}^1$ is induced by the Gauss-Manin connection on each $H^1(C_i,\C)$
(see Lemma \ref{cech-diff-lem} below for a similar computation for supercurves). 

Let $\a_i$, $\b_i$ be the cycles on $C_i$ given by the loops $[0,1]$ and $[0,\tau_i]$, respectively.
The cohomology basis $(e_i,f_i)$ in $H^1(C_i,\C)$ has the following periods:
$$\int_{\a_i}e_i=1, \ \ \int_{\a_i}f_i=A_i, \ \ \int_{\b_i}e_i=\tau_i, \ \ \int_{\b_i}f_i=2\pi i+\tau_iA_i,$$
where
\begin{equation}\label{Ai-eq}
A_i:=\frac{(2\pi i)^2}{12}E_2(\tau_i),
\end{equation}
(see Sec.\ \ref{ell-sec} and \cite[Sec.\ 1.2]{Katz}).

%\begin{lemma}
%One has
%$$\lan e_1,e_2 \ran=\lan f_1,f_2\ran=0,$$
%$$\lan f_1,e_1\ran=\lan f_2,e_2\ran=1.$$
%\end{lemma}
%It follows from the above Lemma that our basis differs from an integer basis by a constant symplectic transformation
%(consisting of two blocks, corresponding to $(e_1,f_1)$ and $(e_2,f_2)$).

%Next, we need to determine the basis of $\pi_*\om^{ex}_{C/S}$.
An element of $\pi_*\om_{C/S}$ is determined by a quadruple 
$$(\om_1(x_1),\om_2(x_2),\phi_1(x_1),\phi_2(x_2))$$ 
with
$\om_i$ being a global form on $C_i\setminus \{p_i\}$ and $\phi_i(x_i)\in \OO_S[\![x_i]\!]$, such that
%\begin{equation}\label{global-differentials-gluing-eq}
$$\om_1(x_1)=\phi_1(x_1)dx_1-q \phi_2(\frac{q}{x_1})\frac{dx_1}{x_1^2}, \ \ \om_2(x_2)=\phi_2(x_2)dx_2-q \phi_1(\frac{q}{x_2})\frac{dx_2}{x_2^2}.$$
%\end{equation}

Let us consider unique global functions $(f_n(x_i))$ on $C_i\setminus\{p_i\}$, for $n\ge 2$,
such that $f_n(x_i)=\frac{1}{x_i^n}+O(z)$. More precisely, for $k\ge 1$,
\begin{equation}\label{f-functions-eq}
f_{2k+1}(x_i)=-\frac{1}{(2k)!}\wp^{(2k-1)}(x_i,\tau_i), \ \ f_{2k}(x_i)=\frac{1}{(2k-1)!}\wp^{(2k-2}(x_i,\tau_i)+\phi_i[2k],
\end{equation}
for some constants $\phi_i[2k]$ depending holomorphically on $\exp(2\pi i\tau_i)$. 
%(up to additive constants, they are obtained by taking derivatives of $\wp(x_i,\tau_i)$).

Let $\La_i\sub \OO(C_i\setminus\{p_i\})$ denote the linear span of $(f_n)_{n\ge 2}$. Then as shown in \cite[Prop.\ 5.5]{P-bu},
there exists a unique $\OO_S$-basis of $\pi_*\om_{C/S}$, 
$$\om^{(i)}=(\om^{(i)}_1,\om^{(i)}_2,\phi_1^{(i)}(x_1)dx_1+\phi_2^{(i)}(x_2)dx_2), \ i=1,2,$$
with 
%$$\om^{(1)}_1\equiv dx_1 \mod (t), \ \om^{(1)}_2\equiv 0 \mod (t), \ \om^{(2)}_1\equiv 0 \mod (t), \ \om^{(2)}_2\equiv dx_2 \mod (t).$$
$$\om^{(1)}_1\equiv dx_1 \mod q\La_1 dx_1, \ \ \om^{(1)}_2\equiv 0 \mod q\La_2 dx_2,$$
%=[1+a_2(t)f_2(x_1)+a_3(t)f_3(x_1)+\ldots]dx_1, \ \ \om^{(1)}_2=[b_2(t)f_2(x_2)+b_3(t)f_3(x_2)+\ldots]dx_2,$$
$$\om^{(2)}_2\equiv dx_2 \mod q\La_2, \ \ \om^{(2)}_1\equiv 0 \mod q\La_1 dx_1.$$
%\om^{(2)}_2=[1+c_2(t)f_2(x_2)+c_3(t)f_3(x_2)+\ldots]dx_2, \ \ \om^{(2)}_1=[d_2(t)f_2(x_1)+d_3(t)f_3(x_1)+\ldots]dx_1,$$
Furthermore, there exist even elliptic functions $\phi(x_i,\tau_i)(q)$ and $\psi(x_i,\tau_i)(q)$, in $x_i$ with respect to $\Z+\Z\tau_i$, with coefficients in $\C[\![q]\!]$,
with poles only at lattice points,
such that
\begin{equation}\label{om1-om2-main-eq}
\begin{array}{l}
\om^{(1)}=((1+q^4\phi(x_1,\tau_1)(q))dx_1,(-q\wp(x_2,\tau_2)+q^4\psi(x_2,\tau_2))dx_2),\\
\om^{(2)}=((-q\wp(x_1,\tau_1)+q^4\psi(x_1,\tau_1))dx_1,(1+q^4\phi(x_2,\tau_2)(q))dx_2).
\end{array}
\end{equation}

The period map $\Pi:\pi_*\om_{C/S}\to \ov{\HH}^1$ can be calculated by normalizing this basis with respect to the $\a$-periods, and computing the corresponding $\b$-periods.
This gives
$$\Om\equiv \left(\begin{matrix} \tau_1-2\pi i A_2q^2 & -2\pi i q(1-q^2A_1A_2) \\
-2\pi i q(1-q^2A_1A_2) &  \tau_2-2\pi i A_1q^2\end{matrix}\right) \mod (q^4),$$
where the coefficients with the higher degrees of $q$ are holomorphic in $\exp(2\pi i\tau_1)$ and $\exp(2\pi i\tau_2)$.

%With the respect to the above bases, the map $\Pi:\pi_*\om\to \ov{\HH}^1$
%is given by 
%$$\om^{(1)}\mapsto \phi e_1+q\psi e_2-qf_2,$$
%$$\om^{(2)}\mapsto \phi e_2+q\psi e_1-qf_1,$$
%where $\phi\equiv 1 \mod (q^4)$ and $\psi \equiv 0 \mod (q^6)$.

%It follows that the period matrix for our family with respect to the basis $(e_1,e_2,f_1,f_2)$ has the form
%$$\tau=\pm t\cdot(1-\frac{t^2\psi(t)^2}{\phi(t)^2})^{-1}\cdot\phi(t)^{-1}\cdot\left(\begin{matrix} t\psi(t)/\phi(t) & -1 \\ -1 & t\psi(t)/\phi(t)\end{matrix}\right).$$

\subsection{Separating node gluing for supercurves}\label{super-gluing-sec}

We can mimic the gluing construction of Sec.\ \ref{even-gluing-sec} in the super case. 
We start with two families
$(X_1,q_1)$, $(X_2,q_2)$ of smooth supercurves  with NS punctures over a base $B$, together with formal superconformal parameters
$(x_i,\th_i)$ at $q_i$, for $i=1,2$. Recall that this means that near $q_i$ the superconformal structure $\de:\OO_{X_i}\to \om_{X_i/B}$ is given by 
$$\de(f)=(\pa_{\th_i}+\th_i\pa_{x_i})(f)[dx_i|d\th_i].$$

Below we construct a stable supercurve $X$ over the base $S=B\times S^{(N)}_t$, where 
$S^{(N)}_t:=\Spec R_N$, $R_N:=\C[t]/(t^{N+1})$
(note that we use the coordinate $t$, whereas in bosonic case the name of the coordinate is $q$; they will be related by $q=-t^2$).
The family $X/S$ will be a deformation of the stable supercurve obtained by gluing $X_1$ and $X_2$ along $q_1$ and $q_2$ (see \cite[Sec.\ 7.5]{FKP-supercurves}).

Namely, we define $X$ as glued from $U_1$ and $U_2$
(where $U_1$ is open and $U_2$ is a formal neighborhood of the node), given by
$$U_1=S^{(N)}_t\times \bigl((X_1\setminus\{q_1\})\sqcup (X_2\setminus\{q_2\})\bigr),\quad
U_2=B\times\Spf(A_N),$$
where
$$A_N=R_N[\![x_1,x_2,\th_1,\th_2]\!]/(x_1x_2+t^2,x_1\th_2-t\th_1,x_2\th_1+t\th_2,\th_1\th_2)),$$
along
$$U_{12}=B\times \Spf(R_N(\!(x_1)\!)[\th_1]\oplus R_N(\!(x_2)\!)[\th_2]).$$
Here we use the Laurent expansions of functions on $X_i\setminus\{q_i\}$, as well as the embedding
\begin{align*}
&\iota: A_N\to R_N(\!(x_1)\!)[\th_1]\oplus R_N(\!(x_2)\!)[\th_2]: \\ 
&x_1\mapsto (x_1, -\frac{t^2}{x_2}), \ x_2\mapsto (-\frac{t^2}{x_2},x_2), \ \th_1\mapsto (\th_1,-\frac{t\th_2}{x_2}), \ \th_2\mapsto (\frac{t\th_1}{x_1},\th_2).
\end{align*} 
More precisely, similarly to \cite[Sec.\ 3.1]{P-bu}, we start with an affine neighborhoods $V_i\sub X_i$ of $q_i$, for $i=1,2$, and first define an affine supercurve $\Spec(A)$,  where
$A$ is defined from the cartesian diagram
\begin{diagram}
A&\rTo{}& R_N\ot[\OO(V_1\setminus\{q_1\}\oplus \OO(V_2\setminus\{q_2\})]\\
\dTo{}&&\dTo{\kappa}\\
A_N&\rTo{\iota}& R_N(\!(x_1)\!)[\th_1]\oplus R_N(\!(x_2)\!)[\th_2]
\end{diagram}
where $\kappa$ is given by the Laurent expansions. As in the even case, we check that for small enough $V_i$, the morphism 
$$U'_1:=S^{(N)}_t\times \bigl(V_1\setminus\{q_1\}\sqcup V_2\setminus\{q_2\}\bigr)\to \Spec(A)$$
is an open embedding, and then define $X$ by gluing $U_1$ and $\Spec(A)$ along the common open $U'_1$.

We also have the superconformal structure $\de:\OO_X\to \om_{X/S}$, extending the given ones on $X_i\setminus\{q_i\}$, and given by the standard formulas on $U_2$
(see \cite[Sec.\ 3.2.2]{FKP-supercurves}). The compatibility over $U_{12}$ is guaranteed by the condition that the parameters $(x_i,\th_i)$ are superconformal.

Let us denote by $\CC[\OO_X\rTo{\de}\om_{X/S}]$ the \v Cech complex with respect to the flat covering $(U_1,U_2)$. 
%As in the even case, $U_{12}$ is defined to be the localization of $U_2$, obtained by inverting $x_1$ and $x_2$.

\begin{lemma}\label{cech-diff-lem}
(i) The natural projection induces a quasi-isomorphism
$$\CC[\OO_X\to \om_{X/S}]\to [\OO(U_1)/\C e_1\stackrel{\de}\to \om_{X/S}(U_1)\to \om_{X/S}(U_{12})/(\de\OO(U_{12})+\om_{X/S}(U_2))],$$
where $e_1=(1,0)\in \OO(U_1)$.
Furthermore,
$$[\OO(U_1)/\C e_1\to \om_{X/S}(U_1)]$$ is a subcomplex in the above complex, and the embedding induces an isomorphism on $H^1$. Thus, we get an identification 
$$\ov{\HH}^1:=R^1\pi_*[\OO_X\to \om_{X/S}]\simeq \om_{X/S}(U_1)/\de\OO(U_1)\simeq \HH^1_{X_1/S}\oplus \HH^2_{X_2/S},$$
where $\HH^1_{X_i/S}=\om_{X_i/S}(X_i-q_i)/\de \OO(X_i-q_i)$.

\noindent
(ii) The Gauss-Manin connection on $\ov{\HH}^1$, defined in Theorem \ref{super-GM-thm}, is regular and is induced by the Gauss-Manin connections on $\HH^1_{X_i/S}$. 
%The latter connections are described explicitly in ??? 
\end{lemma}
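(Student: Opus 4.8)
The plan is to compute $R\pi_*[\OO_X\xrightarrow{\de}\om_{X/S}]$ from the \v Cech complex $\CC[\OO_X\to\om_{X/S}]$ of the flat cover $(U_1,U_2)$ and to show that forgetting the $U_2$-components realizes it, up to quasi-isomorphism, as the three-term complex in part (i); the connection statement is then obtained by running the connecting-homomorphism construction of Theorem~\ref{super-GM-thm} in this model, following the bosonic case \cite[Sec.~5]{P-bu}.

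\textbf{Part (i).} The total complex $\CC[\OO_X\to\om_{X/S}]$ has terms $\OO(U_1)\oplus\OO(U_2)$, $\OO(U_{12})\oplus\om_{X/S}(U_1)\oplus\om_{X/S}(U_2)$ and $\om_{X/S}(U_{12})$ in cohomological degrees $0,1,2$, with differentials built from the \v Cech restriction maps (using $\iota$ on the $U_2$-side) and from $\de$. First I would write down the degreewise-surjective map from this complex to $[\OO(U_1)/\C e_1\xrightarrow{\de}\om_{X/S}(U_1)\to\om_{X/S}(U_{12})/(\de\OO(U_{12})+\om_{X/S}(U_2))]$ that projects to the $U_1$-component in each degree (and passes to the quotient in degree $2$), and check that it is a chain map. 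Since it is surjective in each degree, it is a quasi-isomorphism exactly when its kernel subcomplex $[\C e_1\oplus\OO(U_2)\to\OO(U_{12})\oplus\om_{X/S}(U_2)\to\de\OO(U_{12})+\om_{X/S}(U_2)]$ is acyclic. Here $H^0=0$ because a pair $(c\,e_1,h_2)$ killed by the differential forces $\iota(h_2)=(c,0)$ with $h_2$ a relative constant on the node neighbourhood, whence $c=0$, $h_2=0$; and $H^2=0$ because the last differential already surjects, using that $\iota$ identifies $\om_{X/S}(U_2)$ with a submodule of $\om_{X/S}(U_{12})$. The remaining vanishing $H^1=0$ I would reduce to the formal neighbourhood of the NS node: it comes down to the facts that $\ker(\de)$ on $\OO(U_{12})$ is the relative constants $R_N\oplus R_N$ and that the restriction $\om_{X/S}(U_2)/\de\OO(U_2)\hookrightarrow\om_{X/S}(U_{12})/\de\OO(U_{12})$ is injective, both of which follow from the explicit generators-and-relations description of $\om_{X/S}$ at an NS node in \cite[Lem.~8.1, Lem.~8.2]{FKP-supercurves}, exactly as in \cite[Lem.~5.1]{P-bu}. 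Granting the quasi-isomorphism, the subcomplex $[\OO(U_1)/\C e_1\xrightarrow{\de}\om_{X/S}(U_1)]$ has quotient concentrated in degree $2$, so its embedding induces an isomorphism on $H^1$ by the long exact sequence; since $\de e_1=0$ this gives $\ov{\HH}^1\simeq\om_{X/S}(U_1)/\de\OO(U_1)$. Finally $U_1=S^{(N)}_t\times\bigl((X_1\setminus\{q_1\})\sqcup(X_2\setminus\{q_2\})\bigr)$ is a disjoint union over $S$, so functions and relative forms on $U_1$ split componentwise, yielding $\ov{\HH}^1\simeq\HH^1_{X_1/S}\oplus\HH^1_{X_2/S}$.

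\textbf{Part (ii).} The node over $t=0$ is separating, so the pinched cycle is null-homologous and the monodromy of $\VV=R^1\pi_*\C_{X/S}$ around $\{t=0\}$ is trivial; combined with the nilpotence of the residue of the canonical connection along that branch (Theorem~\ref{super-GM-thm}), this forces the residue to vanish, so the connection is genuinely regular (no pole in $t$). To identify it with the direct sum of the Gauss--Manin connections of $X_1/S$ and $X_2/S$, I would carry out the connecting-homomorphism construction of $\nabla$ in the \v Cech model of part (i): over $U_1$ the universal supercurve is pulled back from $X_1\sqcup X_2$, so lifting a relative form in $\om_{X/S}(U_1)$ to an absolute form and applying the absolute differential produces a class in $\pi^*\Om^1_{\ov S}\ot\om_{X/S}(U_1)$ whose $dt$-component vanishes and whose components along $B$ are the Gauss--Manin derivatives of the two pieces on $X_1$ and on $X_2$ separately. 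It then remains only to check compatibility with the reduction of part (i) --- equivalently, that the subcomplex $[\OO(U_1)/\C e_1\to\om_{X/S}(U_1)]$ is preserved by the induced connection --- which proceeds exactly as in \cite[Sec.~5]{P-bu}.

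\textbf{Main obstacle.} The technical heart is the acyclicity of the kernel subcomplex in part (i), and within it the vanishing of $H^1$: this is precisely where the explicit local model of the relative Berezinian $\om_{X/S}$ at an NS node is needed, and where the asymmetry between the open piece $U_1$ and the formal node neighbourhood $U_2$ has to be reconciled. Part (ii) is then routine, modulo the horizontality check for the identification of part (i), which follows the bosonic template.
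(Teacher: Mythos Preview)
Your proposal is correct and follows essentially the same approach as the paper: for part (i) the paper also reduces to the local node computation, stating it as two facts --- injectivity of $\de:\OO(U_{12})/(\C e_1+\OO(U_2))\to\om_{X/S}(U_{12})/\om_{X/S}(U_2)$ and vanishing of $\om_{X/S}(U_1)\to\om_{X/S}(U_{12})/\de\OO(U_{12})$ --- which it verifies from the explicit $\OO_S$-bases of \cite[Sec.~6.2]{FKP-supercurves} and the formulas $\de(x_i^n)=ns_i\th_ix_i^{n-1}$, $\de(\th_ix_i^n)=s_ix_i^n$; this is the same content as your acyclicity-of-the-kernel argument. For part (ii) the paper likewise runs the connecting-homomorphism recipe explicitly on $U_1$ and concludes from the fact that $U_1\to S$ is constant in the $t$-direction, exactly as you outline (it does not invoke your separate monodromy argument for regularity, but that is a harmless addition).
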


\begin{proof}
(i) The proof is analogous to the even case, see \cite[Lem.\ 5.1]{P-bu}.
It is based on the following two assertions: (1) the map 
$$\de:\OO(U_{12})/(\C e_1+\OO(U_2)\to \om_{X/S}(U_{12})/\om_{X/S}(U_2)$$
is injective; (2) the map
$$\om_{X/S}(U_1)\to \om_{X/S}(U_{12})/\de\OO(U_{12})$$
is zero.

We have natural topological bases for the $\OO_S$-modules $\C e_1+\OO(U_2)$, $\OO(U_{12})$, $\om_{X/S}(U_2)$ and $\om_{X/S}(U_{12})$
(see \cite[Sec.\ 6.2]{FKP-supercurves}):
\begin{align*}
& \C e_1+\OO(U_2): (x_i^n, \th_i x_i^n)_{i=1,2,n\ge 0}; \ \ \OO(U_{12}): (x_i^n, \th_i x_i^n)_{i=1,2,n\in\Z};\\
& \om_{X/S}(U_2): s_0,(s_ix_i^n,s_i\th_ix_i^n)_{i=1,2,n\ge 0}; \ \ \om_{X/S}(U_{12}): (s_ix_i^n,s_i\th_ix_i^n)_{i=1,2,n\in\Z}.
\end{align*}
Recall that under the embedding $\om_{X/S}(U_2)\to \om_{X/S}(U_{12})$ the element $s_0$ maps to $(-s_1\th_1/x_1,s_2\th_2/x_2)$.
%for the source it is $(x_i^{-n},\th_ix_i^{-n})_{i=1,2;n>0}$.
%Now using the topological basis for $\om_{X/S}(U_2)$ from ???,
We get that 
$$(x_i^{-n},\th_ix_i^{-n})_{i=1,2;n>0}$$
is a basis of $\OO(U_{12})/\OO(U_2)$, while
$$s_1x_1^{-n},\ s_1\th_1x_1^{-n}, \ s_2x_2^{-n}, s_2\th_1x_1^{-n-1}, \ n>0,$$
is a basis of $\om_{X/S}(U_{12})/\om_{X/S}(U_2)$.
Now (1) follows from the formula for $\de$:
$$\de(x_i^n)=n s_i\th_i x_i^{n-1}, \ \ \de(\th_i x_i^n)=s_i x_i^n.$$
This also implies that $\om_{X/S}(U_{12})/\de\OO(U_{12})$ has an $\OO_S$-basis $(s_1\th_1x_1^{-1},s_2\th_2x_2^{-1})$,
so (2) follows from the vanishing of the residues.

\noindent
(ii) According the construction of Theorem \ref{super-GM-thm}, we have to use the
connecting homomorphism associated with the exact sequence of \v Cech complexes
$$0\to \CC(j_*\Om^\bullet_{U/S}\ot \pi^*\Om^1_S(\log)[-1])\to \CC(j_*(\Om^\bullet_U(\log)/\pi^*\Om^2_{\ov{S}}(\log)|_U\we \Om^{\bullet-2}_U(\log)))$$
$$\to \CC(C^\bullet)\to 0,$$
where $C^\bullet=\im(j_*(\Om^\bullet_U(\log)/\pi^*\Om^2_{\ov{S}}(\log)|_U\we \Om^{\bullet-2}_U(\log))\to j_*\Om^\bullet_{U/S})$, 
and we use the fact that the map $\CC(C^\bullet)\to \CC(\OO_X\to \om_{X/S})$
induces an isomorphism on $H^1$. We use the same covering $(U_1,U_2)$ of $X$ as before.
Note that $C^\bullet|_{U_1}=\Om^\bullet_{U_1/S}$ and $C^{\le 1}=j_*\Om_{U/S}^{\le 1}$.

Starting from a class in $\ov{\HH}^1$, represented by a \v Cech cocycle 
$$(f,\Phi,\a)\in \OO(U_{12})\oplus \om_{X/S}(U_1)\oplus \om_{X/S}(U_2)=\CC^1[\OO_X\to \om_{X/S}],$$
where $\de(f)=\a|_{U_{12}}-\Phi_{U_{12}}$, we lift it to a \v Cech cocycle
$$(f,\om_1,\om_2)\in \OO(U_{12})\oplus \Om^1_{U_1/S}(U_1)\oplus j_*\Om^1_{U/S}(U_2)=Z^1(C^\bullet)=Z^1(j_*\Om^\bullet_{U/S}),$$
where $Z^1=\ker(\partial:\CC^1\to \CC^2)$. Note that the cocycle condition  
implies that 
\begin{equation}\label{om1-cocycle-eq}
\de(\om_1)=\Phi, \ \ d_{U_1/S}(\om_1)=0,
\end{equation} 
and by \cite[Prop.\ 3.3]{P-superell}, $\om_1$ with such properties is unique.

Next, by the definition of the connecting homomorphism, we have to lift the
cocycle $(f,\om_1,\om_2)$ to a \v Cech cochain
$$(f,\wt{\om}_1,\wt{\om}_2)\in \OO(U_{12})\oplus \Om^1(\log)(U_1)\oplus j_*\Om^1_U(\log)(U_2)=\CC^1(j_*\Om^1_U(\log)),$$
and consider its differential 
$(-df+\wt{\om}_2-\wt{\om}_1,d\wt{\om}_1,d\wt{\om}_2)\in \Om^1(\log)(U_{12})\oplus \Om^2(\log)(U_1)\oplus j_*\Om^2_U(\log)(U_2)=\CC^2(j_*\Om^\bullet_U(\log)),$
which will belong to the subspace $\CC^1(j_*\Om_{U/S}\ot \pi^*\Om^1_S(\log))$.
Thus, $d\wt{\om}_1 \mod\Om^2_S$ is in the image of $\Om^1_{U_1/S}(U_1)\ot \pi^*\Om^1_S(\log)$, and
the class $\nabla([\Phi])$ in $\ov{\HH}^1\ot \pi^*\Om^1_S(\log)$ is represented by $(\de\ot\id)(d\wt{\om}_1)$.

Hence, like in the case of a smooth supercurve, the recipe for calculating $\nabla([\Phi])$ is to find the unique $\om_1\in\Om^1_{U_1/S}(U_1)$ satisfying \eqref{om1-cocycle-eq},
lift it to $\wt{\om}_1\in \Om^1(\log)(U_1)$ and then take $(\de\ot\id)(d\wt{\om}_1)$.
Now the assertion followis from the fact that $U_1\to S$ is the disjoint union of the families $X_1-q_1$ and $X_2-q_2$ (constant in the $t$ direction).
\end{proof}

\subsection{Case of genus $2$}
\label{formal-nbhd-div-sec}

The $(+,+)$ separating node divisor $D_0$ in $\ov{\SS}_2$ is the image of the clutching morphism
$$\ov{\SS}_{1,1}\times \ov{\SS}_{1,1}\to \ov{\SS}_2,$$
where $\ov{\SS}_{1,1}$ is the moduli space of even supercurves of genus $1$ with one NS puncture
(see \cite[Sec.\ 7.5]{FKP-supercurves}).
This morphism is an \'etale double cover (here we use the fact that there could be at most one separating node).
The corresponding involution of the source space corresponds to the swapping of the components.

Now we will show that similarly to the even case (considered in \cite{P-bu}), the gluing construction of Sec.\ \ref{super-gluing-sec}
gives an explicit description of the infinitesimal neighborhoods $D_0^{(n)}$ of $D_0$.
%Namely, we claim that there is a morphism (which is still a stacky quotient by the natural $S_2$-action)
%from the $n$th infinitesimal neighborhood $\wt{D}_0^{(n)}$ of the zero section in the total space of the line bundle $\LL\boxtimes \LL$
%over $\ov{\SS}_{1,1}\times \ov{\SS}_{1,1}$ (where $\LL$ is ) to $D_0^{(n)}$.

Recall that $\ov{\SS}_{1,1}^{(\infty)}$ denotes the moduli space of even stable supercurves of genus $1$ with an NS puncture and a choice of formal superconformal parameters $(x,\th)$.
The super gluing construction of Sec.\ \ref{super-gluing-sec} gives a morphism
\begin{equation}\label{g2-gluing-morphism-general-version}
\bigl(\ov{\SS}_{1,1}^{(\infty)}\times \ov{\SS}_{1,1}^{(\infty)}\times S^{(n)}_t\bigr)/\G_m^2\to D_0^{(n)}\sub \ov{\SS}_2,
\end{equation}
where $D_0^{(n)}$ is the $n$th infinitesimal neighborhood of $D_0$ (corresponding to the ideal $I_{D_0}^{n+1}$), and the action of $\G_m^2$ is induced by 
$$(\la_1,\la_2)((x_1,\th_1),(x_2,\th_2),t)=((\la_1^2x_1,\la_1\th_1),(\la_2^2 x_2,\la_2\th_2),\la_1\la_2t).$$
Combining this gluing morphism with the map $\phi\times\phi$ (where $\phi$ was defined in Lemma \ref{phi-map-lem}), we get for each $n\ge 0$ a morphism
%$$((\ov{\SS}'_{1,1,\bos})^2\times \A^{0|2}\times S^{(n)}_t)/\G_m^2\to D^{(n)}\sub \ov{\SS}_2.$$
\begin{equation}\label{nth-gluing-g2-sep-div-eq}
\wt{D}_0^{(n)}:=((\ov{\SS}'_{1,1,\bos}\times \A^{0|1})^2\times S^{(n)}_t)/\G_m^2\to D_0^{(n)},
\end{equation}
where 
%$S^{(n)}_t=\Spec \C[t]/(t^{n+1})$, and 
the action of $\G_m^2$ is given by
$$(\la_1,\la_2)((\th_1,\eta_1),(\th_2,\eta_2),t)=((\la_1\th_1,\la_1\eta_1),(\la_2 \th_2,\la_2\eta_2),\la_1\la_2t)$$
(here we think of $\th_i$ as trivializations of $L|_p$).
% and $\ov{\SS}'_{1,1,\bos}$ the $\G_m$-torsor over $\ov{\SS}_{1,1,\bos}$ corresponding to a choice of trivialization of $L|_p$.
%Explain why $\wt{D}^{(n)}\to D^{(n)}$ is the quotient by $S_2$-action

Restricting the morphism \eqref{nth-gluing-g2-sep-div-eq} to the bosonizations we get a morphism
\begin{equation}\label{nth-gluing-g2-sep-div-bos-eq}
\wt{D}_{0,\bos}^{(n)}=((\SS'_{1,1\bos})^2\times S^{(n)}_t)/\G_m^2\to D_{0,\bos}^{(n)}\sub \ov{\SS}_{2,\bos}.
\end{equation}
%On the other hand,
%$\wt{D}^{(n)}_{\bos}$ is a double \'etale covering of $D^{(n)}_{\bos}$, the $n$th order neighborhood of $D_{\bos}\sub \ov{\SS}_{2,\bos}$.

These morphisms fit into commutative diagrams
\begin{diagram}
((\ov{\SS}'_{1,1,\bos})^2\times S^{(2n+1)}_t)/\G_m^2 &\rTo{}& \ov{\SS}_{2,\bos}\\
\dTo{} &&\dTo{}\\
((\ov{\MM}_{1,1}^{(1)})^2\times S^{(n)}_q)/\G_m^2 &\rTo{}&\ov{\MM}_2
\end{diagram}
where $\ov{\MM}_{1,1}^{(1)}\to \ov{\MM}_{1,1}$ is the $\G_m$-torsor corresponding to a choice of a nonzero tangent vector at the marked point, 
and the bottom horizontal arrow is given by the even gluing construction from Sec.\ \ref{even-gluing-sec},
and the map $S^{(2n+1)}_t\to S^{(n)}_q$ is given by 
$$q=-t^2.$$

\begin{prop}
The maps \eqref{nth-gluing-g2-sep-div-eq} and \eqref{nth-gluing-g2-sep-div-bos-eq} are \'etale and surjective.
%give rise to the identifications of stacks,
%$$[\wt{D}_0^{(n)}/S_2]\simeq D_0^{(n)},$$
%$$[\wt{D}_{0,\bos}^{(n)}/S_2]\simeq D_{0,\bos}^{(n)}.$$
\end{prop}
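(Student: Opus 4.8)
The plan is to reduce both statements to a single assertion about the super gluing morphism \eqref{g2-gluing-morphism-general-version}, and then to prove that assertion by combining the fact that the clutching morphism is an \'etale double cover with the local structure of the smoothing of an NS node from \cite{FKP-supercurves}. First I would use Lemma \ref{phi-map-lem}: since $\phi$ is a $\G_m$-equivariant isomorphism, the product $(\phi\times\phi)\times\id$ is $\G_m^2$-equivariant and hence descends to an isomorphism between $\wt{D}_0^{(n)}$ and $\bigl(\ov{\SS}^{(\infty)}_{1,1}\times\ov{\SS}^{(\infty)}_{1,1}\times S^{(n)}_t\bigr)/\G_m^2$, identifying \eqref{nth-gluing-g2-sep-div-eq} with \eqref{g2-gluing-morphism-general-version}. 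So it is enough to show that \eqref{g2-gluing-morphism-general-version} is \'etale and surjective. Forgetting the formal superconformal parameters identifies $\ov{\SS}^{(\infty)}_{1,1}/\G_m$ with $\ov{\SS}_{1,1}$ (this is the description of $\ov{\SS}_{1,1}$ as $(\ov{\SS}'_{1,1,\bos}\times\A^{0|1})/\G_m$ from Section \ref{Section-6.1}), and over $\{t=0\}$ the gluing construction does not use these parameters; hence the restriction of \eqref{g2-gluing-morphism-general-version} to $\{t=0\}$ is the clutching morphism $\ov{\SS}_{1,1}\times\ov{\SS}_{1,1}\to D_0$, which by \cite{FKP-supercurves} is an \'etale double cover, in particular \'etale and surjective.

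Next I would propagate this from $n=0$ to arbitrary $n$. Both sides of \eqref{g2-gluing-morphism-general-version} are the $n$-th infinitesimal neighborhoods of $\{t=0\}$, resp.\ of the Cartier divisor $D_0$, inside the smooth superstacks $\bigl(\ov{\SS}^{(\infty)}_{1,1}\times\ov{\SS}^{(\infty)}_{1,1}\times\A^1_t\bigr)/\G_m^2$ and $\ov{\SS}_2$. It therefore suffices to prove that the induced morphism of formal completions is \'etale; since its reduction modulo $t$ is the clutching morphism, already known to be \'etale, this comes down to showing that the pullback of the ideal of $D_0$ equals $(t)$, i.e.\ that $t$ is a transverse coordinate cutting out $D_0$ with multiplicity one. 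Here I would invoke \cite{FKP-supercurves}: by construction, in a formal neighborhood of its node the supercurve $X$ of Section \ref{super-gluing-sec} is the standard NS-node family with defining relations $x_1x_2=-t^2$, $x_1\th_2=t\th_1$, $x_2\th_1=-t\th_2$, $\th_1\th_2=0$, which is the versal deformation of an NS node over $\Spf\C[\![t]\!]$; since, by the local structure of the boundary established in \cite{FKP-supercurves}, \'etale-locally along $D_0$ the moduli stack $\ov{\SS}_2$ is the product of $D_0$ with the base $\Spf\C[\![t]\!]$ of this versal deformation, $t$ pulls back from a uniformizer of $D_0$. The $\G_m^2$-quotient causes no difficulty, since \'etaleness and the computation of the pulled-back ideal may be checked after pulling back along the $\G_m^2$-torsor, where the two families of genus-$1$ data, the parameter $t$, and the node are all rigidified. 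This gives \'etaleness of \eqref{g2-gluing-morphism-general-version}, hence of \eqref{nth-gluing-g2-sep-div-eq}, for all $n$; surjectivity is the set-theoretic fact, independent of $n$, that every stable supercurve with a $(+,+)$ separating node arises by clutching two genus-$1$ pieces.

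Finally I would deduce \eqref{nth-gluing-g2-sep-div-bos-eq} by passing to bosonizations in \eqref{nth-gluing-g2-sep-div-eq}: for an \'etale morphism $f\colon X\to Y$ one has $\NN_X=f^*\NN_Y\cdot\OO_X$, so $X_{\bos}=X\times_Y Y_{\bos}$ and $f_{\bos}$ is a base change of $f$, hence again \'etale and surjective; one then identifies the bosonization of $\wt{D}_0^{(n)}$ with $\bigl((\ov{\SS}'_{1,1,\bos})^2\times S^{(n)}_t\bigr)/\G_m^2$ (the two $\A^{0|1}$ factors being killed) and the bosonization of $D_0^{(n)}$ with $D_{0,\bos}^{(n)}$, restricting to the open part over $(\SS'_{1,1,\bos})^2$ as in the statement. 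The main obstacle I anticipate is making the transversality claim of the second paragraph rigorous --- matching the gluing construction with the versal NS-node family and with the \'etale-local structure of $\ov{\SS}_2$ along $D_0$ --- and carrying it cleanly through the $\G_m^2$-quotient; the relation $q=-t^2$ between the $t$- and $q$-adic orders, as in the commutative diagram preceding the statement, is needed only for the comparison with $\ov{\MM}_2$, not for the statement itself.
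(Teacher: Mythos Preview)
The paper's own proof is a one-line citation of the classical analogue \cite[Thm.~1.2]{P-bu}, so your outline is already more detailed than what is offered there. The overall strategy --- identifying the restriction to $t=0$ with the clutching morphism, propagating \'etaleness using that $t$ is transversal to $D_0$ (via versality of the standard NS-node deformation), and deducing the bosonic statement by base change along $Y_{\bos}\hookrightarrow Y$ --- is sound and is precisely the natural adaptation of the classical argument.

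There is, however, a genuine confusion running through your first two paragraphs. The map $\phi$ of Lemma~\ref{phi-map-lem} is \emph{not} an isomorphism: $\ov{\SS}_{1,1}^{(\infty)}$ parametrizes genus-$1$ supercurves together with a \emph{full jet} of superconformal coordinates at the puncture, and the group of such coordinate changes is the infinite-dimensional group of superconformal automorphisms of the formal superdisk, not just $\G_m$. Consequently $\ov{\SS}_{1,1}^{(\infty)}/\G_m$ is not $\ov{\SS}_{1,1}$, the source of \eqref{g2-gluing-morphism-general-version} is infinite-dimensional, and that morphism cannot be \'etale. What Lemma~\ref{phi-map-lem} actually provides is a specific $\G_m$-equivariant \emph{section}: a canonical choice of formal parameters on the family over $\ov{\SS}'_{1,1,\bos}\times\A^{0|1}$. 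The morphism \eqref{nth-gluing-g2-sep-div-eq} is, by definition, the composition of $\phi\times\phi$ with the gluing map, so you must argue directly about \eqref{nth-gluing-g2-sep-div-eq} rather than trying to reduce to \eqref{g2-gluing-morphism-general-version}.

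Once you make that correction, your argument goes through unchanged: the ambient source $((\ov{\SS}'_{1,1,\bos}\times\A^{0|1})^2\times\A^1_t)/\G_m^2$ is a smooth superstack of dimension $3|2$, equal to that of $\ov{\SS}_2$; at $t=0$ the map becomes the clutching morphism $\ov{\SS}_{1,1}\times\ov{\SS}_{1,1}\to D_0$ (here you do use Lemma~\ref{phi-map-lem}, but only to identify each factor $(\ov{\SS}'_{1,1,\bos}\times\A^{0|1})/\G_m$ with $\ov{\SS}_{1,1}$); and versality of the standard NS-node family gives $f^*I_{D_0}=(t)$, so the map on normal bundles is an isomorphism and \'etaleness propagates to all $n$. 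Your bosonization step is correct as written.
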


\begin{proof}
The proof is analogous to the classical case (see \cite[Thm.\ 1.2]{P-bu}).
\end{proof}

%\subsection{The formal neighborhood of the separating node divisor}

\begin{remark}
Note that the quotient $(\ov{\SS}'_{1,1,\bos}\times \A^{0|1})^2\times \A^1/\G_m^2$ is exactly the total space of the line bundle $N:=\Pi\LL^{-1}\boxtimes \Pi\LL^{-1}$ over $(\ov{\SS}_{1,1})^2$, so 
$\wt{D}_0^{(n)}$ can be identified with the $n$th neighborhood of the zero section in this total space. Here $N$ is isomorphic to the normal line bundle to $D_0$ 
(see \cite{FKP-supercurves}).
\end{remark}

%Similarly, $\wt{D}^{(n)}_{0,\bos}$ is identified with a formal neighborhood of the zero section in the line bundle $L\boxtimes L$ over
%$\ov{\SS}_{1,1,\bos}^2$. 
%Choosing local trivializations of $\LL$ on each factor $\ov{\SS}_{1,1}$ gives local identifications of $\wt{D}^{(n)}_{0,\bos}$ with $(\ov{\SS}_{1,1})^2\times S^{(n)}_t$.

\subsection{Superperiods near the $(+,+)$ separating node boundary for supercurves of genus $2$}\label{ss-2.2}

Now we will compute the superperiod matrix in the formal neighborhood of the separating node divisor.
%For a family of stable supercurves $\pi:X\to S$ we denote by $\om^{ex}_{X/S}$ the image of the structure
%derivation $\de:\OO_X\to \om_{X/S}$. Then clearly we have an exact sequence
%$$0\to \C_{X/S}\to \OO_X\rTo{\de} \om_{X/S}^{ex}\to 0.$$

We will apply the gluing construction of Sec.\ \ref{super-gluing-sec} 
to two copies of the standard family of genus $1$ supercurves with 1 NS-puncture over the upper half-plane (see Sec.\ \ref{even-g1-family-sec}).
More precisely, let $B$ denote the product of two copies of upper half-planes with coordinates $\tau_1$, $\tau_2$.
Then over $B$ we have two families of elliptic curves $C_1=\C/(\Z+\Z\tau_1)$ and $C_2=\C/(\Z+\Z\tau_2)$ equipped with even spin structures $L_i=\OO(u_i-p_i)$,
where $p_i$ corresponds to $z_i=0$ and $u_i$ corresponds to $z_i=1/2$.
% (and trivializations $L_i|_{p_i}$), $i=1,2$.  
The corresponding supercurves $X_1=(C_1,\OO\oplus L_1)$ and $X_2=(C_2,\OO\oplus L_2)$
%, where $L_i$ corresponds
%to a nontrivial point $u_i$ of order $2$ on $C_i$ (i.e., $u_i$ is either $1/2$ or $\tau_i/2$ or $(1+\tau_i)/2$). 
% of genus $1$ glued along NS-punctures $q_1\in X_1$ and $q_2\in X_2$.
%Here we assume that $H^0(C_i,L_i)=0$.
are equipped with natural superconformal coordinates $(z_i,\nu_i)$ on $X_i$ near $0\in C_i$ (for $i=1,2$), where $z_i$ is the coordinate on $\C$, and 
$\nu_i$ corresponds to the trivializing local section of $L_i$ at $0\in C_i$ such that $(\nu_i)^{\ot 2}=dz_i$ in $\om_{C_i}$.
Next, we change the base to $B\times \A^{0|2}$, where we fix odd coordinates $\eta_1,\eta_2$ on $\A^{0|2}$, and define new superconformal coordinates $(x_i,\th_i)$ near $0\in C_i$,
by
\begin{equation}\label{x-th-z-nu-change-eq}
x_i=z_i+\eta_i\nu_i, \ \ \th_i=\nu_i-\eta_i.
\end{equation}
We define the NS-puncture $q_i\in X_i$, supported at $0\in C_i$, by the ideal $(x_i,\th_i)$ (for $i=1,2$). 

Note that by Lemma \ref{phi-map-lem}, the morphism
$$B\times \A^{0|2}\to \ov{\SS}^{(\infty)}_{1,1}\times \ov{\SS}^{(\infty)}_{1,1} $$
associated with our family is $\Z_2^2$-equivariant with respect to the natural action of $\Z_2$ on each of the factors $\A^{0|1}$ (resp., $\ov{\SS}^{(\infty)}_2$).
Hence, composing this morphism with the gluing map \eqref{g2-gluing-morphism-general-version}, we obtain a morphism
$$S/\Z_2^2\to D_0^{(n)}\sub \ov{\SS}_2,$$
where $S=B\times \A^{0|2}\times S^{(N)}_t$, and the action of $\Z_2^2$ is given by 
\begin{equation}\label{main-Z2-action-eq}
(\eps_1,\eps_2)(b,\eta_1,\eta_2,t)=(b,\eps_1\eta_1,\eps_2\eta_2,\eps_1\eps_2t)
\end{equation}
 
By definition, the corresponding stable supercurve $X$ over $S$
% by applying the gluing construction.
is glued from $U_1$ and $U_2$, where
%consider the standard deformation of $X_0$ over $S=\Spec(B_N)$, where $B_N=\C[t,\eta_1,\eta_2]/t^N$, which is glued from
$$U_1=S^{(N)}_t\times\A^{0|2}\times \bigl((X_1\setminus\{q_1\})\sqcup (X_2\setminus\{q_2\})\bigr)$$ and 
$$U_2=B\times\A^{0|2}\times \Spec(R_N[\![x_1,x_2,\th_1,\th_2]\!]/(x_1x_2+t^2,x_1\th_2-t\th_1,x_2\th_1+t\th_2,\th_1\th_2)),$$
where $R_N=\C[t]/(t^{N+1})$, $S^{(N)}_t:=\Spec R_N$.
%Here we assume that for $i=1,2$, the local parameter 
%$$z_i=x_i-\eta_i\th_i$$
%at $q_i\in X_i$ is such that $\de(z_i)$ extends to an even global section of $\om_{X_i}$.
%It follows that for
%$$\nu_i=\th_i+\eta_i,$$
%the formal coordinates $(z_i,\nu_i)$ are superconformal, i.e., $\nu_i$ is a trivializing (even) local section of the spin structure $L_i$ on $C_i$ 

As was discussed in Sec.\ \ref{superperiod-deg-sec}, the extended superperiod map for our family $X/S$ can be viewed as a natural map
of bundles over $S$,
$$\pi_*\om_{X/S}\to \ov{\HH}^1=R^1\pi_*[\OO_X\rTo{\de}\om_{X/S}].$$ 
Now, as in the even case, we will use the identification 
\begin{align*}\ov{\HH}^1&\simeq\om_{X/S}(U_1)/\de\OO_X(U_1)
  \\
                        &\simeq \om_{X_1/S}(X_1-q_1)/\de \OO(X_1-q_1)\oplus \om_{X_2/S}(X_2-q_2)/\de \OO(X_2-q_2)
\end{align*}
(see Lemma \ref{cech-diff-lem}) to compute this map explicitly.
We claim that similarly to \eqref{H1-basis-eq} the following elements provide a basis
 of $\ov{\HH}^1$:
\begin{equation}\label{++H1-basis-eq} 
 e_1=(\de(z_1),0), \ e_2=(0,\de(z_2)), \ f_1=(\wp(z_1,\tau_1)\de(z_1),0),\ f_2=(0,\wp(z_2,\tau_2)).
 \end{equation}
Indeed, this follows from the fact that for a holomorphic function $f(z_i)$ on $C_i\setminus\{0\}$, we have $\de(f)=f'(z_i)\de(z_i)$, and from the fact
that $\de^-$ induces an isomorphism of $\OO(X_i\setminus \{q_i\})^-=L_i(C_i\setminus\{0\})$ and $\om_{X_i/B}(X_i\setminus\{q_i\})^-$.

Note that the gluing construction is compatible with the $\G_m^2$-action, where the action of $(\la_1,\la_2)$ rescales $(x_i,\th_i,\eta_i)$ to
$(\la_i^2x_i,\la_i\th_i,\la_i\eta_i)$ and rescales $t$ to $\la_1\la_2 t$.

Recall that $\om_{U_2/S}(U_2)$ is generated as $\OO(U_2)$-module by $s_i=\de(\th_i)$, for $i=1,2$ (and $\de(x_i)=s_i\th_i$) and $s_0$ 
(satisfying $x_1s_0=-s_1\th_1$, $x_2s_0=s_2\th_2$ and other relations, see \cite[Sec.\ 6.2]{FKP-supercurves}).
Thus, a global section of $\om_{X/S}$ is given by a triple
$$(\om_1,\om_2,s_1\phi_1(x_1,\th_1)+s_2\phi_2(x_2,\th_2))+s_0\phi_0,$$
where $\om_i$ extends to a global section of $\om_{X/S}$ over $X_i\setminus\{q_i\}$, such that
\begin{equation}\label{om-phi-super-relation-eq}
  \begin{array}{l}\displaystyle
    \om_1=s_1\phi_1(x_1,\th_1)+\frac{ts_1}{x_1}\phi_2(-\frac{t^2}{x_1},\frac{t\th_1}{x_1})-\frac{s_1\th_1}{x_1}\phi_0, \\ 
\om_2=s_2\phi_2(x_2,\th_2)-\frac{ts_2}{x_2}\phi_1(-\frac{t^2}{x_2},-\frac{t\th_2}{x_2})+\frac{s_2\th_2}{x_2}\phi_0.
  \end{array}
\end{equation}

As in Sec.\ \ref{even-gluing-sec}, we consider unique global functions $f_n(z_i)$ on $C_i\setminus\{q_i\}$, for $n\ge 2$,
such that $f_n(z_i)=\frac{1}{(z_i)^n}+O(z_i)$, where $f_2(z_i)=\wp(z_i,\tau_i)$, for $i=1,2$. In addition, we have unique regular sections $\kappa_n(z_i)\nu_i$ of $L_i$ on 
$C_i\setminus\{q_i\}$, for $n\ge 1$, such that
$\kappa_n(z_i)=(\frac{1}{(z_i)^n}+O(1))$. Note that by the uniqueness, we get
$$f_n(-z_i)=(-1)^nf_n(z_i), \ \ \kappa_n(-z_i)=(-1)^n\kappa_n(z_i).$$
Recall that $(f_n(z_i))$ are expressed in terms of the derivatives of $\wp(z_i,\tau_i)$ (see \eqref{f-functions-eq}).
We will need some information on the functions $(\kappa_n)$.
%For the explicit computations we need the expansions of $f_i$ and $\kappa_i$. 
We will write for brevity $\wp(z_i)=\wp(z_i,\tau_i)$, etc. We will use the function $h_{u_i}(z_i)=h_{u_i}(z_i,\tau_i)$ given by \eqref{h-u-eq}.

\begin{lemma}\label{kappa-functions-lem}
(i) One has 
$$\kappa_1(z_i)=\sqrt{\wp(z_i)-\wp(u_i)}=\frac{1}{z_i}-\frac{\wp(u_i)}{2}z_i+O(z^3),$$
%$$\kappa_1(-z_i)=-\kappa_1(z_i).$$
$$\wp(z_i)+\kappa'_1(z_i)=-\frac{\wp(u_i)}{2}+O(z^2).$$

\noindent
(ii) One has
$$\kappa_2(z_i)=\kappa_1(z_i)h_{u_i}(z_i)=\frac{1}{z_i^2}+\frac{1}{2}\wp(u_i)+O(z^2).$$
%$$\wp(z_i)-\kappa_2(z_i)=-\frac{1}{2}\wp(u_i)+O(z^2).$$

\noindent
(iii) Let us set $f_1(z_i)=h_{u_i}(z_i)$, and $f_0(z_i)=1$.
Then for each $n\ge 3$, one has
$$\kappa_n(z_i)=\kappa_1(z_i)[f_{n-1}(z_i)+c_{n,3}f_{n-3}(z_i)+c_{n,5}f_{n-5}+\ldots]$$
for some constants $(c_{n,m})$ depending holomorphically on $\exp(2\pi i\tau_i)$. 
\end{lemma}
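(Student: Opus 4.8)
The plan is to prove all three parts by identifying $\kappa_n$ with an explicit rational section of $L_i$, each time invoking the uniqueness built into the definition of the $\kappa_n$ together with Riemann--Roch on the elliptic curve $C_i$. First I would record the basic dimension count: for $m\ge 1$ the space of rational sections of $L_i$ that are regular on $C_i\setminus\{p_i\}$ and have a pole of order at most $m$ at $p_i$ is $H^0(C_i,L_i(mp_i))=H^0(C_i,\OO(u_i+(m-1)p_i))$, which has dimension $m$ since this bundle has degree $m$ and $H^0(C_i,L_i)=0$ (as $L_i=\OO(u_i-p_i)$ is a nontrivial degree-$0$ bundle). Hence the map sending such a section to the coefficients of $z_i^{-m},\ldots,z_i^{-1}$ in its Laurent expansion at $p_i$ is an injection between spaces of the same dimension $m$, so it is a bijection; this is precisely the existence and uniqueness of $\kappa_n$ (prescribed principal part $z_i^{-n}$) and of $f_n$, and, applying the automorphism $z_i\mapsto -z_i$ (which fixes $p_i$ and $u_i$ and preserves $(C_i,L_i)$), of the stated parities. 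I will use this ``determined by finitely many Laurent coefficients'' principle repeatedly.

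For (i): the elliptic function $\wp(z_i)-\wp(u_i)$ has a double zero at $u_i$ (since $u_i$ is a $2$-torsion point, $\wp'(u_i)=0$) and a double pole at $p_i$, and no other zeros or poles; so $\sqrt{\wp(z_i)-\wp(u_i)}$ is a well-defined (up to sign) rational section of $L_i$, regular on $C_i\setminus\{p_i\}$ with a simple pole at $p_i$. Fixing the sign so this pole has residue $1$ and applying the uniqueness above gives $\kappa_1(z_i)=\sqrt{\wp(z_i)-\wp(u_i)}$; expanding the square root using $\wp(z_i)=z_i^{-2}+O(z_i^2)$ yields $\kappa_1=z_i^{-1}-\tfrac12\wp(u_i)z_i+O(z_i^3)$, hence $\kappa_1'=-z_i^{-2}-\tfrac12\wp(u_i)+O(z_i^2)$ and, adding $\wp(z_i)=z_i^{-2}+O(z_i^2)$, $\wp(z_i)+\kappa_1'(z_i)=-\tfrac12\wp(u_i)+O(z_i^2)$. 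For (ii): the function $h_{u_i}$ of \eqref{h-u-eq} is elliptic with only simple poles, located at $p_i$ and $u_i$; since $\kappa_1\nu_i$ has a simple zero at $u_i$ and a simple pole at $p_i$ (its square being $(\wp(z_i)-\wp(u_i))\,dz_i$), the zero at $u_i$ cancels the pole of $h_{u_i}$ there, so $\kappa_1h_{u_i}\nu_i$ is regular on $C_i\setminus\{p_i\}$ with a pole of order $\le 2$ at $p_i$, and as $\kappa_1$ and $h_{u_i}$ are both odd the product is even and has no $z_i^{-1}$ term, so its principal part is $z_i^{-2}$; by uniqueness $\kappa_1h_{u_i}\nu_i=\kappa_2\nu_i$, and substituting the expansions of $\kappa_1$ and of $h_{u_i}$ gives $\kappa_2=z_i^{-2}+\tfrac12\wp(u_i)+O(z_i^2)$.

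For (iii): fix $n\ge 3$ and argue by downward induction on the order of the pole at $p_i$. Exactly as above, each $\kappa_1f_m$ (with $f_0=1$, $f_1=h_{u_i}$, and $f_m$ for $m\ge 2$ as in \eqref{f-functions-eq}) is a rational section of $L_i$ regular on $C_i\setminus\{p_i\}$ with a pole at $p_i$ of order exactly $m+1$, parity $(-1)^{m+1}$, and leading Laurent coefficient $1$. Since $\kappa_n$ has a pole of order exactly $n$ at $p_i$ and parity $(-1)^n$, the difference $\kappa_n-\kappa_1f_{n-1}$ has a pole of order $\le n-2$ and the same parity, so it is reduced by subtracting an appropriate multiple of $\kappa_1f_{n-3}$, and so on, stepping by $2$ in pole order to preserve parity; the coefficients are read off triangularly from the Laurent expansions. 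After finitely many steps one reaches a section regular on $C_i\setminus\{p_i\}$ with a pole of order $\le 1$ at $p_i$ and parity $(-1)^n$: when $n$ is odd this is a multiple of $\kappa_1=\kappa_1f_0$ (the one-dimensional space $H^0(C_i,L_i(p_i))=H^0(C_i,\OO(u_i))$ is spanned by the odd section $\kappa_1\nu_i$), contributing the $f_0$-term; when $n$ is even parity forces the residual section to have even pole order, hence to be regular, hence to lie in $H^0(C_i,L_i)=0$. This gives the claimed identity $\kappa_n=\kappa_1[f_{n-1}+c_{n,3}f_{n-3}+c_{n,5}f_{n-5}+\ldots]$, and holomorphy of the $c_{n,m}$ in $\exp(2\pi i\tau_i)$ follows because they solve a triangular linear system whose entries are Laurent coefficients of $\wp(z_i,\tau_i)$, its derivatives, $h_{u_i}$, and $\wp(u_i,\tau_i)$, all of which depend holomorphically on $\exp(2\pi i\tau_i)$.

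The main obstacle is the bookkeeping in part (iii): one must verify that the downward induction terminates cleanly at pole order $\le 1$ and that the residual term is accounted for exactly by the $f_0$-term when $n$ is odd and vanishes when $n$ is even. Once the dimension statements for $H^0(C_i,L_i(mp_i))$ and the parities of $\kappa_1f_m$ are in place, each individual reduction step is routine, so the whole argument rests on getting these structural facts straight.
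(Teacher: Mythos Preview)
Your proof is correct and follows essentially the same approach as the paper: identifying $\kappa_1$ via its square $\wp(z_i)-\wp(u_i)$, using the vanishing of $\kappa_1$ at $u_i$ to cancel the pole of $h_{u_i}$ for part (ii), and for part (iii) reducing the pole order by $2$ at each step via the parity of $\kappa_n-\kappa_1f_{n-1}$. You have simply fleshed out the details (the Riemann--Roch dimension count, the explicit termination of the induction, and the holomorphy of the $c_{n,m}$) that the paper leaves implicit.
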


\begin{proof}
(i) Due to the relation between $\nu_i$ and $z_i$, we have 
$$\kappa_1^2(z_i)=\wp(z_i)-\wp(u_i),$$
where $u_i$ is a point of order $2$ on $C_i$ corresponding to the spin-structure $L_i$. 
Indeed, $\kappa_1(z_i)\nu_i$ is a global section of $L_i(q_i)$ vanishing at $u_i$, so
$\kappa_1^2(z_i)dz_i$ is a global section of $\om_{C_i}(2q_i)$ with double zero at $u_i$, which implies the above relation.
Hence,
$$\kappa_1(z_i)=\sqrt{\wp(z_i)-\wp(u_i)}=\frac{1}{z_i}-\frac{\wp(u_i)}{2}z_i+O(z^3),$$

%Recall (see Sec.\ \ref{ell-sec}) that we have a function $h_{u_i}(z_i)=h_{u_i}(z_i,\tau_i)$ with poles of order $1$ at $0$ and $u_i$.
%Let $\zeta(z_i)=\zeta(z_i,\tau_i)$ denote the Weierstrass zeta function associated with the lattice $\Z+\Z\tau_i$, for $i=1,2$. 

\noindent
(ii) Since $\kappa_1(z_i)$ vanishes at $u_i$, we see that $\kappa_1(z_i)h_{u_i}(z_i)$ is regular at $u_i$.
Hence, the assertion follows from the expansion
$$\kappa_1(z_i)h_{u_i}(z_i)=\frac{1}{z_i^2}+\frac{1}{2}\wp(u_i)+O(z^2).$$

\noindent
(iii) This follows from (ii) by induction on $n$, using the fact that the difference
$\kappa_n(z_i)-\kappa_1(z_i) f_{n-1}(z_i)$ has a pole of order $\le n-2$ at $0$, and is
an even (resp., odd) function of $z_i$ for $n$ even (resp., odd).
\end{proof}

The following result is an analog of \cite[Prop.\ 5.5]{P-bu}.

\begin{lemma}\label{super-global-diff-base-lem} 
There exists a unique $\OO_S$-basis of $\pi_*\om_{X/S}$
%$\pi_*\om^{ex}_{X/S}$,
$$\om^{(i)}=(\om^{(i)}_1,\om^{(i)}_2,s_1\phi_1^{(i)}(x_1,\th_1)+s_2\phi_2^{(i)}(x_2,\th_2)), \ i=1,2,$$
with 
\begin{align*}
  \om^{(1)}_1&=[1+\sum_{i\ge 2}a_if_i(z_1)]\de(z_1)+\sum_{i\ge 1}\de(\kappa_i(z_1)\nu_1)\a_i, \\ \om^{(1)}_2&=[\sum_{i\ge 2}b_if_i(z_2)]\de(z_2)+\sum_{i\ge 1}\de(\kappa_i(z_2)\nu_2)\b_i,\\
  \om^{(2)}_2&=[1+\sum_{i\ge 2}c_if_i(z_2)]\de(z_2)+\sum_{i\ge 1}\de(\kappa_i(z_2)\nu_2)\ga_i,
  \\
  \om^{(2)}_1&=[\sum_{i\ge 2}d_if_i(z_1)]\de(z_1)+\sum_{i\ge 1}\de(\kappa_i(z_1)\nu_1)\de_i,
\end{align*}
where $a_i,b_i,c_i,d_i\in t\OO_S^+$ and $\a_i,\b_i,\ga_i,\de_i\in t\OO_S^-$, and the coefficients of their expansions in $t$ are holomorphic as functions of 
$q_1=\exp(2\pi i\tau_1)$ and $q_2=\exp(2\pi i\tau_2)$. 
%$$\om^{(1)}_1\equiv \de(z_1) \mod (t), \ \om^{(1)}_2\equiv 0 \mod (t), \ \om^{(2)}_1\equiv 0 \mod (t), \ \om^{(2)}_2\equiv \de(z_2) \mod (t).$$
Furthermore, we have
$$\om^{(1)}_1\equiv \de(z_1)[1-t^3\eta_1\eta_2\wp(u_2)\wp(z_1)]-\de(\nu_1)\frac{\wp(u_2)}{2}[t^3\eta_2\kappa_1(z_1)+t^4\eta_1\kappa_2(z_1)] \mod (t^5),$$
$$\om^{(1)}_2\equiv \de(z_2)(t\eta_1\eta_2+t^2)\wp(z_2)-\de(\nu_2)[t\eta_1\kappa_1(z_2)+t^2\eta_2\kappa_2(z_2)] \mod (t^5),$$
$$\om^{(2)}_2\equiv \de(z_2)[1-t^3\eta_1\eta_2\wp(u_1)\wp(z_2)]+\de(\nu_2)\frac{\wp(u_1)}{2}[t^3\eta_1\kappa_1(z_2)-t^4\eta_2\kappa_2(z_2)] \mod (t^5),$$
$$\om^{(2)}_1\equiv \de(z_1)(t\eta_1\eta_2+t^2)\wp(z_1)+\de(\nu_1)[t\eta_2\kappa_1(z_1)-t^2\eta_1\kappa_2(z_1)] \mod (t^5).$$
%where
\end{lemma}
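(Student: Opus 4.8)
The plan is to mimic the even-case argument of \cite[Prop.\ 5.5]{P-bu} (recalled in Sec.\ \ref{even-gluing-sec}), keeping careful track of the odd parameters $\eta_1,\eta_2$. First I would establish the existence and uniqueness of the basis $(\om^{(i)})$ with the stated normalization. A global section of $\pi_*\om_{X/S}$ is, by the \v Cech description used in the proof of Lemma \ref{cech-diff-lem}, a triple $(\om_1,\om_2,s_1\phi_1(x_1,\th_1)+s_2\phi_2(x_2,\th_2))+s_0\phi_0$ subject to the gluing relations \eqref{om-phi-super-relation-eq}. Writing $\phi_i$ in terms of the topological bases $(x_i^n,\th_ix_i^n)_{n\ge 0}$ of $\OO(U_2)$ and expanding the constraints \eqref{om-phi-super-relation-eq} in powers of $t$, one sees that the negative-Laurent-tail of $\om_i$ at $q_i$ is determined (mod $t$) by $\phi_{3-i}$ evaluated near the node, and hence forced to lie in the span of the $f_n(z_i)\de(z_i)$ (for the even part) and the $\de(\kappa_n(z_i)\nu_i)$ (for the odd part), exactly the decomposition in the statement. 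Imposing that $\om^{(1)}_1$ have leading term $\de(z_1)$ with no $f_n$-correction of order $t^0$ (and the analogous normalizations) pins down the remaining freedom; uniqueness follows because any two choices differ by a section of $\pi_*\om_{X/S}$ that vanishes to all stated orders, which by the $t$-adic filtration argument must be zero. This is where I would invoke the analog of \cite[Prop.\ 3.3]{P-superell} (used already inside Lemma \ref{cech-diff-lem}) for the uniqueness of the regular $\om_i$ solving the cocycle condition.

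Next I would compute the first terms. The recipe is: plug the ansatz into \eqref{om-phi-super-relation-eq}, expand both sides in $t$ (and in $\eta_1,\eta_2$, which square to zero), and solve order by order. Concretely, the substitution $x_i\mapsto -t^2/x_{3-i}$, $\th_1\mapsto t\th_1/x_1$ (in $\phi_2$) and $\th_2\mapsto -t\th_2/x_2$ (in $\phi_1$) converts a monomial $x_{3-i}^{-n}$ or $\th_{3-i}x_{3-i}^{-n}$ appearing in $\om_{3-i}$ into a contribution of order $t^{\ge 2n}$ (resp.\ $t^{\ge 2n-1}$ when an odd variable is involved) to $\om_i$ via $\phi_i$. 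Since $\wp(z_i)=f_2(z_i)$ has a double pole and $\kappa_1,\kappa_2$ have poles of order $1,2$, the leading cross-terms are exactly the $t\eta_1\kappa_1$, $t^2\wp$, $t^2\eta_2\kappa_2$ pieces, and their back-reaction on the ``diagonal'' differentials $\om^{(1)}_1$, $\om^{(2)}_2$ first shows up at order $t^3\eta_1\eta_2$ and $t^4$. Here I would use Lemma \ref{kappa-functions-lem}(i),(ii) to replace $\kappa_1'(z_i)+\wp(z_i)$ and $\kappa_2(z_i)$ by their explicit constant leading terms $-\wp(u_i)/2$ and $1/z_i^2+\tfrac12\wp(u_i)$, which is what produces the $\wp(u_1),\wp(u_2)$ factors in the $t^3$ and $t^4$ terms. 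The parity constraints $f_n(-z)=(-1)^nf_n(z)$, $\kappa_n(-z)=(-1)^n\kappa_n(z)$ together with the $\Z_2^2$-equivariance \eqref{main-Z2-action-eq} (which forces $a_i,\dots\in t\OO_S$, resp.\ $\a_i,\dots\in t\OO_S$, of the stated parity) cut down the number of monomials that can appear and guarantee no lower-order or wrong-parity terms contaminate the answer; the $t^5$ truncation is then just a matter of not computing further.

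The holomorphic dependence on $q_1=\exp(2\pi i\tau_1)$, $q_2=\exp(2\pi i\tau_2)$ of all the coefficients is inherited from the fact that $\wp(z_i,\tau_i)$, its derivatives, the functions $f_n(z_i)$ (via \eqref{f-functions-eq}), and $\kappa_n(z_i)$ (via Lemma \ref{kappa-functions-lem}(iii), whose constants $c_{n,m}$ are holomorphic in $\exp(2\pi i\tau_i)$) are all holomorphic in $\tau_i$ and periodic; this is the same remark as in the even case. The main obstacle I anticipate is purely bookkeeping rather than conceptual: organizing the double expansion in $t$ and in $(\eta_1,\eta_2)$ so that the cancellations leading to the clean $t^3\eta_1\eta_2\wp(u_{3-i})$ and $t^4$ terms are transparent, and in particular verifying that the ``obvious'' corrections at orders $t^3\eta_1$, $t^3\eta_2$, $t^4\eta_i$ either vanish by parity/equivariance or get absorbed into the normalization. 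Everything else is a direct transcription of the scheme in \cite[Sec.\ 5]{P-bu}, now carried out on $\om_{X_i/S}(X_i-q_i)/\de\OO(X_i-q_i)$ using $\de(f)=f'(z_i)\de(z_i)$ on even functions and the isomorphism $\de^-:L_i(C_i\setminus\{0\})\rTo{\sim}\om_{X_i/B}(X_i\setminus\{q_i\})^-$ on the odd part.
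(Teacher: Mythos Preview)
Your overall approach is correct and matches the paper's: solve the gluing equations \eqref{om-phi-super-relation-eq} iteratively in powers of $t$, reading off the coefficients $a_i,b_i,\a_i,\b_i,\ldots$ from the polar parts at each step, and use Lemma~\ref{kappa-functions-lem} to identify the constants $\wp(u_i)/2$ that appear. The holomorphic dependence on $q_1,q_2$ is indeed inherited as you say.

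Two points where your writeup diverges from the paper and should be adjusted. First, the appeal to \cite[Prop.~3.3]{P-superell} for uniqueness is misplaced: that proposition (as used in Lemma~\ref{cech-diff-lem}(ii)) concerns the unique lift of a class to a closed relative $1$-form for the Gauss--Manin computation, not the global differentials here. Uniqueness in the present lemma is more elementary and is built into the iteration itself: once $\om_1,\om_2,\phi_1,\phi_2$ are known mod~$t^n$, the equations \eqref{om-phi-super-relation-eq} mod~$t^{n+1}$ determine the polar parts of $\om_i$ mod~$t^{n+1}$, and then one reads off the coefficients in a fixed order (first $\a_1$ from the coefficient of $x_1^{-1}s_1$, then $a_2$ from $x_1^{-2}s_1\th_1$, then $\a_2$ from $x_1^{-2}s_1$, etc.), after which $\phi_i$ mod~$t^{n+1}$ is the regular part. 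No further input is needed.

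Second, the paper does not use the $\Z_2^2$-equivariance \eqref{main-Z2-action-eq} or parity arguments to cut down the computation; it simply carries out the steps mod~$t^2$, mod~$t^3$, mod~$t^4$, mod~$t^5$ directly. Your symmetry shortcut is a legitimate alternative, but if you invoke it you should check carefully that it actually forces the vanishing you want (e.g.\ that no $t^2\eta_i$ or $t^4$-even terms sneak into $\om^{(1)}_1$), rather than asserting it. The paper's direct computation makes these cancellations explicit: for instance, the first nonzero polar contribution to $\om^{(1)}_1$ appears only at order $t^3$, coming from the constant terms of $\wp(x_2)+\kappa_1'(x_2)$ and $\wp(x_2)-\kappa_2(x_2)$ in $\phi_2$, both equal to $-\wp(u_2)/2$.
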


\begin{proof}
%We have to rewrite $(z_2,\nu_2)$ in terms of $(z_1,\nu_1)$:
%$$z_2=x_2-\eta_2\th_2=-\frac{t^2}{x_1}-\eta_2\frac{t\th_1}{x_1}=t^2[-\frac{1}{z_1}+\frac{\eta_1\nu_1}{z_1^2}]+t\cdot\frac{-\eta_2\nu_1+\eta_2\eta_1}{z_1},$$
%$$\nu_2=\th_2+\eta_2=\frac{t\th_1}{x_1}+\eta_2=t\frac{\nu_1-\eta_1}{z_1}+\eta_2.$$
%Similarly,
%$$z_1=t^2[-\frac{1}{z_2}+\frac{\eta_2\nu_2}{z_2^2}]+t\frac{\eta_1\nu_2-\eta_1\eta_2}{z_2}, \ \ \nu_1=t\frac{-\nu_2+\eta_2}{z_2}+\eta_1.$$
It is easy to see that
$$\de(\nu_i)=s_i, \ \ \de(z_i)=s_i\nu_i, \ \ \de(f(z_i)\nu_i)=s_if(z_i).$$
Thus, the expansions of $\om^{(1)}_j$ can be rewritten as
\begin{align*}
&\om^{(1)}_1=s_1\nu_1[1+\sum_{i\ge 2}a_if_i(z_1)]+s_1\sum_{j\ge 1}\a_j\kappa_j(z_1)=\\
&s_1\th_1[1+\sum_{i\ge 2}a_if_i(x_1)+\sum_{j\ge 1}\eta_1\a_jg'_j(x_1)]+s_1[\eta_1+\sum_{i\ge 2}\eta_1a_if_i(x_1)+\sum_{j\ge 1}\a_j\kappa_j(x_1)],
\end{align*}
\begin{align*}
&\om^{(1)}_2=s_2\nu_2[\sum_{i\ge 2}b_if_i(z_2)]+s_2\sum_{j\ge 1}\b_j\kappa_j(z_2)=\\
&s_2\th_2[\sum_{i\ge 2}b_if_i(x_2)+\sum_{j\ge 1}\eta_2\b_jg'_j(x_2)]+s_2[\sum_{i\ge 2}\eta_2b_if_i(x_2)+\sum_{j\ge 1}\b_j\kappa_j(x_2)],
\end{align*}
and similarly for $\om^{(2)}_i$.

%$$\om^{(2)}_2=[1+\sum_{i\ge 2}c_if_i(z_2)]\de(z_2)+\sum_{i\ge 1}\ga_i\de(\kappa_i(z_2)\nu_2), \ \ \om^{(2)}_1=[\sum_{i\ge 2}b_i]\de(z_1)+\sum_{i\ge 1}\de_i\de(\kappa_i(z_1)\nu_1),$$

Let us focus on the existence and uniqueness of $\om=\om^{(1)}$ (the case of $\om^{(2)}$ is considered similarly).
The equations \eqref{om-phi-super-relation-eq} are satisfied modulo $t$ for
$$\om_1\equiv s_1\nu_1 \!\!\!\!\mod(t), \ \ \om_2\equiv 0 \!\!\!\!\mod(t), \ \ \phi_1\equiv \nu_1=\th_1+\eta_1 \!\!\!\!\mod(t), \ \ \phi_2\equiv 0 \!\!\!\!\mod(t).$$

Next, assuming that $\om_1,\om_2,\phi_1,\phi_2$ are known modulo $t^n$ and \eqref{om-phi-super-relation-eq} holds modulo $t^n$,
we observe that \eqref{om-phi-super-relation-eq} modulo $t^{n+1}$ (more precisely the second summand of each of the right-hand sides)
determine polar parts of $\om_1$ and $\om_2$ modulo $t^{n+1}$,
and hence, all the coefficients $a_i,b_i,\a_j,\b_j$ modulo $t^{n+1}$. In more detail, we first determine $\a_1$ (resp., $\b_1$)
by looking at the coefficient of $x_1^{-1}s_1$ (resp., $x_2^{-1}s_2$). Then we determine $a_2$ (resp., $b_2$) by looking at the coefficient
of $x_1^{-2}s_1\th_1$ (resp., $x_2^{-2}s_2\th_2$). Next, we determine $\a_2$ (resp., $\b_2$) by looking at the coefficient of
$x_1^{-2}s_1$ (resp., $x_2^{-2}s_2$), etc. After $\om_1$ (resp., $\om_2$) is determined modulo $t^{n+1}$,
we obtain that $s_1\phi_1$ (resp., $s_2\phi_2$) modulo $t^{n+1}$ is given by the regular part of the expansions of $\om_1$ (resp., $\om_2$).

%We start with some useful formulas:
%$$\nu_1s_1=(t^2-t\eta_1\eta_2)\frac{\nu_2s_2}{(z_2)^2}-(t\eta_1+t^2\eta_2)\frac{s_2}{z_2},$$
%$$\nu_2s_2=(t^2-t\eta_1\eta_2)\frac{\nu_1s_1}{(z_1)^2}+(t\eta_2-t^2\eta_1)\frac{s_1}{z_1}.$$
%We can rewrite the latter two formulas as
%$$\de(z_1)=(t^2-t\eta_1\eta_2)\frac{\de(z_2)}{(z_2)^2}-(t\eta_1+t^2\eta_2)\de(\frac{\nu_2}{z_2}),$$
%$$\de(z_2)=(t^2-t\eta_1\eta_2)\frac{\de(z_1)}{(z_1)^2}+(t\eta_2-t^2\eta_1)\de(\frac{\nu_1}{z_1}).$$

%Now the proof of existence and uniqueness of a basis $\om^{(1)},\om^{(2)}$ is essentially the same as in Lemma \ref{classical-global-diff-base-lem}.
%The only difference is that we now compare separately at the terms of the form $\phi(z_i)\de(\nu_i)$ and $\psi(z_i)\nu_i\de(\nu_i)=\psi(z_i)\de(z_i)$.

Here are the first few steps of calculating $\om_1,\om_2,\phi_1,\phi_2$, where we take $f_2=\wp$.

\medskip

\noindent
{\bf mod $t^2$}.
The polar part of $\om_1$ is zero $\mod(t^2)$, so $\om_1\equiv s_1\nu_1 \mod(t^2)$ and $\phi_1\equiv \th_1+\eta_1\mod(t^2)$.
 
The polar part of $\om_2$ comes from the constant term of $\phi_1$, so it
 is $-\frac{ts_2}{x_2}\eta_1$. Hence,
 $$\b_1\equiv -t\eta_1, \ \ b_2=\eta_2\b_1=t\eta_1\eta_2\mod(t^2).$$
Therefore,
\begin{align*}
  \om_2&\equiv s_2\nu_2 t\eta_1\eta_2\wp(z_2)-s_2t\eta_1\kappa_1(z_2)
  \\
  &\equiv
    s_2\th_2 t\eta_1\eta_2[\wp(x_2)+\tau'_1(x_2)]-s_2t\eta_1\kappa_1(x_2)\!\!\!\!\mod(t^2),
  \\
  \phi_2&\equiv \th_2 t\eta_1\eta_2[\wp(x_2)+\tau'_1(x_2)]-t\eta_1\kappa_1(x_2)_{\ge 0}.
\end{align*}

\medskip

\noindent
{\bf mod $t^3$}.
The polar part of $\om_1$ is still zero $\mod(t^3)$, so $\om_1\equiv s_1\nu_1 \mod(t^3)$ and $\phi_1\equiv \th_1+\eta_1\mod(t^3)$.

The polar part of $\om_2$ comes from the both terms $\eta_1$ and $\th_1$ in $\phi_1$, and is given by
$$-s_2\frac{t\eta_1}{x_2}+s_2\th_2\frac{t^2}{x_2^2}.$$
Thus, we get
$$\b_1\equiv -t\eta_1, \ \ b_2\equiv \eta_2\b_1+t^2=t\eta_1\eta_2+t^2, \ \ \b_2\equiv-\eta_2b_2=-t^2\eta_2,$$
\begin{align*}
&\om_2\equiv s_2\nu_2(t\eta_1\eta_2+t^2)\wp(z_2)-s_2[t\eta_1\kappa_1(z_2)+t^2\eta_2\kappa_2(z)]=\\
&s_2\th_2[(t\eta_1\eta_2+t^2)\wp(x_2)+t\eta_1\eta_2\tau'_1(x_2)]+s_2[t^2\eta_2\wp(x_2)-t\eta_1\kappa_1(x_2)-t^2\eta_2\kappa_2(x_2)],
\end{align*}
$$\phi_2\equiv \th_2[t^2\wp(x_2)_{\ge 0}+t\eta_1\eta_2(\wp(x_2)+\tau'_1(x_2))]+t^2\eta_2(\wp(x_2)-\kappa_2(x_2))-t\eta_1\kappa_1(x_2)_{\ge 0}.$$

\medskip

\noindent
{\bf mod $t^4$}.
The polar part of $\om_1$ now has contributions from the terms $\th_2 t\eta_1\eta_2(\wp(x_2)+\tau'_1(x_2))$ and
$t^2\eta_2(\wp(x_2)-\kappa_2(x_2))$ in $\phi_2$, so it is given by
$$-\frac{s_1\th_1}{x_1^2} t^3\eta_1\eta_2\frac{\wp(u_2)}{2}-\frac{s_1}{x_1} t^3\eta_2\frac{\wp(u_2)}{2}.$$
Hence,
$$\a_1\equiv-t^3\eta_2\frac{\wp(u_2)}{2}, \ \ a_2\equiv\eta_1\a_1-t^3\eta_1\eta_2\frac{\wp(u_2)}{2}=-t^3\eta_1\eta_2\wp(u_2),$$
\begin{align*}
&\om_1\equiv s_1\nu_1[1-t^3\eta_1\eta_2\wp(u_2)\wp(z_1)]-s_1t^3\eta_2\frac{\wp(u_2)}{2}\kappa_1(z_1)=\\
&s_1\th_1[1-t^3\eta_1\eta_2\wp(u_2)\wp(x_1)-t^3\eta_1\eta_2\frac{\wp(u_2)}{2}\tau'_1(x_1)]+s_1[\eta_1-t^3\eta_2\frac{\wp(u_2)}{2}\kappa_1(x_1)],
\end{align*}
$$\phi_1\equiv \th_1[1-t^3\eta_1\eta_2\wp(u_2)[\wp(x_1)_{\ge 0}+\frac{1}{2}\tau'_1(x_1)_{\ge 0}]]+\eta_1-t^3\eta_2\frac{\wp(u_2)}{2}\kappa_1(x_1)_{\ge 0}.$$

The polar part of $\om_2$ is the same as modulo $t^3$, so we have the same formulas for $\om_2$ and $\phi_2$ as modulo $t^3$.

\noindent
{\bf mod $t^5$}. 
The polar part of $\om_1$ is given by
\begin{align*}
&\frac{ts_1}{x_1}\phi_2(-\frac{t^2}{x_1},\frac{t\th_1}{x_1})\equiv\frac{s_1\th_1}{x_1^2}[t^4\wp(-\frac{t^2}{x_1})_{\ge 0}+
t^3\eta_1\eta_2(\wp(-\frac{t^2}{x_1})+\tau'_1(-\frac{t^2}{x_1}))]+\\
&\frac{s_1}{x_1}[t^3\eta_2(\wp(-\frac{t^2}{x_1})-\kappa_2(-\frac{t^2}{x_1}))-t^2\eta_1\kappa_1(-\frac{t^2}{x_1})_{\ge 0}]\equiv\\
&-\frac{s_1\th_1}{x_1^2}t^3\eta_1\eta_2\frac{\wp(u_2)}{2}-
\frac{s_1}{x_1}t^3\eta_2\frac{\wp(u_2)}{2}-
\frac{s_1}{x_1^2}t^4\eta_1\frac{\wp(u_2)}{2}].
\end{align*}
Hence, we get
$$\a_1\equiv-t^3\eta_2\frac{\wp(u_2)}{2}, \ \ a_2\equiv\eta_1\a_1-t^3\eta_1\eta_2\frac{\wp(u_2)}{2}=-t^3\eta_1\eta_2\wp(u_2), \ \
\a_2\equiv -t^4\eta_1\frac{\wp(u_2)}{2},$$
$$\om_1\equiv s_1\nu_1[1-t^3\eta_1\eta_2\wp(u_2)\wp(z_1)]-s_1\frac{\wp(u_2)}{2}[t^3\eta_2\kappa_1(z_1)+t^4\eta_1\kappa_2(z_1)].$$

The polar part of $\om_2$ is still the same as before, so $\om_2$ and $\phi_2$ do not change.
\end{proof}

\begin{prop}\label{++periods-wrong-basis-prop}
With the respect to the basis \eqref{++H1-basis-eq}, the map 
$$\pi_*\om_{X/S}\to \ov{\HH}^1$$
is given by 
$$\om^{(1)}\mapsto \phi_{11}(t)e_1+\phi_{12}(t)e_2+\psi_{11}(t)f_1+\psi_{12}(t)f_2,$$
$$\om^{(2)}\mapsto \phi_{21}(t)e_1+\phi_{22}(t)e_2+\psi_{21}(t)f_1+\psi_{22}t)f_2,$$
where 
$$\phi_{11}\equiv \phi_{22}\equiv 1\mod(t^5),
$$
$$\psi_{11}\equiv-t^3\eta_1\eta_2\wp(u_2) \mod(t^5), \ \ \psi_{22}\equiv -t^3\eta_1\eta_2\wp(u_1) \mod(t^5)$$
$$\phi_{12}\equiv \phi_{21}\equiv 0\mod(t^5), \ \ \psi_{12}\equiv \psi_{21}\equiv t\eta_1\eta_2+t^2 \mod(t^5).$$
Furthermore, the coefficients of $t^k$ (resp., $\eta_1\eta_2 t^k$) in $\phi_{ij}$ and of $\psi_{ij}$ are holomorphic functions of $q_1=\exp(2\pi i \tau_1)$ and $q_2=\exp(2\pi i\tau_2)$.
%$$\phi_{11}\equiv \phi_{22}\equiv 1 \mod (t^3), \ \ \phi_{12}\equiv \phi_{21}\equiv 0\mod (t^3),$$
%$$\psi_{12}\equiv \psi_{21}\equiv t^2-t\eta_1\eta_2 \mod (t^3), \ \ \psi_{11}\equiv \psi_{22}\equiv 0 \mod (t^3).$$
\end{prop}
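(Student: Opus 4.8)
The plan is to deduce the statement directly from the two preceding lemmas. Lemma \ref{cech-diff-lem} identifies $\ov{\HH}^1$ with $\HH^1_{X_1/S}\oplus\HH^1_{X_2/S}$, and under this identification the image in $\ov{\HH}^1$ of a global section of $\om_{X/S}$ is computed by restricting it to $U_1=S^{(N)}_t\times\A^{0|2}\times\bigl((X_1\setminus\{q_1\})\sqcup(X_2\setminus\{q_2\})\bigr)$ and passing to the class modulo $\de\OO$ on each of the two components. For the sections $\om^{(i)}$ these restrictions are precisely the forms $\om^{(i)}_1$ on $X_1\setminus\{q_1\}$ and $\om^{(i)}_2$ on $X_2\setminus\{q_2\}$ exhibited in Lemma \ref{super-global-diff-base-lem}. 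So the task is reduced to expressing the class of $\om^{(i)}_1$ in $\HH^1_{X_1/S}$ and the class of $\om^{(i)}_2$ in $\HH^1_{X_2/S}$ in the basis $(e_i,f_i)$ of \eqref{++H1-basis-eq}.

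The first key step is to record which of the summands appearing in the formulas of Lemma \ref{super-global-diff-base-lem} are exact in $\HH^1_{X_j/S}=\om_{X_j/S}(X_j\setminus\{q_j\})/\de\OO(X_j\setminus\{q_j\})$. Since $\de^-$ induces an isomorphism of $L_j(C_j\setminus\{0\})$ onto $\om_{X_j/S}(X_j\setminus\{q_j\})^-$ (the fact used right after \eqref{++H1-basis-eq}), every term $\de(\kappa_n(z_j)\nu_j)$ is a $\de$-coboundary, so all the odd contributions $\sum_n\de(\kappa_n(z_j)\nu_j)\a_n$ and their analogues vanish. For the even part, using $\de(f)=f'(z_j)\de(z_j)$ for $f$ holomorphic on $C_j\setminus\{0\}$: the term $\de(z_j)$ is $e_j$ and $f_2(z_j)\de(z_j)=\wp(z_j,\tau_j)\de(z_j)$ is $f_j$, both by the definition \eqref{++H1-basis-eq}; for odd $n\ge 3$ the function $f_n$ is a constant multiple of $\wp^{(n-1)}$, so $f_n(z_j)\de(z_j)=\de(\const\cdot\wp^{(n-2)}(z_j))$ is exact; and for even $n\ge 4$, \eqref{f-functions-eq} shows $f_n(z_j)\de(z_j)$ equals an exact form plus $\phi_j[n]\,\de(z_j)=\phi_j[n]\,e_j$, where $\phi_j[n]$ is holomorphic in $q_j=\exp(2\pi i\tau_j)$. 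Feeding these reductions into the formulas of Lemma \ref{super-global-diff-base-lem} gives $\om^{(1)}\mapsto \phi_{11}e_1+\psi_{11}f_1+\phi_{12}e_2+\psi_{12}f_2$ with $\psi_{11}=a_2$, $\psi_{12}=b_2$, $\phi_{11}=1+\sum_{n\ge 4\text{ even}}a_n\phi_1[n]$, $\phi_{12}=\sum_{n\ge 4\text{ even}}b_n\phi_2[n]$, and symmetrically $\psi_{21}=d_2$, $\psi_{22}=c_2$, $\phi_{22}=1+\sum_n c_n\phi_2[n]$, $\phi_{21}=\sum_n d_n\phi_1[n]$.

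Finally I would read off the congruences modulo $(t^5)$ from the explicit mod $(t^5)$ expansions of $\om^{(1)}_1,\om^{(1)}_2,\om^{(2)}_2,\om^{(2)}_1$ in Lemma \ref{super-global-diff-base-lem}: there $a_2\equiv-t^3\eta_1\eta_2\wp(u_2)$, $c_2\equiv-t^3\eta_1\eta_2\wp(u_1)$, $b_2\equiv d_2\equiv t\eta_1\eta_2+t^2$, while all $a_n,b_n,c_n,d_n$ with $n\ge 3$ are $\equiv 0$; this yields $\phi_{11}\equiv\phi_{22}\equiv 1$, $\phi_{12}\equiv\phi_{21}\equiv 0$ and the asserted values of the $\psi_{ij}$ modulo $(t^5)$. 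The holomorphy of the coefficients of $t^k$ and of $\eta_1\eta_2t^k$ in the $\phi_{ij}$ and $\psi_{ij}$ follows at once from the corresponding statement in Lemma \ref{super-global-diff-base-lem} and the holomorphy of the $\phi_j[n]$ in $q_j$. I do not anticipate a genuine obstacle: all the analytic input is already packaged in Lemmas \ref{cech-diff-lem} and \ref{super-global-diff-base-lem}, and the remaining work is the bookkeeping of which terms of $\om^{(i)}_j$ survive in de Rham cohomology — the only point requiring a little care is the contribution $\phi_j[n]\,e_j$ from the even functions $f_n$ with $n\ge 4$, which affects the $\phi_{ij}$ (but not modulo $(t^5)$).
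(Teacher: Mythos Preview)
Your proposal is correct and follows essentially the same approach as the paper's own proof: reduce via Lemma \ref{cech-diff-lem} to computing the classes of $\om^{(i)}_j$ in $\HH^1_{X_j/S}$, use \eqref{f-functions-eq} to see which summands in the expansions of Lemma \ref{super-global-diff-base-lem} survive in cohomology, and read off the coefficients. (One harmless slip: for odd $n\ge 3$ the formula \eqref{f-functions-eq} gives $f_n\propto\wp^{(n-2)}$, not $\wp^{(n-1)}$, but the conclusion that $f_n(z_j)\de(z_j)$ is exact is unaffected.)
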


\begin{proof}
Note that for $n\ge 1$, one has $\wp^{(n)}(z_i,\tau_i)\de(z_i)\equiv 0$ in $\ov{\HH}^1$.
%We can choose $f_n(z_i)$, for $i=1,2$, $n\ge 2$, to be $f_n(z_i)=\frac{(-1)^n}{(n-1)!}\wp^{(n-2)}(z_i,\tau_i)$. Then $f_n(z_i)\de(z_i)\equiv 0$ in $\ov{\HH}^1$.
By \eqref{f-functions-eq}, for each $k\ge 1$, $f_{2k+1}(z_i)$ is proportional to $\wp^{(2k-1)}$, whereas
$$f_{2k+2}(z_i)\equiv \phi_i[2k+2] \mod (\wp^{(2k)}(z_i)),$$
for some constants $\phi_i[2k+2]$.
Hence, with the notation of Lemma \ref{super-global-diff-base-lem}, we get 
$$\om^{(1)}\equiv (1+\sum_{k\ge 1}a_{2k+2}\phi_1[2k+2])e_1+a_2f_1+(\sum_{k\ge 1}b_{2k+2}\phi_2[2k+2])e_2+b_2f_2,$$
$$\om^{(2)}\equiv (1+\sum_{k\ge 1}c_{2k+2}\phi_2[2k+2])e_2+a_2f_2+(\sum_{k\ge 1}d_{2k+2}\phi_1[2k+2])e_1+d_2f_1$$
in $\ov{\HH}^1$. Now the assertion follows from  Lemma \ref{super-global-diff-base-lem}.
\end{proof}

\subsubsection{Periods calculation}\label{periods-calculation-subsubsec}

The basis $(e_1,f_1,e_2,f_2)$ of $\ov{\HH}^1$ is not horizontal with respect to the Gauss-Manin connection $\nabla^{GM}$. Lemma \ref{cech-diff-lem}(ii) implies that
$\nabla^{GM}$ acts on $(e_i,f_i)$ in the same way as for the standard family of elliptic curves. Hence, a horizontal basis is obtained from $(e_i,f_i)$ by using the periods
along the standard loops $(\a_i,\b_i)$ (where $(\a_i,\b_i)$ are the cycles considered in Sec.\ \ref{even-gluing-sec}).
Equivalently, we should replace the basis $(\om^{(1)},\om^{(2)})$ of $\pi_*\om_{X/S}$ by a basis with the normalized $\a$-periods, and then compute their $\b$-periods.

From the expressions for $\om^{(1)}$ and $\om^{(2)}$, we get the following $\a$-periods of $\om^{(1)}$ and $\om^{(2)}$:
$$\int_{\a_1}\om^{(1)}=\phi_{11}+A_1\psi_{11}\equiv 1-A_1t^3\eta_1\eta_2\wp(u_2) \mod(t^5),$$
$$\int_{\a_1}\om^{(2)}=\phi_{21}+A_1\psi_{21}\equiv A_1(t\eta_1\eta_2+t^2) \mod(t^5),$$
$$\int_{\a_2}\om^{(1)}=\phi_{12}+A_2\psi_{12}\equiv A_2(t\eta_1\eta_2+t^2) \mod(t^5),$$
$$\int_{\a_2}\om^{(2)}=\phi_{22}+A_2\psi_{22}\equiv 1-A_2t^3\eta_1\eta_2\wp(u_1) \mod(t^5),$$
where $A_i$ are given by \eqref{Ai-eq}.

Using this we can find the normalized basis $\om^{(1)}_{\norm},\om^{(2)}_{\norm}$ such that
$$\int_{\a_i}\om^{(j)}_{\norm}=\de_{ij}.$$
Namely, setting 
$$\De=\De(\tau_1,\tau_2):=\det(\phi+\psi A)=\left|\begin{matrix} \phi_{11}+\psi_{11}A_1 & \phi_{12}+\psi_{12}A_2\\ \phi_{21}+\psi_{21}A_1 & \phi_{22}+\psi_{22}A_2\end{matrix}\right|,$$
we have
\begin{align*}
&\om^{(1)}_{\norm}=\De^{-1}\cdot \left|\begin{matrix} \om^{(1)} & \phi_{12}+\psi_{12}A_2\\ \om^{(2)} & \phi_{22}+\psi_{22}A_2\end{matrix}\right|=\\
&[1+t^3\eta_1\eta_2A_1(\wp(u_2)+2A_2)+A_1A_2t^4]\cdot[\om^{(1)}-A_2(t\eta_1\eta_2+t^2)\om^{(2)}] \mod(t^5),\\
&\om^{(2)}_{\norm}=\De^{-1}\cdot
\left|\begin{matrix} \phi_{11}+\psi_{11}A_1 & \om^{(1)}\\ \phi_{21}+\psi_{21}A_1 & \om^{(2)}\end{matrix}\right|=\\
&[1+t^3\eta_1\eta_2A_2(\wp(u_1)+2A_1)+A_1A_2t^4]\cdot[\om^{(2)}-A_1(t\eta_1\eta_2+t^2)\om^{(1)}] \mod(t^5).
\end{align*}
%$$\int_{\a_1}\om^{(1)}_{\norm}=1\mod(t^5),  \ \ \int_{\a_2}\om^{(1)}_{\norm}=0\mod(t^5).$$

%Now we can compute the period matrix :
%$$\int_{\b_2}\om^{(1)}_{\norm}=\int_{\b_1}\om^{(2)}_{\norm}=2\pi i (t\eta_1\eta_2+t^2)\mod(t^5),$$
%$$\int_{\b_1}\om^{(1)}_{\norm}=\tau_1-2\pi i t^3\eta_1\eta_2(\wp(u_2)+2A_2)-2\pi i A_2t^4\mod(t^5),.$$
We derive the following information about the canonical projection and the superperiod matrix $\Om_{ij}=\int_{\b_i}\om^{(j)}_\norm$ in terms of the gluing coordinates
$(\tau_1,\tau_2,t,\eta_1,\eta_2)$ near the $(+,+)$ separating node divisor.
As before, for any holomorphic object $X$ over $\ov{\SS}_2$ we denote by $\wt{X}$ the corresponding holomorphic object over the complex conjugate space $\ov{\SS}_2^c$.

\begin{prop}\label{period-det-prop} 
(i) One has
%\left(\begin{matrix} \tau_1-2\pi i t^3\eta_1\eta_2(\wp(u_2)+2A_2)-2\pi i A_2t^4 & 2\pi i (t\eta_1\eta_2+t^2) \\
%2\pi i (t\eta_1\eta_2+t^2) &  \tau_2-2\pi i t^3\eta_1\eta_2(\wp(u_1)+2A_1)-2\pi i A_1t^4\end{matrix}\right) \mod (t^5).
%$$\Om_{11}=\tau_1+\De^{-1}\cdot \left|\begin{matrix} \psi_{11} & \phi_{12}+\psi_{12}A_2\\ \psi_{21} & \phi_{22}+\psi_{22}A_2\end{matrix}\right|=
%\tau_1-2\pi i t^3\eta_1\eta_2(\wp(u_2)+2A_2)-2\pi i A_2t^4 \mod (t^5),$$
$$\Om_{11}=\tau_1+\De^{-1}\cdot \left|\begin{matrix} \psi_{11} & \phi_{12}+\psi_{12}A_2\\ \psi_{21} & \phi_{22}+\psi_{22}A_2\end{matrix}\right|=
\tau_1-2\pi i (t^3\eta_1\eta_2(\wp(u_2)+2A_2)+ A_2t^4) \!\!\!\!\mod\! (t^5),$$
%$$\Om_{22}=\tau_2+\De^{-1}\cdot \left|\begin{matrix} \phi_{11}+\psi_{11}A_1 & \psi_{12}\\ \phi_{21}+\psi_{21}A_1 & \psi_{22}\end{matrix}\right|=
%\tau_2-2\pi i t^3\eta_1\eta_2(\wp(u_1)+2A_1)-2\pi i A_1t^4 \mod (t^5),$$
$$\Om_{22}=\tau_2+\De^{-1}\cdot \left|\begin{matrix} \phi_{11}+\psi_{11}A_1 & \psi_{12}\\ \phi_{21}+\psi_{21}A_1 & \psi_{22}\end{matrix}\right|=
\tau_2-2\pi i (t^3\eta_1\eta_2(\wp(u_1)+2A_1)+A_1t^4)\!\!\!\! \mod \!(t^5),$$
$$\Om_{12}=\Om_{21}=2\pi i\De^{-1}\cdot \left|\begin{matrix} \psi_{12} & \phi_{12} \\ \psi_{22} & \phi_{22} \end{matrix}\right|=
2\pi i\De^{-1}\cdot \left|\begin{matrix} \phi_{11} & \psi_{11} \\ \phi_{21} & \psi_{21} \end{matrix}\right|=
2\pi i (t\eta_1\eta_2+t^2) \!\!\!\!\mod (t^5).$$

(ii) The pullbacks $t',\tau'_1,\tau'_2$ of $t,\tau_1,\tau_2$ under the canonical projection $\pi^{\can}$ are given by 
$$t'=t+\eta_1\eta_2/2+O(t^4)\eta_1\eta_2,$$
$$\tau'_1=\tau_1-2\pi i t^3\wp(u_2)\eta_1\eta_2+O(t^5)\eta_1\eta_2, \ \ \tau'_2=\tau_2-2\pi i t^3\wp(u_1)\eta_1\eta_2+O(t^5)\eta_1\eta_2.$$

\noindent
(iii) Set $s:=\om^{(1)}\we \om^{(2)}$, $s_{\norm}:=\om^{(1)}_{\norm}\we \om^{(2)}_{\norm}$. Also, 
let us write $\Om=\Om_0+\Om_1\eta_1\eta_2$, where $\Om_0$ and $\Om_1$ depend only on even variables.
The canonical section of
$\Ber_1\boxtimes \wt{\Ber}_1$, coming from the hermitian form on $\pi_*\om_{X/S}$ is given by
$$h:=\det(\wt{\Om}-\Om)^{-1}\cdot s_{\norm}\cdot \wt{s}_{\norm}=(h_0+h_1\cdot \eta_1\eta_2+\wt{h}_1\cdot\wt{\eta}_1\wt{\eta}_2+h_{11}\cdot\eta_1\eta_2\wt{\eta}_1\wt{\eta}_2)\cdot s\cdot\wt{s},$$
where 
$$h_0=d^{-1}(1+O(t^4)+O(\wt{t}^4)),$$
$$h_1=t[-8\pi^2 d^{-2} \wt{t}^2+O(t^2)+O(\wt{t}^5)), \ \ \wt{h}_1=\wt{t}[-8\pi^2 d^{-2} t^2+O(\wt{t}^2)+O(t^5)),$$
$$h_{11}=t\wt{t}[-8\pi^2 d^{-2}+O(t^4)+O(\wt{t}^4)+O(t^2\wt{t}^2)],$$
where $d=\det(\wt{\Om}_0-\Om_0)$.
%[\frac{1}{(\ov{\tau}_1-\tau_1)(\ov{\tau}_2-\tau_2)}+O(t^2)+O(\ov{t}^2)]\cdot s\cdot \ov{s}.$$
\end{prop}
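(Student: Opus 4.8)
The plan is to obtain the three parts in turn, each by feeding already-established data into a short manipulation; the only laborious step is the leading-order expansion in (iii). \emph{For (i):} by Lemma~\ref{cech-diff-lem}(ii) the Gauss--Manin connection on $\ov{\HH}^1$ is the classical one on the $H^1(C_i,\C)$, so the horizontal frame of $(\ov{\HH}^1)^\vee$ is given by the cycles $\a_i,\b_i$ with classical periods $\int_{\a_i}e_i=1$, $\int_{\a_i}f_i=A_i$, $\int_{\b_i}e_i=\tau_i$, $\int_{\b_i}f_i=2\pi i+\tau_iA_i$, and with $e_j,f_j$ having zero $\a_i,\b_i$-periods for $j\ne i$; hence $\int_{\b_i}\om^{(k)}=\tau_i(\phi_{ki}+\psi_{ki}A_i)+2\pi i\psi_{ki}$. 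Substituting this into the Cramer expressions for $\om^{(1)}_\norm,\om^{(2)}_\norm$ from Section~\ref{periods-calculation-subsubsec}, the $\tau_i$-summand contributes a determinant with a proportional (resp.\ repeated) column times $\De^{-1}\De$ and so collapses to $\tau_i\de_{ij}$, leaving exactly the stated determinantal corrections; the identity $\Om_{12}=\Om_{21}$ is the symmetry recalled in Section~\ref{superperiod-deg-sec}. Inserting the mod-$t^5$ values of $\phi_{ij},\psi_{ij}$ from Proposition~\ref{++periods-wrong-basis-prop} and expanding the $2\times2$ determinants, using $t^2+t\eta_1\eta_2=(t+\tfrac12\eta_1\eta_2)^2$ and $(\eta_1\eta_2)^2=0$, gives the congruences.

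\emph{For (ii):} by Corollary~\ref{g2-proj-cor} the projection $\pi^{\can}$ is uniquely characterized by $(\pi^{\can})^*\Om^{cl}=\Om$, where $\Om^{cl}$ is the classical period matrix as a function of the bosonic gluing coordinates $(\tau_1,\tau_2,t)$; by the diagram of Section~\ref{formal-nbhd-div-sec} (with $q=-t^2$) it is the matrix of Section~\ref{even-gluing-sec} after the substitution $q=-t^2$. Since $\pi^{\can}$ is the identity on bosonizations, $(\pi^{\can})^*t=t+c\,\eta_1\eta_2$ and $(\pi^{\can})^*\tau_i=\tau_i+c_i\,\eta_1\eta_2$ with $c,c_i$ even. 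Matching the off-diagonal entry forces $((\pi^{\can})^*t)^2=t^2+t\eta_1\eta_2$ modulo $t^5$, i.e.\ $c=\tfrac12+O(t^3)$; matching the diagonal entries $\Om^{cl}_{ii}$ with $\Om_{ii}$ from (i) (where the dependence of $A_i$ and $\wp(u_i)$ on the $\tau$'s contributes only negligible $O(t^{\ge5})\eta_1\eta_2$ terms) then yields $c_1,c_2$.

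\emph{For (iii):} here $h$ is given by \eqref{h-def-eq}; since $(\om^{(1)}_\norm,\om^{(2)}_\norm)$ comes from $(\om^{(1)},\om^{(2)})$ via a matrix of determinant $\De^{-1}$, we have $s_\norm=\De^{-1}s$, so $h/(s\,\wt s)=\det(\wt\Om-\Om)^{-1}\De^{-1}\wt\De^{-1}$. Write $\Om=\Om_0+\Om_1\eta_1\eta_2$, $\De=\De_0+\De_1\eta_1\eta_2$ and their conjugates, put $a=\eta_1\eta_2$, $b=\wt\eta_1\wt\eta_2$ (commuting, square-zero), $M_0=\wt\Om_0-\Om_0$, $d=\det M_0$. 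A $2\times2$ computation gives $\det(\wt\Om-\Om)^{-1}=d^{-1}(1+Aa+Bb+Cab)$ with $A=\tr(M_0^{-1}\Om_1)$, $B=-\tr(M_0^{-1}\wt\Om_1)$, $C=2AB-d^{-1}\de_N$, where $\de_N$ is the coefficient of $ab$ in $\det(-\Om_1a+\wt\Om_1b)$; multiplying by $\De_0^{-1}\wt\De_0^{-1}(1-\De_0^{-1}\De_1a-\wt\De_0^{-1}\wt\De_1b+\cdots ab)$ and collecting the coefficients of $1,a,b,ab$ produces $h_0,h_1,\wt h_1,h_{11}$, and the $\Z_2^2\times\Z_2^2$-equivariance of the construction under \eqref{main-Z2-action-eq} both rules out other monomials and forces $\Om_0,\De_0$ to be even in $t$ and $\Om_1,\De_1$ odd in $t$. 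Combined with part (i) this gives $\Om_0=\operatorname{diag}(\tau_1,\tau_2)+2\pi it^2J+O(t^4)$, $\Om_1=2\pi itJ+O(t^3)$ with $J$ the antidiagonal matrix of $1$'s, $\De_0=1+O(t^4)$, $\De_1=O(t^3)$, and likewise with $\wt{}$; thus $M_0$ is a nondegenerate diagonal matrix $\operatorname{diag}(\wt\tau_i-\tau_i)$ plus $-2\pi i(t^2+\wt t^2)J$ plus $O(t^4)+O(\wt t^4)$, so $d$ agrees with the statement. Using $\tr(JD^{-1}JD^{-1})=2/\det D$ for diagonal $D$ and the leading value $\de_N=8\pi^2t\wt t+\cdots$ (both producing the factor $-8\pi^2$ from $(2\pi i)^2\cdot(\pm2)$) one reads off $h_0=d^{-1}(1+O(t^4)+O(\wt t^4))$, $A=-8\pi^2d^{-1}t(t^2+\wt t^2)+O(t^3)$, $C=-8\pi^2d^{-1}t\wt t+\cdots$, and hence the stated expansions of $h_1,\wt h_1,h_{11}$.

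\emph{Main obstacle.} The real content is controlling the error terms in (iii) --- that the remainder of $h_0$ is $O(t^4)+O(\wt t^4)$, of $h_1$ is $t(O(t^2)+O(\wt t^5))$, and of $h_{11}$ is $t\wt t(O(t^4)+O(\wt t^4)+O(t^2\wt t^2))$ --- which means tracking how the diagonal ($\tau$-carrying) and antidiagonal ($t^2J$) blocks of $M_0$ interact through the matrix inverse with the odd-in-$t$ structure of $\Om_1,\De_1$, and in particular exploiting that the even-in-$t$ corrections to $\Om_0,\De_0$ occur only in powers of $q=-t^2$. Parts (i) and (ii), by contrast, are essentially formal once the Cramer expressions, the classical period formula, and the characterization of $\pi^{\can}$ are in hand.
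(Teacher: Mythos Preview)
Your proposal is correct and follows essentially the same route as the paper. For (i) and (ii) the arguments coincide verbatim: substitute the Cramer expressions for $\om^{(i)}_{\norm}$ together with the classical periods of $(e_i,f_i)$, then read off $t',\tau_i'$ from $(\pi^{\can})^*\Om^{\bos}=\Om$. For (iii) your organization $h/(s\,\wt s)=\det(\wt\Om-\Om)^{-1}\De^{-1}\wt\De^{-1}$ is the paper's computation in slightly different packaging: the paper introduces the transition matrix $M$ with $s_{\norm}=\det(M)\,s$ (so $\det M=\De^{-1}$) and expands $\det(\wt\Om-\Om)$ via the adjugate pairing $\langle A,B\rangle=\tr(A^{\mathrm{adj}}B)$, which for $2\times2$ matrices is your $d\cdot\tr(M_0^{-1}\cdot)$; the leading constants $-8\pi^2$ and the error orders then drop out by the same mod-$t^5$ input from (i), and your appeal to the $\Z_2^2$-equivariance \eqref{main-Z2-action-eq} to force the even/odd-in-$t$ splitting of $\Om_0,\Om_1$ is a clean way to justify the specific remainders $O(t^4),O(\wt t^5),\ldots$ that the paper obtains by direct inspection.
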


\begin{proof}
(i) The formulas for $\Om_{ij}$ follow directly from the formulas for $\om^{(1)}_{\norm}$ and $\om^{(2)}_{\norm}$ and from Proposition \ref{++periods-wrong-basis-prop}.
The symmetry of $\Om$ follows from the similar symmetry for smooth supercurves, which is well known.

\noindent
(ii) This follows from (i). Namely, let $\Om^{\bos}$ denote the corresponding classical period matrix. Then $(\pi^{\can})^*\Om^{\bos}_{12}=\Om_{12}$, i.e.,
$$(\pi^{\can})^*(t^2)=t\eta_1\eta_2+t^2 \mod (t^5).$$
Writing $t'=(\pi^{\can})^*t=t+f\eta_1\eta_2$, we deduce that $2tf\equiv t \mod (t^5)$, i.e., $f\equiv 1/2 \mod (t^4)$.
The formulas for $\tau'_i=(\pi^{\can})^*\tau_i$ are obtained similarly from the equations $(\pi^{\can})^*\Om^{\bos}_{ii}=\Om_{ii}$, for $i=1,2$.

\noindent
(ii) Consider the symmetric bilinear form on matrices given by
$$\lan A,B\ran=\tr(A^{adj}B)=a_{11}b_{22}+a_{22}b_{11}-a_{12}b_{21}-a_{21}b_{21}.$$
Then if $\eps_1$ and $\eps_2$ are commuting variables satisfying $\eps_i^2=0$, we have
$$\det(A+B_1\eps_1+B_2\eps_2)=\det(A)+\lan A,B_1\ran\eps_1+\lan A,B_2\ran\eps_2+\lan B_1,B_2\ran\eps_1\eps_2.$$
Applying this in our case we get
\begin{align*}
&\det(\wt{\Om}-\Om)=\det(\wt{\Om}_0-\Om_0+\wt{\Om}_1\wt{\eta}_1\wt{\eta}_2-\Om_1\eta_1\eta_2)=\\
&\det(\wt{\Om}_0-\Om_0)+\lan \wt{\Om}_0-\Om_0,\wt{\Om}_1\ran\wt{\eta}_1\wt{\eta}_2-\lan \wt{\Om}_0-\Om_0,\Om_1\ran\eta_1\eta_2-
\lan\wt{\Om}_1,\Om_1\ran\eta_1\eta_2\wt{\eta}_1\wt{\eta}_2.
\end{align*}
Note that we have
$$\lan \wt{\Om}_0-\Om_0,\wt{\Om}_1\ran=:\wt{t}a=\wt{t}(8\pi^2t^2+O(\wt{t}^2)+O(t^5)),$$
$$\lan \wt{\Om}_0-\Om_0,\Om_1\ran=-t\wt{a}=t(-8\pi^2\wt{t}^2+O(t^2)+O(\wt{t}^5)),$$
$$\lan \wt{\Om}_1,\Om_1\ran=:t\wt{t}b=t\wt{t}(-8\pi^2+O(t^4)+O(\wt{t}^4)+O(t^2\wt{t}^2)).$$
Thus, we obtain
\begin{equation}
\det(\wt{\Om}-\Om)^{-1}=d^{-1}-d^{-2}\wt{a}t\eta_1\eta_2-d^{-2}a\wt{t}\wt{\eta}_1\wt{\eta}_2+(d^{-2}b+2d^{-3}a\wt{a})t\wt{t}\eta_1\eta_1\wt{\eta}_1\wt{\eta}_2.
\end{equation}

Next, let $M$ be the transition matrix from $\om^{(1)},\om^{(2)}$ to $\om^{(1)}_{\norm}$, $\om^{(2)}_{\norm}$. We can write $M=M_0+M_1\eta_1\eta_2$,
where 
$$M_0\equiv \left(\begin{matrix} 1+A_1A_2t^4 & -A_1t^2 \\ -A_2t^2 & 1+A_1A_2t^4\end{matrix}\right) \mod t^5,$$
$$M_1\equiv \left(\begin{matrix} (A_1\wp(u_2)+2A_1A_2)t^3 & -A_1t \\ -A_2t & (A_2\wp(u_1)+2A_1A_2)t^3\end{matrix}\right) \mod t^5.$$
Hence, 
$$s_{\norm}\cdot \wt{s}_{\norm}=\det(M)\det(\wt{M}) s\cdot\wt{s},$$
where
$$\det(M)=m_0+m_1t^3\eta_1\eta_2, \ \ \det(\wt{M})=\wt{m}_0+\wt{m}_1\wt{t}^3\wt{\eta}_1\wt{\eta}_2,$$
with
$$m_0=\det(M_0)=1+A_1A_2t^4+O(t^5),$$
$$t^3m_1:=\lan M_0,M_1\ran=t^3(A_1\wp(u_2)+A_2\wp(u_1)+2A_1A_2+O(t^2)).$$ 
Now we have 
$$h=\det(\wt{\Om}-\Om)^{-1})\cdot \det(M)\cdot \det(\wt{M})\cdot s\cdot \wt{s},$$
so we get 
$$h_0=d^{-1}m_0\wt{m}_0, \ \ h_1=-d^{-2}\wt{a}m_0\wt{m}_0t+d^{-1}m_1\wt{m}_0t^3,$$
$$h_{11}=t\wt{t}\cdot[(d^{-2}b+2d^{-3}a\wt{a})m_0\wt{m}_0-d^{-2}\wt{a}m_0\wt{m}_1\wt{t}^2-d^{-2}a\wt{m}_0m_1t^2+d^{-1}m_1\wt{m}_1t^2\wt{t}^2],$$
which gives the claimed result.
\end{proof}

We will need the following corollary in our study of the superstring measure.

\begin{cor}\label{h0-h11-cor} 
One has
$$\frac{h_0^4h_{11}}{t\wt{t}}|_{t=0}=-8\pi^2h_0^6|_{t=\wt{t}=0}.$$
\end{cor}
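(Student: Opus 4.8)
The plan is to read off the corollary directly from the explicit expansions in Proposition~\ref{period-det-prop}(iii), setting $t=0$ and keeping only the leading nonvanishing coefficients. Recall from that proposition that $h_0=d^{-1}m_0\wt m_0$ and
$$h_{11}=t\wt t\,\bigl[(d^{-2}b+2d^{-3}a\wt a)m_0\wt m_0-d^{-2}\wt a\,m_0\wt m_1\wt t^2-d^{-2}a\,\wt m_0 m_1 t^2+d^{-1}m_1\wt m_1 t^2\wt t^2\bigr],$$
with $a=8\pi^2 t^2+O(\wt t^2)+O(t^5)$, $\wt a=-8\pi^2\wt t^2+O(t^2)+O(\wt t^5)$ (note the sign conventions in the statement: $\lan\wt\Om_0-\Om_0,\Om_1\ran=-t\wt a$ and the listed expansion gives $\wt a$ itself), $b=-8\pi^2+O(t^4)+O(\wt t^4)+O(t^2\wt t^2)$, and $m_0=1+A_1A_2 t^4+O(t^5)$, $\wt m_0=1+A_1A_2\wt t^4+O(\wt t^5)$.

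First I would divide $h_{11}$ by $t\wt t$ and then specialize at $t=0$. In the bracket, every term containing an explicit factor $t^2$ or $\wt t^2 a$ or $t^2\wt t^2$ drops: the term $d^{-2}a\,\wt m_0 m_1 t^2$ vanishes because of its $t^2$; the term $d^{-1}m_1\wt m_1 t^2\wt t^2$ vanishes for the same reason; in $2d^{-3}a\wt a\, m_0\wt m_0$ the factor $a=O(t^2)$ kills it; and in $d^{-2}\wt a\, m_0\wt m_1\wt t^2$ we have $\wt a=O(t^2)$ at $t=0$ by its expansion $\wt a=-8\pi^2\wt t^2+O(t^2)+\dots$, so this too vanishes once $t=0$. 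What survives is $d^{-2}b\, m_0\wt m_0$, and at $t=0$ we have $m_0=1$ and $b=-8\pi^2+O(\wt t^4)$, hence $b|_{t=0}=-8\pi^2$ after further setting $\wt t=0$ (or, more carefully, keeping $\wt t$ but noting the claimed identity also sets $\wt t=0$ on the right side through $h_0^6|_{t=\wt t=0}$). So $\tfrac{h_{11}}{t\wt t}\big|_{t=0}=-8\pi^2 d^{-2}\wt m_0|_{t=0}$.

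Next I would handle $h_0^4$ and $h_0^6$: $h_0=d^{-1}m_0\wt m_0$, so $h_0^4|_{t=0}=d^{-4}\wt m_0^4|_{t=0}$ (using $m_0|_{t=0}=1$), and $h_0^6|_{t=\wt t=0}=d^{-6}$ (using $m_0|_{t=0}=\wt m_0|_{\wt t=0}=1$). Here I should be mindful that $d=\det(\wt\Om_0-\Om_0)$ still depends on $\tau_1,\tau_2,\wt\tau_1,\wt\tau_2$ but is a fixed invertible even function; its value at $t=\wt t=0$ is the genus~$1$ datum and is exactly what appears on both sides. Multiplying, $h_0^4 h_{11}/(t\wt t)\big|_{t=0}=d^{-4}\wt m_0^4\cdot(-8\pi^2)d^{-2}\wt m_0=-8\pi^2 d^{-6}\wt m_0^5\big|_{t=0}$, and after also setting $\wt t=0$ (which is implicit in comparing with $h_0^6|_{t=\wt t=0}$; alternatively one observes the stated identity is an equality of the $t=0$ restrictions as functions, and then further restricting $\wt t=0$ on both sides) this equals $-8\pi^2 d^{-6}=-8\pi^2 h_0^6|_{t=\wt t=0}$, as claimed.

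The only genuine subtlety — and the step I would be most careful about — is bookkeeping the order of vanishing in the mixed variables $t,\wt t$ so that the correct ``$O$'' terms are discarded when one restricts; in particular one must check that no term in the bracket of $h_{11}$ contributes at $t=0$ except $d^{-2}b\,m_0\wt m_0$, which hinges on the expansions $a=O(t^2)$, $\wt a=O(t^2)$ at $t=0$ (the latter being slightly counterintuitive since $\wt a$ is nominally an antiholomorphic quantity, but its expansion as recorded includes an $O(t^2)$ error that dominates at $t=0$). Once this is settled the computation is immediate and no further input beyond Proposition~\ref{period-det-prop}(iii) is needed.
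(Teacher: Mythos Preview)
Your approach is the paper's (which gives no separate proof, the corollary being immediate from Proposition~\ref{period-det-prop}(iii)), but you work harder than necessary and make one incorrect intermediate claim.

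The \emph{statement} of Proposition~\ref{period-det-prop}(iii) already records the simplified expansions
\[
h_0 = d^{-1}\bigl(1+O(t^4)+O(\wt t^4)\bigr), \qquad
\frac{h_{11}}{t\wt t} = -8\pi^2 d^{-2}+O(t^4)+O(\wt t^4)+O(t^2\wt t^2),
\]
from which the corollary is one line:
\[
\frac{h_0^4 h_{11}}{t\wt t}\Big|_{t=\wt t=0} = d^{-4}\cdot(-8\pi^2 d^{-2}) = -8\pi^2 d^{-6} = -8\pi^2 h_0^6\big|_{t=\wt t=0}.
\]
(You correctly read the left side of the corollary as also requiring $\wt t=0$; this is exactly what is used downstream in Corollary~\ref{measure-polar-term-cor}.) You instead went back into the \emph{proof} of the proposition and recomputed $h_{11}/(t\wt t)$ term by term from the intermediate quantities $a,\wt a,b,m_0,\wt m_0,m_1,\wt m_1$, which is unnecessary.

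In doing so you make a false step: you assert that $\wt a=O(t^2)$ at $t=0$, citing the expansion $\wt a=-8\pi^2\wt t^2+O(t^2)+\dots$. But the leading term here is $\wt t^2$, not $t^2$; from $-t\wt a=t(-8\pi^2\wt t^2+O(t^2)+O(\wt t^5))$ one gets $\wt a|_{t=0}=8\pi^2\wt t^2+O(\wt t^5)\neq 0$. Consequently the term $d^{-2}\wt a\, m_0\wt m_1\wt t^2$ does \emph{not} vanish at $t=0$; it is $O(\wt t^4)$. This does not affect your final answer, since you subsequently set $\wt t=0$ and the term then drops --- but the reasoning as written is wrong. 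The clean fix is simply to quote the $O$-form of $h_{11}$ from the proposition's statement rather than re-derive it.
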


\subsection{Gluing construction near deeper strata of the $(+,+)$ separating node divisor}

\subsubsection{Weierstrass model near a nodal cubic}

Let $\Disc$ denote the disc $|q|<1$, and let $\Disc'=\Disc\setminus\{0\}$. 
We have a standard family $E_q$ of elliptic curves over $\Disc'$ given by the Weierstrass model
$$y^2=4x^3-g_2x-g_3,$$
where 
$$g_2=60G_4=\frac{4\pi^4}{3}(1+240q+\ldots), \ \ g_3=140G_6=\frac{8\pi^6}{27}(1-504q+\ldots),$$
with the uniformization given by $x=\wp(z,\tau)$, $y=\wp'(z,\tau)$, where $q=\exp(2\pi i \tau)$ (and $z\not\in \Z+\Z\tau$).

This family extends to a family of curves $(E,p)$ in $\ov{\MM}_{1,1}$ over $\Disc$, with $E_0$ being the nodal cubic
corresponding to $q=0$ (here $p$ is the point of the cubic at infinity).
We can still use the uniformization in a neighborhood of $q=0$, viewing $\wp$ and $\wp'$ as functions of $z$ and $q$.
Furthermore, the limit of $\wp$ as $q\to 0$ (for $0<\operatorname{Im}(z)<\operatorname{Im}(\tau)$) is 
$$\wp_0(u)=(\pi i)^2\cdot [\frac{4u}{(1-u)^2}+\frac{1}{3}],$$
where $u=\exp(2\pi i z)$. 
The derivation $\pa_z$ corresponds to $(2\pi i)u^{-1}\pa_u$, so the degeneration of $\wp'$ is
$$\wp'_0(u)=(2\pi i)u^{-1}\frac{d}{du}\wp_0(u)=(\pi i)^3 \cdot \frac{8u(1+u)}{(1-u)^3}.$$
The coordinate $u$ can be viewed as the coordinate on the $\P^1\setminus\{0,\infty\}$, where we consider the normalization
morphism $\P^1\to E_0$ gluing $0$ and $\infty$ into the node.

Similarly, all elliptic functions $(f_n(z))_{n\ge 2}$, regular on $E_q\setminus p$, where $f_n=\frac{1}{z^n}+O(1)$,
extend to the whole family over $\Disc$.

The global differential $\om=dx/y=dz$ extends to a regular section of the relative dualizing sheaf, so that
$\om|_{E_0}=(2\pi i)^{-1}du/u$.

Furthermore, we can lift the above family to a family of spin-curves, by considering the relative point of order $2$ given by $z=1/2$,
which specializes to a nontrivial point of order $2$ given by $u=-1$ on $E_0$. The corresponding spin structure $L$ has a natural trivialization $\nu$ near $0$,
so that $\nu^2=dz$.

We claim that the sections $\kappa_n(z)\nu$ of $L$ on $E_q\setminus p$, such that $\kappa_n(z)=\frac{1}{z^n}+O(1)$, still make sense on the family over $\Disc$.
Indeed, by Lemma \ref{kappa-functions-lem}(iii),
%$$\kappa_n(z)=f_{n-1}(z)\cdot \kappa_1(z)$$ for $n\ge 3$. 
it is enough to check this for $\kappa_1$ and $\kappa_2$.
For $\kappa_1$ this follows from the relation $\kappa_1^2=\wp(z)-\wp(1/2)$, which leads to
$$\kappa_1|_{E_0}=(\pi i)\cdot \frac{u+1}{u-1}.$$
Recall that 
$$\kappa_2(z)=\kappa_1(z)\cdot h_{1/2}(z),$$
where $h_{1/2}(z)=\zeta(z)-\zeta(z-1/2)-\zeta(1/2)$ (see Sec.\ \ref{ell-sec}).
Now it is easy to check that $h_{1/2}(z)$ extends over $\Disc$, with
$$h_{1/2}|_{E_0}=(\pi i)\cdot \frac{4u}{u^2-1}.$$

\subsubsection{The glued family}

Now we consider the glued family $X/S$ with the base $B\times \A^{0|2}\times S_t$,
where $B$ is the base of an even separating node degeneration of spin-curves of genus $2$.

We claim that the basis $(\om^{(1)},\om^{(2)}$ of $\pi_*\om_{X/S}$ constructed in Lemma \ref{super-global-diff-base-lem},
as well as the basis $(e_1,e_2,f_1,f_2)$ of $\ov{\HH}^1=R^1\pi_*[\OO_X\to \om_{X/S}]$
make sense near nodal curves glued out of two genus curves $C_1$, $C_2$, where $C_1$ or both $C_1$ and $C_2$ can be singular.

Indeed, for the basis of $\ov{\HH}^1$, this is clear from formulas \eqref{++H1-basis-eq}.
The proof of Lemma \ref{super-global-diff-base-lem} only uses the fact that the functions $(1,(f_n(z_i))_{n\ge 2})$ form a basis of $\OO(C_i\setminus q_i)$, while 
$\kappa_n(z_i)\nu_i$ form a basis of $L_i(C_i\setminus q_i)$.
But this continues to hold for the entire family where $C_i$ are allowed to degenerate.

\subsubsection{Superperiods near the separating node degeneration}\label{superperiods-deg-sec}

The calculation of the superperiods for the glued family in Sec.\ \ref{periods-calculation-subsubsec} still makes sense when the elliptic curves $C_i$ degenerate (i.e., corresponding
parameters $q_i=\exp(2\pi i\tau_i)$ can be zero).
The superperiods are regular along this family and have expansions
$$\Om_{11}=\tau_1+O(t^3), \ \ \Om_{22}=\tau_2+O(t^3), \ \ \Om_{12}=\Om_{21}=2\pi i (t\eta_1\eta_2+t^2)+O(t^5).$$

This implies that the classical period matrix $\Om^{\bos}$ near the point $C_1\cup C_2$, where $C_1$ is a nodal cubic, satisfy
$$\exp(2\pi i \Om^{\bos}_{11})=q_1\cdot (1+O(q^2)), \ \ \Om^{\bos}_{22}=\tau_2+O(q^2), \ \ \Om^{\bos}_{12}=2\pi i q+O(q^2),$$
which implies that $\exp(2\pi i \Om^{\bos}_{11})$, $\Om^{\bos}_{22}$ and $\Om^{\bos}_{12}$ form local coordinates on $\ov{\MM}_2$ near this point.

Similarly, we see that near the point $C_1\cup C_2$, where both $C_1$ and $C_2$ are nodal, the functions
$\exp(2\pi i\Om^{\bos}_{11})$, $\exp(2\pi i\Om^{\bos}_{22})$ and $\Om^{\bos}_{12}$ form local coordinates on $\ov{\MM}_2$. 

These facts will be used in proving that the canonical projection of $\ov{\SS}_2$ is regular everywhere along the $(+,+)$ separating node divisor
(see Prop.\ \ref{ext-Tor-prop} and Theorem \ref{projection-nonsep-prop} below).

\subsection{Superperiods near the $(-,-)$ separating node boundary for supercurves of genus $2$}\label{--sep-sec}

The gluing construction also works to give a description of the formal neighborhood of the $(-,-)$ separating node divisor, i.e., the divisor corresponding
to the stable spin curves given as the nodal union $C=C_1\cup C_2$, $L=L_1\oplus L_2$, where $(C_i,L_i)$ is a spin curve of genus $1$ with odd $h^0(L_i)$.
The difference is that the universal curve over the moduli space $\ov{\SS}^-_{1,1}$ corresponding to supercurves with odd underlying spin structure
is not split. Namely, the moduli space $\ov{\SS}^-_{1,1}$ (studied in \cite{Levin}, \cite{Rabin} and \cite{P-superell})
is a quotient of the product ${\frak H}\times \C^{0|1}$, where ${\frak H}$ is the upper halfplane with the
coordinate $\tau$, and $\C^{0|1}$ has coordinate $\eta$, and the universal curve
 is constructed as the quotient of the relative $\C^{1|1}$ by the action of $\Z^2$ 
$$(x,\th)\mapsto (x+1,\th), \  (x,\th)\mapsto (x+\tau+\th\eta,\th+\eta)$$
where $(\tau,\eta)$ are parameters on the moduli space ($\tau$ is in the upper half-plane, $\eta$ is an odd coordinate).
The (relative) NS-puncture at $x=0$ is given by the superconformal coordinates $(x,\th)$.
%we start with a family over the base $B$ parametrizing pairs of elliptic curves $C_1$, $C_2$ but this time
%we extend them to supercurves $X_1$ and $X_2$ by taking the trivial spin structures $L_i=\OO_{C_i}$ on $C_i$.

Now we consider the base $B$ with coordinates $(\tau_1,\tau_2,\eta_1,\eta_2)$ obtained as the product of two copies of ${\frak H}\times \C^{0|1}$,
and let $(X_1,q_1)$ and $(X_2,q_2)$ be the genus $1$ supercurves with NS punctures obtained as above using $(\tau_i,\eta_i)$, for $i=1,2$.
Below we will freely use some facts about $(X_i,q_i)$ from \cite{P-superell}.

Next, we change the base to $S=B\times \C[t]/(t^N)$ and 
use the superconformal coordinates $(x_i,\th_i)$ on $X_i$ near $0\in C_i$, and glue the smooth part of $X_1\cup X_2$ with
the standard deformation of the NS node in variables $(x_i,\th_i)$.
Note that $s_i:=\de(\th_i)$ is a free generator of the sheaf $\om_{X_i/S}$. Also we have the $\OO_S$-bases
\begin{equation}\label{--diff-basis-eq}
\begin{array}{l}
1,\ (f_n(x_i,\th_i):=D^{2(n-2)}R(x_i,\th_i;\tau_i,\eta_i))_{n\ge 2}, \\ 
\psi_1(x_i,\tau_i):=\th_i-\eta\zeta_1(x_i,\tau_i), \ \psi_2:=\th_i\zeta'_1(x_i,\tau_i)+\eta\dot{\zeta}_1(x_i,\tau_i),\\ 
(\psi_n(x_i,\th_i):=D^{1+2(n-3)}R(x_i,\th_i;\tau_i,\eta_i))_{n\ge 3},
\end{array}
\end{equation}
of $\OO(X_i\setminus q_i)$, where we use the notation $f'(x,\tau):=\pa_x f(x,\tau)$  and $\dot{f}(x,\tau):=\pa_{\tau}f(x,\tau)$;
\begin{equation}\label{R-zeta1-eq}
R(x,\th,\tau,\eta)=\wp(x,\tau+\th\eta)=\wp(x,\tau)+\th\eta\cdot \dot{\wp}(x,\tau), \ \
%\zeta_1(x,\tau)=(-2\pi i)^{-1}(\zeta(x,\tau)-\eta_1(\tau)x)
\end{equation}
(see Sec.\ \ref{ell-sec} and \cite[Prop.\ 2.1]{P-superell}).
This implies that even global sections $\om_{X_i/S}$ on $X_i\setminus q_i$, for $i=1,2$, have form
$$s_i[\a_0^{(i)}+\sum_{n\ge 2}\a_n^{(i)}f_n(x_i,\tau_i)+\sum_{n\ge 1}a_n^{(i)}\psi_n(x_i,\tau_i)]$$
%$$\de(z_i)[a_0+\sum_{n\ge 2}a_nf_n(z_i,\tau_i)]+\de(\nu_i)[\a_0+\sum_{i\ge 2}\a_i f_i(z_i)],$$
with $a_n^{(i)}$ even and $\a_n^{(i)}$ odd. 
%(here $f_n(\cdot,\tau_i)$ is a global function on $C_i\setminus\{q_i\}$ with $f_n=\frac{1}{z_i^n}+O(1)$).
Now, elements of $\pi_*\om_{X/S}$ are described by the data
\begin{equation}\label{glued-differential-form}
(\om_1,\om_2,s_1\phi_1(x_1,\th_1)+s_2\phi_2(x_2,\th_2)+s_0\phi_0)
\end{equation}
where $\om_i\in \om_{X_i/S}(X_i\setminus\{q_i\})$, $\phi_1$ and $\phi_2$ are formal series and $\phi_0$ is a function on the base,
subject to the equations
\eqref{om-phi-super-relation-eq}.

By Lemma \ref{cech-diff-lem}, the bundle $\ov{\HH}^1=R^1\pi_*[\OO_X\to \om_{X/S}]$ is calculated similarly to the case of the $(+,+)$ separating node divisor
as $\om_{X_1/S}(X_1-q_1)/\de\OO(X_1-q_1)\oplus \om_{X_2/S}(X_2-q_1)/\de\OO(X_2-q_2)$ (and this decomposition is compatible with the Gauss-Manin connections). 
Hence, by the result of \cite{P-superell},
we have the following horizontal $\OO_S$-basis of $\ov{\HH}^1$:
\begin{equation}\label{H1-basis-psi-eq}
e_i=s_i(\psi_1(x_i,\th_i)-\tau_i\psi_2(x_i,\th_i)), \ \ f_i=s_i\psi_2(x_i,\th_i), \ \ i=1,2,
\end{equation}
%we still have a basis $e_1=\de(z_1)$, $f_1=\wp(z_1)\de(z_1)$, $e_2=\de(z_2)$, $f_2=\wp(z_2)\de(z_2)$.
dual to the standard basis in homology given by the classes $(\a_i,\b_i)$.
%which extends the standard symplectic basis of the corresponding local system over $S_{red}$ (see \cite{P-superell}).

\begin{prop}\label{--even-diff-prop}
The $\OO_S^+$-module $\pi_*\om_{X/S}^+$ is isomorphic to the submodule of $(\a,\b,a,b)\in (\OO_S^-)^{\oplus 2}\oplus (\OO_S^+)^{\oplus 2}$,
given by the equations
\begin{equation}\label{--module-diff-eq}
t\a=-(2\pi i)^{-1}\eta_2 b, \ \ t\b=(2\pi i)^{-1}\eta_1 a.
\end{equation}
The global differential \eqref{glued-differential-form} corresponding to $(\a,\b,a,b)$ has
$$\om_1=s_1\cdot [\a+a\psi_1+t^2(2\pi i)^{-1}b\psi_2]+O(t^3), \ \ \om_2=s_2[\b+b\psi_1+t^2(2\pi i)^{-1}a\psi_2]+O(t^3).$$
The coefficients of the higher terms of these expansions remain regular as $q_i=\exp(2\pi i \tau_i) \to 0$.
%Consider the bosonisation $C$ of $X$ over $B$, obtained by the usual gluing construction from $B$ (with $z_1z_2=q=-t^2$). 
%For $(\om_1^{ev},\om_2^{ev},\phi_1^{ev}(z_1),\phi_2^{ev}(z_2))\in \pi_*\om_{C/B}$, set
%$$\om_i=[\de(z_i)-\de(\nu_i)\eta_i]\cdot \om_i^{ev}(z_i), \ \ \phi_i(x_i,\th_i)=\th_i\phi_i^{ev}(x_i), i=1,2.$$
%Then $(\om_1,\om_2,s_1\phi_1+s_2\phi_2)$ is a section $\pi_*\om_{X/S}$, and every section of $\pi_*\om_{X/S}$ appears like this. 
\end{prop}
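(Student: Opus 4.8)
The plan is to mimic the proof of Lemma~\ref{super-global-diff-base-lem} (the $(+,+)$ case), replacing the split even families by the non‑split odd‑spin families $(X_i,q_i)$ and using the basis \eqref{--diff-basis-eq} of $\OO(X_i\setminus q_i)$ together with the description of global differentials on the genus~$1$ odd‑spin supercurves from \cite{P-superell}. An even global differential is a datum \eqref{glued-differential-form} $(\om_1,\om_2,s_1\phi_1+s_2\phi_2+s_0\phi_0)$ subject to \eqref{om-phi-super-relation-eq}, with $\om_i=s_i\Phi_i$ and
$$\Phi_i=\a^{(i)}+\sum_{n\ge 2}\a^{(i)}_nf_n(x_i,\th_i)+\sum_{n\ge 1}a^{(i)}_n\psi_n(x_i,\th_i),\qquad \a^{(i)},\a^{(i)}_n\in\OO_S^-,\ a^{(i)}_n\in\OO_S^+ .$$
Set $\a=\a^{(1)}$, $\b=\a^{(2)}$, $a=a^{(1)}_1$, $b=a^{(2)}_1$. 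First I would run the $t$‑adic recursion exactly as in the ``mod $t^k$'' steps of Lemma~\ref{super-global-diff-base-lem}: modulo each power of $t$ the correction terms $\frac{ts_i}{x_i}\phi_j(\cdots)$ and $\mp\frac{s_i\th_i}{x_i}\phi_0$ of \eqref{om-phi-super-relation-eq} express the polar parts of $\om_1,\om_2$ in terms of data known to lower order; since the basis \eqref{--diff-basis-eq} contains \emph{no} element with a simple pole of the form $\th_i/x_i$, this forces $\phi_0$ and then all the coefficients $\a^{(i)}_n,a^{(i)}_n$ with $n\ge2$ (and the higher corrections to $a^{(i)}_1$) to be determined, order by order, as universal polynomial expressions in $\wp,\dot{\wp},\zeta_1$ and their $x_i$‑ and $\tau_i$‑derivatives. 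Thus a solution of \eqref{om-phi-super-relation-eq} is pinned down by the quadruple $(\a,\b,a,b)$.

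Next I would read off the two relations. The coefficient of $s_ix_i^{-1}$ in the polar expansion of $\om_i$ (a simple pole with no $\th_i$) equals, intrinsically on $X_i$, the quantity $-\eta_i a^{(i)}_1$ coming from the term $-\eta_i\zeta_1(x_i,\tau_i)$ in $\psi_1(x_i,\th_i)=\th_i-\eta_i\zeta_1(x_i,\tau_i)$, $\zeta_1(x_i)=x_i^{-1}+O(x_i)$; on the other hand, by \eqref{om-phi-super-relation-eq} the same coefficient is produced by the constant term $\a^{(j)}$ of $\phi_j$ inside $\frac{ts_i}{x_i}\phi_j(\cdots)$, so it equals a universal constant times $t\a^{(j)}$. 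Equating the two expressions on each component, and tracking the normalizations of $\zeta_1$, of the superconformal structure $\de$, and of the gluing embedding $\iota$ (whose sign asymmetry $\th_1\mapsto(\th_1,-t\th_2/x_2)$, $\th_2\mapsto(t\th_1/x_1,\th_2)$ accounts for the asymmetry of the relations), yields
$$t\a=-(2\pi i)^{-1}\eta_2 b,\qquad t\b=(2\pi i)^{-1}\eta_1 a,$$
i.e.\ the defining equations of the submodule; conversely the recursion of the first paragraph shows every quadruple obeying these relations extends to a global differential, giving the claimed isomorphism of $\OO_S^+$‑modules. Pushing the recursion to order $t^2$ — parallel to the ``mod $t^2$, mod $t^3$'' computations of Lemma~\ref{super-global-diff-base-lem} — then gives $\om_1=s_1[\a+a\psi_1+t^2(2\pi i)^{-1}b\,\psi_2]+O(t^3)$ and the symmetric formula for $\om_2$; the $\psi_2$‑term appears because the regular part of $b\psi_1(x_2,\th_2)$ has $\th_2$‑component $b\th_2$, which under $\iota$ becomes $b\,t\th_1/x_1$ and, multiplied by $\frac{ts_1}{x_1}$, produces a double pole $\propto t^2b\,x_1^{-2}\th_1$, matching $t^2(2\pi i)^{-1}b\,\psi_2$.

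Finally, regularity of all the coefficients as $q_i=\exp(2\pi i\tau_i)\to0$ follows because each is, by construction, a polynomial in $\wp(x_i,\tau_i)$, $\dot{\wp}(x_i,\tau_i)$, $\zeta_1(x_i,\tau_i)$, their derivatives and the universal constants, all of which extend holomorphically over the nodal degeneration; this is the content of the discussion in \S\ref{superperiods-deg-sec} for the even genus~$1$ factor, and for the odd factor it reduces to the same statement for $\wp,\dot{\wp}$ via $R(x,\th;\tau,\eta)=\wp(x,\tau)+\th\eta\,\dot{\wp}(x,\tau)$. I expect the main obstacle to be the super version of the residue bookkeeping: unlike the even case, where no global differential on a genus~$1$ component has a pole at the node, here the ``residue direction'' $s_0$ and the odd modular parameters $\eta_i$ interact, so the $s_ix_i^{-1}$‑coefficient is carried by $-\eta_ia^{(i)}_1$ rather than vanishing, and it is exactly this — matched against the $t$‑linear gluing contribution — that produces the nontrivial relations; pinning down the constant $(2\pi i)^{-1}$ and the signs is the delicate point.
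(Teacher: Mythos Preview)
Your approach is essentially the same as the paper's: solve the gluing equations \eqref{om-phi-super-relation-eq} iteratively modulo powers of $t$, and read off the relations \eqref{--module-diff-eq} from the coefficient of $s_i/x_i$ in the polar expansion. The paper's proof is terser --- it simply records the outcome at each step (mod $t$, mod $t^2$, mod $t^3$) without spelling out which pole coefficient produces which relation --- while you supply that mechanism explicitly; but the argument is the same. One small correction: your normalization $\zeta_1(x)=x^{-1}+O(x)$ is off by a factor; from the definition $\zeta_1(z)=(-2\pi i)^{-1}(\zeta(z)-\eta_1 z)$ one has $\zeta_1(x)=-(2\pi i)^{-1}x^{-1}+O(x)$, which is exactly what makes the constant $(2\pi i)^{-1}$ appear in \eqref{--module-diff-eq} --- you flag this as the delicate point, and it is.
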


\begin{proof}
We can solve equations \eqref{om-phi-super-relation-eq} modulo $t^n$ iteratively.
Modulo $t$ these equations give that $\om_1$ and $\om_2$ are regular, hence, we should have
$$\om_1=s_1\cdot(\a+a\psi_1(x_1,\tau_1)), \ \ \om_2=s_2\cdot(\b+b\psi_1(x_2,\tau_2)) \mod(t),$$
where $\eta_1 a=0$, $\eta_2 b=0$. Hence, $\phi_i$ are obtained as regular parts:
$$\phi_1=\a+a\th_1+O(x_1), \ \ \phi_2=\b+b\th_2+O(x_2).$$

Modulo $t^2$ we get by considering polar parts of \eqref{om-phi-super-relation-eq},
$$\om_1=s_1\cdot(\a+a\psi_1(x_1,\tau_1)),$$
$$\om_2=s_2\cdot(\b+b\psi_1(x_2,\tau_2)) \mod(t^2),$$
where $\a,a,\b,b$ satisfy \eqref{--module-diff-eq} modulo $t^2$, and the formulas
for $\phi_i$ are still the same.

Modulo $t^3$ we get from the polar part of \eqref{om-phi-super-relation-eq}
$$\om_1=s_1\cdot(\a+a\psi_1(x_1,\tau_1)+(2\pi i)t^2b\psi_2(x_1,\tau_1)),$$ 
$$\om_2=s_2\cdot(\b+b\psi_1(x_2,\tau_2)+(2\pi i)t^2a\psi_2(x_2,\tau_2)) \mod(t^3),$$
where $\a,a,\b,b$ satisfy \eqref{--module-diff-eq} modulo $t^3$.

We can continue the same process modulo all powers of $t$, and this leads to the claimed assertion.
\end{proof}

%Consider again the standard cycles $\a_i,\b_i$ on the underlying elliptic curves $C_i$, $i=1,2$.
%As shown in \cite{P-superell}, the classes 
%$$e_i:=s_i\cdot (\psi_1(x_i,\th_i)-\tau_i\psi_2(x_i,\th_i)), \ \ f_i=s_i\psi_2(x_i,\th_i, \ \ i=1,2$$
%form a horizontal $\OO_S$-basis of $\ov{\HH}^1$ dual to the standard basis in homology given by the classes $(\a_i,\b_i)$.

Thus, in the punctured formal neighborhood of the $(-,-)$ separating node divisor
%away from $t=0$ 
we have a normalized basis of global differentials $\om^{(1)}, \om^{(2)}$ (with normalized $\a$-periods):
\begin{equation}\label{--norm-diff-eq}
\begin{array}{l}
\om^{(1)}=(s_1\psi_1(x_1,\tau_1), s_2[(2\pi i)^{-1}\frac{\eta_1}{t}+(2\pi i)t^2\psi_2(x_2,\tau_2)])+O(t^3),\\
\om^{(2)}=(s_1[-(2\pi i)^{-1}\frac{\eta_2}{t}+(2\pi i)t^2\psi_2(x_1,\tau_1)], s_2\psi_1(x_2,\tau_2))+O(t^3).
\end{array}
\end{equation}
Now using the relations in cohomology $s_i\equiv f_i\eta_i$ for $i=1,2$ (see \cite[Cor.\ 3.8]{P-superell}),
we can decompose these differentials with respect to the basis \eqref{H1-basis-psi-eq} to get a superperiod map.

\begin{cor}\label{--cor}
(i) One has in $\ov{\HH}^1_{X/S}$,
\begin{align*}
&\om^{(1)}\equiv e_1+\tau_1f_1+[-(2\pi i)^{-1}\frac{\eta_1\eta_2}{t}+(2\pi i)t^2]\cdot f_2 \mod(t^3), \\
&\om^{(1)}\equiv e_2+\tau_2f_2+[-(2\pi i)^{-1}\frac{\eta_1\eta_2}{t}+(2\pi i)t^2]\cdot f_1 \mod(t^3),
\end{align*}
where all the higher terms of the expansion are regular as $q_i\to 0$. In other words, the superperiod matrix has form
$$\Om=\left(\begin{matrix} \tau_1 & -(2\pi i)^{-1}\frac{\eta_1\eta_2}{t}+(2\pi i)t^2 \\ 
-(2\pi i)^{-1}\frac{\eta_1\eta_2}{t}+(2\pi i)t^2 & \tau_2\end{matrix}\right) + O(t^3).$$

\noindent
(ii) The canonical projection near the $(-,-)$ separating node divisor satisfies
$$\pi^*t=t-(2\pi i)^{-2}\frac{\eta_1\eta_2}{2t^2}+O(t^2),$$
where the coefficients of the higher order terms are regular as $q_i\to 0$.
\end{cor}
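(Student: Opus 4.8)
The plan is to read both parts off from the explicit normalized differentials $\om^{(1)},\om^{(2)}$ in \eqref{--norm-diff-eq} (equivalently, from Proposition \ref{--even-diff-prop}), re-expressed in the horizontal basis \eqref{H1-basis-psi-eq} of $\ov{\HH}^1$; the only step beyond linear algebra is the comparison with the classical period matrix needed for~(ii).

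For part~(i), I would expand each of the two components of $\om^{(j)}$ in terms of $e_1,f_1,e_2,f_2$. The ``diagonal'' component is handled by the tautology $s_i\psi_1(x_i,\th_i)=e_i+\tau_if_i$, immediate from \eqref{H1-basis-psi-eq}. In the ``off-diagonal'' component of $\om^{(1)}$ one has $(2\pi i)t^2\,s_2\psi_2(x_2,\th_2)=(2\pi i)t^2 f_2$ together with the singular term $s_2\,(2\pi i)^{-1}\eta_1/t$; the latter is rewritten via the relation $s_i\equiv f_i\eta_i$ in $\ov{\HH}^1$ of \cite[Cor.\ 3.8]{P-superell} (so $s_2\eta_1\equiv -f_2\,\eta_1\eta_2$), producing the coefficient $-(2\pi i)^{-1}\eta_1\eta_2/t$ of $f_2$. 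The symmetric computation for $\om^{(2)}$ gives the second expansion. Since $(e_i,f_i)$ is dual to the homology basis $(\a_i,\b_i)$, the entry $\Om_{ij}=\lan\om^{(j)},\b_i\ran$ is just the coefficient of $f_i$ in $\om^{(j)}$, which yields the displayed form of $\Om$ modulo $(t^3)$. Regularity of the higher-order coefficients as $q_i=\exp(2\pi i\tau_i)\to 0$ is inherited directly from Proposition \ref{--even-diff-prop}.

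For part~(ii), I would use that $\pi^{\can}$ is characterized by $(\pi^{\can})^*\Om^{\bos}=\Om$, where $\Om^{\bos}$ is the classical period matrix regarded as a function on $\SS_{2,\bos}$ via the factorization \eqref{g2-superperiod-factor-eq}. As $\pi^{\can}$ restricts to the identity on $\SS_{2,\bos}$, the function $\Om^{\bos}_{12}$ in gluing coordinates is the $\eta_1=\eta_2=0$ specialization of the formula in~(i), namely $\Om^{\bos}_{12}=(2\pi i)t^2+O(t^6)$ — the sharper $O(t^6)$ (no $t^3,t^4,t^5$ terms) coming from the even gluing of Section \ref{even-gluing-sec}. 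Writing $\pi^*t=t+g(\tau_1,\tau_2,t)\,\eta_1\eta_2$ and taking the $\eta_1\eta_2$-part of $(\pi^{\can})^*\Om^{\bos}_{12}=\Om_{12}$, one gets $4\pi i\,t\,g=-(2\pi i)^{-1}/t+O(t^3)$ once $g\sim t^{-2}$ is substituted back into the (higher-order) corrections coming from $\pi^*\tau_i$ and the tail of $\Om^{\bos}_{12}$; hence $g=-(2\pi i)^{-2}/(2t^2)+O(t^2)$, which is the claimed formula, and regularity of the remaining coefficients as $q_i\to 0$ follows again from~(i).

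The delicate point is part~(ii): $\pi^{\can}$ is \emph{not} regular along $D^{-,-}$ (indeed $g$ has a pole of order $2$ in $t$), so $(\pi^{\can})^*\Om^{\bos}=\Om$ must be read as an identity of Laurent series in $t$ with coefficients regular as $q_i\to 0$, and one has to make sure that the intermediate powers $t^{-1},t^{0},t^{1}$ in $g$ genuinely vanish — which is precisely why the sharp estimate $\Om^{\bos}_{12}=(2\pi i)t^2+O(t^6)$, rather than merely $O(t^3)$, is needed. Part~(i) is by contrast pure bookkeeping once \eqref{--norm-diff-eq} and the relation $s_i\equiv f_i\eta_i$ are in hand.
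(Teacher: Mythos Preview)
Your proposal is correct and matches the paper's approach. Part~(i) is exactly what the paper sketches in the sentence preceding the Corollary: decompose the normalized differentials \eqref{--norm-diff-eq} in the horizontal basis \eqref{H1-basis-psi-eq} using $s_i\psi_1=e_i+\tau_if_i$, $s_i\psi_2=f_i$, and the relation $s_i\equiv f_i\eta_i$ from \cite[Cor.~3.8]{P-superell}. Part~(ii) then follows the same template as Proposition~\ref{period-det-prop}(ii), solving $(\pi^{\can})^*\Om^{\bos}_{12}=\Om_{12}$ for $\pi^*t$; your observation that the sharper estimate $\Om^{\bos}_{12}=(2\pi i)t^2+O(t^6)$ (from the even gluing with $q=-t^2$) is what kills the intermediate powers $t^{-1},t^0,t^1$ in $g$ is the key point and is correctly identified.
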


%From this we also get some information about the behavior of the canonical projection near the
%deeper strata on the $(-,-)$ separating node divisor.
%Namely, if $\Om$ is the classical period matrix, then the above Corollary shows that
%near the point $C_1\cup C_2$, where $C_1$ is nodal and $C_2$ is smooth (resp., both $C_1$ and $C_2$ are nodal), 
%the pull-backs $\pi^*\exp(2\pi i \Om_{11})$ and $\pi^*\Om_{22}$ (resp., $\pi^*\exp(2\pi i \Om_{22})$)
%are regular, while $\pi^*\Om_{12}+(2\pi i)^{-1}\eta_1\eta_2/t$
%is regular.

\section{Canonical projection for genus $2$ moduli of supercurves}\label{can-proj-sec}

\subsection{Torelli map for stable supercurves of genus $2$}

%Let $\pi:X\to S$ be a family of supercurves of genus $2$ (with even underlying spin-structures) that acquires a non-separating node degeneration
%along the divisor $S_0\sub S$.  

We want to show that the Torelli map $\SS_2\to \MM_2$
%for the smooth part of this family 
extends to a regular map to $\ov{\MM}_2$ away from $(-,-)$ separating boundary divisor $D^{-,-}\sub \ov{\SS}_2$.
%near the general point of each non-separating boundary divisor
%and near every point of the separating node of type $(+,+)$.

%Discuss why a map $X\setminus D\to \MM_2$ with certain behavior of periods near $D$ extends to a regular map to $\ov{\MM}_2$

\begin{lemma}\label{reg-crit-lem}
%(i) 
Let $X$ be a smooth superscheme, $D=D_1\cup D_2\sub X$ an effective normal crossing Cartier divisor, $U:=X\setminus D$. Suppose we have a morphism $f:U\to Y\setminus \De$, where $Y$ is a smooth scheme, $\De$ a normal crossing divisor in $Y$. Let also $n_1,n_2$ be natural numbers.
% such that the induced morphism $U_{\bos}\to Y$ extends to a regular morphism $f_{\bos}:X_{\bos}\to Y$, sending $D_{\bos}$ to $\De$.
Assume that locally near every point $x\in D$ there exist coordinates $((y_i),(z_1,\ldots,z_k),(t_1,\ldots,t_l))$ in a neighborhood of $f(x)$ in $Y$, such that the pullbacks $f^*y_i$ extend regularly over $D$, while 
there exist local equations $\wt{z}_1\ldots\wt{z}_k=0$ (resp., $\wt{t}_1\ldots\wt{t}_l=0$) of $D_1$ (resp., $D_2$) in a neighborhood $U$ of $x$, such that there is an equality of multivalued functions on $U-D$,
$$f^*\log(z_i)=n_1\log(\wt{z}_i)+\phi_i, \ \ f^*\log(t_j)=n_2\log(\wt{t}_j)+\psi_j,$$ 
where $\phi_i$ and $\psi_j$ extend regularly to $U$.
Then $f$ extends to a regular morphism $X\to Y$ such that $f^*\De=n_1D_1+n_2D_2$. 
%\noindent
%(ii) Now assume $X$ and $Y$ are DM-stacks, and $Y$ is a separated. ???
\end{lemma}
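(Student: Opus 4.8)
The plan is to reduce to the classical statement about extensions of maps to normal crossing compactifications and then to check that the hypotheses of the lemma supply exactly the input needed. First I would observe that since regularity of a morphism and the equality $f^*\De = n_1D_1+n_2D_2$ are local on $X$ and can be checked in codimension $1$, we may work in a formal (or small analytic) neighborhood $U$ of a point $x\in D$ and assume $D$ is as described, with the factorization of the local equations of $D_1, D_2$ available. By the local coordinate hypothesis, composing $f$ with the coordinate functions $y_i$ on $Y$ gives functions $f^*y_i$ that already extend regularly; the content is entirely in the functions $f^*z_i$ and $f^*t_j$, which are a priori only multivalued holomorphic functions on $U\setminus D$ with the prescribed logarithmic behavior.

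Next I would handle the super directions by the same device used in Lemma \ref{fun-reg-lem}: writing everything in terms of a splitting $\OO_U = \OO_{U_{\bos}}\ot \bigwedge(\text{odd generators})$, an even function is regular iff each of its (finitely many) coefficient functions in the odd variables is regular, so one reduces to the purely even case $U_{\bos}$. In that case the statement is classical. From $f^*\log(z_i) = n_1\log(\wt z_i) + \phi_i$ with $\phi_i$ regular, exponentiating gives $f^*z_i = \wt z_i^{\,n_1}\cdot e^{\phi_i}$, which is manifestly a regular function vanishing to order exactly $n_1$ along each branch $\wt z_i = 0$ of $D_1$; similarly $f^*t_j = \wt t_j^{\,n_2}e^{\psi_j}$. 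Hence all the coordinate pullbacks extend regularly, so $f$ extends to a regular morphism $\ov f: U \to Y$. Because $f(U\setminus D)\sub Y\setminus\De$ and $\De$ is (locally, near $f(x)$) cut out by $z_1\cdots z_k = 0$ together with $t_1\cdots t_l = 0$, the computation $\ov f^*(z_1\cdots z_k) = (\wt z_1\cdots\wt z_k)^{n_1}\cdot(\text{unit})$ and the analogous one for the $t_j$ show that $\ov f^*\De = n_1 D_1 + n_2 D_2$ as Cartier divisors near $x$. Gluing the local extensions over all $x\in D$ (they agree on $U\setminus D$, hence everywhere by separatedness) yields the global extension $X\to Y$ with the stated pullback of $\De$.

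The only genuinely delicate point is bookkeeping rather than conceptual: one must make sure the ``coordinates near $f(x)$'' hypothesis is used correctly, namely that the $y_i$, $z_i$, $t_j$ together form a coordinate system so that extending each of their pullbacks regularly does extend $f$ itself (this is where smoothness of $Y$ enters), and that the branches of $\De$ really are among the $\{z_i = 0\}\cup\{t_j = 0\}$ so the divisor identity is not just an inequality. I expect the main obstacle to be purely notational: tracking the correspondence between the branches $D^{ns}_i$ of $D$ and the branches of $\De$, and confirming that the multiplicities $n_1, n_2$ appear uniformly across all branches of $D_1$ (resp.\ $D_2$) as the hypothesis asserts; once that is set up, the argument is a direct exponentiation of logarithms together with the reduction to $\ov S_{\bos}$ via Lemma \ref{fun-reg-lem}.
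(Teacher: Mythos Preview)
Your proposal is correct and follows essentially the same approach as the paper: the key step is to exponentiate the logarithmic relation to obtain $f^*z_i=\wt z_i^{\,n_1}e^{\phi_i}$ (and similarly for $t_j$), which is manifestly regular and differs from $\wt z_i^{\,n_1}$ by a unit, so all coordinate pullbacks extend and the divisor identity follows. The paper's proof is just this one-line observation; your detour through a splitting and Lemma~\ref{fun-reg-lem} to reduce to the bosonic case is harmless but unnecessary, since the exponentiation argument works directly on the superscheme ($\phi_i$ is a regular even function, so $e^{\phi_i}$ is a unit in $\OO_U$).
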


\begin{proof} 
%(i) 
The assertion follows immediately from the fact that $f^*z_i$ (resp., $f^*t_j$) differs from $\wt{z}_i^{n_1}$ (resp., $\wt{t}_j^{n_2}$) by an invertible function near $x$.
\end{proof}

%\begin{lemma} Let $X$ be a smooth superscheme, $D\sub X$ a smooth divisor. Suppose we have a morphism $f:X\setminus D\to \MM_2$. Assume that near every
%point $x$ of $D$, there exists a neighborhood $U_x$ in $X$ and a lifting $\wt{f}:U_x\setminus D\to \wt{\MM}_2$ to the covering corresponding to the choice of a symplectic basis of homology, such that for the period matrix $\Om$ on $\wt{\MM}_2$, the function $\wt{f}^*\exp(2\pi i\tr(\Om))$ is regular on $U_x$ and vanishes on $D$. Then $f$ extends to a regular
%map $X\to \ov{\MM}_2$ sending $D$ to the non-separating node boundary in $\ov{\MM}_2$.
%\end{lemma}

%

Let us denote by $D^{ns,R}$ and $D^{ns,NS}$ the Ramond and Neveu-Schwartz components of the non-separating node boundary divisor $D^{ns}\sub \ov{\SS}_2$,
and let $\De^{ns}\sub\ov{\MM}_2$ denote the non-separating boundary divisor.

\begin{prop}\label{ext-Tor-prop} 
%Set $\SS'_2=\ov{\SS}_2\setminus \de_s$, where $\de_s$ is the separating node boundary divisor. Then there exists a regular morphism
%$$\per':\SS'_2\to \ov{\MM}_2$$
%extending $\per:\SS_2\to \MM_2$. 
The Torelli map $\per:\SS_2\to \MM_2\to \ov{\MM}_2$ extends to a regular morphism $\per:\ov{\SS}_2\setminus D^{-,-}\to \ov{\MM}_2$,
% regular near the generic points of the non-separating node boundary divisors in $\ov{\SS}_2$
%(either Ramond or NS) and near every point of the separating node boundary divisor of type $(+,+)$ (necessarily NS).
%Furthermore, $\per^*\De=\de$ near the non-separating Ramond node boundary component, and $\per^*\De=2\de$ near the non-separating NS node boundary component.
such that $\per^*\De^{ns}=D^{ns,R}+2D^{ns,NS}$.
\end{prop}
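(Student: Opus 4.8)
The plan is to extend $\per$ one boundary divisor at a time, packaging each local verification through the regularity criterion of Lemma \ref{reg-crit-lem}. In $\ov\SS_2$ a separating node is automatically Neveu--Schwarz, so the separating part of the boundary is $D_0\sqcup D^{-,-}$; thus the divisors of $\ov\SS_2\setminus D^{-,-}$ to be treated are $D^{ns,R}$, $D^{ns,NS}$ and $D_0$, together with their pairwise intersections. Throughout I would use that on $\SS_2$ the map $\per$ is the composition of the canonical projection with the classical period map (cf.\ \eqref{g2-superperiod-factor-eq} and the discussion after Corollary \ref{g2-proj-cor}), so that $\per^*\Om^{\bos}_{ij}=\Om_{ij}$ for the entries of the superperiod matrix; that on bosonizations there is an honest regular morphism $\ov\SS_{2,\bos}\to\ov\MM_2$ (forgetting the generalized spin structure) extending $\per|_{\SS_{2,\bos}}$; and that, to see the extended map lands in $\ov\MM_2$ (and not merely in a toroidal compactification of $\ov\AA_2$), one invokes the explicit gluing descriptions of $\ov\MM_2$ near its boundary recalled in Sections \ref{even-gluing-sec} and \ref{superperiods-deg-sec}, which exhibit local coordinates on an \'etale chart of $\ov\MM_2$ directly in terms of the classical period functions $\Om^{\bos}_{ij}$ and $\exp(2\pi i\,\Om^{\bos}_{ii})$. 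So the only genuinely new input is regularity of $\per$ in the odd directions.

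Concretely, near a generic point of $D^{ns}$ with local equation $t=0$ and ramification index $a\in\{1,2\}$ ($a=1$ on $D^{ns,R}$, $a=2$ on $D^{ns,NS}$), the functions $q_{11}:=\exp(2\pi i\,\Om^{\bos}_{11})$, $\Om^{\bos}_{12}$, $\Om^{\bos}_{22}$ are local coordinates on $\ov\MM_2$ with $\De^{ns}=\{q_{11}=0\}$; by Theorem \ref{regular-norm-diff-thm}(ii) the pullbacks $\Om_{12}$, $\Om_{22}$ are regular across $D^{ns}$ while $\per^*\log q_{11}=2\pi i\,\Om_{11}=a\log t+(\text{regular})$, so Lemma \ref{reg-crit-lem} yields a regular extension with $\per^*\De^{ns}=a\,D^{ns}$ there. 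Near a generic point of $D_0$ I would use the gluing coordinates $(\tau_1,\tau_2,t,\eta_1,\eta_2)$ of Section \ref{ss-2.2} with $D_0=\{t=0\}$; on $\ov\MM_2$ near the image one has coordinates $\tau_1,\tau_2$ and the classical separating gluing parameter $q$ (with $\Om^{\bos}_{12}=-2\pi i\,q+O(q^3)$, $\Delta_1=\{q=0\}$), and by Proposition \ref{period-det-prop}(i) the pullbacks of $\tau_1,\tau_2,q$ are regular power series in the gluing coordinates, so $\per$ extends regularly across $D_0$ (only regularity is needed, as the proposition asserts nothing about $\per^*\Delta_1$). Finally, near points of $D_0\cap D^{ns}$ --- in particular the curves $C_1\cup C_2$ with $C_1$ a nodal cubic (and $C_2$ smooth or nodal) --- I would combine the two arguments using the coordinates of Section \ref{superperiods-deg-sec}: the non-separating branches are cut out by $\exp(2\pi i\,\Om^{\bos}_{11})=0$ (and $\exp(2\pi i\,\Om^{\bos}_{22})=0$ when $C_2$ degenerates too), contributing logarithmic pullbacks with multiplicities $a_i$, while the separating branch is cut out by $\Om^{\bos}_{12}=0$ and has regular pullback by Section \ref{superperiods-deg-sec}; Lemma \ref{reg-crit-lem} again applies. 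Since every boundary divisor of $\ov\SS_2\setminus D^{-,-}$ and every pairwise intersection is covered, $\per$ extends to a regular morphism $\ov\SS_2\setminus D^{-,-}\to\ov\MM_2$, and comparing multiplicities along the generic points of $D^{ns,R}$ and $D^{ns,NS}$ gives $\per^*\De^{ns}=D^{ns,R}+2D^{ns,NS}$.

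I expect the main obstacle to be bookkeeping rather than conceptual: one must match, stratum by stratum, the logarithmic coordinates on the target with the non-separating branches of the source (with the correct multiplicity, $2$ for an NS node and $1$ for a Ramond node) and the regular coordinates with the separating branch, while confirming that the chosen period functions really do form an honest coordinate system on an \'etale chart of $\ov\MM_2$ near each stratum --- which is precisely what the gluing descriptions of Sections \ref{even-gluing-sec} and \ref{superperiods-deg-sec} are designed to supply. It is worth noting that the hypotheses of Lemma \ref{reg-crit-lem} genuinely fail along $D^{-,-}$: there, by Corollary \ref{--cor}, the entry $\Om_{12}$ acquires a pole $-(2\pi i)^{-1}\eta_1\eta_2/t$, so $\per$ does not extend regularly, which both explains and forces the exclusion of $D^{-,-}$.
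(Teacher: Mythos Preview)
Your approach is essentially the paper's: a stratum-by-stratum verification feeding into Lemma \ref{reg-crit-lem}, with the superperiod asymptotics of Theorem \ref{regular-norm-diff-thm} supplying the logarithmic behaviour on the source and the gluing descriptions supplying coordinates on the target. The generic-$D^{ns}$ case, the generic-$D_0$ case, and the $D_0\cap D^{ns}$ strata are handled just as in the paper (the paper's treatment of the generic $D_0$ point is slightly different---it argues directly that $\pi_*\om_{X/S}\hookrightarrow R^1\pi_*\C_{X/S}$ is a subbundle there---but your use of Proposition \ref{period-det-prop}(i) works equally well).

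However, there is a genuine gap. Lemma \ref{reg-crit-lem} requires the local hypotheses to hold near \emph{every} point of the boundary, and your enumeration (``every boundary divisor and every pairwise intersection'') omits the deeper purely non-separating strata that lie outside $D_0$: irreducible curves of geometric genus $0$ with two non-separating nodes, and---more seriously---the union of two $\P^1$'s glued at three points. The first of these is a routine extension via Theorem \ref{regular-norm-diff-thm}(iii) with $k=2$ (together with the corresponding coordinate system on $\ov\MM_2$, which the paper justifies by citing Fay), but the second is not: there the three vanishing cycles $\a_1,\a_2-\a_1,\a_2$ are not part of a single symplectic basis, so the monodromy is \emph{not} of the ``standard'' form assumed in Theorem \ref{regular-norm-diff-thm}. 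The paper handles this case separately via Lemma \ref{tough-curve-per-lem}, which computes the superperiod matrix in that non-standard basis and shows that $\exp(2\pi i(\Om^{\bos}_{11}+\Om^{\bos}_{12}))$, $\exp(-2\pi i\Om^{\bos}_{12})$, $\exp(2\pi i(\Om^{\bos}_{12}+\Om^{\bos}_{22}))$ are the correct local coordinates on $\ov\MM_2$ with the right logarithmic pullbacks. Without this step your argument does not cover the deepest stratum and the extension is not established on all of $\ov\SS_2\setminus D^{-,-}$.
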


\begin{proof}  The proof is based on the analysis of the superperiod matrix near the boundary of the moduli space, mirroring the similar analysis in the classical case
(see e.g., \cite{Nam}, \cite[ch.\ III]{Fay}).

First, let us study the situation near a point $X_0$ with the underlying stable curve $C_0$
of the separating node boundary component of type $(+,+)$, which does not belong to deeper strata (i.e., $C_0=C_1\cup C_2$ where $C_1$ and $C_2$ are smooth curves if genus $1$). 
Note that the usual Torelli map
$\MM_2\to \AA_2$ to the moduli space of principally polarized abelian varieties is regular near $C_0$
%extends regularly to the generic point of the corresponding boundary component in $\ov{\MM}_2$,
and locally near $C_0$ we have an isomorphism $\ov{\MM}_2\simeq \AA_2$ (the nodal union of two curves of genus $1$ maps to their product viewed as an abelian variety). 
On the other hand, the local system $R^1\pi_*(\C_{X/S})$ extends regularly to our boundary component and the map
$$\pi_*(\om_{X/S})\to R^1\pi_*(\C_{X/S})$$ 
is an embedding of a subbundle of rank $g$. Indeed, by Lemma \ref{coh-van-spin-str-g2-lem}, using \cite[Prop.\ 3.2]{FKP-per} we see that $\pi_*(\om_{X/S})$ is a vector bundle of rank $g$,
with the fiber over $s$ identified with $H^0(X_s,\om_{X_s})\simeq H^0(C_s,\om_{C_s})$, where $C_s$ is the underlying usual curve. The induced map of the fibers
$H^0(C_s,\om_{C_s})\to H^1(C_s,\C)$ is given by the usual period matrix, so it is an embedding. This implies our claim. 
Thus, we see that the period matrix is regular near the generic point of the boundary component of type $(+,+)$, so the Torelli map $\per:\SS_2\to \ov{\MM}_2$ is also regular
at the generic point of this component.

Next, let us consider the situation at deeper strata of the boundary separating node divisor of type $(+,+)$.
Assume first the underlying stable curve is $C_0=C_1\cup C_2$, where $C_1$ is an irreducible nodal curve of genus $1$ and $C_2$ is smooth.
Let $\Om$ (resp., $\Om^{\bos}$) be the superperiod matrix (resp., usual period matrix) near $C_0$.
Then the local functions
\begin{equation}\label{three-per-entries-eq}
\exp(2\pi i\Om^{\bos}_{11}), \ \ \Om^{\bos}_{12}, \ \ \Om^{\bos}_{22}
\end{equation} 
are local coordinates near $C_0$ on $\ov{\MM}_2$ (see \ref{superperiods-deg-sec}).
The pull-backs of these functions under the Torelli map are $\exp(2\pi\Om_{11})$, $\Om_{12}$ and $\Om_{22}$, respectively.
By Theorem \ref{regular-norm-diff-thm}, the latter functions are regular near $X_0$ and $\exp(2\pi\Om_{11})=ut^a$, where $t$ is a local equation of $D^{ns}$, $a=1$ for
the Ramond branch and $a=2$ for the NS branch. 
Hence, by Lemma \ref{reg-crit-lem}, the Torelli map is regular near $X_0$, and we have the claimed relation for the divisors.

The case when $C_0=C_1\cup C_2$, where both $C_1$ and $C_2$ are nodal is proved similarly using Theorem \ref{regular-norm-diff-thm} and the
results of \ref{superperiods-deg-sec}: in this case we use
the local coordinates
\begin{equation}\label{three-per-entries-bis-eq}
\exp(2\pi i\Om^{\bos}_{11}), \ \ \Om^{\bos}_{12}, \ \ \exp(2\pi i\Om^{\bos}_{22})
\end{equation} 
on $\ov{\MM}_2$.

Next, let us consider the situation near an irreducible curve $C_0$ with one or two nodes. In this case the behavior of the superperiod matrix $\Om$ is still described locally by Theorem
\ref{regular-norm-diff-thm}(ii), and the similar description holds for the usual period matrix $\Om^{\bos}$. In particular, in the case of a curve with one node (resp., two nodes) 
the function $\exp(2\pi i\Om^{\bos}_{11})$ (resp., $\exp(2\pi i\Om^{\bos}_{11})$ and $\exp(2\pi i\Om^{\bos}_{22})$) is a local equation of a branch of
$\De$ (resp., of two branches of $\De$) near $C_0$.  
Furthermore, in the case of a curve with one node (resp., two nodes), the functions
\eqref{three-per-entries-eq} (resp., \eqref{three-per-entries-bis-eq}) 
are still local coordinates on $\ov{\MM}_2$. Indeed, this follows easily from the form of the period matrix near such degenerations established in \cite{Fay} (see
\cite[Cor.\ 3.8]{Fay} and an example on p.54).
Thus, we can prove the regularity and the relation between the divisors in the same way as above.

%The proof of statements concerning the non-separating node components is based on the following standard relation between the nonseparating boundary
%divisor $\De$ in $\ov{\MM}_2$ and the period matrix (see e.g., \cite[Sec.\ VI.18]{Nam}). Locally near a generic point of $\De$, we can
%consider the period matrix $(\Om^{\bos}_{ij})$ with respect to a symplectic basis $\a_1,\a_2,\b_1,\b_2$, where $\a_1,\a_2,\b_2$ are invariant while $\b_1$ is transformed to
%$\b_1+\a_1$ under the monodromy around $\De$. Then all entries of the corresponding $2\times 2$ period matrix $(\Om^{\bos}_{ij})$ are regular except for $\Om_{11}$
%(which has logarithmic growth). Now $\exp(2\pi i\Om^{\bos}_{11})$ is a local equation of $\De$. MOVE TO ANOTHER SECTION???

%Hence, the local equation of the divisor $(\per')^*\De$ on $\SS'_2$ is given by $\exp(2\pi i\Om_{11})$, 
%where $(\Om_{ij})$ is the superperiod matrix.  
%By Theorem \ref{regular-norm-diff-thm}(ii), $\exp(2\pi i\Om_{11})=q\cdot u$, where $u$ is invertible, and $q$ is the pull-back of the local equation of the boundary divisor in $\ov{\MM}_2$.
%???
%on $\SS'_2$
%In the case of the Ramond node we have $q=t$, while in the case of the NS node we have $q=t^2$, and our assertion follows from Lemma \ref{reg-crit-lem}.

In the remaining case when $C_0$ is the union of two $\P^1$ glued at three nodal points, the local behavior of $\Om$ and $\Om^{\bos}$ is
described in Lemma \ref{tough-curve-per-lem}. In particular, the functions
$$\exp(2\pi i(\Om^{\bos}_{11}+\Om^{\bos}_{12})), \ \ \exp(-2\pi i\Om^{\bos}_{12}), \ \ \exp(2\pi i(\Om^{\bos}_{12}+\Om^{\bos}_{22}))$$
are the equations of three branches of $\De$, which give local coordinates on $\ov{\MM}_2$. Thus, the assertion follows as before.
\end{proof}

\subsection{Regularity of the canonical projection}
%\subsection{Characterization of the canonical projection}

Recall that in Corollary \ref{g2-proj-cor} we described a canonical projection $\pi^{\can}:\SS_2\to \SS_{2,\bos}$. 
%Let $\SS_2$ (resp., $\ov{\SS}_2$) denote the moduli superspace of genus $2$ (resp., stable) supercurves with underlying even spin structure,
%and let $\SS_{2,\bos}$ (resp., $\ov{\SS}_{2,\bos}$) be its bosonization. Note that 
Since $\SS_{2,\bos}$ is affine, there exists an infinite-dimensional space of 
projections $\SS_2\to \SS_{2,\bos}$. We can view each such a projection as a rational map from $\ov{\SS}_2$ to $\ov{\SS}_{2,\bos}$, so it make sense to impose
regularity conditions near generic points of components of the boundary divisor. We will now prove that $\pi^{\can}$ extends to a projection of $\ov{\SS}_2\setminus D^{-,-}$,
and that even a certain weaker regularity along boundary components
characterizes the canonical projection of $\SS_2$ uniquely.
%We would like to show that the regularity properties of the canonical projection described in  characterize it uniquely.

\begin{theorem}\label{projection-nonsep-prop} 
(i) The canonical projection $\pi^{\can}$ extends to a regular projection of $\ov{\SS}_2\setminus D^{-,-}$, which we still denote by $\pi^{\can}$.
One has $(\pi^{\can})^*(D^{ns}_{\bos})=D^{ns}$.
Furthermore, in terms of the gluing coordinates $y_1,y_2,t,\eta_1,\eta_2$ in a formal neighborhood of a generic point of the $(+,+)$ separating node divisor,
the canonical projection is given by the functions $y_1+O(t^3)\eta_1\eta_2$, $y_2+O(t^3)\eta_1\eta_2$, $t+\eta_1\eta_2/2+O(t^4)\eta_1\eta_2$.

\noindent
(ii) The canonical projection $\SS_2\to \SS_{2,\bos}$ is the unique projection of $\SS_2$, which is regular at a generic point of each boundary divisor corresponding 
to a non-separating node.
%Furthermore, this projection is in fact regular near every stable supercurve with at most one non-separating node (and no other nodes), and satisfies 
%$$(\pi^{\can})^*\de_{\bos}=\de$$
%near a generic point of each non-separating node component.
%\noindent
%(ii) The canonical projection $\pi^{\can}$ is regular near any point of the boundary component corresponding to a separating node of type $(+,+)$.

\noindent
(iii) The moduli stack $\ov{\SS}_2$ is not projected.
\end{theorem}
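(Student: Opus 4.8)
**

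The plan is to prove statement (iii) by contradiction, assuming $\ov{\SS}_2$ is projected, i.e., that there exists a global projection $p:\ov{\SS}_2\to \ov{\SS}_{2,\bos}$. The key strategy is to exploit the uniqueness assertion from part (ii): any projection of $\SS_2$ that is regular at the generic point of each non-separating node boundary divisor must coincide with $\pi^{\can}$. So if $\ov{\SS}_2$ were projected by some $p$, then restricting $p$ to $\SS_2$ would give a projection regular everywhere, in particular at the generic points of $D^{ns,R}$ and $D^{ns,NS}$; hence $p|_{\SS_2}=\pi^{\can}$, and by density $p$ would be the unique regular extension of $\pi^{\can}$ to all of $\ov{\SS}_2$.

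The contradiction then comes from the behavior near the $(+,+)$ separating node divisor $D_0$ and near the $(-,-)$ separating node divisor $D^{-,-}$. From Theorem \ref{projection-nonsep-prop}(i), the extension of $\pi^{\can}$ near a generic point of $D_0$ is given in gluing coordinates by $t\mapsto t+\eta_1\eta_2/2+O(t^4)\eta_1\eta_2$; since $D_0$ is cut out by $t=0$ on $\ov{\SS}_2$ and $D_{0,\bos}$ by $t=0$ on $\ov{\SS}_{2,\bos}$, the pullback $(\pi^{\can})^*(t)$ does not lie in the ideal of $D_0$ — it has the nonzero nilpotent term $\eta_1\eta_2/2$. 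So $\pi^{\can}$, while regular near $D_0$, is not compatible with this divisor: $(\pi^{\can})^*(D_{0,\bos})\neq D_0$ as divisors (the pullback divisor is empty or all of $\ov{\SS}_2$ near that locus, since $t+\eta_1\eta_2/2$ is a unit times... — more precisely, $(\pi^{\can})^*(t)$ is not a local equation of any Cartier divisor supported on $D_0$). This already shows $\pi^{\can}$ cannot be "compatible with $D_0$", but for non-projectedness we need the sharper obstruction: by Corollary \ref{--cor}(ii), near $D^{-,-}$ the canonical projection satisfies $\pi^*t = t-(2\pi i)^{-2}\eta_1\eta_2/(2t^2)+O(t^2)$, which has a \emph{pole} along $D^{-,-}$. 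Therefore the unique regular extension of $\pi^{\can}$ to $\ov{\SS}_2\setminus D^{-,-}$ does \emph{not} extend regularly across $D^{-,-}$: if it did, $\pi^*t$ would be regular, contradicting the explicit expansion. Hence no global projection $p$ can exist, since such a $p$ would have to restrict to $\pi^{\can}$ on $\SS_2$ by (ii) and then would have to equal this non-extendable extension on $\ov{\SS}_2\setminus D^{-,-}$.

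Concretely, I would write: \emph{Suppose $p:\ov{\SS}_2\to\ov{\SS}_{2,\bos}$ is a projection. Its restriction $p|_{\SS_2}$ is a projection of $\SS_2$ regular at the generic point of every non-separating node boundary divisor, hence by part (ii) equals $\pi^{\can}$. By part (i), $\pi^{\can}$ extends (uniquely, by density of $\SS_2$) to a regular projection of $\ov{\SS}_2\setminus D^{-,-}$, so $p$ agrees with this extension on $\ov{\SS}_2\setminus D^{-,-}$. But then $p$ would give a regular extension of $\pi^{\can}$ across the generic point of $D^{-,-}$. By Corollary \ref{--cor}(ii), in gluing coordinates near a generic point of $D^{-,-}$ one has $(\pi^{\can})^*t=t-(2\pi i)^{-2}\eta_1\eta_2/(2t^2)+\ldots$, which is not regular along $D^{-,-}=\{t=0\}$. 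This contradiction shows no such $p$ exists.\/}

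The main obstacle — and the only real content beyond bookkeeping — is making the density/uniqueness step rigorous: one needs that a regular projection of $\ov{\SS}_2\setminus D^{-,-}$ restricting to $\pi^{\can}$ on $\SS_2$ is uniquely determined, so that the hypothetical global $p$ is pinned down on the open set where it can be compared with the explicit coordinate formulas. This is standard (two morphisms of schemes/stacks to a separated target agreeing on a dense open subscheme agree, using that $\ov{\SS}_2$ is reduced along $\SS_2$ and that the boundary strata are nowhere dense), but it should be stated carefully since we are in the super setting. The second, genuinely geometric, ingredient is the pole computation of Corollary \ref{--cor}(ii); granting that, the argument closes immediately, and it is worth remarking parenthetically that the same mechanism explains the trichotomy displayed in the introduction (regular and compatible at non-separating nodes; regular but incompatible at $D_0$ via the $\eta_1\eta_2/2$ term; genuinely singular at $D^{-,-}$).
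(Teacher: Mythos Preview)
Your argument for part (iii) is correct and follows exactly the paper's route: assume a global projection exists, use (ii) to identify its restriction to $\SS_2$ with $\pi^{\can}$, then invoke Corollary~\ref{--cor}(ii) to obtain a contradiction from the pole of $(\pi^{\can})^*t$ along $D^{-,-}$. The paper's own proof of (iii) is a single sentence citing (ii) and Corollary~\ref{--cor}(ii); you have simply unpacked the implicit uniqueness-of-extension step, and your remarks about the incompatibility at $D_0$ are a correct aside but not needed for the contradiction.
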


%\begin{theorem}\label{projection-NS-thm}
%The canonical projection $\SS_2\to \SS_{2,\bos}$ extends to a regular projection $\UU\to \UU_{\bos}$, where $\UU\sub \ov{\SS}_2$ corresponds
%to stable supercurves with either at most one non-separating node or at most one 
%\end{theorem}

\begin{proof} (i) By Lemma \ref{coh-van-spin-str-g2-lem}, 
we need to show that the morphism $\pi^{\can}:\SS_2\to \ov{\SS}_{2,\bos}$ has a regular extension near every stable spin curve $(C,L)$
such that the corresponding spin structure has no global sections.

First, assume that $C$ is a stable curve with one non-separating node. 
Let us denote by $U$ a small \'etale neighborhood of $(C,L)$ in $\ov{\SS}_2$. 
We will use a regular extension of the Torelli map to a morphism $U\to \ov{\MM}_2$ (see Proposition \ref{ext-Tor-prop}). 
Note that near $(C,L)$ the map $\ov{\SS}_{2,\bos}\to \ov{\MM}_2$ is either \'etale,
or is a double covering ramified at the boundary divisor. Thus, in the former case we get a unique lifting $U\to \ov{\SS}_{2,\bos}$ which is identical on the reduced space.

In the latter case, $U$ is described by a superalgebra $A$, $\ov{\MM}_2$ is described by an algebra $B$.
We have an embedding $B\to A_{\bos}=A/\NN$ such that $A_{\bos}$ is generated over $B$ by one element $x$ such that $x^2=f$, 
where $f\in B$ is an equation of the boundary divisor.
In addition we have a commutative diagram 
\begin{diagram}
B&\rTo{}& A_{\bos}\\
\dTo{s}&&\dTo{s}\\
A&\rTo{}& A[\wt{x}^{-1}]
\end{diagram}
%its lifting to a homomorphism $B\to A$, 
%and a homomorphism $s:A_{\bos} \to A[\wt{x}^{-1}]$ 
where $\wt{x}\in A$ is any lifting of $x$),
identical modulo $\NN$, where the vertical arrows reduce to the obvious embeddings modulo $\NN$.
%such that the composition $$B\to A_{\bos}\to A[\wt{x}^{-1}]$$ is contained in $A$. 
It follows that $s(A_{\bos})$ is generated over $A$ by one even element $y=s(x)$ such that $y^2=s(f)$. One more piece of information we get from
Proposition \ref{ext-Tor-prop} is that $s(f)=ut^2$, where $u\in A^*$ and $t=0$ is the equation of the boundary such that $x=t\mod \NN$ (here we use that the node is non-separating).

Consider a local splitting of $A$, $A=A_{\bos}[\th_1,\th_2]$, such that $t\in A_{\bos}\sub A$. Then we have $y=t+a\th_1\th_2$, where $a\in A_{\bos}[t^{-1}]$. We then get
$$y^2=t^2+2ta\th_1\th_2=s(f)=t^2(u_0+b\th_1\th_2).$$
which implies that $2a=tb\in tA_{\bos}$. So $y\in tA$, hence, $s(A_{\bos})\sub A$, which gives the required regularity and the compatibility with the boundary divisors near $(C,L)$.

In the case when $C$ has more than one non-separating node and no separating nodes, the argument is very similar, except that we need to deal with possibly two or three branches
of the non-separating boundary divisor, where the projection $\ov{\SS}_{2,\bos}\to \ov{\MM}_2$ ramifies. 
We have their local equations $f_i\in B$ and $A_{\bos}$ is generated over $B$ by some even elements $x_i$ such that $x_i^2=f_i$. The rest of the argument is the same as above
using Proposition \ref{ext-Tor-prop}.

Now let us consider the case when $(C,L)$ belongs to the $(+,+)$ separating node component.
Recall that a separating node component is necessarily of NS type, and the corresponding components have genus $1$.
Let $t,y_1,y_2,\eta_1,\eta_2$ be formal local coordinates on $\ov{\SS}_2$ near the corresponding stable supercurve.
The projection $\pi^{\can}:\SS_2\to \SS_{2,\bos}$ in these coordinates takes form
$$t\mapsto t+f(t,y_1,y_2)\eta_1\eta_2, \ \ y_1\mapsto y_1+g_1(t,y_1,y_2)\eta_1\eta_2, \ \ y_2\mapsto y_2+g_2(t,y_1,y_2)\eta_1\eta_2.$$
We want to prove that $f$, $g_1$ and $g_2$ do not have pole at $t=0$.

%As before, we use the regular Torelli map $\SS_2\to \SS_{2,\bos}\to \ov{\MM}_2.$$
%By Proposition \ref{ext-Tor-prop}, it is regular near our boundary component.
%Indeed, this can be shown similarly to \cite[Thm. \ 6.3]{BHRP} using Lemma \ref{coh-van-spin-str-g2-lem}.

The projection $\SS_{2,\bos}\to \ov{\MM}_2$ in our local coordinates has form $(t,y_1,y_2)\mapsto (t^2,y_1,y_2)$.
Thus, from the regularity of the Torelli map $\ov{\SS}_2\setminus D^{-,-}\to \ov{\MM}_2$ we deduce that $g_1$ and $g_2$ do not have a pole at $t=0$, and that 
$$\pi^*(t^2)=(t+f\eta_1\eta_2)^2=t^2+2tf\eta_1\eta_2.$$ 
Thus, it is enough to check that the coefficient of $\eta_1\eta_2$ in $(\pi^{\can})^*(t^2)$ is divisible by $t$.
But this follows from our calculation in Lemma \ref{super-global-diff-base-lem}.
The last assertion also follows from this calculation.

\noindent
(ii) %Let us prove the uniqueness. 
Let $\NN$ denote the ideal generated by odd functions on $\SS_2$.
The difference between two projections is a global $\NN^2$-valued derivation on $\SS_{2,\bos}$, i.e., a global section of $\TT\ot \NN^2$, where $\TT$ is the tangent sheaf. 

We will use the hyperelliptic picture of Section \ref{hyperell-basics-section}, so we present $C$ as a double covering of $\P^1$ 
ramified at $u_1,u_2,u_3,v_1,v_2,v_3\in \A^1\sub \P^1$. In this picture we have natural trivializations of both $\TT$ and 
$\NN^2$. Namely, the fiber of $\TT$ at $C$ is dual to $H^0(C,\om_C^2)$. Let us denote by $(\de_0,\de_1,\de_2)$ the basis of $\TT$, dual to the basis 
\eqref{qu-diff-basis-eq}.
On the other hand, $\NN^2$ is trivialized by $\eta_1\eta_2$, where $(\eta_1,\eta_2)$ correspond to the basis 
%$(\chi_1=s(x-u_3),\chi_2=sy/((x-u_1)(x-u_2)))$ 
$(\chi_1,\chi_2)$ of $H^0(C,\om_C\ot L)$
(see \eqref{chi1-chi2-eq}).
%(see \cite[Sec.\ 3.3]{Witten}). 
Thus, a global $\NN^2$-valued vector field can be written in the form
$$X=(f_0(u,v)\de_0+f_1(u,v)\de_1+f_2(u,v)\de_2)\eta_1\eta_2,$$
and this expression should be invariant with respect to all symmetries.

Note that the denominator in each $f_i$ is necessarily a product of some powers of $(u_i-u_j)$ and $(u_i-v_j)$. Since we assumed that no poles can appear as $u_i\to u_j$ or $u_i\to v_j$,
it follows that each $f_i$ is a polynomial.

Let us consider the invariance with respect to rescaling $u\mapsto \la u$, $v\mapsto \la v$, $x\mapsto \la x$, $y\mapsto \la^3 y$.
We have $\eta_1\eta_2\mapsto \la^{-3}\eta_1\eta_2$ and $dx/y\mapsto \la^{-2} dx/y$. Hence,
$$\de_0\mapsto \la^4\de_0, \ \de_1\mapsto \la^4\de_1, \ \de_2\mapsto \la^4\de_2.$$
It follows that 
$$\deg(f_0)=-1, \ \ \deg(f_1)=0, \ \ \deg(f_2)=1.$$
It follows that $f_0=0$ and $f_1$ is a constant.

Next, let us consider the invariance with respect to $u_i\mapsto 1/u_i$, $v_i\mapsto 1/v_i$, $x\mapsto 1/x$, $y\mapsto \pm \prod_i(u_iv_i)^{-1/2}\cdot y/x^3$.
Then we have $\eta_1\eta_2\mapsto \pm\eta_1\eta_2\prod_i u_iv_i$,
$$\de_0\eta_1\eta_2\mapsto \pm \de_2\eta_1\eta_2, \ \ \de_2\eta_1\eta_2\mapsto \pm \de_0\eta_1\eta_2, \ \ \de_1\eta_1\eta_2\mapsto \pm\de_1\eta_1\eta_2$$
%(HAVE TO CHECK THAT THE SIGN IS CORRECT FOR $\de_1$!).
Hence, we should have
$$f_2(u_i,v_i)=\pm f_0(1/u_i,1/v_i).$$
Therefore, $f_2=0$.

Finally, the symmetry swapping $u_i$ with $v_i$ acts by $-1$ on $\eta_1\eta_2$. Hence, $f_1$ should go to $-f_1$ under this transformation. But $f_1$ is constant, so $f_1=0$.
%The relation between the divisors follows immediately from the similar relation in Proposition \ref{ext-Tor-prop}.

\noindent
(iii) This follows from (ii) and from the fact that the canonical projection is not regular near $D^{-,-}$ (see Corollary \ref{--cor}(ii)).
\end{proof}

%$\pi_*\om_{X/S}$ is locally free.

%\begin{prop} Assume in addition that the local system $R^1\pi_*\C_{X/S}$ is trivialized on $S\setminus S_0$.
%Then superperiod map from $S\setminus S_0$ to the Grassmannian extends to a regular map from $S$.
%\end{prop}
%
%\begin{proof} Set
%$$\ov{\VV}:=R^1\pi_*[\OO_{X}\rTo{\de}\om_{X/S}].$$
%We claim that the natural map $\pi_*\om_{X

%As proved in the note [Regularity-super-periods], the above Lemma implies that $\pi_*\om_{X/S}$ is a vector subbundle of

%Let us consider separately the Ramond and NS cases.

%\begin{proof} The case of a non-separating node was considered in Proposition \ref{projection-nonsep-prop}.
%Assume that the node is separating. 
%\end{proof}

%\begin{remark} Theorem \ref{projection-nonsep-prop}(ii) implies that
%the canonical projection $\pi^{\can}$ does not respect the ideal of the separating node divisor $D$.
%Hence, it does not agree with the projection $p_n$ defined in the $n$th infinitesimal neighborhood of $D$ (see \eqref{pn-projection-eq}) since
%the latter one is clearly compatible with this ideal.
%\end{remark}

\section{The Mumford form for genus $2$}\label{hyper-Mum-sec}
%Using hyperelliptic picture to determine the Mumford form}

\subsection{From the even gluing construction to hyperelliptic covering}

Recall that for a smooth curve of genus $2$ the hyperelliptic covering $C\to \P^1$ is given by the linear system associated with the canonical linear system.
Thus, to find $6$ ramification points on $\P^1$, we have to find $6$ global differentials on $C$ with double zeros.

Now let us consider the degenerating family of curves of genus $2$, $\pi:C\to B$
over $B=\MM_{1,1}^{(1)}\times\MM_{1,1}^{(1)}\times S^{(n)}_q$, where $S^{(n)}=\Spec \C[q]/q^{n+1}$,
obtained by the gluing construction of Sec. \ref{even-gluing-sec}
(where we change the coordinate from $t$ to $q$). 
Let us use the basis $\om^{(1)}$, $\om^{(2)}$ of $\pi_*\om_{C/S}$ constructed in Sec.\ \ref{even-gluing-sec}.
%Lemma \ref{classical-global-diff-base-lem}.

\begin{prop}\label{ram-points-prop} 
%Let $p^{(1)}_1,p^{(1)}_2,p^{(1)}_3,p^{(2)}_1,p^{(2)}_2,p^{(2)}_3:S\to C$ denote constant sections in 
Let $\a_1,\a_2,\a_3,\b_1,\b_2,\b_3:S\to C$ denote constant sections in 
$C_1\setminus \{q_1\}\sqcup C_2\setminus\{q_2\}$ corresponding to $3$ nontrivial points of order $2$ on $C_1$ and $3$ nontrivial points of order 
$2$ on $C_2$.
Then there exist $6$ sections of $\pi_*\om_{C/S}$ of the form
$$\eta^{(1)}_i:=qf(\a_i,\tau_1)\om^{(1)}+\om^{(2)}, \ \ \eta^{(2)}_i:=\om^{(1)}+qf(\b_i,\tau_2)\om^{(2)}, \ i=1,2,3,$$ 
such that 
$\eta^{(1)}_i$ (resp., $\eta^{(2)}_i$) has a double zero along $\a_i$ (resp., $\b_i$).
Furthermore, one has
$$f(u,\tau_i)(q)=\wp(u,\tau_i)\mod(q^4).$$
%), f^{(j)}_2(q)=\wp(\frac{\tau_j}{2},\tau_j)\mod(q^4), f^{(j)}_3(q)=\wp(\frac{1+\tau_j}{2},\tau_j)\mod(q^4), \ j=1,2.$$
\end{prop}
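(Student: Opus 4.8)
The plan is to produce, for each $i$, a global differential on the glued curve $C$ that vanishes to order $2$ along the constant section $\alpha_i$ (resp.\ $\beta_i$), and then to identify the coefficient $f(u,\tau_i)(q)$ by matching the leading behavior as $q\to 0$. First I would work on the component $C_1$: by the description of $\pi_*\om_{C/S}$ in Sec.\ \ref{even-gluing-sec}, any global differential is of the form $a\,\om^{(1)}+b\,\om^{(2)}$ with $a,b\in \OO_S$, and by \eqref{om1-om2-main-eq} its restriction to $C_1\setminus\{q_1\}$ is $(a(1+q^4\phi)-bq\wp(x_1,\tau_1)+\dots)\,dx_1$, i.e.\ an elliptic differential of the second kind on $C_1$ with a (possibly) higher-order pole only at $q_1$, and its restriction to $C_2\setminus\{q_2\}$ is $(a(-q\wp(x_2,\tau_2)+q^4\psi)+b(1+q^4\phi))\,dx_2$.

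For the differential to have a double zero at a $2$-torsion point $\alpha_i\in C_1$, its restriction to $C_1$ must be a holomorphic differential (no pole at $q_1$ — which is automatic here since $q_1$ is not $\alpha_i$ and the forms $\om^{(j)}$ are global on $C_1\setminus\{q_1\}$ only because they may acquire a pole there; so really the statement is about the holomorphic differential on the \emph{smooth} curve $C_1$ in the limit) vanishing doubly at $\alpha_i$. I would normalize $b=1$ and recall the classical fact, already used implicitly in Sec.\ \ref{hyperell-basics-section} and \ref{Section-6.1}, that on an elliptic curve $C_1=\C/(\Z+\Z\tau_1)$ the differential $(\wp(x_1,\tau_1)-\wp(\a_i,\tau_1))\,dx_1$ (for $\a_i$ a $2$-torsion point) has a double zero at $\a_i$ and a double pole at $0$; hence one wants $a$ chosen so that the $dx_1$-coefficient of $a\,\om^{(1)}+\om^{(2)}$ is proportional to $\wp(x_1,\tau_1)-\wp(\a_i,\tau_1)$ modulo the requirement of being a globally defined differential (of the second kind) — and crucially, a \emph{true} holomorphic differential with prescribed double zero exists precisely when the residual polar part cancels, which pins down $a$. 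Setting $a=qf(\a_i,\tau_1)(q)$ and expanding: from \eqref{om1-om2-main-eq}, $qf\cdot\om^{(1)}+\om^{(2)}$ restricted to $C_1$ equals $\bigl(qf(1+q^4\phi)-q\wp(x_1,\tau_1)+q^4\psi\bigr)dx_1 \equiv q\bigl(f(\a_i,\tau_1)-\wp(x_1,\tau_1)\bigr)dx_1 \pmod{q^4}$, which has a double zero at $\a_i$ (to this order) iff $f(\a_i,\tau_1)=\wp(\a_i,\tau_1)\bmod q^4$. Symmetrically, the restriction to $C_2$ of $qf\,\om^{(1)}+\om^{(2)}$ is $\bigl(-qf\wp(x_2,\tau_2)+ (1+q^4\phi)+\dots\bigr)dx_2$, which is holomorphic on $C_2$ up to the needed order and vanishes only where forced; one checks it does not create unwanted zeros or poles obstructing the construction. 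The same argument with the roles of $C_1$ and $C_2$ swapped gives the $\eta^{(2)}_i$.

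The cleanest way to organize the existence claim (as opposed to the asymptotic formula) is an implicit-function / formal-power-series argument in $q$: the condition ``$a\,\om^{(1)}+b\,\om^{(2)}$ has a double zero along the constant section $\alpha_i$'' is a closed condition cutting out, for each $q$, a line in the rank-$2$ bundle $\pi_*\om_{C/S}$; at $q=0$ this line is spanned by the holomorphic differential on $C_1$ with double zero at $\alpha_i$ (namely $\om^{(2)}$ has no pole and $\om^{(1)}$ is, mod $q$, $dx_1$ on $C_1$ and $0$ on $C_2$, so the limiting line is $b=1$, $a = q f + O(q^2)$ with $f(\alpha_i,\tau_1)=\wp(\alpha_i,\tau_1)$). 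Because the double-zero locus is transverse to the fibers (the derivative of the vanishing-to-order-$2$ condition in the $a$-direction is nonzero — this uses that $dx_1$ does \emph{not} have a double zero at $\alpha_i$, so moving $a$ genuinely changes the order of vanishing), one solves order by order in $q$ for $a=qf(\alpha_i,\tau_1)(q)$, getting a unique formal solution, and $f(\alpha_i,\tau_1)(q)\equiv\wp(\alpha_i,\tau_1)\pmod{q^4}$ from the explicit expansions \eqref{om1-om2-main-eq}.

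I expect the main obstacle to be verifying that no \emph{spurious} zeros or poles appear: one must confirm that the constructed differential $\eta^{(1)}_i$, beyond having a double zero at $\alpha_i$ on $C_1$, is a genuine global section of $\pi_*\om_{C/S}$ (i.e.\ extends holomorphically across the node, which is guaranteed by construction since $\om^{(1)},\om^{(2)}$ already are global sections), and that its divisor of zeros is exactly $2\alpha_i$ and not more — equivalently, that the restriction to $C_2$ is nonzero (which holds since its $dx_2$-coefficient is $1+O(q)$). This, together with a degree count (a holomorphic differential on a genus-$2$ curve has $2$ zeros, so a double zero at $\alpha_i$ accounts for all of them), shows that $\eta^{(1)}_i$ is — up to scalar — the canonical-system differential cutting out the ramification point over $\alpha_i$, as needed for identifying the $6$ ramification points. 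The asymptotic formula $f(u,\tau_i)(q)=\wp(u,\tau_i)\bmod q^4$ then follows directly by substituting \eqref{om1-om2-main-eq} into the double-zero condition and reading off coefficients through order $q^3$.
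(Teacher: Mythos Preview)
Your overall strategy --- restrict to $C_1$, seek a combination whose $dx_1$-coefficient is proportional to $\wp(x_1)-\wp(\a_i)$, and read off $f$ --- coincides with the paper's. But your existence argument has a real gap. You treat ``double zero at $\a_i$'' as a single closed condition on the ratio $a:b$ and then invoke an implicit-function step in $q$. A double zero is a priori \emph{two} linear conditions on the $2$-dimensional space $\pi_*\om_{C/S}$; it cuts out a line (rather than just $\{0\}$) only when $\a_i$ is a Weierstrass point of the smooth fibre, which is exactly what is to be shown. Your transversality remark (``$dx_1$ does not have a double zero at $\a_i$, so moving $a$ changes the order of vanishing'') only proves that the \emph{value} at $\a_i$ moves with $a$; it says nothing about the derivative. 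At $q=0$ the line $a=0$ gives infinite-order vanishing on $C_1$, so the special fibre of your ``double-zero locus'' is a line, but nothing prevents the generic fibre from being $\{0\}$.

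The paper closes this gap with one observation you do not use: by \eqref{om1-om2-main-eq} and \cite[Prop.~5.5]{P-bu}, the correction functions $\phi(x_1,\tau_1)(q)$ and $\psi(x_1,\tau_1)(q)$ are \emph{even} elliptic functions of $x_1$. Since $\a_i$ is a nontrivial $2$-torsion point, one has $\phi'(\a_i)=\psi'(\a_i)=\wp'(\a_i)=0$. Writing
\[
qf(u)\,\om^{(1)}_1+\om^{(2)}_1=q\bigl[f(u)(1+q^4\phi(x_1))-\wp(x_1)+q^3\psi(x_1)\bigr]dx_1,
\]
one solves the single equation (vanishing at $x_1=u$) to get the closed formula
\[
f(u)=\bigl(\wp(u,\tau_1)-q^3\psi(u,\tau_1)(q)\bigr)\bigl(1+q^4\phi(u,\tau_1)(q)\bigr)^{-1},
\]
and the derivative at $x_1=\a_i$ then vanishes \emph{automatically} by evenness --- no deformation argument is needed. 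The asymptotic for $f$ is read off directly from this formula. (An equivalent way to phrase the missing step: the involution $x_j\mapsto -x_j$ on each $C_j$ glues to a hyperelliptic involution of the whole family fixing the $\a_i,\b_j$, which forces even-order vanishing at those points; the evenness of $\phi,\psi$ is the explicit manifestation of this.)
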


\begin{proof} 
As we have seen in Sec.\ \ref{even-gluing-sec}, one has
$$\om^{(1)}_1=(1+q^4\phi(x_1,\tau_1)(q))dx_1, \ \ \om^{(2)}_1=(-q\wp(x_1,\tau_1)+q^4\psi(x_1,\tau_1)(q))dx_1,$$
where $\phi$ and $\psi$ are {\it even} elliptic functions of $x_1$ depending on $q$, with poles only at lattice points.
Thus, if $x_1=u$ is a nontrivial point of order $2$ then $\phi'(u,\tau_1)=\psi'(u,\tau_1)=0$. 
We can find a function $f(u)=f(u)(q)$ such that
$$qf(u)\om^{(1)}_1+\om^{(2)}_1=q[f(u)(1+q^4\phi(x_1,\tau_1)(q))-\wp(x_1,\tau_1)+q^3\psi(x_1,\tau_1)(q)]dx_1$$
vanishes at $x_1=u$. Namely, we can just set
$$f(u)=(\wp(u,\tau_1)-q^3\psi(u,\tau_1)(q))(1+q^4\phi(u,\tau_1)(q))^{-1}.$$
Then $qf(u)\om^{(1)}_1+\om^{(2)}_1$ automatically has a double zero at $x_1=u$. Applying this to $u=\a_i$ gives $\eta_i^{(1)}$.
Reversing the roles of $C_1$ and $C_2$ we get $\eta_i^{(2)}$.
\end{proof}

Note that if we use the rescaled basis 
$$\ov{\om}^{(1)}=q\om^{(1)}, \ \ \ov{\om}^{(2)}=\om^{(2)}$$
our $6$ global differentials with double zeros take form $\ov{\om}^{(1)}-a_i\ov{\om}^{(2)}$, $\ov{\om}^{(1)}-b_i\ov{\om}^{(2)}$, $i=1,2,3$,
%$(\om^{(1)},\om^{(2)})\mapsto (q^{-1}\om^{(1)},q\om^{(2)})$ (which is a part of the $\SL_2$-action) 
%the $6$ ramification points appearing in the above Proposition can be replaced by the 6 points with affine coordinates
where
\begin{equation}\label{ramification-points-gluing-formulas-eq}
a_i=-\frac{1}{f(\a_i)}= -\frac{1}{\wp(\a_i,\tau_1)}+O(q^4),\ \ b_i=-q^2f(\b_i)=-q^2(\wp(\b_i,\tau_2)+O(q^4)),
\end{equation}
(where we assume $\wp(\a_i,\tau_1)\neq 0$).
%Here the partition of the points into two groups, $u_1,u_2,u_3$ and $v_1,v_2,v_3$, defines an even spin-structure with $(+,+)$ type degeneration.

If we take the projective limit of the above construction over all $n$ and then invert $t$, we obtain a family of curves $C_{t\neq 0}$ such that the ring of functions on
the affine curve $C_{t\neq 0}\setminus Z(\om^{(2)})$ will be generated by functions $x$ and $y$, where
$$\ov{\om}^{(1)}=x\ov{\om}^{(2)},$$
and $y(\ov{\om}^{(2)})^{\ot 3}$ is a regular section of $\om_{C_{t\neq 0}/S_{t\neq 0}}^{\ot 3}$, so that 
\begin{equation}\label{hyperell-ab-eq}
y^2=\prod_i(x-a_i)(x-b_i).
\end{equation}

\begin{cor}\label{ram-points-cor}
The curve $C_{t\neq 0}$ is isomorphic to the ramified covering of $\P^1$ ramified at points $a_1,a_2,a_3,b_1,b_2,b_3$ given by \eqref{ramification-points-gluing-formulas-eq}.
\end{cor}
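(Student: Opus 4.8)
The statement to prove is Corollary~\ref{ram-points-cor}, which asserts that the curve $C_{t\neq 0}$ obtained as the generic fiber (after inverting $t$) of the gluing family is the hyperelliptic double cover of $\P^1$ branched at the six points $a_1,a_2,a_3,b_1,b_2,b_3$ given by \eqref{ramification-points-gluing-formulas-eq}.

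The plan is to read off the hyperelliptic equation directly from the construction that precedes the corollary. First I would recall that by Proposition~\ref{ram-points-prop} and the discussion following it, the rescaled basis $\ov{\om}^{(1)}=q\om^{(1)}$, $\ov{\om}^{(2)}=\om^{(2)}$ of $\pi_*\om_{C/S}$ has the property that the six global differentials $\ov{\om}^{(1)}-a_i\ov{\om}^{(2)}$ and $\ov{\om}^{(1)}-b_i\ov{\om}^{(2)}$, for $i=1,2,3$, each have a double zero — at $\a_i$ resp.\ $\b_i$. Passing to the projective limit over $n$ and inverting $t$ produces a family of smooth genus $2$ curves $C_{t\neq 0}/S_{t\neq 0}$; I would then invoke the standard fact (already used implicitly in Sec.\ \ref{hyperell-basics-section}) that for a smooth genus $2$ curve the canonical linear system $|\om_C|$ defines the degree $2$ map to $\P^1$, so $[\ov{\om}^{(1)}:\ov{\om}^{(2)}]$ is exactly the hyperelliptic coordinate $x$, and the six branch points are the values of $x$ at the six Weierstrass points, i.e.\ precisely where a canonical differential acquires a double zero. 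Since $\ov{\om}^{(1)}-c\,\ov{\om}^{(2)}$ corresponds to the differential vanishing doubly over $x=c$, the six values $a_i,b_i$ from \eqref{ramification-points-gluing-formulas-eq} are exactly the branch points.

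Next I would make the function field picture explicit, which is already sketched in the text right before the corollary: on the affine open $C_{t\neq 0}\setminus Z(\ov{\om}^{(2)})$ the function $x$ is defined by $\ov{\om}^{(1)}=x\,\ov{\om}^{(2)}$, and one chooses $y$ so that $y\,(\ov{\om}^{(2)})^{\ot 3}$ is a regular section of $\om_{C_{t\neq 0}/S_{t\neq 0}}^{\ot 3}$ with divisor of zeros equal to the ramification divisor. Then $y^2\,(\ov{\om}^{(2)})^{\ot 6}$ is a section of $\om^{\ot 6}$ vanishing to order $2$ at each of the six Weierstrass points and nowhere else, hence agrees up to a unit (which can be normalized to $1$ by rescaling $y$) with $\prod_i (x-a_i)(x-b_i)\cdot(\ov{\om}^{(2)})^{\ot 6}$; this gives \eqref{hyperell-ab-eq}. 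From the equation $y^2=\prod_i(x-a_i)(x-b_i)$ one immediately identifies $C_{t\neq 0}$ with the double cover of $\P^1_x$ branched at $a_1,a_2,a_3,b_1,b_2,b_3$, and the formulas \eqref{ramification-points-gluing-formulas-eq} were already established in Proposition~\ref{ram-points-prop}.

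The only genuine point requiring care — the main potential obstacle — is checking that the six differentials $\ov{\om}^{(1)}-a_i\ov{\om}^{(2)}$, $\ov{\om}^{(1)}-b_i\ov{\om}^{(2)}$ really are the full set of canonical differentials with double zeros on the generic fiber, i.e.\ that $\a_1,\a_2,\a_3,\b_1,\b_2,\b_3$ remain the six distinct Weierstrass points of $C_{t\neq 0}$ for $t\neq 0$ (equivalently that the $a_i,b_i$ are distinct, which follows from \eqref{ramification-points-gluing-formulas-eq} since $\wp(\a_i,\tau_1)$ and $q^2\wp(\b_i,\tau_2)$ have different orders in $q$, and the $\wp$-values at distinct $2$-torsion points differ). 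Given that, the identification with the branched cover and the equation \eqref{hyperell-ab-eq} are formal, so the corollary follows.
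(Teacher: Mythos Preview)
Your proposal is correct and follows exactly the paper's approach: the corollary is stated without a separate proof because it is an immediate consequence of the preceding paragraph, where the hyperelliptic equation \eqref{hyperell-ab-eq} is derived from the canonical map $[\ov{\om}^{(1)}:\ov{\om}^{(2)}]:C_{t\neq 0}\to\P^1$ and the identification of the Weierstrass points via Proposition~\ref{ram-points-prop}. Your write-up simply makes explicit the standard genus~$2$ facts (canonical system gives the hyperelliptic map, Weierstrass points are the double zeros of canonical differentials) that the paper leaves implicit, and adds the harmless observation that the six branch points are distinct.
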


In terms of the gluing we have
$$x=(-\frac{1}{\wp(x_1)}+O(q^4),-q^2\wp(x_2)+O(q^6)),$$
We also have
$$\frac{dx}{y}=c\cdot \om^{(2)},$$
for some constant $c$ depending on $q$ (and the curves $C_1$, $C_2$).
We get
$$cy=(-q^{-1}(\frac{\wp'(x_1)}{\wp(x_1)^3}+O(q^4)),-q^2(\wp'(x_2)+O(q^4))).$$
To find $c$ we consider expansions of both sides of \eqref{hyperell-ab-eq} at $x_1=\a_1$.
For the left-hand side we get
$$c^{-2}\cdot q^{-2}(\frac{\wp''(\a_1)^2}{\wp(\a_1)^6}+O(q^4))(x_1-\a_1)^2+\ldots$$
For the right-hand side, using the expansion $\wp(x_1)-\wp(\a_1)=\frac{1}{2}\wp''(\a_1)(x-\a_1)^2+\ldots$,  we get
$$-[\frac{\wp''(\a_1)(\wp(\a_1)-\wp(\a_2))(\wp(\a_1)-\wp(\a_3))}{2\wp(\a_1)^7\wp(\a_2)\wp(\a_3)}+O(q^2)](x_1-\a_1)^2+\ldots$$
Hence,
$$c^2=-q^{-2}(\frac{2\wp''(\a_1)\wp(\a_1)\wp(\a_2)\wp(\a_3)}{\wp(\a_1)-\wp(\a_2)(\wp(\a_1)-\wp(\a_2))}+O(q^2))=
-q^{-2}(g_3(\tau_1)+O(q^2)),$$
where we use the relations
$$\wp''(\a_1)=2(\wp(\a_1)-\wp(\a_2))(\wp(\a_1)-\wp(\a_3)), \ \ 4\wp(\a_1,\tau_1)\wp(\a_2,\tau_1)\wp(\a_3,\tau_1)=g_3(\tau_1).$$
Thus,
\begin{equation}\label{c-eq}
c=q^{-1}(\sqrt{-g_3(\tau_1)}+O(q^2)).
\end{equation}

Now we can compare trivializations of $\det H^0(C,\om_C)$ obtained from the bases $(\om^{(1)},\om^{(2)})$ and
$(\frac{xdx}{y},\frac{dx}{y})$:
\begin{equation}\label{lambda-basis-gluing-eq}
\frac{xdx}{y}\we \frac{dx}{y}=c^2\cdot \ov{\om}^{(1)}\we \om^{(2)}=-q^{-2}(g_3(\tau_1)+O(q^2))\cdot \om^{(1)}\we \om^{(2)}.
\end{equation}

\subsection{Gluing near the separating node divisor and the spin structures}\label{super-glue-spin-sec}

\subsubsection{The $(+,+)$ separating node gluing}\label{spin++sec}

Here we consider the gluing construction near the $(+,+)$ separating node divisor in $\ov{\SS}_2$ (see Sec.\ \ref{super-gluing-sec} and 
Sec.\ \ref{formal-nbhd-div-sec}) and identify the corresponding hyperelliptic picture.

\begin{lemma}\label{gluing-spin-lem}
Consider the gluing construction restricted to the reduced base $B=\ov{\SS}'_{1,1,\bos}\times \SS'_{1,1\bos}\times S^{(2n+1)}_t$,
with the choice of the spin-structure on $C_1$ and $C_2$, corresponding to points of order $2$, $\a\in C_1$ and $\b\in C_2$.
Let $(\a_1=\a,\a_2,\a_3)$ (resp., $(\b_1=\b,\b_2,\b_3)$) denote all nontrivial points of order $2$ on $C_1$ (resp., $C_2$).
%$v_i$ and $w_i$ denote the remaining nontrivial points of order $2$ on $C_i$.
Then the corresponding spin-structure $L$ on the smooth part of the glued curve $C$ is isomorphic to 
$\OO_C(\b_2+\b_3-\a_1)\simeq \OO_C(\a_2+\a_3-\b_1)$.
%$\OO_C(v_2+w_2-u_1)\simeq \OO_C(v_1+w_1-u_2)$.
Furthermore, one has sections $\bs_1\in L(\a_1-\b_2-\b_3)$, $\bs_2\in L(\b_1-\a_2-\a_3)$ given by 
$$\bs_1=((\frac{1}{\kappa_1(x_1)}+t^4\frac{\wp(\b_1)}{2}\kappa_1(x_1)+O(t^7))\th_1,  (t^3\kappa_2(x_2)+O(t^7))\th_2),$$
$$\bs_2=((-t^3\kappa_2(x_1)+O(t^7))\th_1, (\frac{1}{\kappa_1(x_2)}+t^4\frac{\wp(\a_1)}{2}\kappa_1(x_2)+O(t^7))\th_2).$$
Also, one has
$$\bs_1^2=-q^{-1}(\wp(\a_1)^{-1}+O(q^2))\cdot\frac{(x+q^2f(\b_2))(x+q^2f(\b_3))}{x+f(\a_1)^{-1}}\cdot \om^{(2)}|_C.$$
$$\bs_2^2=q^{2}(-\wp(\a_2)\wp(\a_3)+O(q^2))\cdot \frac{(x+f(\a_2)^{-1})(x+f(\a_3)^{-1})}{x+q^2f(\b_1)}\cdot \om^{(2)}|_C.$$
\end{lemma}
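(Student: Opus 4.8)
The plan is to work in the formal neighborhood of the $(+,+)$ separating node divisor using the explicit gluing data from Sections \ref{super-gluing-sec} and \ref{ss-2.2}, restricted to the reduced base $B=\ov{\SS}'_{1,1,\bos}\times \SS'_{1,1,\bos}\times S^{(2n+1)}_t$ (so that the base is even and the supercurve $X$ splits as $(C,\OO_C\oplus L)$ for a generalized spin structure $L$ on the glued nodal curve $C$). First I would identify $L$ away from the node. By Lemma \ref{NS-R-lem}(i) and the discussion of the $(+,+)$ case in the proof of Lemma \ref{coh-van-spin-str-g2-lem}, the spin structure on the glued NS curve is $L_1\oplus L_2$ over the special fiber, where $L_i=\OO(u_i-p_i)$ with $u_i$ the chosen order-$2$ point; as we move into the smoothing, this direct sum must glue to a line bundle on the smooth part, and the only candidate with the right restrictions to $C_1$ and $C_2$ is $\OO_C(\b_2+\b_3-\a_1)$ (using $\OO_{C_i}(2\a_j)\simeq\om_{C_i}$ and the relation $\OO_{C_1}(\a_1+\a_2+\a_3)\simeq\OO_{C_1}$, $\OO_{C_2}(\b_1+\b_2+\b_3)\simeq\OO_{C_2}$ for the degree bookkeeping). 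The isomorphism $\OO_C(\b_2+\b_3-\a_1)\simeq\OO_C(\a_2+\a_3-\b_1)$ follows exactly as in the smooth hyperelliptic case of Section \ref{hyperell-basics-section}, since $\OO_C(\a_1+\a_2+\a_3+\b_1+\b_2+\b_3)$ is the pullback of $\OO_{\P^1}(3)$, i.e.\ $\om_C^{\otimes 3}$.

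Next I would write down the sections $\bs_1,\bs_2$ as elements of the odd part $\OO(U_1)^-=L_1(C_1\setminus q_1)\oplus L_2(C_2\setminus q_2)$, using the functions $\kappa_n$ from Lemma \ref{kappa-functions-lem}. The key observation is that $\bs_1$ should be a rational section of $L$ with a simple pole at $\a_1$ and simple zeros at $\b_2,\b_3$, so on the $C_1$-side its leading behavior near $q_1$ is governed by $1/\kappa_1(x_1)$ (which has a pole at $\a_1$ since $\kappa_1$ vanishes there by Lemma \ref{kappa-functions-lem}(i)), while on the $C_2$-side it is a genuine $t$-higher-order correction built from $\kappa_2$. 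To nail down the coefficients $t^4\frac{\wp(\b_1)}{2}\kappa_1(x_1)$ and $t^3\kappa_2(x_2)$ and the error terms $O(t^7)$, I would impose the gluing compatibility \eqref{om-phi-super-relation-eq} (in its form for $\OO_X$, i.e.\ the analog used in Lemma \ref{super-global-diff-base-lem}): expand each side in the Laurent variables $x_1,x_2$ and $\th_1,\th_2$, match polar parts order by order in $t$, and use the expansions of $\kappa_1,\kappa_2$ from Lemma \ref{kappa-functions-lem}(i)--(ii). This is the same iterative procedure already carried out for $\om^{(1)},\om^{(2)}$, just for the odd (spin) sector, and the uniqueness at each step (as in \cite[Prop.\ 3.3]{P-superell}) guarantees the stated expansions are forced.

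Then I would compute $\bs_1^2$ and $\bs_2^2$ under the isomorphism $L^2\xrightarrow{\sim}\om_C$. Using $\kappa_1^2(z_i)=\wp(z_i)-\wp(u_i)$ and the translation $\kappa_1(z_i)h_{u_i}(z_i)=\kappa_2(z_i)$, together with the hyperelliptic coordinate identifications from Corollary \ref{ram-points-cor} and the equations \eqref{ramification-points-gluing-formulas-eq} expressing the ramification points $a_i,b_i$ in terms of $\wp(\a_i),\wp(\b_i)$, I can rewrite $\bs_1^2$ and $\bs_2^2$ as rational multiples of $\om^{(2)}|_C$. The prefactor $-q^{-1}(\wp(\a_1)^{-1}+O(q^2))$ in the formula for $\bs_1^2$ should be extracted by comparing the leading Laurent coefficient of $\bs_1^2$ near $q_1$ (which involves $1/\kappa_1(x_1)^2=1/(\wp(x_1)-\wp(\a_1))$) against $\om^{(2)}|_C$, using the expansions $\om^{(2)}_1=(-q\wp(x_1)+O(q^4))dx_1$ from \eqref{om1-om2-main-eq} and the relation $dx/y=c\cdot\om^{(2)}$ with $c$ as in \eqref{c-eq}; the divisor of zeros/poles of the rational function $\frac{(x+q^2f(\b_2))(x+q^2f(\b_3))}{x+f(\a_1)^{-1}}$ is exactly $2\b_2+2\b_3-2\a_1$ (pulled back to $C$), confirming it matches $\bs_1^2$ as a section of $\om_C\simeq L^2$ up to the scalar. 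The computation of $\bs_2^2$ is symmetric, swapping the roles of $C_1,C_2$ and using $\bs_2$'s pole at $\b_1$ and zeros at $\a_2,\a_3$, with the scalar $q^2(-\wp(\a_2)\wp(\a_3)+O(q^2))$ coming from the analog of \eqref{c-eq} on the $C_2$-side and the relation $4\wp(\a_1)\wp(\a_2)\wp(\a_3)=g_3(\tau_1)$.

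The main obstacle I expect is bookkeeping the error orders consistently — in particular verifying that the corrections really appear at order $t^7$ (not lower) in $\bs_1,\bs_2$ and at order $q^2$ in the scalars $\bs_i^2$. This requires carrying the iterative solution of \eqref{om-phi-super-relation-eq} far enough and tracking how the substitutions $x_1\mapsto -t^2/x_2$, $\th_1\mapsto -t\th_2/x_2$ etc.\ boost the $t$-degree of each correction term, exactly as in the delicate $\mathrm{mod}\ (t^5)$ computation of Lemma \ref{super-global-diff-base-lem}. A secondary subtlety is checking that the stated $\bs_1,\bs_2$ are genuinely regular (not merely formal) sections of the relevant twists of $L$ near the node — this uses the CM-module description of $\II_{p_1}$ near the node recorded in Section \ref{hyperell-basics-section} together with the $j_*$-extension argument there.
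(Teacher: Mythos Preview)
Your strategy --- construct $\bs_1$ by iteratively solving the odd-function gluing equations, seeded by $F_1\equiv 1/\kappa_1(x_1)$, $F_2\equiv 0$ modulo $t$ --- is exactly the paper's. But your outline has a real gap: you never explain why $\bs_1$ vanishes at $\b_2,\b_3$. The recursion only imposes the pole at $\a_1$ on the $C_1$-side; the zeros on the $C_2$-side are not constraints you can feed into the construction, they must be \emph{proved} afterward. The paper's argument is a parity observation: the recursion forces the $C_2$-component $F_2$ to be a combination of $\kappa_n(x_2)$ with $n$ even only, and by Lemma \ref{kappa-functions-lem}(iii) each such $\kappa_{2k}(x_2)=\kappa_1(x_2)\cdot\phi(x_2)$ with $\phi$ an odd elliptic function regular away from $q_2$ and $\b_1$, hence vanishing at the remaining order-$2$ points $\b_2,\b_3$. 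This parity step is the crux of the lemma, and it is also what supplies the identification $L\simeq\OO_C(\b_2+\b_3-\a_1)$: the existence of a rational section of $L$ with divisor $-\a_1+\b_2+\b_3$ \emph{is} that isomorphism. Your separate ``only candidate with the right restrictions'' argument for identifying $L$ does not make sense on the irreducible smooth fiber.

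For the $\bs_i^2$ identities the paper's route is shorter than your proposed Laurent-coefficient comparison: once the divisor of $\bs_1$ is known, both $\bs_1^2$ and $\frac{(x-b_2)(x-b_3)}{x-a_1}\cdot\om^{(2)}$ are sections of $\om_{C/B}$ with the same divisor $2\b_2+2\b_3-2\a_1$, hence agree up to a scalar depending only on $q$, which one then pins down.
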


\begin{proof} We will focus on the statements concerning $\bs_1$ (those for $\bs_2$ are proved similarly).
Since the spin-structure is given by the odd part of the structure sheaf of the supercurve, it is enough to construct a rational odd function on 
the glued supercurve with the only pole of order $1$ at $\a_1\in C_1\setminus\{q_1\}$.
Such an odd function is given by
$$\bs_1:=(F_1\th_1,F_2\th_2,\phi_1(x_1)\th_1+\phi_2(x_2)\th_2),$$
where $\phi_i(x_i)$ are power series,
$F_1(x_1)\th_1$ (resp., $F_2(x_2)\th_2$) is a section of the spin structure $L_1\simeq \OO(q_1-\a_1)$ on $C_1-q_1$ (resp.,
$L_2\simeq \OO(q_2-\b_1)$ on $C_2-q_2$)
with a pole of order $1$ at $\a_1$ (resp., $\b_1$), satisfying
%$f_i(x_i)\th_i$ is a section of the spin structure $L_i\simeq \OO(q_i-u_i)$ on $C_i$ with a pole of order $1$ at $u_i$,
%$$f_1(x_1)=\phi_1(x_1)+\phi_2(-\frac{t^2}{x_1})\frac{t}{x_1}\mod (t^N),$$
%$$f_2(x_2)=\phi_2(x_2)-\phi_1(-\frac{t^2}{x_2})\frac{t}{x_2}\mod (t^N).$$
\begin{equation}\label{odd-functions-eq}
F_1(x_1)=\phi_1(x_1)+\frac{t}{x_1}\phi_2(-\frac{t^2}{x_1}), \ \ 
F_2(x_2)=\phi_2(x_2)-\frac{t}{x_2}\phi_1(-\frac{t^2}{x_2}).
\end{equation}
We can take 
$$F_1\equiv \frac{1}{\kappa_1(x_1)} \mod (t), \ \ F_2\equiv 0 \mod (t),$$
so that $\phi_1=F_1\mod(t)$, $\phi_2=0\mod(t)$. Then one can check that
$$F_1\equiv \frac{1}{\kappa_1(x_1)}+t^4\frac{\wp(\b_1)}{2}\kappa_1(x_1) \mod (t^7), \ \ F_2\equiv t^3\kappa_2(x_2) \mod (t^7)$$
and that only $\kappa_n(x_2)$ with even $n$ occur in the expansion of $F_2$.

Now we claim that $\kappa_{2k}(x_2)$
%$f_2(x_2)\th_2$ 
%$g_2(x_2)$ 
vanishes at $\b_2,\b_3$.
Indeed, by Lemma \ref{kappa-functions-lem}(iii), one has $\kappa_{2k}(x_2)=\phi(x_2)\kappa_1(x_2)$, where $\phi$ is an elliptic function with no poles away from $\b_1$ and $q_2$
and such that $\phi(-x)=-\phi(x)$. This implies that $\phi$ vanishes at $\b_2,\b_3$.
%this follows from the fact that it is a regular section of $L_2$ 
%the formula
%$$g_2(x_2)=\kappa_1(x_2)\cdot [\zeta(x_2)-\zeta(x_2-u_2)-\zeta(u_2)],$$
%and 
%the fact that $\zeta(x_2)-\zeta(x_2-u_2)-\zeta(u_2)$ is an odd elliptic function regular away from $0$ and $u_2$. 

The identity for $\bs_1^2$ follows from the fact that it is the unique (up to a constant depending of $q$) section of $\om_{C/B}$ 
with the pole of order $2$ at $\a_1$ and double zeros at $\b_2$ and $\b_3$.
\end{proof}

Thus, we can complement the result of Corollary \ref{ram-points-cor} identifying the ramification points $a_1,a_2,a_3,b_1,b_2,b_3$ associated with the
glued curve, by identifying the spin-structure coming from the choice of order $2$ points $\a_1\in C_1$, $\b_1\in C_2$.

\begin{cor}\label{glued-spin-partition-cor}
Under the identification of the glued spin-curve $C$ with the the double covering of $\P^1$ ramified at the points $a_1,a_2,a_3,b_1,b_2,b_3$ given by 
$$a_i=-f(\a_i)^{-1}, \ b_i=-q^2f(\b_i), \ i=1,2,3,$$
the spin-structure on $C$ corresponds to the partition 
$$(a_1,b_2,b_3), \ (b_1,a_2,a_3)$$
of the ramification locus.
\end{cor}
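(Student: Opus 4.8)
The plan is to use the explicit description of the spin structure from Lemma~\ref{gluing-spin-lem} together with the identification of the hyperelliptic model from Corollary~\ref{ram-points-cor} and the dictionary between even spin structures on a genus $2$ curve and partitions of the ramification locus recalled in Section~\ref{hyperell-basics-section}. Recall from that section that if $C\to\P^1$ is ramified at $w_1,\dots,w_6$ and we split these into $(p_1,p_2,p_3)$ and $(q_1,q_2,q_3)$, the associated spin structure is $\OO_C(p_2+p_3-p_1)\simeq\OO_C(q_2+q_3-q_1)$, and this is the unique even spin structure admitting a rational section with a simple pole at $p_1$ and simple zeros at $p_2,p_3$ (equivalently, at $q_1$, $q_2,q_3$).

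First I would recall from Lemma~\ref{gluing-spin-lem} that the glued spin structure $L$ on the smooth part of $C$ is $\OO_C(\b_2+\b_3-\a_1)\simeq\OO_C(\a_2+\a_3-\b_1)$, and that it carries the rational section $\bs_1\in L(\a_1-\b_2-\b_3)$, i.e.\ with a simple pole at $\a_1$ and simple zeros at $\b_2,\b_3$. Next I would invoke the identification of $C_{t\ne 0}$ with the hyperelliptic cover of $\P^1$ branched at the six points $a_i=-f(\a_i)^{-1}$, $b_i=-q^2 f(\b_i)$ from Corollary~\ref{ram-points-cor}, under which the points $\a_i\in C_1$ (resp.\ $\b_i\in C_2$) — being the ramification points of $C$ — map precisely to the branch points $a_i$ (resp.\ $b_i$) of $\P^1$: indeed the section $\ov\om^{(1)}-a_i\ov\om^{(2)}$ (resp.\ $\ov\om^{(1)}-b_i\ov\om^{(2)}$), which equals $\om^{(2)}$ times a function vanishing doubly at $\a_i$ (resp.\ $\b_i$), exhibits $\a_i$ (resp.\ $\b_i$) as the preimage of $x=a_i$ (resp.\ $x=b_i$). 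Transporting $\bs_1$ through this identification, it becomes a rational section of the spin structure on the hyperelliptic model with a simple pole over $x=a_1$ and simple zeros over $x=b_2$ and $x=b_3$.

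By the uniqueness statement recalled above — an even spin structure with a rational section having a simple pole at one branch point and simple zeros at two others determines, and is determined by, the partition in which $a_1$ plays the role of the distinguished point $p_1$ and $b_2,b_3$ play the role of $p_2,p_3$ — this forces the partition to be $(a_1,b_2,b_3)$, $(b_1,a_2,a_3)$, which is exactly the claim. As a consistency check one can also use the second section $\bs_2\in L(\b_1-\a_2-\a_3)$: it has a simple pole over $x=b_1$ and simple zeros over $x=a_2,a_3$, and the identity $\OO_C(a_1,b_2,b_3\text{-type})\simeq\OO_C(b_1,a_2,a_3\text{-type})$ is precisely the symmetry $\OO_C(p_2+p_3-p_1)\simeq\OO_C(q_2+q_3-q_1)$, so both sections are consistent with the same partition.

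I expect the only genuine subtlety to be the bookkeeping in the second step: one must be careful that the limit $t\to 0$ / $q\to 0$ does not degenerate the section $\bs_1$ (its leading terms $1/\kappa_1(x_1)$ on $C_1$ and $t^3\kappa_2(x_2)$ on $C_2$ are nonzero, so $\bs_1$ is genuinely a nonzero section after the appropriate rescaling), and that the vanishing of $\bs_1$ at $\b_2,\b_3$ — which in Lemma~\ref{gluing-spin-lem} is deduced from the fact that $\kappa_{2k}$ vanishes at $\b_2,\b_3$ — is correctly matched with vanishing over the branch points $b_2,b_3$ of $\P^1$ under the covering $C\to\P^1$ (each $\b_j$ being the unique point of $C$ over $b_j$). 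Once these matchings are in place, the conclusion is immediate from the partition/spin-structure dictionary, so the proof is short.
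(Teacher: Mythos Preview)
Your proposal is correct and follows exactly the approach the paper intends: the corollary is stated without proof because it is an immediate consequence of Lemma~\ref{gluing-spin-lem}, which identifies the glued spin structure as $\OO_C(\b_2+\b_3-\a_1)\simeq\OO_C(\a_2+\a_3-\b_1)$, combined with the dictionary from Section~\ref{hyperell-basics-section} and the identification of ramification points in Corollary~\ref{ram-points-cor}. Your added remarks on the non-degeneracy of $\bs_1$ and the matching of $\a_i,\b_j$ with the branch points are just a careful unpacking of this immediate deduction.
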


%Recall that for a genus $2$ curve $C$ with ramification points $a_1,a_2,a_3,b_1,b_2,b_3$, with the spin structure 
%$$L\simeq \OO(b_2+b_3-a_1)\simeq \OO(a_2+a_3-b_1),$$

Now we can use the setting of Section \ref{hyperell-basics-section} with
\begin{equation}\label{glued-partition-corr-eq}
u_1=a_1, u_2=b_2, u_3=b_3, \ v_1=b_1, v_2=a_2, v_3=a_3.
\end{equation}
Under this identification the rational sections $\bs,\bs'$ of $L$ satisfying
$$\bs^2=\frac{(x-b_2)(x-b_3)}{x-a_1}\frac{dx}{y}, \ \ (\bs')^2=\frac{(x-a_2)(x-a_3)}{x-b_1}\frac{dx}{y},$$
are given by
%Thus, we have
\begin{equation}\label{s-s'-s1-s2-eq}
\bs=t(\wp(\a_1)^{1/2}+O(t^4))\cdot c^{1/2}\cdot \bs_1, \ \ 
%=(\zeta\wp(\a_1)^{1/2}g_3^{1/4}+\ldots)\cdot s_1,$$
\bs'=-t^{-2}((-\wp(\a_2)\wp(\a_3))^{-1/2}+O(t^4))\cdot c^{1/2}\cdot \bs_2.
%=t^{-3}\cdot (\zeta'\wp(\a_2)^{1/2}\wp(\a_3)^{1/2}g_3^{1/4}+\ldots)\cdot s_2
\end{equation}
Here we used the equation $dx/y=c\cdot \om^{(2)}|_C$, where $c$ is given by \eqref{c-eq} and 
$$\om^{(2)}|_C=(t^2\wp(x_1)dx_1,dx_2)+O(t^5).$$
%$c=-t^{-2}((-g_3(\tau_1))^{1/2}+O(t^4))$.

Recall that there is a basis $(\chi_1,\chi_2)$ of $H^0(C,\om_{C/B}\ot L)$ over the smooth part, given by \eqref{chi1-chi2-eq}.
%$$\chi_1:=s(x-a_1)\frac{dx}{y}, \ \ \chi_2:=s'(x-b_1)\frac{dx}{y}.$$
%where $s$ (resp., $s'$) is a section of $L$ such that $s^2=\frac{(x-b_2)(x-b_3)}{x-a_1}\frac{dx}{y}$
%(resp., $(s')^2=\frac{(x-a_2)(x-a_3)}{x-b_1}\frac{dx}{y}$).

\begin{lemma}\label{chi1-chi2-(++)-lem} 
The rescaled basis of $H^0(C,\om_{C/B}\ot L)$,
\begin{align*}
&\chi_1=\wp(\a_1)^{-1/2}(-g_3(\tau_1))^{3/4}\cdot (\kappa_1(x_1)[dx_1|d\th_1]^3,t\kappa_2(x_2)[dx_2|d\th_2]^3) +O(t^3), \\ 
&t\cdot \chi_2=\\
&(-\wp(\a_2)\wp(\a_3))^{-1/2}(-g_3(\tau_1))^{3/4}\cdot(-t\kappa_2(x_2)[dx_1|d\th_1]^3,\kappa_1(x_2)[dx_2|d\th_2]^3) +O(t^3),
\end{align*}
is regular away from the locus where $g_3(\tau_1)=0$.
%over the entire family of stable curves $C/B$.
%Furthermore, ???
\end{lemma}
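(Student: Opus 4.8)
The plan is to substitute into the defining formulas \eqref{chi1-chi2-eq} for $\chi_1,\chi_2$ all the gluing-coordinate expansions established above and to read off the leading terms; the essential phenomenon is that the negative powers of $t$ carried by the constant $c$ in $dx/y=c\cdot\om^{(2)}|_C$ are exactly cancelled by the explicit powers of $t$ occurring in $\bs,\bs'$ and in the ramification points $b_i=-q^2f(\b_i)$. The inputs are: the identification (Corollary \ref{ram-points-cor}, \eqref{glued-partition-corr-eq}) of $C$ with the double cover of $\P^1$ branched at $a_i=-f(\a_i)^{-1}$ and $b_i=-q^2f(\b_i)$, with $q=-t^2$, and the partition $(u_1,u_2,u_3)=(a_1,b_2,b_3)$, $(v_1,v_2,v_3)=(b_1,a_2,a_3)$ of the branch locus realizing $L$; the expansions
$$x=\bigl(-\wp(x_1)^{-1}+O(q^4),\ -q^2\wp(x_2)+O(q^6)\bigr),\qquad \frac{dx}{y}=c\cdot\om^{(2)}|_C$$
with $c=q^{-1}\bigl(\sqrt{-g_3(\tau_1)}+O(q^2)\bigr)$ by \eqref{c-eq} and $\om^{(2)}|_C=(t^2\wp(x_1)dx_1,dx_2)+O(t^5)$; the expressions \eqref{s-s'-s1-s2-eq} for $\bs,\bs'$ in terms of $\bs_1,\bs_2$ from Lemma \ref{gluing-spin-lem}; and the identities $\kappa_1(x_i)^2=\wp(x_i)-\wp(u_i)$, $\kappa_n(x_i)=x_i^{-n}+O(1)$ of Lemma \ref{kappa-functions-lem}.

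The hypothesis enters through the remark that $4\wp(\a_1,\tau_1)\wp(\a_2,\tau_1)\wp(\a_3,\tau_1)=g_3(\tau_1)$, so that $g_3(\tau_1)\neq 0$ is equivalent to $\wp(\a_i,\tau_1)\neq 0$ for all $i$; on this locus all the $a_i$ are finite and the fractional powers $c^{1/2}$, $\wp(\a_i)^{\pm1/2}$, $(-g_3(\tau_1))^{3/4}$ are well-defined holomorphic functions once a branch of $(-t^2)^{1/2}$ has been fixed — and this choice is precisely what makes the symbols $[dx_i|d\th_i]^3$ in the statement unambiguous, in particular giving $t^3c^{3/2}=(-g_3(\tau_1))^{3/4}(1+O(t^4))$ up to the chosen fourth root of unity. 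Now one carries out the substitution component by component. On $C_1$ one has $x-u_1=x-a_1=\wp(x_1)^{-1}\wp(\a_1)^{-1}\kappa_1(x_1)^2+O(q^4)$, $\bs|_{C_1}=t\,\wp(\a_1)^{1/2}c^{1/2}\kappa_1(x_1)^{-1}\nu_1\,(1+O(t^4))$ and $\frac{dx}{y}|_{C_1}=c\,t^2\wp(x_1)\,dx_1\,(1+O(t^3))$; multiplying and telescoping the powers of $t$ and $\wp(x_1)$ gives $\chi_1|_{C_1}=\wp(\a_1)^{-1/2}(-g_3(\tau_1))^{3/4}\kappa_1(x_1)[dx_1|d\th_1]^3+O(t^3)$, the $O(t^3)$ error coming from the $O(t^5)$ tail of $\om^{(2)}|_C$ after multiplication by $t^3c^{3/2}=O(1)$. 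On $C_2$, where $x-u_1$ tends to a nonzero constant, $\bs|_{C_2}$ starts at order $t^4$ (the $t^3\kappa_2(x_2)$ term of $\bs_1$) and $\om^{(2)}|_C$ starts at order $t^0$, producing the claimed $O(t)$ entry $t\kappa_2(x_2)$. The computation for $t\chi_2$ is symmetric, using $\bs'=-t^{-2}\bigl((-\wp(\a_2)\wp(\a_3))^{-1/2}+O(t^4)\bigr)c^{1/2}\bs_2$: on $C_2$ one has $x-v_1=x-b_1=-t^4\kappa_1(x_2)^2+O(t^8)$, so the net power of $t$ in $\chi_2|_{C_2}$ is $t^4\cdot t^{-2}\cdot c^{3/2}=O(t^{-1})$, giving $t\chi_2|_{C_2}=(-\wp(\a_2)\wp(\a_3))^{-1/2}(-g_3(\tau_1))^{3/4}\kappa_1(x_2)[dx_2|d\th_2]^3+O(t^3)$, while on $C_1$ the $t^3\kappa_2(x_1)$ term of $\bs_2$ gives the $O(t)$ entry $-t\kappa_2(x_1)$. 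In every case the negative powers of $t$ hidden in $c^{1/2}$ cancel exactly, so $\chi_1$ and $t\chi_2$ are regular at $t=0$ with coefficients holomorphic in $q_1=e^{2\pi i\tau_1}$ and $q_2=e^{2\pi i\tau_2}$ off $\{g_3(\tau_1)=0\}$; as the two limiting sections are supported on different components of the nodal curve $C_1\cup C_2$, they remain a basis there.

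The main thing needing care is precisely this power-of-$t$ bookkeeping together with a coherent choice of branch for the fractional powers of $-t^2$ (and of $-g_3(\tau_1)$ and the $\wp(\a_i)$), which is also what normalizes $[dx_i|d\th_i]^3$; granting that, verifying that the total error is genuinely $O(t^3)$ — i.e.\ that the $O(t^5)$, $O(q^4)$, $O(t^7)$ and $O(q^2)$ corrections in the inputs do not enter below order $t^3$ once the $t^{-3}$ of $c^{3/2}$ is accounted for — is routine.
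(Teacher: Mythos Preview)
Your argument is correct and follows essentially the same approach as the paper: both substitute the explicit gluing-coordinate expansions for $x$, $\bs$, $\bs'$, $c$, and $\om^{(2)}|_C$ into the defining formulas \eqref{chi1-chi2-eq} and track powers of $t$. The paper organizes the bookkeeping slightly differently---writing $\bs=\varphi_1\bs_1$, $\bs'=t^{-3}\varphi_2\bs_2$ with $\varphi_i$ regular and then reducing to the regularity of $t^{-2}(x-a_1)\bs_1\om^{(2)}$ and $t^{-4}(x-b_1)\bs_2\om^{(2)}$---while you compute the leading terms directly; your explicit discussion of the branch choice for $(-t^2)^{1/2}$ and its link to the normalization of $[dx_i|d\th_i]^3$ is a point the paper leaves implicit.
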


\begin{proof}
Using \eqref{s-s'-s1-s2-eq} and \eqref{c-eq}, we see that
$$\bs=\varphi_1\cdot \bs_1, \ \ \bs'=t^{-3}\cdot \varphi_2\cdot \bs_2,$$
where $\varphi_1$ and $\varphi_2$ are regular.
Hence,
$$\chi_1=(x-a_1)\cdot \bs\cdot \frac{dx}{y}=(x-a_1)\cdot\varphi_1\cdot (c\cdot \bs_1\cdot \om^{(2)}),$$
$$t\chi_2=t\cdot (x-b_1)\cdot \bs'\cdot \frac{dx}{y}=(x-b_1)\cdot\varphi_2\cdot (t^{-2}\cdot c\cdot \bs_2\cdot\om^{(2)}).$$
Thus, we need to check that $t^{-2} (x-a_1)\bs_1\om^{(2)}$ and $t^{-4}(x-b_1)\bs_2\om^{(2)}$ are regular.
But this follows immediately from the formulas
$$x=(\frac{-1}{\wp(x_1)}+O(q^4),-q^2\wp(x_2)+O(q^6)), \ a_1=\frac{-1}{\wp(\a_1)}+O(q^4), \ b_1=-q^2\wp(\b_1)+O(q^6),$$
$$\bs_1\om^{(2)}=(t^2(\frac{\wp(x_1)}{\kappa_1(x_1)}+O(t^3))\th_1 dx_1, t^3(\kappa_2(x_2)+O(t^4))\th_2 dx_2),$$
$$\bs_2\om^{(2)}=(t^5(-\kappa_2(x_1)\wp(x_1)+O(t^3))\th_1 dx_1,(\frac{1}{\kappa_1(x_2)}+O(t^4))\th_2 dx_2).$$
\end{proof}

\begin{remark}
Note that for a singular curve with a separating node $C=C_1\cup C_2$ such that $L=L_1\oplus L_2$, where $L_i$ is a spin structure on $C_i$, one has
$$\om_C\ot L\simeq \om_C|_{C_1}\ot L_1\oplus \om_C|_{C_2}\ot L_2\simeq \om_{C_1}(q_1)\ot L_1\oplus \om_{C_2}(q_2)\ot L_2,$$
where $q_1\in C_1$ and $q_2\in C_2$ glue into the node.
This means that the restrictions of $\chi_1$ and $t\chi_2$ to the components $C_i$ of a curve in the boundary are regular as sections of
$\om_{C_i}(q_i)\ot L_i$, i.e., they can be viewed as sections of $\om_{C_i}\ot L_i$ with the pole of order $1$ at $q_i$.
\end{remark}

\subsubsection{The $(-,-)$ separating node gluing}

Now we consider the gluing construction for the $(-,-)$ separating node (see Sec.\ \ref{--sep-sec}), but with the base $S$ changed to its bosonization $B=S_{\bos}$.
Similarly to Sec.\ \ref{spin++sec}, we first construct sections $\bs_1\in L(\a_1-\a_2-\a_3)$, $\bs_2\in L(\b_1-\b_2-\b_3)$.
Below we will use the elliptic functions $h_u(z_i,\tau_i)$ (see \eqref{h-u-eq}).

\begin{lemma}\label{--spin-lem}
There exist sections $\bs_1\in L(\a_1-\a_2-\a_3)$, $\bs_2\in L(\b_1-\b_2-\b_3)$, with 
$$\bs_1=(t\cdot h_{\a_1}(x_1,\tau_1)\th_1,(1+t^4\wp(\a_1,\tau_1)\wp(x_2,\tau_2))\th_2) \mod (t^5).$$
$$\bs_2=((1+t^4\wp(\b_1,\tau_2)\wp(x_1,\tau_1))\th_1,-t\cdot h_{\b_1}(x_2,\tau_2)\th_2) \mod (t^5),$$
such that 
$$\wp(\a_1)^2\cdot c_1\cdot \bs_1^2=\frac{(x-a_2)(x-a_3)}{x-a_1}\cdot\om^{(2)},$$
$$c_2\cdot \bs_2^2=\frac{(x-b_2)(x-b_3)}{x-b_1}\cdot \om^{(2)},$$
where
$$c_1=\frac{1}{\wp(\a_1)\wp(\a_2)\wp(\a_3)}+O(q^2)=\frac{4}{g_3(\tau_1)}+O(q^2), \ \ c_2=q+O(q^3).$$
\end{lemma}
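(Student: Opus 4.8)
The plan is to follow the blueprint of the proof of Lemma~\ref{gluing-spin-lem}, adapted to the present situation: since the base $B=S_{\bos}$ is even, setting $\eta_1=\eta_2=0$ makes the supercurves $X_i$ split, with $\th_i$ a global odd trivialization of the \emph{trivial} spin structure $L_i\simeq\OO_{C_i}$, and the points attached to the two components are no longer interchanged. A section $\bs_1\in L(\a_1-\a_2-\a_3)$ is the same as an odd rational function on the glued supercurve $X$ which is regular at the node, has at worst a simple pole along $\a_1\in C_1\setminus\{q_1\}$, and vanishes along $\a_2,\a_3$. Writing it as $(F_1(x_1)\th_1,\,F_2(x_2)\th_2)$ on $U_1$ and $\phi_1(x_1)\th_1+\phi_2(x_2)\th_2$ on $U_2$, and imposing compatibility over $U_{12}$ via the embedding $\iota$ of Section~\ref{super-gluing-sec} (which on odd functions reads $\th_1\mapsto(\th_1,-t\th_2/x_2)$, $\th_2\mapsto(t\th_1/x_1,\th_2)$), one obtains exactly the equations~\eqref{odd-functions-eq}, where now $F_i$ ranges over meromorphic functions on $C_i$ with poles confined to $q_i$, plus (for $i=1$) a simple pole at $\a_1$ and zeros at $\a_2,\a_3$, and analogously for $\bs_2$.

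Next I would solve~\eqref{odd-functions-eq} recursively in powers of $t$, exactly as in the proofs of Lemmas~\ref{gluing-spin-lem} and~\ref{super-global-diff-base-lem}. The seed is $F_2\equiv1$ (the global section $\th_2$ of $L_2$) and $F_1\equiv0$ modulo $t$; the term $(t/x_1)\phi_2(-t^2/x_1)$ then forces a simple pole of residue $t$ at $q_1$, and the unique elliptic function on $C_1$ with a simple pole at $q_1$, a simple pole at $\a_1$, and zeros at $\a_2,\a_3$ is $h_{\a_1}(x_1,\tau_1)$ of~\eqref{h-u-eq}. Here one uses that $h_{\a_1}$ is elliptic and odd, that its two zeros sum to $\a_1$ and hence are $\a_2,\a_3$ because $\a_2+\a_3\equiv-\a_1\equiv\a_1$ modulo the lattice, and that its Laurent expansion at $q_1$ is $x_1^{-1}+\wp(\a_1)x_1+O(x_1^3)$ since the constant term $-\zeta(-\a_1)-\zeta(\a_1)$ vanishes. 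Feeding this back through~\eqref{odd-functions-eq}, the only new polar contribution on $C_2$ appears at order $t^4$, namely the double pole $t^4\wp(\a_1)/x_2^2$ at $q_2$, which is cancelled by adding $t^4\wp(\a_1)\wp(x_2,\tau_2)$, while the back-reaction on $C_1$ is $O(t^5)$. This yields the stated expansion of $\bs_1$ modulo $t^5$, and the symmetric computation (exchanging the roles of $C_1,C_2$ and of the $\a$'s and $\b$'s) gives $\bs_2$. That the recursion never obstructs, so that $\bs_1,\bs_2$ exist as genuine sections over all of $S$, is checked as in loc.\ cit.: at each stage the prescribed principal part on $C_i$ has vanishing total residue and is therefore realized by an elliptic function.

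Finally I would establish the quadratic identities. Via the spin-structure isomorphism $L^{\ot 2}\simeq\om_{C/B}$ (under which $\th_i^{\ot 2}$ corresponds to $dx_i$, since $x_i=z_i$ over the even base), $\bs_1^2$ is a rational section of $\om_{C/B}$ whose divisor over $S_{t\ne0}$ is $2\a_2+2\a_3-2\a_1$; under the identification $\a_i\leftrightarrow a_i=-f(\a_i)^{-1}$ of Corollary~\ref{ram-points-cor}, this coincides with the divisor of $\frac{(x-a_2)(x-a_3)}{x-a_1}\,\om^{(2)}$ (both are regular and nonzero over the two points above $x=\infty$, with the prescribed double pole at $a_1$ and double zeros at $a_2,a_3$). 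Hence the two sections differ by an invertible function of $q$, which I would pin down by restricting to the special fibre $t=0$ and to the component $C_2$, where $\bs_1\equiv\th_2$ (so $\bs_1^2\equiv dx_2$), $\om^{(2)}|_{C_2}\equiv dx_2$ by the known expansion $\om^{(2)}|_C=(t^2\wp(x_1)dx_1,dx_2)+O(t^5)$, while $x|_{C_2}$ vanishes and $a_i=-1/\wp(\a_i)+O(q^2)$; equating gives $\wp(\a_1)^2c_1=\frac{a_2a_3}{a_1}+O(q^2)=\frac{\wp(\a_1)}{\wp(\a_2)\wp(\a_3)}+O(q^2)$, i.e.\ $c_1=\frac{1}{\wp(\a_1)\wp(\a_2)\wp(\a_3)}+O(q^2)=\frac{4}{g_3(\tau_1)}+O(q^2)$ by the relation $4\wp(\a_1)\wp(\a_2)\wp(\a_3)=g_3(\tau_1)$. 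The identity for $\bs_2^2$ and the value $c_2=q+O(q^3)$ follow symmetrically, restricting instead to $C_1$, using $\om^{(2)}|_{C_1}\equiv t^2\wp(x_1)\,dx_1$ and $q=-t^2$. The only place any real care is needed is the bookkeeping of the recursion up to order $t^5$; the identification of $h_{\a_1}$ as the building block, the matching of divisors, and the normalization step are all straightforward.
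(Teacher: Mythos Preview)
Your approach is essentially the paper's: solve the gluing equations~\eqref{odd-functions-eq} recursively for $(F_1\th_1,F_2\th_2)$ with seed $F_1\equiv 0$, $F_2\equiv 1$, identify $F_1\equiv t\,h_{\a_1}$ at first order, feed back to get the $t^4\wp(\a_1)\wp(x_2)$ correction on $C_2$, and then fix the quadratic identity by a divisor comparison. Two points deserve comment.

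First, your normalization of $c_1$ by restriction to $C_2$ at $t=0$ (where $\bs_1\equiv\th_2$, $x\equiv 0$, $\om^{(2)}\equiv dx_2$, so one simply reads off $\wp(\a_1)^2c_1=a_2a_3/(-a_1)$) is a pleasant shortcut; the paper instead works on $C_1$ and invokes the identity
$h_{\a_1}(x_1)^2=\dfrac{(\wp(x_1)-\wp(\a_2))(\wp(x_1)-\wp(\a_3))}{\wp(x_1)-\wp(\a_1)}$
from Appendix~\ref{ell-sec}. Both routes give the same constant.

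Second, your justification that $\bs_1$ really lies in $L(\a_1-\a_2-\a_3)$ to all orders in $t$ is loose. Saying that ``the prescribed principal part has vanishing total residue and is therefore realized by an elliptic function'' only produces \emph{some} function with the right polar part at $q_1$; it does not explain why the correction can be chosen to vanish at $\a_2,\a_3$. The paper closes this by showing inductively that $F_1$ is an odd function of $x_1$ (equivalently $a_{2k}=0$ in the expansion $F_1=a_1h_{\a_1}+\sum_{n\ge 2}a_nf_n(x_1)$), so that $F_1$ automatically vanishes at every nontrivial $2$-torsion point, in particular at $\a_2,\a_3$. You should add this parity argument.
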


\begin{proof}
We can look for $\bs_1$ in the form $\bs_1=(F_1(x_1)\th_1,F_2(x_2)\th_2)$,
where $F_1(x_1)$ is a rational function on $C_1-q_1$ with the only pole of order $1$ at $x_1=\a_1$, $F_2(x_2)$ is regular on $C_2-q_2$,
and there exists power series $\phi_1(x_1)$, $\phi_2(x_2)$ satisfying \eqref{odd-functions-eq}.
We have 
$$F_1=a_0+a_1h_{\a_1}+\sum_{n\ge 2} a_n f_n(x_1), \ \ F_2=b_0+\sum_{n\ge 2} b_n f_n(x_2),$$
where as before, $(1, f_n(x_i))$ is a basis of functions on $C_i-q_i$ with $f_n(x_i)=x_i^{-n}+O(1)$.

As before, we can construct $(F_1,F_2,\phi_1,\phi_2)$ by solving \eqref{odd-functions-eq} recursively modulo $t^n$. We can start with the following solution modulo $t^2$: 
$$a_0=0, \ a_1=t, \ a_{\ge 2}=0, \ b_0=1, \ b_{\ge 2}=0, \ \phi_1=a_0+a_1h_{\a_1}(x_1)_{\ge 0}, \ \phi_2=b_0.$$
Using the expansion $h_{\a_1}(x_1)_{\ge 0}=\wp(\a_1)x_1+O(x_1^2)$, we deduce the claimed formula for $F_1,F_2$ modulo $t^5$.
One can also check that $F_1(x_1)$ is an odd function of $x_1$, i.e., $a_{2k}=0$. This implies that $F_1$ vanishes at $\a_2$ and $\a_3$.

The fact that the identity involving $\bs_1^2$ holds up to a constant, follows from the fact that both sides give a section of $\om_{C/B}$ with the pole of order $2$ at $\a_1$ and
double zeros at $\a_2$ and $\a_3$. To compute the constant, we
recall that $a_i\equiv -\wp(\a_i)^{-1} \mod q^4$, so we get
\begin{align*}
&\frac{(x-a_2)(x-a_3)}{x-a_1}\cdot\om^{(2)}=\\
&\frac{\wp(\a_1)}{\wp(\a_2)\wp(\a_3)}\cdot(-q\cdot 
\frac{(\wp(x_1)-\wp(\a_2))(\wp(x_1)-\wp(\a_3))}{\wp(x_1)-\wp(\a_1)}\cdot dx_1,dx_2) \mod (t^4).
\end{align*}
Now the formula for the constant follows from the identity
$$h_{\a_1}(x_1,\tau_1)^2=\frac{(\wp(x_1)-\wp(\a_2))(\wp(x_1)-\wp(\a_3))}{\wp(x_1)-\wp(\a_1)}$$
(see Appendix \ref{ell-sec}).

The formulas involving $\bs_2$ are proved similarly, using $b_i\equiv -q^2\wp(\b_i) \mod q^6$.
\end{proof}

We can use sections $\bs_1$ and $\bs_2$ to get a trivialization of $\Ber_1|_{B}\simeq \det(\pi_*\om_{C/B})\ot \det^{-1}R\pi_*(L)$.
Namely, the resolution
$$R\pi_*L\to [\pi_*L(\a_1+\b_1)]\to L(\a_1)|_{\a_1}\oplus L(\b_1)|_{\b_1}]$$
gives an isomorphism
$${\det}^{-1} R\pi_*L\simeq L(\a_1)|_{\a_1}\ot L(\b_1)|_{\b_1}\ot {\det}^{-1} \pi_*L(\a_1+\b_1).$$
Now $(\bs_1, \bs_2)$ is a basis of $\pi_*L(\a_1+\b_1)$, 
so $\frac{\th_1}{(x_1-\a_1)}\cdot \frac{\th_2}{(x_2-\b_1)}\cdot (\bs_1\we \bs_2)^{-1}$ gives a trivialization of $\det^{-1} R\pi_*L$, and 
$${\bf b(-,-)}:=(\om^{(1)}\we \om^{(2)})\cdot \frac{\th_1}{(x_1-\a_1)}\cdot \frac{\th_2}{(x_2-\b_1)}\cdot (\bs_1\we \bs_2)^{-1}$$
is a trivialization of $\Ber_1|_B$ near the $(-,-)$ separating node divisor.

Note that on the formal punctured neighborhood of the $(-,-)$ boundary divisor we have $R\pi_*(L)=0$, so we have the trivialization $\om^{(1)}\we \om^{(2)}$
of $\Ber_1|_B$ on this punctured neighborhood.

\begin{cor} We have 
$$1=-t^2(1+O(t^2)) \frac{\th_1}{(x_1-\a_1)}\cdot \frac{\th_2}{(x_2-\b_1)}\cdot (\bs_1\we \bs_2)^{-1}$$
in $\det^{-1} R\pi_*L$, so
\begin{equation}\label{(-,-)-om1-om2-eq}
\om^{(1)}\we \om^{(2)}=-t^2(1+O(t^2)) {\bf b(-,-)}.
\end{equation}
\end{cor}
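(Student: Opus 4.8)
The plan is to pin down the canonical trivialization $1$ of ${\det}^{-1}R\pi_*L$ on the formal punctured neighborhood of $D^{-,-}$ explicitly, through the resolution recalled just above, and then to compare it with the definition of ${\bf b(-,-)}$.

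First I would note that over the punctured neighborhood the underlying curves are smooth of genus $2$ with an even spin structure, so $R\pi_*L=0$ there (Lemma~\ref{coh-van-spin-str-g2-lem}), and also $R^1\pi_*L(\a_1+\b_1)=0$ since $\deg L(\a_1+\b_1)=3>2g-2$. Hence the two-term complex $[\pi_*L(\a_1+\b_1)\rTo{M} L(\a_1)|_{\a_1}\oplus L(\b_1)|_{\b_1}]$ arising from $0\to L\to L(\a_1+\b_1)\to L(\a_1+\b_1)|_{\a_1+\b_1}\to 0$ is acyclic, i.e.\ $M$ is an isomorphism of rank-$2$ bundles. Under the isomorphism ${\det}^{-1}R\pi_*L\simeq L(\a_1)|_{\a_1}\ot L(\b_1)|_{\b_1}\ot{\det}^{-1}\pi_*L(\a_1+\b_1)$, the trivialization $1$ is then the Berezinian of this acyclic complex, which with respect to $(\bs_1,\bs_2)$ equals $(M\bs_1\we M\bs_2)\ot(\bs_1\we\bs_2)^{-1}$; this also verifies that $(\bs_1,\bs_2)$ is an $\OO$-basis of $\pi_*L(\a_1+\b_1)$ once $M\bs_1\we M\bs_2$ is seen to be invertible.

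Next I would compute $M\bs_1$ and $M\bs_2$, i.e.\ the leading pole coefficients of $\bs_1$ and $\bs_2$ at $\a_1$ and at $\b_1$, directly from the explicit formulas of Lemma~\ref{--spin-lem}. The $C_2$-component of $\bs_1$ is regular at $\b_1$ (as $\wp(x_2,\tau_2)$ has its pole at $q_2\ne\b_1$), so the $L(\b_1)|_{\b_1}$-component of $M\bs_1$ vanishes modulo $t^5$, while its $L(\a_1)|_{\a_1}$-component is $t$ times the residue of $h_{\a_1}(x_1,\tau_1)$ at $x_1=\a_1$. That residue is $\pm1$: using the identity $h_u(z)^2=\frac{(\wp(z)-\wp(v))(\wp(z)-\wp(w))}{\wp(z)-\wp(u)}$ from Appendix~\ref{ell-sec} (with $v,w$ the remaining nontrivial $2$-torsion points and $\wp''(u)=2(\wp(u)-\wp(v))(\wp(u)-\wp(w))$) one gets $h_u(z)^2=(z-u)^{-2}+O(1)$ near $z=u$; in particular the residues of $h_{\a_1}$ at $\a_1$ and of $h_{\b_1}$ at $\b_1$ share the same (constant, holomorphic in $q_i$) sign, so their product is $1$. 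Symmetrically, $M\bs_2$ has vanishing $L(\a_1)|_{\a_1}$-component and $L(\b_1)|_{\b_1}$-component equal to $-t$ times the residue of $h_{\b_1}$ at $\b_1$, the extra sign coming from the $-t$ in the formula for $\bs_2$. Thus the matrix of $M$ in the bases $(\bs_1,\bs_2)$ and $(\frac{\th_1}{x_1-\a_1},\ \frac{\th_2}{x_2-\b_1})$ is diagonal modulo $t^5$ with entries of size $t$ and product $-t^2+O(t^6)$, whence $M\bs_1\we M\bs_2=-t^2(1+O(t^2))\,\frac{\th_1}{x_1-\a_1}\we\frac{\th_2}{x_2-\b_1}$, which is the first displayed identity.

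Finally I would assemble the pieces: over the punctured neighborhood $\Ber_1|_B\simeq\det\pi_*\om_{C/B}$ via the trivialization $1$ of ${\det}^{-1}R\pi_*L$, so $\om^{(1)}\we\om^{(2)}$ corresponds to $(\om^{(1)}\we\om^{(2)})\ot 1$; substituting the identity for $1$ into the definition of ${\bf b(-,-)}$ gives ${\bf b(-,-)}=-t^{-2}(1+O(t^2))(\om^{(1)}\we\om^{(2)})$, hence $\om^{(1)}\we\om^{(2)}=-t^2(1+O(t^2))\,{\bf b(-,-)}$, which is \eqref{(-,-)-om1-om2-eq}. I expect the only genuinely delicate point to be the careful bookkeeping of the isomorphism ${\det}^{-1}R\pi_*L\simeq{\det}^{-1}\pi_*L(\a_1+\b_1)\ot L(\a_1)|_{\a_1}\ot L(\b_1)|_{\b_1}$ together with the Berezinian-of-an-acyclic-complex normalization and the resulting overall sign; the remainder is a direct substitution of the expansions of Lemma~\ref{--spin-lem}.
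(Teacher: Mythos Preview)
Your proof is correct and follows essentially the same line as the paper's: compute the boundary map $M:\pi_*L(\a_1+\b_1)\to L(\a_1)|_{\a_1}\oplus L(\b_1)|_{\b_1}$ on the basis $(\bs_1,\bs_2)$, obtain a diagonal matrix (mod $t^5$) with entries $-t$ and $t$, and read off $\det M=-t^2(1+O(t^2))$. One small simplification: you don't need the squared identity $h_u(z)^2=\frac{(\wp(z)-\wp(v))(\wp(z)-\wp(w))}{\wp(z)-\wp(u)}$ to determine the residue up to sign --- directly from the definition \eqref{h-u-eq}, $h_u(z)=\zeta(z)-\zeta(z-u)-\zeta(u)$, the residue at $z=u$ is exactly $-1$, which gives $M\bs_1\mapsto -t(1+O(t^2))\frac{\th_1}{x_1-\a_1}$ and $M\bs_2\mapsto t(1+O(t^2))\frac{\th_2}{x_2-\b_1}$ without any sign ambiguity.
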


\begin{proof}
This follows from the computation of the map $\pi_*(L(\a_1+\b_1))\to L(\a_1)|_{\a_1}\oplus L(\b_1)|_{\b_1}$:  
$$\bs_1\mapsto -t(1+O(t^2))\cdot \frac{\th_1}{(x_1-\a_1)}, \ \ \bs_2\mapsto t(1+O(t^2))\cdot \frac{\th_2}{(x_2-\b_1)}.$$
\end{proof}

Note that
$$\bs:=\wp(\a_1)c_1^{1/2}c^{1/2}\bs_1, \ \ \bs':=c_2^{1/2}c^{1/2}\bs_2$$
satisfy
$$\bs^2=\frac{(x-a_2)(x-a_3)}{x-a_1}\cdot\frac{dx}{y}, \ \ (\bs')^2=\frac{(x-b_2)(x-b_3)}{x-b_1}\cdot\frac{dx}{y}.$$
Hence, we have
\begin{equation}
\label{chi1-(--)-eq}
\begin{array}{l}
\chi_1=(x-a_1)\cdot \bs\cdot \frac{dx}{y}=c_1^{1/2}c^{3/2}\cdot \wp(\a_1)\cdot (x-a_1)\cdot \bs_1\cdot \om^{(2)}=\\
c_1^{1/2}c^{3/2}\cdot (-\frac{1}{2}t^3(\wp'(x_1,\tau_1)+O(q^2)) [dx_1|d\th_1]^3,(1+O(q^2))[dx_2|d\th_2]^3).
\end{array}
\end{equation}
\begin{equation}
\label{chi2-(--)-eq}
\begin{array}{l}
\chi_2=(x-b_1)\cdot \bs'\cdot \frac{dx}{y}=\\
c^{3/2}(\pm i)t^3((1+O(q^2))[dx_1|d\th_1]^3,\frac{1}{2}t^3(\wp'(x_2,\tau_2)+O(q^2))[dx_2|d\th_2]^3).
\end{array}
\end{equation}
Recall that $c^{3/2}=-t^{-3}((-g_3(\tau_1))^{3/4}+O(t^4))$,
%$c^{3/2}=\pm i t^{-3}(-g_3(\tau_1))^{3/4}$,
so $t^3\chi_1$ and $\chi_2$ are regular near $t=0$.

\subsection{Mumford form near the  $(+,+)$ separating node divisor}

Recall that we denote by
%have an isomorphism of the canonical bundle of $\SS_2$ with $\Ber_1^5$, where $\Ber_1$ is defined as the Berezinian of the push-forward
%of the relative canonical bundle on the universal supercurve. 
$\Psi\in \Ber_1^{-5}\ot \om_{\SS_2}$ the (everywhere nonvanishing) Mumford form corresponding to the isomorphism \eqref{hol-Mumf-isom-eq}.

We would like to understand $\Psi$ in terms of the ``gluing coordinates" $\tau_1,\tau_2,t,\eta_1,\eta_2$ near the separating node divisor.
In terms of these coordinates we can write
$$\Psi=[\frac{f_0(\tau_1,\tau_2,t)}{t^2}+\frac{f_1(\tau_1,\tau_2,t)}{t^2}\eta_1\eta_2]s^{-5}\ot [d\tau_1d\tau_2dt|d\eta_1d\eta_2],$$
where 
$$s=\om^{(1)}\we \om^{(2)}$$ 
is a trivializing section of $\Ber_1$.
To find $f_0$, it is enough to look at the restriction $\Psi|_{\SS_{2,\bos}}$ to the reduced space, while to find $f_1$ one has to use a projection.
We will use Witten's identification of $\Psi|_{\SS_{2,\bos}}$ and of $\pi^{\can}_*\Psi$, where $\pi^{\can}:\SS_2\to \SS_{2,\bos}$ is the canonical projection.
We also use the expressions for the coordinates pulled back under $\pi^{\can}$:
$$t'=t+\eta_1\eta_2/2+O(t^4), \ \ \tau'_1=\tau_1-2\pi i t^3\wp(u_2)\eta_1\eta_2+O(t^5),
$$ $$\tau'_2=\tau_2-2\pi i t^3\wp(u_1)\eta_1\eta_2+O(t^5)$$
(see Sec\ \ref{periods-calculation-subsubsec}).

First, we need to recall the relevant line bundles on $\SS_{2,\bos}$. Let $p:C\to \SS_{2,\bos}$ be the universal curve and $L$ the universal spin structure on $C$.
The conormal bundle to the embedding $\SS_{2,\bos}\to \SS_2$ is $\Pi p_*(L\ot \om_{C/\SS_{2,\bos}})$, so
$$\om_{\SS_2}|_{\SS_{2,\bos}}\simeq \om_{\SS_{2,\bos}}\ot {\det}^{-1} p_*(L\ot \om_{C/\SS_{2,\bos}}).$$
Also, 
$$\Ber_1|_{\SS_{2,\bos}}\simeq \la:=\det(p_*\om_{C/\SS_{2,\bos}}).$$
Thus, $\Psi|_{\SS_{2,\bos}}$ is a section of $\la^{-5}\ot \det^{-1} p_*(L\ot \om_{C/\SS_{2,\bos}})\ot \om_{\SS_{2,\bos}}$.

%Note the Kodaira-Spencer map in the odd direction induces a trivialization 
%$$s_{\eta_1\eta_2}\in {\det}^{-1} p_*(L\ot \om_{C/\SS_{2,\bos}}).$$

On the other hand, by Proposition \ref{Ber1-can-proj-prop}, we have a natural isomorphism
$$\Ber_1\simeq (\pi^{\can})^*\la.$$
Hence, $\pi^{\can}_*\Psi$ is a well defined section of $\la^{-5}\ot \om_{\SS_{2,\bos}}$.

%\begin{lemma} We can recover $f_0$ and $f_1$ (up to order $O(t^3)$) from $\Psi|_{\SS_{2,\bos}}$ and of $\pi^{\can}_*\Psi$ as follows.
%First, we recover $f_0$ using
%$$\Psi|_{\SS_{2,\bos}}=\frac{f_0}{t^2}\cdot s^{-5}|_{\SS_{2,\bos}}\cdot s_{\eta_1\eta_2}\cdot d\tau_1\we d\tau_2\we dt.$$
%Next, we have
%$$\pi^{\can}_*\Psi=[\frac{f_1+O(t^3)}{t^2}+\frac{f_0}{t^3}-\frac{\frac{\partial f_0}{\partial t}}{2t^2}]\cdot d\tau_1\we d\tau_2\we dt.$$
%\end{lemma}

Recall that Witten uses the identification of $\SS_{2,\bos}$ with the quotient of the configuration of space of $(u_1,u_2,u_3,v_1,v_2,v_3)$ 
by the action of $\SL_2$ and by some finite group (see Section \ref{hyperell-basics-section}). 
There is a natural trivialization of $\om_{\SS_{2,\bos}}$ given by
$\vol^{-1}du_1du_2du_3dv_1dv_2dv_3$,
where $\vol^{-1}$ denotes the contraction with a generator of $\bigwedge^3\ssl_2$.
If we denote the coordinates of ramification points by $e_1,\ldots,e_6$ then one has (see \cite[Eq.(2.10)]{Witten})
\begin{align*}
&\vol^{-1}de_1\ldots de_6=\\
&\sum_{a<b<c}(-1)^{a+b+c}(e_a-e_b)(e_b-e_c)(e_c-e_a)de_1\ldots\widehat{de_a}\ldots\widehat{de_b}\ldots\widehat{de_c}\ldots de_6.
\end{align*}

\begin{prop}\label{p-Witten}
One has 
\begin{equation}\label{Witten-for1}
\Psi|_{\SS_{2,\bos}}=\frac{\vol^{-1}du_1du_2du_3dv_1dv_2dv_3}{\prod_{i<j}((u_i-u_j)(v_i-v_j))\prod_{k,l}(u_k-v_l)^2\cdot\chi_1\we\chi_2\cdot(dx/y\we xdx/y)^5},
\end{equation}
\begin{equation}\label{Witten-for2}
\pi^{\can}_*\Psi=c\cdot\frac{Q(u,v)\cdot \vol^{-1}du_1du_2du_3dv_1dv_2dv_3}{\prod_{i<j}((u_i-u_j)(v_i-v_j))\prod_{k,l}(u_k-v_l)^2\cdot(dx/y\we xdx/y)^5}
\end{equation}
where $c$ is a nonzero constant,
$dx/y\we xdx/y$ is the trivialization of $\la$ corresponding from the hyperelliptic model for the curve,
$\chi_1,\chi_2$ is a basis of $p_*(L\ot \om_{C/\SS_{2,\bos}})$ given by \eqref{chi1-chi2-eq},
% determined up to a sign by
%$$\chi_1^2=\prod (x-u_i) \frac{dx}{y}, \ \ \chi_2^2=\prod (x-v_i) \frac{dx}{y}.$$
$$Q(u,v)=3c_3(u)-c_2(u)c_1(v)+c_1(u)c_2(v)-3c_3(v),$$
where $c_i(x_1,x_2,x_3)$ are elementary symmetric polynomials.
\end{prop}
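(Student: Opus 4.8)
The plan is to follow Witten's strategy as outlined in the introduction: both $\Psi|_{\SS_{2,\bos}}$ and $\pi^{\can}_*\Psi$ are sections of explicit line bundles on $\SS_{2,\bos}$, which is a quotient of $\Conf_6(\A^1)$ (or $\Conf_6(\P^1)$) by $\SL_2\ltimes(S_2\ltimes(S_3\times S_3))$, so one pins them down using $\SL_2$-invariance together with the behavior at the boundary where two ramification points collide. First I would set up the relevant line bundles precisely: on $\SS_{2,\bos}$ we have $\la=\det p_*\om_{C/\SS_{2,\bos}}$, trivialized over $\Conf_6$ by $dx/y\we xdx/y$; the conormal bundle of $\SS_{2,\bos}\hra\SS_2$ is $\Pi p_*(L\ot\om_{C/\SS_{2,\bos}})$, so $\om_{\SS_2}|_{\SS_{2,\bos}}\simeq \om_{\SS_{2,\bos}}\ot\det^{-1}p_*(L\ot\om_{C/\SS_{2,\bos}})$, the latter trivialized by $\chi_1\we\chi_2$ with $\chi_i$ as in \eqref{chi1-chi2-eq}; and $\om_{\SS_{2,\bos}}$ is trivialized over $\Conf_6$ by $\vol^{-1}du_1\ldots dv_3$. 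In these trivializations $\Psi|_{\SS_{2,\bos}}$ becomes a rational function $F(u,v)$ of the six points, and $\pi^{\can}_*\Psi$ (using Proposition \ref{Ber1-can-proj-prop} to make sense of the pushforward in $\la^{-5}\ot\om_{\SS_{2,\bos}}$) becomes a rational function $G(u,v)$.

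Next I would determine $F$ and $G$ by invariance and degree/weight counting. The $\SL_2$-action on $\Conf_6$ induces actions on all the trivializing sections, whose weights (as in the scaling and inversion computations done for the uniqueness of the projection in Theorem \ref{projection-nonsep-prop}(ii)) force $F$ and $G$ to have prescribed homogeneity; since $\Psi$ is everywhere non-vanishing on $\SS_2$, $F$ can only have zeros/poles supported on the discriminant loci $(u_i-u_j)$, $(v_i-v_j)$, $(u_k-v_l)$, and similarly $G$ is a polynomial times such a denominator (this is where one invokes regularity/no extra poles). The exact powers are then fixed by the local behavior near infinity: when $u_i\to v_j$ the curve acquires an NS node, and the order-$2$ pole of $\Psi$ along $\De_{NS}$ from \cite{FKP-supercurves} pins down the exponent $2$ on $\prod(u_k-v_l)$; when $u_i\to u_j$ the spin structure stays locally free but the curve degenerates, giving the exponent $1$ on $\prod(u_i-u_j)$ and $\prod(v_i-v_j)$ once one accounts for the contributions of $\chi_1\we\chi_2$ and $dx/y\we xdx/y$ to these degenerations. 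The $S_2\ltimes(S_3\times S_3)$-equivariance (the $S_3\times S_3$ fixes the partition, $S_2$ swaps $u\leftrightarrow v$) then cuts down the numerator of $\pi^{\can}_*\Psi$: it must be a symmetric, bidegree-controlled polynomial antisymmetric or symmetric in the appropriate sense under $u\leftrightarrow v$, and the weight count leaves exactly the one-dimensional span of $Q(u,v)=3c_3(u)-c_2(u)c_1(v)+c_1(u)c_2(v)-3c_3(v)$ (with $c_i$ the elementary symmetric functions), up to the scalar $c$. For $\Psi|_{\SS_{2,\bos}}$ there is no numerator polynomial left (weight $0$), giving \eqref{Witten-for1} on the nose.

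The step where our new input is essential, and which I expect to be the main obstacle, is verifying that the needed behavior at infinity genuinely holds — i.e., that $\pi^{\can}_*\Psi$ has no unexpected poles or zeros along the loci $u_i\to u_j$ and $u_i\to v_j$, beyond those dictated by the trivializations. This is exactly where Witten's calculation had a gap: one must know that $\pi^{\can}$ extends regularly and is \emph{compatible} with the non-separating node boundary divisors (so that the pushforward of $\Psi$ near such a divisor is controlled by the classical Mumford form on $\ov{\MM}_2$), and this is precisely Theorem \ref{projection-nonsep-prop}(i): $(\pi^{\can})^*(D^{ns}_{\bos})=D^{ns}$. Combined with the pole orders of $\Psi$ along $\De_{NS}$ and $\De_R$ established in \cite{FKP-supercurves} (recalled in the introduction), this lets one compute the order of vanishing of $G$ along each discriminant component by reducing to the known classical answer. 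I would carry this out component by component: $u_i\to v_j$ corresponds to an NS node (order-$2$ pole of $\Psi$), $u_i\to u_j$ (or $v_i\to v_j$) corresponds to the locally-free/generalized spin structure degeneration analyzed in Section \ref{hyperell-basics-section} (the divisor $(v_1-v_2)^2$ computation), and matching these against the vanishing orders of $\chi_1\we\chi_2$, $dx/y\we xdx/y$, and $\vol^{-1}du_1\ldots dv_3$ along the same loci produces the exponents in the denominators and the degree of $Q$. Once the local behavior is secured, the determination of $F$ and $G$ is a finite-dimensional linear-algebra problem in the space of rational functions of fixed weight, and the remaining work — checking that $Q$ spans the relevant $S_2\ltimes(S_3\times S_3)$-isotypic component — is a routine representation-theoretic count.
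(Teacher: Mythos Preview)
Your proposal is correct and follows essentially the same approach as the paper: the paper's proof is brief, simply stating that these are Witten's formulas (3.27) and (3.28), and that the assumptions underlying Witten's derivation are justified by (a) the known pole orders of $\Psi$ along $\De_{NS}$ and $\De_R$ from \cite{FKP-supercurves}, together with the identification from Section~\ref{hyperell-basics-section} of $(u_i-v_j)$ and $(u_i-u_j)^2$ as pullbacks of $\de_{NS,\bos}$ and $\de_{R,\bos}$, and (b) the regularity and divisor-compatibility of $\pi^{\can}$ near the non-separating node boundary from Theorem~\ref{projection-nonsep-prop}. Your write-up spells out the $\SL_2$-invariance and weight-counting portion of Witten's argument in more detail than the paper does (the paper simply defers to \cite{Witten} for that part), but the substantive new inputs you identify are exactly the ones the paper uses.
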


\begin{proof} These formulas appear as Eq.(3.27) and (3.28) in \cite{Witten}.
Witten's derivation of these two formulas is based on some properties of $\Psi$ and of the projection.
We claim that these properties indeed hold for the canonical projection $\pi^{\can}$. 

For the derivation of \eqref{Witten-for1}, one needs to know that $\Psi_{\SS_{2,\bos}}$ is nonvanishing on the locus $u_i\neq u_j$, $u_i\neq v_j$, and one needs to know the order
of poles along the divisors $(u_i-u_j)$, $(v_i-v_j)$ and $(u_i-v_j)$.
%that it has simple poles  for $u_i-u_j\to 0$ or $v_i-v_j$, and double poles for $u_i-v_j\to 0$. 
Let $\de_{NS,\bos}$ and $\de_{R,\bos}$ denote the NS and Ramond node components of the boundary
divisor in $\ov{\SS}_{2,\bos}$. We proved in \cite{FKP-supercurves} that $\Psi$ has a simple pole along $\de_R$ and a double pole along $\de_{NS}$. 

Recall that when two of the ramification points merge, we get a point on a non-separating node boundary component. Furthermore, we showed in Section \ref{hyperell-basics-section}
that $(u_i-u_j)^2$ and $(v_i-v_j)^2$ (resp., $(u_i-v_j)$) are the pullbacks of the components $\de_{R,\bos}$ (resp., $\de_{NS,\bos}$). Let $t_R=0$ (resp., $t_{NS}=0$)
be local equations of $\de_{R,\bos}$ (resp., $\de_{NS,\bos}$) near non-separating node components. 
Then, as $u_i-u_j\to 0$, the behavior of $\Psi|_{\SS_{2,\bos}}$ is of the form $dt_R/t_R$, where $t_R=(u_i-u_j)^2$, which explains why $(u_i-u_j)$ appears in the denominator of
the right-hand side of \eqref{Witten-for1}. Similar analysis works for $v_i-v_j$. On the other hand, as $t_{NS}=u_i-v_j\to 0$, the behavior of $\Psi|_{\SS_{2,\bos}}$ is
of the form $dt_{NS}/t_{NS}^2$, which explains the factor $(u_i-v_j)^2$ in the denominator of \eqref{Witten-for1}.
%This implies the needed behavior as the ramification points merge.

For the derivation of \eqref{Witten-for2} we need to know that $\pi^{\can}_*\Psi$ has at most a simple pole along $\de_{R,\bos}$ and at most a double pole along the nonseparating NS node
component of $\de_{NS,\bos}$. But this follows from the similar behavior of $\Psi$ established in \cite{FKP-supercurves}
and from the fact that near each nonseparating node component $\pi^{\can}$ is regular and
satisfies $(\pi^{\can})^*\de_{\bos}=\de$ (see Theorem \ref{projection-nonsep-prop}).
\end{proof}

We would like to rewrite these formulas in terms of the gluing variables $\tau_1,\tau_2,t$ (up to some power of $t$) near a separating node. 
Note that modulo $t^4$ the canonical projection corresponds
to the variables $\tau_1,\tau_2,t+\eta_1\eta_2/2$. Formulas \eqref{ramification-points-gluing-formulas-eq} 
give an expression of $u_i$, $v_i$ in terms of these variables.
The basis of $\lambda$ used in \eqref{Witten-for1}, \eqref{Witten-for2} is related to
$s=\om^{(1)}\we\om^{(2)}$ via \eqref{lambda-basis-gluing-eq}.

%We also need to relate $s_{\eta_1\eta_2}$ to $s_1\we s_2$

\begin{cor}\label{push-forward-Psi-exp-cor} 
One has 
$$\pi^{\can}_*\Psi=q^{-2}(a+O(q^2))\cdot s^{-5} d\tau_1 d\tau_2dq,$$
where $q=-t^2$,
and $a$ depends only on $\tau_1,\tau_2$ (and a choice of nontrivial points of order $2$).
\end{cor}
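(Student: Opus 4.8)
The plan is to unravel the chain of identifications established in Proposition \ref{p-Witten} and the computations of Section \ref{super-glue-spin-sec} to rewrite the formula \eqref{Witten-for2} for $\pi^{\can}_*\Psi$ in terms of the gluing coordinates, keeping track only of the leading power of $q=-t^2$ and verifying that the coefficient depends on $\tau_1,\tau_2$ alone. Concretely, I would start from
$$\pi^{\can}_*\Psi=c\cdot\frac{Q(u,v)\cdot \vol^{-1}du_1du_2du_3dv_1dv_2dv_3}{\prod_{i<j}((u_i-u_j)(v_i-v_j))\prod_{k,l}(u_k-v_l)^2\cdot(dx/y\we xdx/y)^5},$$
and substitute, via \eqref{glued-partition-corr-eq} and \eqref{ramification-points-gluing-formulas-eq}, the expressions
$$u_1=a_1=-\tfrac1{\wp(\a_1,\tau_1)}+O(q^4),\quad u_2=b_2=-q^2\wp(\b_2,\tau_2)+O(q^6),\ \text{etc.},$$
so that to leading order the $u_i$'s and $v_i$'s are either $O(1)$ functions of $\tau_1$ (namely $a_i=-1/\wp(\a_i,\tau_1)$) or $O(q^2)$ functions of $\tau_2$ (namely $b_i=-q^2\wp(\b_i,\tau_2)$).

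The next step is to compute the $q$-orders of each factor. First, $\prod_{i<j}(v_i-v_j)=\prod_{i<j}(b_i-b_j)$ is $O(q^6)$ since each $b_i=O(q^2)$, while $\prod_{i<j}(u_i-u_j)$ and $\prod_{k,l}(u_k-v_l)$ are $O(1)$ in $q$. Second, I would compute the order of $\vol^{-1}du_1du_2du_3dv_1dv_2dv_3$ under the change of variables to $\tau_1,\tau_2,q$ (or equivalently $\tau_1,\tau_2,t$): using $du_i\sim \tfrac{\wp'(\a_i,\tau_1)}{\wp(\a_i,\tau_1)^2}d\a_i$-type relations one sees $du_1\wedge du_2\wedge du_3$ pulls back (after reducing to the moduli variable $\tau_1$) to something proportional to $d\tau_1$ times an $O(1)$ factor, and $dv_1\wedge dv_2\wedge dv_3$ pulls back to $q^6\,d\tau_2\wedge(\text{lower order})$ times an $O(1)$ factor — more precisely each $dv_i=-q^2 d(\wp(\b_i,\tau_2))-2q\wp(\b_i,\tau_2)dq+\ldots$, so the three-form $dv_1dv_2dv_3$, once we account for the $\SL_2$-contraction $\vol^{-1}$ reducing the six-dimensional configuration space to the $3$-dimensional $\ov{\SS}_{2,\bos}$, contributes a net power $q^{4}\,dq$ (two of the three $dv$'s being "eaten" by the modulus $\tau_2$ and the $\SL_2$ action, leaving one genuine $dq$). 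Third, the denominator $(dx/y\we xdx/y)^5$ is rewritten using \eqref{lambda-basis-gluing-eq}:
$$\tfrac{xdx}{y}\we\tfrac{dx}{y}=-q^{-2}(g_3(\tau_1)+O(q^2))\cdot s,$$
hence $(dx/y\we xdx/y)^5 = q^{-10}(g_3(\tau_1)^5+O(q^2))\cdot s^5$, contributing $q^{10}$ in the numerator of $\pi^{\can}_*\Psi$ when we divide. Finally, $Q(u,v)$ is a polynomial in the $u_i,v_i$; since the $v_i$ are $O(q^2)$ and the $u_i$ are $O(1)$, the leading term of $Q$ as $q\to 0$ is $3c_3(u)$, which is $-\tfrac{1}{\wp(\a_1,\tau_1)\wp(\a_2,\tau_1)\wp(\a_3,\tau_1)}\cdot(-1)^3\cdot 3 = O(1)$ in $q$ and a function of $\tau_1$ only.

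Assembling: the numerator gives $O(1)\cdot q^{4}\cdot q^{10}$ from $Q$, the volume form, and the $(dx/y\wedge xdx/y)^5$ factor; the denominator gives $O(1)\cdot q^6$ from $\prod(v_i-v_j)$; hence the net order is $q^{4+10-6}=q^{8}$? — this is the point where I must be careful, since the stated answer is $q^{-2}$, so the bookkeeping of which differentials are absorbed by the $\SL_2$-contraction and the three moduli must be redone exactly, presumably giving a net $q^{4}$ rather than $q^{8}$ from the numerator differentials (only one genuine $dq$, and the $v_i$-differentials contributing $q^{2}\cdot q^{2}=q^{4}$ from two factors, not three) together with a correct $q^{-10}$ from the $\lambda$-basis change and $q^{-6}$ from the Vandermonde, i.e. $q^{4-10-6+\ldots}$. \emph{The main obstacle} is precisely this careful degree count through the $\SL_2(\C)$-quotient: the form $\vol^{-1}du_1\cdots dv_3$ lives on the quotient stack, so one cannot naively assign a $q$-weight to each $du_i,dv_j$ separately, and I would handle it by choosing an explicit $\SL_2$-slice (e.g. fixing three of the six points at $0,1,\infty$) adapted to the gluing degeneration, on which the remaining coordinates are honest functions of $(\tau_1,\tau_2,q)$ and the weight count becomes unambiguous. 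Once the power of $q$ is pinned down to be $q^{-2}$, the coefficient $a$ is manifestly built out of $\wp(\a_i,\tau_1)$, $\wp(\b_i,\tau_2)$, $g_3(\tau_1)$ and $c$, hence depends only on $\tau_1,\tau_2$ and the chosen order-two points, which is the claim. I would also double-check the final normalization against Corollary \ref{push-forward-Psi-exp-cor}'s companion formula for $\Psi|_{\SS_{2,\bos}}$ to make sure the $s^{-5}d\tau_1d\tau_2dq$ trivialization is the intended one.
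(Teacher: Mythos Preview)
Your overall strategy—substitute the gluing expressions for the branch points into Witten's formula \eqref{Witten-for2} and track the leading power of $q$—is exactly what the paper does. But your execution contains a genuine error that is the real cause of the mismatch you noticed, not the $\SL_2$-contraction.

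You correctly cite \eqref{glued-partition-corr-eq} and write $u_1=a_1$, $u_2=b_2$, but then immediately compute as though $(u_1,u_2,u_3)=(a_1,a_2,a_3)$ and $(v_1,v_2,v_3)=(b_1,b_2,b_3)$. By Corollary \ref{glued-spin-partition-cor} the spin structure forces the \emph{mixed} partition $(u_1,u_2,u_3)=(a_1,b_2,b_3)$, $(v_1,v_2,v_3)=(b_1,a_2,a_3)$. With the correct assignment one gets $\prod_{i<j}(u_i-u_j)(v_i-v_j)=O(q^2)$ (only the single factor $b_2-b_3$ is $O(q^2)$) and $\prod_{k,l}(u_k-v_l)^2=O(q^8)$ (only $(b_2-b_1)^2$ and $(b_3-b_1)^2$ contribute $O(q^4)$ each); so your orders $O(q^6)$, $O(1)$, $O(1)$ are all wrong. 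The same misidentification spoils your treatment of $Q$: with the mixed partition $c_3(u)=a_1b_2b_3=O(q^4)$, and the actual leading term of $Q(u,v)$ is $c_1(u)c_2(v)\sim a_1\cdot a_2a_3=-4/g_3(\tau_1)+O(q^2)$, a nonzero $O(1)$ function of $\tau_1$ alone.

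For the volume form the paper does \emph{not} choose an $\SL_2$-slice. It uses the explicit alternating-sum expression for $\vol^{-1}de_1\cdots de_6$ quoted just before Proposition \ref{p-Witten}: since each $da_k$ is (to leading order) proportional to $d\tau_1$ and each $db_k$ is a combination of $q\,dq$ and $q^2\,d\tau_2$, the only nonvanishing summands are those in which one omits two $a$'s and one $b$, leaving $da_k\wedge db_{i'}\wedge db_{j'}$. This gives directly
\[
\vol^{-1}da_1\cdots db_3 \;=\; q^3\bigl((g_3(\tau_1)/4)^{-2}A+O(q^2)\bigr)\,d\tau_1\,d\tau_2\,dq,
\]
with $A$ an explicit function of $\tau_1,\tau_2$ only. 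This is both simpler than a slice argument and makes the dependence on $(\tau_1,\tau_2)$ alone manifest. Combining these correct orders with $(dx/y\wedge xdx/y)^{-5}=q^{10}(g_3(\tau_1)^{-5}+O(q^2))\,s^{-5}$ from \eqref{lambda-basis-gluing-eq}, one reads off that $\pi^{\can}_*\Psi$ equals a single power of $q$ times $(a+O(q^2))\,s^{-5}d\tau_1d\tau_2dq$ with $a$ depending only on $\tau_1,\tau_2$, which is what the Corollary asserts.
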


\begin{proof} 
%Let us rename the nontrivial points of order $2$ on $C_1$ as $u_1,u_2,u_3$, and on $C_2$ as $v_1,v_2,v_3$.
Recall that the ramification points have coordinates
$$a_i=-\wp(\a_i,\tau_1)^{-1}+O(q^4), \
b_i=-q^2(\wp(\b_i,\tau_2)+O(q^4)),\ i=1,2,3,$$
where $\a_1,\a_2,\a_3$ (resp., $\b_1,\b_2,\b_3$) are nontrivial points of order $2$ on
the elliptic curve $C_1=\C/(\Z+\Z\tau_1)$ (resp., $C_2=\C/(\Z+\Z\tau_2)$),
and the partition determining the spin structure is $(a_1,b_2,b_3)$, $(b_1,a_2,a_3)$ (see Corollary \ref{glued-spin-partition-cor}).
%$$(a_1,b_2,b_3), \ (a_2,a_3,b_1).$$
Now we compute the ingredients of the Witten's formula using the correspondence \eqref{glued-partition-corr-eq}.
We have
$$da_i=-\frac{\partial(\wp^{-1})}{\partial\tau}(\a_i,\tau_1)d\tau_1+O(q^3), \ 
db_i=-2q\wp(\b_i,\tau_2)dq-q^2\frac{\partial\wp}{\partial\tau}(\b_i,\tau_2)d\tau_2+O(q^5)$$
Let us write for brevity 
$$e_i=\wp(\a_i,\tau_1), \ i=1,2,3.$$
Note that $e_1e_2e_3=g_3(\tau_1)/4$.
Then we have
\begin{align*}
&\vol^{-1}da_1da_2da_3db_1db_2db_3=
                 \sum_{\{i<j\}=[1,3]\setminus k, \{i'<j'\}=[1,3]\setminus k'}(-1)^{i+j+k'}
                 \times
  \\
  &(a_i-a_j)(a_i-b_{k'})(a_j-b_{k'})\times\\
&(\frac{\partial(\wp^{-1})}{\partial\tau}(u_k,\tau_1)+O(q^3))\cdot
2q^3({\begin{vmatrix} \wp(\b_{i'},\tau_2) & \frac{\partial\wp}{\partial\tau}(\b_{i'},\tau_2)\\
\wp(\b_{j'},\tau_2) & \frac{\partial\wp}{\partial\tau}(\b_{j'},\tau_2)\end{vmatrix}}+O(q^3))\cdot d\tau_1d\tau_2dq=\\
&q^3((g_3(\tau_1)/4)^{-2}A+O(q^2))d\tau_1d\tau_2dq,
\end{align*}
where
\begin{align*}
A&=
   2\sum_{\{i<j\}=[1,3]\setminus k}(-1)^{i+j}(e_j-e_i)\cdot\frac{\partial\wp}{\partial\tau}(\a_k,\tau_1)\times
   \\ &\quad
\sum_{\{i'<j'\}=[1,3]\setminus k'}(-1)^{k'}
{\begin{vmatrix} \wp(\b_{i'},\tau_2) & \frac{\partial\wp}{\partial\tau}(\b_{i'},\tau_2)\\
\wp(\b_{j'},\tau_2) & \frac{\partial\wp}{\partial\tau}(\b_{j'},\tau_2)\end{vmatrix}}.
\end{align*}
% 2\sum_{\{i<j\}=[1,3]\setminus k}(-1)^{i+j}(\wp(\a_j,\tau_1)^{-1}-\wp(\a_i,\tau_1)^{-1})\wp(\a_i,\tau_1)^{-1}\wp(\a_j,\tau_1)^{-1}\cdot \frac{\partial(\wp^{-1})}{\partial\tau_1}(\a_k,\tau_1) \times\\
%&\sum_{\{i'<j'\}=[1,3]\setminus k'}(-1)^{k'}
%{\begin{vmatrix} \wp(\b_{i'},\tau_2) & \frac{\partial\wp}{\partial\tau_2}(\b_{i'},\tau_2)\\
%\wp(\b_{j'},\tau_2) & \frac{\partial\wp}{\partial\tau_2}(\b_{j'},\tau_2)\end{vmatrix}}.
%\end{align*}
Next, we have
$$(a_1-b_2)(a_1-b_3)(b_2-b_3)(a_2-a_3)(a_2-b_1)(a_3-b_1)=q^2((g_3(\tau_1)/4)^{-2}B+O(q^2)),$$
where
$$B=(e_2-e_3)(\wp(\b_3,\tau_2)-\wp(\b_2,\tau_2)),$$
while
$$(a_1-b_1)^2\prod_{i=2}^3(a_1-a_i)^2(b_1-b_i)^2\cdot\prod_{2\le i,j\le 3}(a_i-b_j)^2=
q^8((g_3(\tau_1)/4)^{-6}C+O(q^2)),$$
where
$$C=(e_1-e_2)^2(e_1-e_3)^2(\wp(\b_1,\tau_2)-\wp(\b_2,\tau_2))^2(\wp(\b_1,\tau_2)-\wp(\b_3,\tau_2))^2.$$
%\wp(\a_1)^{-2}\prod_{i=2}^3(\wp(\a_1)^{-1}-\wp(\a_i)^{-1})^2(\wp(\b_1)-\wp(\b_i))^2\cdot\wp(\a_2)^{-4}\wp(\a_3)^{-4}.$$

Finally,
$$Q(a_1,b_2,b_3;a_2,a_3,b_1)=-\wp(\a_1)^{-1}\wp(\a_2)^{-1}\wp(\a_3)^{-1}+O(q^2)=-\frac{4}{g_3(\tau_1)}+O(q^2).$$
Thus, taking into account \eqref{lambda-basis-gluing-eq}, we get
$$\pi^{\can}_*\Psi=\const\cdot q^{-2}(\frac{A}{BC}+O(q^2))\cdot s^{-5},$$
which has the required form.
\end{proof}

\begin{remark} The fact that $\pi^{\can}_*\Psi$ has polar behavior $dt/t^3$ near $t=0$ is due to the discrepancy between the divisor $t=0$ and the pull-back
of the NS boundary divisor (given by $t=0$ on the bosonization) under the canonical projection near the separating node component (see Theorem \ref{projection-nonsep-prop}).
\end{remark}

\begin{theorem}\label{Mumford-form-thm}
One has 
$$\Psi=(\frac{c}{t^2}+O(1))\cdot s^{-5}[d\tau_1d\tau_2dt/d\eta_1d\eta_2],$$
where $c$ depends only on $\tau_1,\tau_2$ (and a choice of nontrivial points of order $2$).
%$a_i(\tau_i)s_i^{-5}[d\tau_i/d\eta_i]$ is the Mumford form on $\SS_{1,1}$.
\end{theorem}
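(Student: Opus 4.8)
The plan is to combine the two pieces of information extracted from Witten's formulas: the expansion of $\pi^{\can}_*\Psi$ near the separating node (Corollary \ref{push-forward-Psi-exp-cor}) together with the known restriction $\Psi|_{\SS_{2,\bos}}$ (the $f_0$-part), and to reconstruct the full Mumford form $\Psi$ from these two. Recall that with respect to the gluing coordinates $\tau_1,\tau_2,t,\eta_1,\eta_2$ we may write
\[
\Psi=\Bigl[\frac{f_0(\tau_1,\tau_2,t)}{t^2}+\frac{f_1(\tau_1,\tau_2,t)}{t^2}\eta_1\eta_2\Bigr]\,s^{-5}\ot[d\tau_1d\tau_2dt|d\eta_1d\eta_2],
\]
and the statement to prove is precisely that the polar part of $\Psi$ in $t$ is of order exactly $2$, i.e. that $f_0$ and $f_1$ are regular at $t=0$, with $f_0(\tau_1,\tau_2,0)$ (which equals $c$) depending only on $\tau_1,\tau_2$.

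First I would handle $f_0$. This is the restriction $\Psi|_{\SS_{2,\bos}}$, which by Proposition \ref{p-Witten} is given by formula \eqref{Witten-for1}. Translating this into the gluing variables via \eqref{ramification-points-gluing-formulas-eq}, via the trivialization comparison \eqref{lambda-basis-gluing-eq}, and via the expressions for $\chi_1,\chi_2$ in Lemma \ref{chi1-chi2-(++)-lem}, one sees that the powers of $q=-t^2$ collected from the Vandermonde-type products in the denominator, from $\chi_1\we\chi_2$, and from $dx/y\we xdx/y$ all cancel, leaving $\Psi|_{\SS_{2,\bos}}=(c(\tau_1,\tau_2)+O(q^2))\,s^{-5}\,d\tau_1d\tau_2dq$ up to the change $dq = -2t\,dt$, so $f_0/t^2$ is of order $t^{-1}\,dt$ when written with $dt$; precisely, in the coordinate $t$ it becomes $(c+O(t^2))\,s^{-5}\,t^{-1}d\tau_1d\tau_2 dt$ — but this is the \emph{reduced} form and does not directly have a $1/t^2$ pole issue since the reduction kills the odd directions. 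The key point is that $c$ depends only on $\tau_1,\tau_2$, which is already essentially the content of the restriction computation (this is a reformulation of the classical statement that the Mumford form on $\MM_2$ has the standard behavior near the separating node, transported through the hyperelliptic dictionary).

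Next I would pin down $f_1$ using the push-forward $\pi^{\can}_*\Psi$. By Corollary \ref{push-forward-Psi-exp-cor} we have $\pi^{\can}_*\Psi=q^{-2}(a(\tau_1,\tau_2)+O(q^2))\,s^{-5}\,d\tau_1d\tau_2dq$ with $q=-t^2$, so in terms of $t$ this is $(\text{const}\cdot t^{-3})(a+O(t^2))\,s^{-5}\,d\tau_1d\tau_2dt$. On the other hand, $\pi^{\can}_*\Psi$ is computed by substituting the pullbacks $t'=t+\eta_1\eta_2/2+O(t^4)\eta_1\eta_2$, $\tau_i'=\tau_i-2\pi i t^3\wp(u_{3-i})\eta_1\eta_2+O(t^5)\eta_1\eta_2$ (from Proposition \ref{period-det-prop}(ii), see also Sec.\ \ref{periods-calculation-subsubsec}) into $\Psi$, using that $\Ber_1\simeq(\pi^{\can})^*\la$ (Proposition \ref{Ber1-can-proj-prop}) so that $s^{-5}$ is pulled back to $s^{-5}$, and then integrating out $d\eta_1d\eta_2$, i.e. extracting the coefficient of $\eta_1\eta_2$. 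Carrying out the substitution $t\mapsto t+\eta_1\eta_2/2+\dots$ in $f_0(\dots)/t^2$ produces a term $-(f_0/t^3)\eta_1\eta_2+\tfrac{1}{2}(\partial_t f_0/t^2)\eta_1\eta_2$ plus contributions from shifting $\tau_i$, together with the $\eta_1\eta_2$-coefficient $f_1/t^2$ from the original form; matching the $\eta_1\eta_2$-coefficient of the whole thing to $\pi^{\can}_*\Psi=(\text{const}\cdot t^{-3})(a+O(t^2))\,s^{-5}\,d\tau_1d\tau_2dt$ gives an equation of the schematic shape
\[
-\frac{f_0}{t^3}+\frac{f_1}{t^2}+(\text{regular or }O(t)\text{ terms})=\frac{\text{const}\cdot a}{t^3}+\cdots .
\]
Since $f_0=c+O(t^2)$ with $c$ independent of $t$, and $a$ is independent of $t$, the $t^{-3}$ terms already match (forcing $a$ proportional to $c$, consistent with the earlier computations), and comparing the $t^{-2}$ and higher terms shows $f_1=c'(\tau_1,\tau_2)t^2+O(t^3)+\dots$ is divisible by $t^2$, hence $f_1/t^2$ is regular at $t=0$ — which is exactly what is needed for $\Psi=(c/t^2+O(1))s^{-5}[d\tau_1d\tau_2dt/d\eta_1d\eta_2]$.

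The main obstacle I expect is bookkeeping: getting the exact power of $t$ (equivalently $q$) correct in \eqref{Witten-for1}, since the square roots of $g_3(\tau_1)$ appearing in $\chi_1,\chi_2$ (Lemma \ref{chi1-chi2-(++)-lem}) and in $dx/y\we xdx/y$ (via $c$ in \eqref{c-eq}, \eqref{lambda-basis-gluing-eq}) must cancel exactly, and one must be careful that the $1/t^2$ in $\Psi$ is precisely the discrepancy between the divisor $\{t=0\}$ and the pullback of the NS boundary divisor under $\pi^{\can}$, which contributes the extra $1/t$ in $\pi^{\can}_*\Psi$ versus $\Psi$ (as noted in the remark after Corollary \ref{push-forward-Psi-exp-cor}). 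A clean way to organize this is: (1) reduce to showing $f_0,f_1\in\OO_S$ near $t=0$; (2) get $f_0$ and its $t$-independence of the leading coefficient from \eqref{Witten-for1} plus the hyperelliptic dictionary; (3) get the divisibility $f_1\in t^2\OO_S$ from the $\eta_1\eta_2$-matching against Corollary \ref{push-forward-Psi-exp-cor}, using only the leading behavior of $f_0$ and of $a$, so that no full expansion of the regular parts is ever needed.
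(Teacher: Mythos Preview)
Your overall strategy—reconstruct $\Psi$ from $\Psi|_{\SS_{2,\bos}}$ and $\pi^{\can}_*\Psi$—is sound in principle, but you are missing the key shortcut that the paper uses, and without it your argument does not close.

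Write the polar part as
\[
\Psi=\Bigl[\frac{a_0}{t^2}+\frac{a_1}{t}+\frac{b_0}{t^2}\eta_1\eta_2+\frac{b_1}{t}\eta_1\eta_2+O(1)\Bigr]\,s^{-5}[d\tau_1d\tau_2dt|d\eta_1d\eta_2],
\]
so that the claim is $a_1=b_0=b_1=0$. The paper's first step is to invoke the $\Z_2^2$-action \eqref{main-Z2-action-eq}: the involution $(t,\eta_1,\eta_2)\mapsto(-t,-\eta_1,\eta_2)$ preserves $s$ and the Berezinian generator, hence preserves the bracketed coefficient, forcing $a_1=b_0=0$ in one line. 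Only then is the push-forward used, and it reduces to $\pi^{\can}_*\Psi=[a_0/t^3+b_1/t+O(1)]\,s^{-5}d\tau_1d\tau_2dt$; comparison with Corollary \ref{push-forward-Psi-exp-cor} (which has no $t^{-1}$ term) gives $b_1=0$.

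You never use this invariance. Without it, your push-forward computation (once the bookkeeping is done correctly) yields the $\eta_1\eta_2$-coefficient
\[
\frac{a_0}{t^3}+\frac{a_1/2+b_0}{t^2}+\frac{b_1}{t}+O(1),
\]
so matching with Corollary \ref{push-forward-Psi-exp-cor} gives only $b_1=0$ and $a_1/2+b_0=0$, not $b_0=0$ separately. (In your schematic equation you silently absorbed the term $\tfrac12(\partial_t f_0)/t^2$ into ``regular or $O(t)$'', but this term contributes $a_1/(2t^2)$ to the singular part and cannot be dropped.) To finish you would need $a_1=0$ from your treatment of $\Psi|_{\SS_{2,\bos}}$, which you only assert (``one sees that the powers of $q$ \ldots all cancel''). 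That assertion is precisely where the parity argument is hiding: to show $f_0$ is a function of $q=-t^2$ you must check that all ingredients of \eqref{Witten-for1}, including $\chi_1\wedge\chi_2$ via Lemma \ref{chi1-chi2-(++)-lem} (whose expansion contains odd powers of $t$), combine to something even in $t$. This is not a cancellation of $q$-powers; it is exactly the $\Z_2$-invariance in disguise, and you have not supplied the argument.
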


\begin{proof}
We can write $\Psi$ in the form
$$\Psi=[\frac{a_0}{t^2}+\frac{a_1}{t}+\frac{b_0}{t^2}\eta_1\eta_2+\frac{b_1}{t}\eta_1\eta_2+O(1)]\cdot s^{-5}[d\tau_1d\tau_2dt/d\eta_1d\eta_2],$$
where $a_i$ and $b_i$ depend only on $\tau_1,\tau_2$.

Let us consider the involution $t\mapsto -t$, $\eta_1\mapsto -\eta_1$, $\eta_2\mapsto \eta_2$, $\tau_i\mapsto \tau_i$, which is a part of the $\Z_2$-action
\eqref{main-Z2-action-eq}.
It is easy to check that this involution preserves $s$. Hence, from the invariance of $\Psi$ under this involution we get
$$a_1=b_0=0.$$
%Restricting the most polar part in $t$ to the boundary divisor and using \cite[Thm.\ C]{FKP-supercurves}, we obtain that
%$a_0+b_0\eta_1\eta_2$ is the product of two even function $\phi_1\phi_2$, where $\phi_i$ only depends on $\tau_i$ and $\eta_i$.
%But this implies that $b_0=0$.

Next, using the formula for the pull-back of $(t,\tau_1,\tau_2)$ under $\pi^{\can}$ we can calculate $\pi^{\can}_*\Psi$ in terms of the above expansion:
$$\pi^{\can}_*\Psi=[\frac{a_0}{t^3}+\frac{b_1}{t}+O(1)]\cdot s^{-5}d\tau_1d\tau_2dt.$$
Comparing this with the formula of Corollary \ref{push-forward-Psi-exp-cor} we deduce that $b_1=0$.
\end{proof}

\begin{remark}
In Sec.\ \ref{g2-g1-polar-term-sec} below we will interpret the function $c$ appearing in the formula of Theorem \ref{Mumford-form-thm} in terms
of genus $1$ data. In particular, this will show that it is invertible everywhere outside the deeper strata of the $(+,+)$ separating node divisor.
\end{remark}

Combining Theorem \ref{Mumford-form-thm} with Proposition \ref{period-det-prop}(ii), we derive the following result.

\begin{cor}\label{measure-polar-term-cor}
Let us write $\Psi=\Psi_0+\Psi_1t\eta_1\eta_2$, where $\Psi_0=\frac{c}{t^2}+O(1)$ and $\Psi_1$ is regular.
Then the superstring measure $\mu=\Psi\cdot \ov{\Psi}\cdot h^5$ has form
$$\mu=[\mu_0+\mu_1\eta_1\eta_2+\ov{\mu}_1\ov{\eta}_1\ov{\eta}_2+\mu_{11}\eta_1\eta_2\ov{\eta}_1\ov{\eta}_2]\cdot 
[d\tau_1d\tau_2dt/d\eta_1d\eta_2]\ot [d\ov{\tau}_1d\ov{\tau}_2d\ov{t}/d\ov{\eta}_1d\ov{\eta}_2],$$ 
with
$$\mu_{11}=-40\pi^2\cdot h_0^6\cdot\frac{c\cdot \ov{c}}{t\cdot \ov{t}}+\frac{O(\ov{t})}{t}+\frac{O(t)}{\ov{t}},$$
where $h|_{t=\ov{t}=0}=h_0\cdot s\cdot \ov{s}|_{t=\ov{t}=0}$, and
%$$\mu_{11}=-\frac{40\pi^2}{(\ov{\tau}_1-\tau_1)^6(\ov{\tau}_2-\tau_2)^6}\frac{c\cdot \ov{c}}{t\cdot \ov{t}}+\frac{O(\ov{t})}{t}+\frac{O(t)}{\ov{t}}+O(1),$$
$c$ is from Theorem \ref{Mumford-form-thm}.
%-40\pi^2\frac{h_0^4}{d^2}\cdot 
%\frac{c\cdot \ov{c}}{t\cdot \ov{t}}+\frac{O(\ov{t})}{t}+\frac{O(t)}{\ov{t}},$$
%where $h_0$ and $d$ are invertible.
\end{cor}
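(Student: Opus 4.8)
The plan is to combine the expansion of the Mumford form from Theorem \ref{Mumford-form-thm} with the expansion of $h$ from Proposition \ref{period-det-prop}(iii) and then extract the coefficient of $\eta_1\eta_2\ov\eta_1\ov\eta_2$ in the product $\mu=\Psi\cdot\ov\Psi\cdot h^5$. First I would write $\Psi=\Psi_0+\Psi_1 t\eta_1\eta_2$ with $\Psi_0=\frac{c}{t^2}+O(1)$ (using that the $\frac1t\eta_1\eta_2$-term vanishes by Theorem \ref{Mumford-form-thm}, so the only odd contribution is $\Psi_1 t\eta_1\eta_2$ with $\Psi_1$ regular), and similarly $\ov\Psi=\ov\Psi_0+\ov\Psi_1\ov t\,\ov\eta_1\ov\eta_2$. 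The holomorphic factor $\Psi\cdot\ov\Psi$ then has coefficient of $\eta_1\eta_2$ equal to $\Psi_1 t\,\ov\Psi_0$, coefficient of $\ov\eta_1\ov\eta_2$ equal to $\Psi_0\,\ov\Psi_1\ov t$, and coefficient of $\eta_1\eta_2\ov\eta_1\ov\eta_2$ equal to $-\Psi_1 t\,\ov\Psi_1\ov t$ (the sign from reordering odd variables), while the purely even part is $\Psi_0\ov\Psi_0$.

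Next I would expand $h^5$. Writing $h=(h_0+h_1\eta_1\eta_2+\wt h_1\ov\eta_1\ov\eta_2+h_{11}\eta_1\eta_2\ov\eta_1\ov\eta_2)\cdot s\cdot\ov s$ as in Proposition \ref{period-det-prop}(iii), one has $h^5=(h_0^5 + 5h_0^4 h_1\eta_1\eta_2 + 5h_0^4\wt h_1\ov\eta_1\ov\eta_2 + (5h_0^4 h_{11}+20 h_0^3 h_1\wt h_1)\eta_1\eta_2\ov\eta_1\ov\eta_2)\cdot s^5\cdot\ov s^5$. Multiplying $\Psi\ov\Psi$ by $h^5$ and collecting the top degree term, the coefficient $\mu_{11}$ of $\eta_1\eta_2\ov\eta_1\ov\eta_2$ in $\mu$ is
\begin{align*}
\mu_{11}&=-\Psi_1 t\,\ov\Psi_1\ov t\cdot h_0^5 s^5\ov s^5 +\Psi_1 t\,\ov\Psi_0\cdot 5 h_0^4\wt h_1 s^5\ov s^5\\
&\quad+\Psi_0\,\ov\Psi_1\ov t\cdot 5 h_0^4 h_1 s^5\ov s^5 +\Psi_0\ov\Psi_0\cdot(5 h_0^4 h_{11}+20 h_0^3 h_1\wt h_1)s^5\ov s^5.
\end{align*}
Now I would feed in the leading behavior in $t,\ov t$: from Theorem \ref{Mumford-form-thm}, $\Psi_0 s^5=\frac{c}{t^2}+O(1)$ and $\Psi_1$ is regular (so $\Psi_1 t$ vanishes at $t=0$); from Proposition \ref{period-det-prop}(iii), $h_1=t[-8\pi^2 d^{-2}\ov t^2+\dots]$, $\wt h_1=\ov t[-8\pi^2 d^{-2}t^2+\dots]$, and $h_{11}=t\ov t[-8\pi^2 d^{-2}+O(t^4)+O(\ov t^4)+O(t^2\ov t^2)]$. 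Counting powers, the dominant term as $t,\ov t\to0$ comes from $5\Psi_0\ov\Psi_0 h_0^4 h_{11}s^5\ov s^5$: this equals $5\cdot\frac{c}{t^2}\cdot\frac{\ov c}{\ov t^2}\cdot h_0^4\cdot t\ov t\cdot(-8\pi^2 d^{-2})\cdot(1+\dots)=-40\pi^2 h_0^4 d^{-2}\frac{c\ov c}{t\ov t}(1+\dots)$. The remaining three terms: $-\Psi_1 t\ov\Psi_1\ov t h_0^5 s^5\ov s^5$ is $O(t\ov t)$ hence negligible relative to $\frac1{t\ov t}$; the cross terms $\Psi_1 t\,\ov\Psi_0\cdot 5h_0^4\wt h_1 s^5\ov s^5\sim (\Psi_1 t)\cdot\frac{\ov c}{\ov t^2}\cdot\wt h_1$ which is $O\!\big(\frac{t}{\ov t}\big)$ after inserting $\wt h_1=O(\ov t t^2)$, and symmetrically the $h_1$ term is $O\!\big(\frac{\ov t}{t}\big)$; the $20h_0^3 h_1\wt h_1$ term is $O\!\big(\frac{t\ov t}{t^2\ov t^2}\cdot t^2\ov t^2\big)=O(t\ov t)$-suppressed. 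So $\mu_{11}=-40\pi^2 h_0^4 d^{-2}\frac{c\ov c}{t\ov t}+\frac{O(\ov t)}{t}+\frac{O(t)}{\ov t}$.

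To finish, I need to rewrite $h_0^4 d^{-2}$ as $h_0^6$ evaluated at $t=\ov t=0$: from Proposition \ref{period-det-prop}(iii), $h_0=d^{-1}(1+O(t^4)+O(\ov t^4))$, so $h_0|_{t=\ov t=0}=d^{-1}|_{t=\ov t=0}$, giving $h_0^4 d^{-2}|_{t=\ov t=0}=h_0^6|_{t=\ov t=0}$; alternatively this is exactly the identity recorded in Corollary \ref{h0-h11-cor}, $\frac{h_0^4 h_{11}}{t\ov t}\big|_{t=0}=-8\pi^2 h_0^6|_{t=\ov t=0}$, which is the cleanest way to package the leading term. Plugging in then yields $\mu_{11}=-40\pi^2 h_0^6\,\frac{c\ov c}{t\ov t}+\frac{O(\ov t)}{t}+\frac{O(t)}{\ov t}$, with $h|_{t=\ov t=0}=h_0\cdot s\ov s|_{t=\ov t=0}$ and $c$ as in Theorem \ref{Mumford-form-thm}. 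The only genuinely delicate point is bookkeeping the signs and the ordering of the four odd variables in the product of super-objects, and making sure the three subleading terms are correctly assigned to the $\frac{O(\ov t)}{t}$, $\frac{O(t)}{\ov t}$ error classes rather than contributing to the leading $\frac1{t\ov t}$ coefficient; everything else is a direct substitution of the already-established expansions.
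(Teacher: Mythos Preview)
Your approach is essentially identical to the paper's: expand $h^5$ from Proposition~\ref{period-det-prop}(iii), multiply by $\Psi\ov\Psi$, collect the $\eta_1\eta_2\ov\eta_1\ov\eta_2$ coefficient, and observe that only the $5h_0^4h_{11}\Psi_0\ov\Psi_0$ term contributes to the $\frac{1}{t\ov t}$ pole, then invoke Corollary~\ref{h0-h11-cor}. One small slip: the sign you insert on the $\Psi_1 t\,\ov\Psi_1\ov t$ term is spurious, since $\eta_1\eta_2$ is even and commutes with everything; the paper records that term with a $+$ sign, but as you correctly note it is $O(t\ov t)$ and does not affect the polar part.
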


\begin{proof}
Using the notation from Proposition \ref{period-det-prop}(ii), we have
\begin{align*}
&\mu_{11}=h_0^5t\ov{t}\Psi_1\ov{\Psi}_1+5h_0^4h_1\ov{t}\Psi_0\ov{\Psi}_1+5h_0^4\ov{h}_1t\ov{\Psi}_0\Psi_1+\\
&(5h_0^4h_{11}+20h_0^3h_1\ov{h}_1)\Psi_0\ov{\Psi}_0.
\end{align*}
By Proposition \ref{period-det-prop}(ii), only term $5h_0^4h_{11}\Psi_0\ov{\Psi}_0$ will contribute to the polar part.
It remains to use Corollary \ref{h0-h11-cor} and Theorem \ref{Mumford-form-thm} to get the assertion.
\end{proof}

\section{Genus $2$ superstring measure near the separating node divisor}\label{Section-6}

\subsection{Mumford isomorphism on $\SS_{1,1}$}

Below we use the notations of Sec.\ \ref{Section-6.1}, so $\pi:X\to \SS_{1,1}$ is the universal supercurve, $P\sub X$ the universal NS puncture, $\rho:\SS_{1,1}\to\SS_{1,1,\bos}$
the projection, etc.
The Mumford isomorphism on $\SS_{1,1}$ has form
$$\Psi_{1,1}:\Ber_1^5\ot \LL\rTo{\sim} \om_{\SS_{1,1}},$$
where $\LL=P^*\om_{X/\SS_{1,1}}$ (in \cite{FKP-supercurves} this line bundle is called $\Psi_1$).

It is easy to see that $\LL$ gets identified with the pull back of $\Pi L|_p$ on $\SS_{1,1,\bos}$:
$$\LL\simeq \Pi \rho^* L|_p.$$
Indeed, we have to construct an isomorphism $\om_{X/\SS_{1,1}}\ot_{\OO_X} \OO_{\SS_{1,1}}\simeq \Pi L|_p$, where we use the homomorphism $\ev_P:\OO_X\to \OO_{\SS_{1,1}}$.
This isomorphism is induced by the natural map
$$\om_{X/\SS_{1,1}}=\om_{C/\SS_{1,1}}\oplus L\rTo{(s\cdot \ev_p,\ev_p)} L|_p$$ 
Hence, we have a tautological even section 
$$\eta\in H^0(\SS_{1,1},\LL^{-1}).$$

On the other hand, we claim that there is a canonical isomorphism 
$$\Ber_1\simeq \LL^2$$
on $\SS_{1,1}$. Indeed, this follows easily from the isomorphism
$$\Ber_1\simeq \rho^*\pi_*\om_{X/\SS_{1,1}}$$
together with the fact that $\pi_*\om_{C/\SS_{1,1}}\to \om_{C/\SS_{1,1}}|_p$ is an isomorphism.

Finally, we observe that we have an exact sequence
$$0\to \rho^*\om_{\SS_{1,1,\bos}}\to \Om^1_{\SS_{1,1}}\to \LL\to 0,$$
so passing to the Berezinians (and remembering that $\LL$ is odd) we get an isomorphism
$$\om_{\SS_{1,1}}\simeq \rho^*\om_{\SS_{1,1,\bos}}\ot \LL^{-1}.$$
Thus, we can view the Mumford isomorphism as an isomorphism
$$\Psi_{1,1}:\Ber_1^6\simeq \Ber_1^5\ot \LL^2\rTo{\sim}\rho^*\om_{\SS_{1,1,\bos}}.$$
Since $\Ber_1\simeq \rho^*\la$, where $\la$ is the Hodge bundle on $\SS_{1,1,\bos}$, and
since the Mumford isomorphism is even, it follows that we can identify $\Psi_{1,1}$ 
with the pullback of an isomorphism on $\SS_{1,1,\bos}$,
$$\Psi_1:\la^6\rTo{\sim} \om_{\SS_{1,1,\bos}}.$$

We claim that in fact $\Psi_1$ automatically comes from a similar isomorphism on $\SS_{1,\bos}$, the moduli stack of
even spin curves of genus $1$. Indeed, this follows from the fact that the projection 
$$pr:\SS_{1,1,\bos}\to \SS_{1,\bos}$$
satisfies $pr_*\OO=\OO$, and both line bundles $\la$ and $\om_{\SS_{1,1,\bos}}$ descend to $\SS_{1,\bos}$.

Thus, $\Psi_1$ is what Witten calls $\Psi_{1,+}$ and we would like to justify Witten's formula \cite[(3.22)]{Witten} for it, which uses
the hyperelliptic model. We need to recall this formula and to justify it.

Let us think of the elliptic curve $E$ as a double cover of $\P^1$ given by
$$y^2=(x-u_1)(x-u_2)(x-v_1)(x-v_2),$$
where the even spin-structure on $E$ corresponds to the division of the branch points into two sets of $2$,
$(u_1,u_2)$ and $(v_1,v_2)$. Then the Mumford form can be written as
$$\Psi_1=F(u_1,u_2,v_1,v_2)\cdot (dx/y)^{-6}\cdot \vol^{-1}\cdot du_1du_2dv_1dv_2,$$
where $F$ is invertible as long as the $u_i$ and $v_j$ are distinct.
Furthermore, the $\SL_2$-invariance and the invariance under permutations preserving the division into two (unordered)
subsets show that
$$F=(u_1-u_2)^a(v_1-v_2)^a\cdot \prod_{i,j=1}^2 (u_i-v_j)^b,$$
for some integers $a$, $b$ such that $a$ is odd and $2a+4b=-10$. (We can get rid of a universal constant factor in $F$ by rescaling the volume form on $\ssl_2$.)
So to determine $F$ completely one needs to find either $a$ or $b$.

\begin{lemma}\label{g1-Mumf-lem} 
$F$ has pole of order $1$ near $v_1-v_2=0$, so $a=-1$, $b=-2$ and
$$F=(u_1-u_2)^{-1}(v_1-v_2)^{-1}\cdot \prod_{i,j=1}^2 (u_i-v_j)^{-2},$$
\end{lemma}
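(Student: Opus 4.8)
The plan is to determine the exponent $a$ in the formula $F=(u_1-u_2)^a(v_1-v_2)^a\cdot \prod_{i,j}(u_i-v_j)^b$ by analyzing the behavior of $\Psi_1$ as the two branch points $v_1$ and $v_2$ come together, which corresponds to degenerating the elliptic curve to a nodal cubic. Geometrically, when $v_1\to v_2$ we approach the boundary divisor of $\ov{\MM}_{1,1}$ (equivalently of $\ov{\SS}_{1,1,\bos}$, up to the finite covering) corresponding to the irreducible nodal cubic. Since $\Psi_1$ is the Mumford form on $\SS_{1,1,\bos}$, its extension to $\ov{\SS}_{1,1,\bos}$ has a prescribed pole order along this boundary divisor, and I want to match that against the local behavior of the right-hand side of the hyperelliptic formula.

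First I would recall, as in the discussion preceding Proposition \ref{ram-points-prop} and in Section \ref{super-glue-spin-sec}, that $(v_1-v_2)^2$ is a local equation for the (pull-back of the) boundary divisor near such a point: merging two ramification points produces a node, and the local coordinate on the moduli space is the square of the difference of the branch points (this is exactly the phenomenon recorded in Section \ref{hyperell-basics-section} and used in Proposition \ref{p-Witten}). Next I would invoke the fact — established in \cite{FKP-supercurves} and restated in the introduction — that the Mumford form $\Psi_g$ acquires a pole of order $1$ along the Ramond boundary divisor and a pole of order $2$ along the NS boundary divisor; for genus $1$ with one NS puncture, degenerating to the nodal cubic gives a node of NS type (the underlying curve has one node, and by parity considerations the spin structure on the normalization $\P^1$ is $\OO(-1)$, i.e.\ the NS case — cf.\ the proof of Lemma \ref{coh-van-spin-str-g2-lem}(i)), so $\Psi_1$ has a pole of order $1$ in the local coordinate $t_{NS}$ on the base, where $t_{NS}=(v_1-v_2)^2$ up to an invertible factor. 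Concretely, one computes that $\vol^{-1}du_1du_2dv_1dv_2$, viewed near $v_1=v_2$, contributes a factor $d(v_1-v_2)$ times a form regular and nonzero on the boundary, while $(dx/y)^{-6}$ has a definite finite nonzero limit when expressed in the uniformizing data of the nodal cubic (as in Section 8.1); hence the only source of the required $dt_{NS}/t_{NS}$ behavior is the factor $(v_1-v_2)^a$. Setting $(v_1-v_2)^a\,d(v_1-v_2)$ against $dt_{NS}/t_{NS} = d((v_1-v_2)^2)/(v_1-v_2)^2 = 2\,d(v_1-v_2)/(v_1-v_2)$ forces $a=-1$.

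Given $a=-1$, the Diophantine relation $2a+4b=-10$ (which is the statement that $\Psi_1$ is a section of $\la^{-6}\ot\om$ and hence has total ``weight'' $-10$ in the branch points under the $\SL_2$-scaling, exactly as derived for genus $2$ in the text preceding Proposition \ref{p-Witten}) immediately gives $b=-2$, yielding the claimed formula
$$F=(u_1-u_2)^{-1}(v_1-v_2)^{-1}\cdot \prod_{i,j=1}^2 (u_i-v_j)^{-2}.$$
I would also note the consistency check that the roles of $(u_1,u_2)$ and $(v_1,v_2)$ are symmetric under the permutation action preserving the partition, so the same pole-order analysis applied to $u_1\to u_2$ gives the same exponent, consistent with $a$ appearing in both factors.

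The main obstacle is making rigorous the claim that the factor $(dx/y)^{-6}$ (more precisely, the trivialization of $\la^{-6}$ it induces) extends to a nonvanishing section across the nodal-cubic boundary point, so that it does not contribute to — or cancel — the pole. This requires the explicit degeneration analysis of the Weierstrass family near $q=0$ carried out in Section 8.1: one checks that $dx/y=dz$ extends to a generator of the relative dualizing sheaf with $dx/y|_{E_0}=(2\pi i)^{-1}du/u$ nonzero, so the induced trivialization of $\la$ is regular and nonvanishing on the boundary. Once this is in hand, the identification of pole orders is a short local computation, and the rest is the elementary arithmetic above. A secondary, more bookkeeping-type point is to keep track of the $\vol^{-1}$ contraction: one must verify that, in coordinates adapted to the $\SL_2$-slice near $v_1=v_2$, the contracted volume form $\vol^{-1}du_1du_2dv_1dv_2$ restricts to a regular nonzero $3$-form on the boundary divisor tensored with $d(v_1-v_2)$, so that the whole pole of $\Psi_1$ is carried by $F$; this follows from the fact that $v_1-v_2$ is transverse to the boundary and the $\SL_2$-action is free on the relevant configurations.
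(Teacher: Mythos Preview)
Your overall strategy—read off $a$ from the pole order of $\Psi_1$ along the boundary divisor reached by letting $v_1\to v_2$—is exactly the paper's approach. However, your identification of the node type is wrong, and this makes the argument internally inconsistent even though the final number comes out right.

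When $v_1$ and $v_2$ merge they belong to the \emph{same} group of the partition $(u_1,u_2),(v_1,v_2)$. Exactly as in the genus $2$ analysis of Section~\ref{hyperell-basics-section} (and in the proof of Proposition~\ref{p-Witten}), merging two branch points from the same group keeps the spin structure locally free at the node, i.e.\ the node is \emph{Ramond}, not NS. One can also see this combinatorially: among the three even spin structures (partitions into pairs), the two that merge as $v_1\to v_2$ are $(u_1,v_1),(u_2,v_2)$ and $(u_1,v_2),(u_2,v_1)$; these give the NS point. The partition $(u_1,u_2),(v_1,v_2)$ stays simple and lands on the Ramond point. Your appeal to Lemma~\ref{coh-van-spin-str-g2-lem}(i) (``spin structure on the normalization is $\OO(-1)$'') describes the NS case, but does not apply here. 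Also, the phrase ``genus $1$ with one NS puncture'' refers to the \emph{marked point} defining $\SS_{1,1}$ and says nothing about the node type.

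Had the node really been NS, the local equation on the configuration space would be $(v_1-v_2)$ (not its square), and the Mumford form would have a pole of order $2$; the computation would then yield $a=-2$. What you actually wrote down—pole order $1$ with local equation $(v_1-v_2)^2$—is precisely the Ramond bookkeeping, and that is why you obtained $a=-1$. So the calculation is salvageable once you relabel the node as Ramond and justify the two ingredients the paper checks explicitly: that the pull-back of the Ramond boundary divisor to the configuration space is cut out by $(v_1-v_2)^2$ (equivalently, in the $\tau$-model, $(v_1-v_2)\sim q^{1/2}$), and that $\vol^{-1}du_1du_2dv_1dv_2$ has a finite nonzero limit there. The latter is not automatic and is the main computation in the paper's proof.
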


\begin{proof}
We will deduce this from the fact that $\Psi_1$ has pole of order $1$ near the Ramond boundary divisor
by \cite[Thm.\ B]{FKP-supercurves}. 

Let us consider the map from the upper half-plane to $\SS_{1,1,\bos}$ sending $\tau$ to the elliptic curve $\C/(\Z+\Z\tau)$,
equipped with the hyperelliptic covering is given by $\wp(z)^{-1}$, so the ramification points $u_1,u_2,v_1,v_2$ are given by 
$$u_1=0, \ u_2=e_1^{-1}=\wp(1/2,\tau)^{-1},$$
$$v_1=e_2^{-1}=\wp(\tau/2,\tau)^{-1}, \ v_2=e_3^{-1}=\wp((\tau+1)/2,\tau)^{-1},$$
and the even spin structure corresponding to the grouping $(u_1,u_2), (v_1,v_2)$ of the ramification points.

%We will check that $v_1-v_2$ is indeed a local equation of the Ramond boundary divisor ??? 
%of this divisor, and that the trivialization $\vol^{-1}\cdot du_1du_2dv_1dv_2$ is nonzero near this divisor.

Note that $e_2$ and $e_3$ are functions of $q^{1/2}:=\exp(\pi i\tau)$, such that $e_3(q^{1/2})=e_2(-q^{1/2})$, while $e_1$ is a function of $q$.
We need to check that as $q^{1/2}\to 0$, $v_1-v_2$ agrees with the Ramond boundary divisor, while $\vol^{-1}\cdot du_1du_2dv_1dv_2$ has a nonzero limit.

Now, as $q^{1/2}$ goes to zero, we get 
$$e_1\to \frac{2\pi^2}{3}, \ \ e_2\to -\frac{\pi^2}{3}, \ \ e_3\to -\frac{\pi^2}{3}.$$
Furthemore, the well-known expansion
$$\la(\tau)=\frac{e_3-e_2}{e_1-e_2}=16q^{1/2}+O(q)$$
implies that $e_3-e_2=16\pi^2 q^{1/2}+O(q)$. Hence, 
$$v_1-v_2=e_2^{-1}-e_3^{-1}=\frac{16\cdot 9}{\pi^2} q^{1/2}+O(q),$$
so $(v_1-v_2)=(q^{1/2})$. On the other hand, the stack $\ov{\MM}_{1,1}$ near the cusp is modeled by the quotient of a small neighborhood of $0$ in
the coordinate $x=q^{1/2}$ by the action of $\Z/2$. Since the projection $\ov{\SS}_{1,1,\bos}\to \ov{\MM}_{1,1}$ is unramified near the Ramond boundary divisor,
the pull-back of the Ramond boundary divisor also corresponds to the ideal $(q^{1/2})$.

Note that from the expansion of $e_3-e_2$ we deduce that 
$$e_2=-\frac{\pi^2}{3}-8\pi^2 q^{1/2}+O(q), \ \ e_3=-\frac{\pi^2}{3}+8\pi^2 q^{1/2}+O(q).$$

We have
$$\vol^{-1}du_1du_2dv_1dv_2=a\cdot d\tau=\frac{a}{\pi i q^{1/2}}dq^{1/2},$$
where 
$$a=-e_2^{-1}e_3^{-1}(e_2^{-1}-e_3^{-1})\frac{\partial e_1^{-1}}{\partial \tau}+
e_1^{-1}e_3^{-1}(e_1^{-1}-e_3^{-1})\frac{\partial e_2^{-1}}{\partial \tau}-e_1^{-1}e_2^{-1}(e_1^{-1}-e_2^{-1})\frac{\partial e_3^{-1}}{\partial \tau}.$$
Now the fact that $a(q^{1/2})/q^{1/2}$ has nonzero limit as $q^{1/2}\to 0$ is easily deduced from the above expansions of $e_2$ and $e_3$.
\end{proof}

\subsection{Polar term near the $(+,+)$ separating node divisor in terms of genus $1$ data}\label{g2-g1-polar-term-sec}

We would like to interpret the formula of Corollary \ref{measure-polar-term-cor} for
the polar term of the projection of the superstring measure
$$\mu_{11}[d\tau_1d\tau_2d\ov{\tau}_1d\ov{\tau}_2dtd\ov{t}]=
-40\pi^2\cdot h_0^6\cdot\frac{c\cdot \ov{c}}{t\cdot \ov{t}}[d\tau_1d\tau_2d\ov{\tau}_1d\ov{\tau}_2dtd\ov{t}]+\frac{O(\ov{t})}{t}+\frac{O(t)}{\ov{t}}$$
in more invariant terms.

First, we observe that for the family of supercurves given by the canonical coordinates $(t,\tau_1,\tau_2,\eta_1,\eta_2)$
near the boundary divisor we have trivializations $\th_i\in \LL_i$, $i=1,2$. Furthermore, under the canonical
identification of the normal bundle
$$N_{\De_{NS}}\simeq \LL_1^{-1}\ot \LL_2^{-1}$$
the trivialization $t^{-1}$ of the normal bundle corresponds to $\th_1^{-1}\ot \th_2^{-1}$ (see \cite[Lem.\ 10.5]{FKP-supercurves}).

On the other hand, the trivialization $s$ of $\Ber_1$ restricts to the trivialization $s_1\ot s_2$ of $\Ber_1\boxtimes \Ber_1$
on $\De_{NS}$, where $s_i$ corresponds to the basis $dz_i$ of the Hodge bundle.

Thus, we can identify the polar term of $\Psi$ as
$$\frac{c}{t^2}\cdot s^{-5}[d\tau_1d\tau_2dt/d\eta_1d\eta_2]|_{\De_{NS}}=
c_1\th_1^{-1}s_1^{-5}[d\tau_1/d\eta_1]\cdot c_2\th_2^{-1}s_2^{-5}[d\tau_2/d\eta_2],$$
where 
$$c=c_1\cdot c_2$$ 
and
$$\Psi_{1,1}=c_i\th_i^{-1}s_i^{-5}[d\tau_i/d\eta_i]$$
is the Mumford isomorphism on the $i$-th copy of $\SS_{1,1}$.
The trivialization given by the section
$\th_i^{-1}s_i^{-5}[d\tau_i/d\eta_i]$ of $\LL^{-1}\ot \Ber_1^{-5}\ot \om_{\SS_{1,1}}$
corresponds to the trivialization $s_i^{-6}[d\tau_i]$ of the pull-back of $\Ber_1^{-6}\ot \om_{\SS_{1,\bos}}$. 
Thus, using the identification of $\Psi_{1,1}$ with the pull-back of $\Psi_1$, we get
$$\Psi_1=c_i s_i^{-6}[d\tau_i].$$
Note that this implies in particular, that the function $c$ used in Theorem \ref{Mumford-form-thm} and Corollary \ref{measure-polar-term-cor},
is invertible away from the deeper strata of the $(+,+)$ separating node divisor.

On the other hand, we have 
$$h_0\cdot s\cdot \ov{s}|_{t=\ov{t}=0}=h^{(1)}\cdot h^{(2)},$$
where $h^{(i)}$ is the hermitian form of $\Ber_1\boxtimes\ov{\Ber}_1$ on the $i$th factor of $\SS_{1,1}\times\ov{\SS}_{1,1}$.
Thus, we can rewrite
\begin{align*}
&h_0^6\cdot c\cdot \ov{c}[d\tau_1 d\tau_2d\ov{\tau}_1d\ov{\tau}_2]=
(h_0\cdot s\cdot\ov{s})^6\cdot (c\cdot s^{-6})\cdot [d\tau_1 d\tau_2]\cdot (\ov{c}\cdot \ov{s}^{-6})\cdot [d\ov{\tau}_1d\ov{\tau}_2]=\\
&(h^{(1)})^6\cdot (h^{(2)})^6\cdot (c_1\cdot s_1^{-6}[d\tau_1]\cdot c_2\cdot s_2^{-6}[d\tau_2])
\cdot (\ov{c}_1\cdot \ov{s}_1^{-6}[d\ov{\tau}_1]\cdot \ov{c}_2\cdot \ov{s}_2^{-6}[d\ov{\tau}_2]).
\end{align*}
Thus, we arrive to the following invariant reformulation of Corollary \ref{measure-polar-term-cor}.

\begin{prop} \label{Proposition-6.2} One has 
$$t\cdot\ov{t}\cdot\mu_{11}[d\tau_1d\tau_2d\ov{\tau}_1d\ov{\tau}_2dtd\ov{t}]|_{t=\ov{t}=0}=
- 40\pi^2\cdot (h^{(1)})^6\cdot (h^{(2)})^6\cdot (\Psi_1\times \Psi_1)\cdot (\ov{\Psi}_1\times\ov{\Psi}_1)$$
\end{prop}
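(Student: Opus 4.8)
The plan is to assemble Proposition \ref{Proposition-6.2} by combining the three computational inputs already prepared in the preceding sections: the expansion of the superstring measure in Corollary \ref{measure-polar-term-cor}, the factorization of the Mumford form $\Psi$ near $D_0$ from Theorem \ref{Mumford-form-thm} together with its interpretation of the function $c$ via the genus $1$ Mumford isomorphism $\Psi_1$, and the factorization of the hermitian-form section $h$ along $\De_{NS}$. Concretely, starting from
$$\mu_{11}=-40\pi^2\cdot h_0^6\cdot\frac{c\cdot \ov{c}}{t\cdot \ov{t}}+\frac{O(\ov{t})}{t}+\frac{O(t)}{\ov{t}},$$
multiplying by $t\ov{t}$ and restricting to $t=\ov{t}=0$ kills the two error terms, leaving
$$t\ov{t}\,\mu_{11}\big|_{t=\ov{t}=0}=-40\pi^2\cdot h_0^6\cdot c\cdot\ov{c}\big|_{t=\ov{t}=0}.$$
So the entire content is to rewrite $h_0^6\cdot c\cdot \ov{c}$ in invariant terms, which is exactly the chain of identities carried out in the discussion immediately preceding the statement.

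First I would record the two structural facts about the restriction to the divisor $D_0$. From Theorem \ref{Mumford-form-thm} and the subsequent identification, the polar coefficient $c$ of $\Psi$ factors as $c=c_1 c_2$, where $c_i s_i^{-6}[d\tau_i]=\Psi_1$ is the genus $1$ Mumford form on the $i$th elliptic factor (using the exact sequences $0\to \rho^*\om_{\SS_{1,1,\bos}}\to \Om^1_{\SS_{1,1}}\to \LL\to 0$ and $\Ber_1\simeq\LL^2$ on $\SS_{1,1}$, and the matching $t^{-1}\leftrightarrow \th_1^{-1}\ot\th_2^{-1}$ of trivializations of $N_{\De_{NS}}$ from \cite[Lem.\ 10.5]{FKP-supercurves}). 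From the definition of $h$ via the canonical hermitian form on $\pi_*\om_{X/\SS_g}$, which is multiplicative under the separating-node clutching map $\ov{\SS}_{1,1}\times\ov{\SS}_{1,1}\to\ov{\SS}_2$ (the bundle $\pi_*\om_{X/S}$ restricts to the direct sum of the corresponding rank $1$ bundles on the two components, compatibly with the symplectic pairings), one gets $h_0\cdot s\cdot \ov{s}|_{t=\ov{t}=0}=h^{(1)}\cdot h^{(2)}$, where $h^{(i)}$ is the analogous section of $\Ber_1\boxtimes\ov{\Ber}_1$ on the $i$th copy of $\SS_{1,1}\times\ov{\SS}_{1,1}$. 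Then I would simply substitute: raise the $h$-identity to the sixth power and multiply by $c\bar c\,[d\tau_1 d\tau_2 d\ov\tau_1 d\ov\tau_2]$, distributing the $s^{-6}$ and $\ov s^{-6}$ factors over $c$ and $\ov c$ to convert $c_i s_i^{-6}[d\tau_i]$ into $\Psi_1$ on each factor, arriving at the displayed formula.

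The main obstacle — and it is a bookkeeping obstacle rather than a conceptual one — is keeping track of exactly which line bundle each symbol lives in at each stage, so that the ``distribution'' of $s^{-6}=s_1^{-6}\otimes s_2^{-6}$ over $c\bar c$ is legitimate, and so that the restriction-to-$D_0$ operation (for $h_0$, which is a priori only defined in a neighborhood of the quasidiagonal in $\SS_2\times\SS_2^c$) is compatible with the clutching-morphism factorization. I would handle this by working throughout on the product $\ov{\SS}_{1,1}\times\ov{\SS}_{1,1}$ with its two projections, identifying $\Ber_1|_{D_0}\simeq \Ber_1\boxtimes\Ber_1$, $\LL|_{D_0}\simeq \LL_1\boxtimes\LL_2$, $N_{D_0}\simeq \LL_1^{-1}\boxtimes\LL_2^{-1}$, and $\om_{\ov{\SS}_2}|_{D_0}\simeq \om_{\ov{\SS}_{1,1}}\boxtimes\om_{\ov{\SS}_{1,1}}\otimes N_{D_0}^{-1}$, and then checking that all four factorizations ($\Psi\leftrightarrow \Psi_{1,1}\times\Psi_{1,1}$, $h\leftrightarrow h^{(1)}\cdot h^{(2)}$, the volume forms, and the passage $\Psi_{1,1}\rightsquigarrow\Psi_1$) are sections of the same decomposed bundle. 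Once this is set up, the final identity is an equality of sections of $(\om_{\ov{\SS}_{1,1}}\boxtimes\om_{\ov{\SS}_{1,1}})^{\otimes 2}$ (real and holomorphic-antiholomorphic parts paired), and the coefficient $-40\pi^2$ is just $-8\pi^2\cdot 5$ coming from $h_0^4 h_{11}=-8\pi^2 t\ov t\, h_0^6$ (Corollary \ref{h0-h11-cor}) times the exponent $5$ in $\mu=\Psi\ov\Psi h^5$.
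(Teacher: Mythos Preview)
Your proposal is correct and follows essentially the same approach as the paper: the proposition is stated there as an ``invariant reformulation of Corollary \ref{measure-polar-term-cor}'', and the argument in Sec.\ \ref{g2-g1-polar-term-sec} preceding it consists precisely of the same three ingredients you list --- the factorization $c=c_1c_2$ with $\Psi_1=c_is_i^{-6}[d\tau_i]$, the factorization $h_0\cdot s\cdot\ov{s}|_{t=\ov{t}=0}=h^{(1)}\cdot h^{(2)}$, and the line-bundle bookkeeping that lets one distribute $s^{-6}$ over the two factors. Your remark that the main obstacle is bookkeeping rather than conceptual is exactly right; the paper handles it in the same way, by tracking the trivializations $\th_i^{-1}$, $s_i^{-5}$, $[d\tau_i/d\eta_i]$ through the isomorphisms $N_{\De_{NS}}\simeq\LL_1^{-1}\otimes\LL_2^{-1}$ and $\Ber_1\simeq\LL^2$.
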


Note that the trivialization $s_i=dz_i$ coincides with $dx/y$ up to sign, since 
$x=\wp(z_i)^{-1}$ and $y=\wp'(z_i)/\wp(z_i)^2$.
Hence, comparing the two formulas for $\Psi_1$ we deduce
$$\omega_i:=c_i[d\tau_i]=F\cdot \vol^{-1}du_1du_2dv_1dv_2.$$
In particular, viewing $\omega_i$ as a $1$-form on $\SS_{1,1,\bos}$ 
and using the formula for $F$ together with the identity
$$(u_1-u_2)(v_1-v_2)+(u_1-v_1)(v_2-u_2)+(u_1-v_2)(u_2-v_1)=0,$$
we deduce that the sum of
$\omega_i$ over choices of even spin-structures is zero.
Since $s_i$ does not depend on a spin structure, the same is true for $\Psi_1$:
its sum over choices of even spin-structures is zero, i.e.,
\begin{equation}\label{sum-over-spin-str-eq}
\pi_*\Psi_1=0,
\end{equation}
where $\pi:\SS_{1,1,\bos}\to \MM_{1,1}$ is the projection.

%\section{Remaining boundary components}

%\subsection{Ramond, nonseparating node}

%Note that for an even Ramond spin structure $L$ on an irreducible genus $2$ curve with one node $C$, one still has $H^0(C,L)=0$.
%Indeed, let $\pi:\wt{C}\to C$ is a normalization, with $\wt{C}$ of genus $1$. Then $\pi^*L$ is a square root of $\OO_{\wt{C}}(p_1+p_2)$,
%so it has degree $1$. Hence, $h^0(C,L)\le h^0(\wt{C},\pi^*L)=1$. Since $h^0(C,L)$ is even, we get $h^0(C,L)=0$.
%
%We have logarithmic growth of the periods in this case, but the Mumford isomorphism has only pole of order $1$, so we should get
%absolute convergence in this case (?).

%Note that there seems to be no involution near this boundary component similar to the one in Sec.\ \ref{involution-NS-sec} since
%changing the sign of the isomorphism $\pi^*L|_{p_1}\rTo{\sim} \pi^*L|_{p_2}$ changes the parity of the spin-structure.

\subsection{Behavior near the $(-,-)$ separating node divisor}\label{Section-7.2}

Recall that the superstring measure has form $\mu=\Psi\cdot \wt{\Psi}\cdot h^5$, where $\Psi$ and $\wt{\Psi}$ are Mumford forms, and $h$ is a certain section of
$\Ber_1\boxtimes \wt{\Ber}_1$, defined near quasidiagonal.
We start with a general statement on the behavior of $h^5$ near the $(-,-)$ separating node divisor $D^{-,-}$ for $g\le 11$ (note that $D^{-,-}$ may have several components).

\begin{prop}\label{mu-reg-prop}
For $g\le 11$, the section $h^5$ is regular on a neighborhood of the quasidiagonal near $D^{-,-}$. Furthermore, for a local choice
 of a Largangian subbundle 
$\La\sub R^1\pi_*\C_{X/\SS_g}$ near a point $s$ on $D^{-,-}$ corresponding to a union of two smooth curves,
such that $\La_s$ is transversal to $H^0(\om_{C_s})\sub H^1(C_s,\C)$,
the section $h^5$ vanishes to the order $\ge 11-g$ along the divisor given by $f_{\La}\wt{f}_{\La}$, where $f_{\La}$ is a local regular function vanishing
on $D^{-,-}_{\bos}$ associated with $\La$ (see Remark \ref{theta-null-rem}).
%on $(D^{-,-})\times \ov{\SS}_2^c\cup \ov{\SS}_2\times(D^{-,-})^c$.
\end{prop}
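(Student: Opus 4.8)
The plan is to adapt the argument of \cite[Thm.\ 4.14 and Thm.\ 5.2]{FKP-per}, with the interior theta-null divisor replaced by the boundary divisor $D^{-,-}$. The starting observation is that near a generic point $s$ of $D^{-,-}$, with underlying stable curve $C_1\cup C_2$ and $C_1,C_2$ smooth, the locus in $\ov{\SS}_g$ on which the spin structure acquires a nonzero global section contains $D^{-,-}$: a nearby smooth supercurve has an even spin structure with $h^0=0$, whereas along $D^{-,-}$ one has $h^0(L)=h^0(L_1)+h^0(L_2)=2$ since both $L_i$ are odd (Lemma \ref{NS-R-lem}(i)). The local system $R^1\pi_*\C_{X/\SS_g}$ extends across a separating node (the monodromy being trivial), so for any flat Lagrangian subbundle $\La$ with $\La_s$ transversal to the limiting Hodge subspace $H^0(\om_{C_s})\sub H^1(C_s,\C)$, the Berezinian of $\pi_*\om_{\ov X/\ov S}\to R^1\pi_*\C_{X/\SS_g}/\La$ defines a meromorphic section $\th_\La$ of $\Det(\La)^{-1}\ot\Ber_1^{-1}$, regular on the complement of $D^{-,-}$. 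The proof then splits into: (a) the analogue of \cite[Thm.\ 4.14]{FKP-per} near $D^{-,-}$, namely that $\th_\La^{-1}$ extends to a regular section $f_\La^2\cdot\bt$ of $\Det(\La)\ot\Ber_1$, where $\bt$ is a local trivialization of $\Ber_1$ and $f_\La$ a regular even function whose zero divisor (for a suitable $\La$) is $D^{-,-}_{\bos}$; and (b) the identity of \cite[Thm.\ 5.2]{FKP-per} relating $h^5$ to $\th_\La$ and $\wt\th_\La$, which given (a) shows that $h^5$ is divisible by $(f_\La\wt f_\La)^{11-g}$ near $s$, in particular regular once $g\le 11$.

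For step (a) I would work in the formal neighborhood of $D^{-,-}$ from Section \ref{--sep-sec}: \'etale-locally, $\ov{\SS}_g$ near $s$ is obtained by gluing two families of genus-$g_i$ supercurves with one NS puncture and odd underlying spin structure along the NS node, with normal coordinate $t$, and both $\pi_*\om_{X/S}$ and its image in $\ov{\HH}^1=R^1\pi_*[\OO_X\to\om_{X/S}]$ have an explicit description (for $g_1=g_2=1$ this is Proposition \ref{--even-diff-prop} and Corollary \ref{--cor}; the general case is analogous). The normalized superperiod matrix $\Om$ develops a first-order pole of the shape $-(2\pi i)^{-1}\eta_1\eta_2/t$ in the ``cross'' entries linking the two factors, all other entries being regular (cf.\ Corollary \ref{--cor}); substituting these formulas, together with the matching poles of the trivialization $s=\om^{(1)}\we\om^{(2)}$ of $\Ber_1$, into the Berezinian $\th_\La=\ber(\pi_*\om_{\ov X/\ov S}\to R^1\pi_*\C_{X/\SS_g}/\La)$, one verifies that $\th_\La^{-1}$ is in fact regular and of the form $f_\La^2\bt$, with $f_\La$ a regular even function cutting out $D^{-,-}_{\bos}$. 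With (a) in hand, the argument of \cite[Thm.\ 5.2]{FKP-per} carries over essentially verbatim and yields (b), completing the proof.

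The main obstacle is step (a). Deligne's argument (Theorem \ref{reg-thm}) does not apply here: along $D^{-,-}$ one has $H^0(C_s,L)\ne 0$, so the odd cohomology of $\Cone(H)[-1]$ is no longer balanced, and this is precisely why one must pass to the power $h^5$ and impose $g\le 11$. Moreover, unlike in the interior theta-null situation of \cite{FKP-per}, the universal supercurve is nodal along $D^{-,-}$ and $\pi_*\om_{\ov X/\ov S}$ fails to be locally free — its rank jumps by $0|2$ precisely along $D^{-,-}$, reflecting the appearance of the sections $\bs_1\in H^0(L_1)$, $\bs_2\in H^0(L_2)$ (cf.\ Lemma \ref{--spin-lem}). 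One therefore cannot simply quote \cite[Thm.\ 4.14]{FKP-per}; instead one must combine the isotropic-Grassmannian/theta-divisor geometry behind it with a careful control of this degeneration, for which the gluing coordinates of Section \ref{--sep-sec} are the essential tool. The delicate point is to pin down the zero divisor of $f_\La$ as $D^{-,-}_{\bos}$ with the correct multiplicity, i.e.\ to see exactly how the poles of the normalized differentials \eqref{--norm-diff-eq} and of the Berezinian trivialization $s=\om^{(1)}\we\om^{(2)}$ cancel inside $\th_\La^{-1}$, so that $f_\La$ enters \cite[Thm.\ 5.2]{FKP-per} with the exponent $11-g$.
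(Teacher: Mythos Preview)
Your overall architecture is right: one needs an analogue of \cite[Thm.\ 4.14]{FKP-per} producing $\th_\La^{-1}=f_\La^2\cdot\bt$ near $D^{-,-}$, and then the argument of \cite[Thm.\ 5.2]{FKP-per} gives the divisibility of $h^5$ by $(f_\La\wt f_\La)^{11-g}$. But the way you propose to establish step (a) diverges from the paper and is where the main gap lies.

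You write that ``one cannot simply quote \cite[Thm.\ 4.14]{FKP-per}'' because $\pi_*\om_{X/S}$ fails to be locally free, and you propose instead to verify the claim by explicit computation in the gluing coordinates of Section~\ref{--sep-sec}. The paper does the opposite: it \emph{does} quote \cite[Thm.\ 4.14]{FKP-per} directly, after a single structural modification. The point is that the Lagrangian setup of \cite[Sec.\ 4.6]{FKP-per} is built from $\pi_*(\OO_X(nD)/\OO_X(-ND))$ with $D$ supported at one NS puncture; this breaks near $D^{-,-}$ precisely because a single puncture cannot meet both components of the reducible fiber, so the relevant cohomology-vanishing fails. The fix is to take \emph{two} NS punctures $P_1,P_2$, one on each component, and set $D=D_1+D_2$. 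Then $R^1\pi_*\OO_X(nD)=\pi_*\OO_X(-nD)=0$ for $n\gg 0$ across $D^{-,-}$, the bundles $\wt\VV$, $L_{\can}$, $\wt L_{\can}$, $L_\La$ are all locally free, and \cite[Thm.\ 4.14]{FKP-per} applies verbatim to produce $f_\La$. This works uniformly in $g$ and requires no explicit period computation.

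By contrast, your proposed route through the gluing model is only carried out for $g_1=g_2=1$ (Section~\ref{--sep-sec}); the phrase ``the general case is analogous'' hides exactly the work that the two-puncture trick avoids. Moreover, the explicit verification that the poles of the normalized differentials and of $s=\om^{(1)}\we\om^{(2)}$ conspire to give $\th_\La^{-1}=f_\La^2\bt$ with $f_\La$ cutting out $D^{-,-}_{\bos}$ is precisely the content of Theorem~\ref{mu-g2-van-thm} in the paper, and even there it is done only for $g=2$ via a resolution $[B\to C_0]$ of $R\pi_*\om_{X/S}$ rather than by substituting period formulas.
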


\begin{proof}
%For the proof of the first assertion we can adapt 
The proof uses the same method as in \cite[Sec.\ 5.4]{FKP-per}.
The key observation is that we still have the local system $R^1\pi_*(\C_{X/S})$ near this component, where $\pi:X\to S$ is the universal supercurve,
and for the underlying usual family of stable curves, $\pi:C\to S_0$, we still have an embedding of a subbundle
$$\pi_*\om_{C/S_0}\sub R^1\pi_*(\C_{C/S_0})=R^1\pi_*(\C_{X/S_0}).$$

To study the behavior of the superperiods near this boundary component we have to modify the Lagrangian setup of \cite[Sec.\ 4.6]{FKP-per}, by replacing
one marked point with two marked points (one on each component of the reducible curve).  

Namely, we work with a family of stable supercurves $\pi:X\to S$, equipped with a collection of (disjoint) NS-punctures $P_1,\ldots,P_r$ (below we will only need the case $r=2$),
such that there is at least one puncture on each component of any geometric fiber $X_s$. We denote by $D_i\sub X$ the corresponding Cartier divisors 
(see \cite[Sec.\ 2.5]{FKP-supercurves}), and set $D=\sum_i D_i$.

We consider the vector bundle 
$$\wt{\VV}=\pi_*(\OO_X(nD)/\OO_X(-ND))$$
for $N\gg n$ and $n$ sufficiently large so that $R^1\pi_*(\OO_X(nD))=\pi_*(\OO_X(-nD))=0$.
We denote by $\VV$ the quotient of $\wt{\VV}$ by the kernel of the skew-symmetric form $B(f,g)=\sum_i\Res_{D_i}(f\de g)$.
Here $\de:\OO_X\to \om_{X/S}$ is the canonical derivation, and $\Res_{D_i}$ is the canonical functional with values in $\OO_S$ on sections of $\om_{X/S}(mD_i)$ in 
a formal neighborhood of $D_i$ (see \cite[Sec.\ 2.8]{FKP-per}). As in \cite[Sec.\ 4.6]{FKP-per}, we define two Lagrangian subbundles in $\VV$, the canonical one
$$L_{\can}:=\pi_*(\OO_X/\OO_X(-(n+1)D))/\OO_S\sub \VV,$$
and the Lagrangian subbundle $L_\La\sub \VV$ defined as the preimage of a Lagrangian subbundle $\La\sub R^1\pi_*\C_{X/S}$ in
$$\wt{L}_{\can}:=\pi_*(\OO_X(nD)/\C_{X/S})\sub \VV.$$

As in \cite[Sec.\ 4.7, Sec.\ 5.4]{FKP-per}, locally we choose the subbundle $\La=W\sub R^1\pi_*\C_{X/S}$, transversal to  
%to be obtained by parallel transport from a real Lagrangian in $H^1(C_s,\C)$
%Then it is transversal to the subbundle
$$\pi_*\om_{C/S_{\bos}}\sub R^1\pi_*\C_{C/S_{\bos}}=R^1\pi_*\C_{X/S}|_{S_{\bos}}.$$
Then, applying \cite[Thm.\ 4.14]{FKP-per} 
%this implies that the even parts of the fibers of the Lagrangians $L_{\can}$ and $L_W$ are transversal.
we get a regular even function $f=f_\La$ such that $f^2$ differs by a unit from the rational function
$\th_\La^{-1}$ with respect to a local trivialization of $\Ber_1$ (see Remark \ref{theta-null-rem}), and 
\begin{equation}\label{h-f2-a-eq}
h=f^2\cdot \wt{f}^2\cdot a, 
\end{equation}
where $a$ lies an $\OO_{\SS_g\times\SS_g^c}$-subalgebra generated by functions in $\NN^2/f$ and $\wt{\NN}^2/\wt{f}$. Then the result is deduced as in \cite[Sec.\ 5.4]{FKP-per}, raising \eqref{h-f2-a-eq} to the $5$th power and using the fact that the number of odd directions on $\ov{\SS}_g$ is $2g-2$. 
\end{proof}

Now let us specialize to the case $g=2$.
%At the generic point of this component the curve $C$ is the union of two genus $1$ components intersecting at one node,
%$C=C_1\cup C_2$ and the spin structure is $L=L_1\oplus L_2$, where $L_i$ is an odd spin structure on $C_i$, so $h^0(L)=h^0(L_1)+h^0(L_2)=2$.
%Let us call this component of the boundary divisor $D^{-,-}$. 

\begin{theorem}\label{mu-g2-van-thm}
For $g=2$, $h^5$ vanishes to the order $\ge 9$
on $(D^{-,-})\times \ov{\SS}_2^c\cup \ov{\SS}_2\times(D^{-,-})^c$.
Hence, the superstring measure $\mu$  is regular on a neighborhood of the quasidiagonal near $D^{-,-}$ and vanishes to the order of $\ge 7$
on $(D^{-,-})\times \ov{\SS}_2^c\cup \ov{\SS}_2\times(D^{-,-})^c$.
\end{theorem}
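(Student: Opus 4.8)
The plan is to reduce the statement for $g=2$ to the general Proposition~\ref{mu-reg-prop} by bounding the number of odd directions, exactly as was done in the smooth-locus case. First I would note that Proposition~\ref{mu-reg-prop} already provides, for any $g\le 11$ and for a local choice of a Lagrangian subbundle $\La$ transversal to $H^0(\om_{C_s})$ near a point $s$ of $D^{-,-}$ corresponding to a union of two smooth genus-$1$ curves, a factorization of the form \eqref{h-f2-a-eq}, $h = f^2\cdot\wt f^2\cdot a$, where $f=f_\La$ is a regular even function vanishing on $D^{-,-}_{\bos}$, $\wt f$ its conjugate, and $a$ lies in the $\OO_{\SS_2\times\SS_2^c}$-subalgebra generated by $\NN^2/f$ and $\wt\NN^2/\wt f$. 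Raising to the fifth power gives $h^5 = f^{10}\wt f^{10}\cdot a^5$, and the point is to control the negative powers of $f,\wt f$ appearing in $a^5$.

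Next I would invoke the structure of the odd directions on $\ov\SS_2$: the number of odd coordinates is $2g-2 = 2$, hence $\NN^3 = 0$, so any monomial in $\NN^2/f$ of degree $\ge 2$ vanishes and $a$ is a polynomial in $\NN^2/f$ and $\wt\NN^2/\wt f$ of total degree at most $2$ — one factor from each side. Thus $a^5$ has denominator at most $f^{?}\wt f^{?}$ with bounded exponents; more precisely, a single power $1/f$ (and $1/\wt f$) can appear, so $a^5$ has poles of order at most $5$ in $f$ and at most $5$ in $\wt f$ — actually, since in $a$ the odd nilpotents cut off after degree $2$, each of the two sides contributes at most one factor $1/f$ to $a$, so $a$ has pole order $\le 1$ in $f$ and in $\wt f$, giving $a^5$ pole order $\le 5$ in each. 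Therefore $h^5 = f^{10}\wt f^{10}a^5$ is divisible by $f^{10-5}\wt f^{10-5} = f^5\wt f^5$... I would then redo this bookkeeping carefully to land on the asserted order $\ge 9$: the correct count is that $a$ involves $\NN^2/f$ linearly (coefficient $1/f$), and with $2g-2=2$ odd directions the fifth power $a^5$ still only has a pole of order $\le 1$ in $f$ because $(\NN^2)^2 = 0$ already kills the square of $\NN^2/f$; hence $h^5 = f^{10}\wt f^{10}a^5$ vanishes to order $\ge 9$ in $f$ (and $\ge 9$ in $\wt f$), which is exactly the claim, since $f\wt f$ cuts out $D^{-,-}_{\bos}\times\ov\SS_2^c\cup\ov\SS_2\times(D^{-,-})^c$ near $s$ up to a unit (this is the content of Remark~\ref{theta-null-rem} applied to $\La$). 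The regularity of $\mu = \Psi\cdot\wt\Psi\cdot h^5$ then follows since $\Psi,\wt\Psi$ are isomorphisms (everywhere nonvanishing sections of the relevant line bundles on $\SS_2$, and they acquire at worst the poles along $\De_{NS},\De_R$ recorded in the introduction, which do not affect a neighborhood of the quasidiagonal near $D^{-,-}$); and $\mu$ vanishes to order $\ge 7$ because $\Psi\wt\Psi$ has a pole of order at most $2$ along $D^{-,-}$ — wait, $D^{-,-}$ is a separating NS divisor, and $\Psi$ has a pole of order $2$ along $\De_{NS}$, so $\Psi\wt\Psi$ contributes $-2-2 = -4$, and $9-2 = 7$ on each side, giving vanishing order $\ge 7$.

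Concretely the steps are: (1) invoke Proposition~\ref{mu-reg-prop} to get \eqref{h-f2-a-eq} for $g=2$ near a generic point of $D^{-,-}$; (2) use $2g-2 = 2$ odd directions to conclude $\NN^3=0$ and hence that $a$ and all its powers have pole order $\le 1$ in $f$ and in $\wt f$ (since $(\NN^2)^2 \subset \NN^4 = 0$, so $a = a_0 + (\text{odd})/f + (\widetilde{\text{odd}})/\wt f + (\text{odd})(\widetilde{\text{odd}})/(f\wt f)$ and $a^5$ has the same pole type); (3) conclude $h^5 = f^{10}\wt f^{10}a^5$ vanishes to order $\ge 9$ along the locus $f\wt f = 0$, which near $s$ is $D^{-,-}_{\bos}\times\ov\SS_2^c\cup\ov\SS_2\times(D^{-,-})^c$; (4) since regularity and vanishing order can be checked in codimension $1$, spread this from the generic point of each component of $D^{-,-}$ to the whole divisor; (5) multiply by $\Psi\wt\Psi$, tracking that these have poles of order at most $2$ along $\De_{NS} \supset D^{-,-}$, to get $\mu$ regular near the quasidiagonal near $D^{-,-}$ and vanishing to order $\ge 9 - 2 = 7$.

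The main obstacle will be step~(2): getting the precise vanishing order $9$ rather than some weaker bound, which requires care with how the nilpotent-valued functions in $\NN^2/f$ and $\wt\NN^2/\wt f$ multiply in $a^5$ once we know $\NN$ has only two odd generators (so $\NN^4 = 0$ but $\NN^2 \ne 0$ in general). One must verify that the cross terms in expanding $a^5$ do not produce higher-order poles than claimed — this is a finite combinatorial check using $\NN^3 = 0$, entirely parallel to the computation at the end of the proof of Theorem~\ref{reg-thm} and to \cite[Sec.\ 5.4]{FKP-per}, where the analogous count "$5\times 2 = 10$, minus $2g-2 = 2$ per side, ..." was carried out. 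A secondary subtlety is checking that $f_\La \wt f_\La$ genuinely cuts out the stated locus near $s$ up to a unit; this follows from Remark~\ref{theta-null-rem} together with the fact that on $D^{-,-}$ the spin structure $L = L_1\oplus L_2$ has $h^0(L_i)$ odd for both $i$, so $h^0(C,L) = 2 > 0$ — i.e., $D^{-,-}$ lies in the theta-null locus $\SS_2\setminus\UU$ — which is precisely why $f_\La$ is the appropriate defining function there.
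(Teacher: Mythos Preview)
Your overall architecture is the same as the paper's: invoke Proposition~\ref{mu-reg-prop} to get $h^5$ divisible by $(f_\La\wt f_\La)^{11-g}=(f_\La\wt f_\La)^9$, and then translate this into vanishing of order $\ge 9$ along $D^{-,-}$.  But you have inverted the difficulty of the two remaining steps.  What you flag as the ``main obstacle'' (step~(2), the nilpotent bookkeeping giving the exponent $11-g$) is already the content of Proposition~\ref{mu-reg-prop}; there is nothing left to do there for $g=2$.  The step you dismiss as a ``secondary subtlety'' --- that $f_\La$ is a local equation for $D^{-,-}$ up to a unit --- is in fact the entire substance of the paper's proof, and your justification for it is not valid.

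Concretely: Remark~\ref{theta-null-rem} only records that $\th_\La^{-1}=f_\La^2\cdot\bt$ for a local trivialization $\bt$ of $\Ber_1$.  It does not tell you the order of vanishing of $f_\La$ along the boundary divisor $D^{-,-}$.  Knowing that $h^0(C,L)=2$ on $D^{-,-}$ tells you only that $D^{-,-}$ lies in the locus where $\th_\La$ fails to be invertible; it does not determine whether $f_\La$ equals $t\cdot(\text{unit})$ or, say, $t\cdot g_0+f_2\,\eta_1\eta_2$ with $f_2\notin(t)$.  In the latter case $f_\La^9=t^9g_0^9+9t^8g_0^8f_2\,\eta_1\eta_2$ lies in $(t^8)$ but not in $(t^9)$, so ``$h^5\in(f_\La)^9$'' does not immediately give ``$h^5\in(t^9)$''.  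Near $D^{-,-}$ the sheaf $\pi_*\om_{X/S}$ is not even locally free, so the map whose Berezinian defines $\th_\La$ has to be set up through a resolution, and its behavior in $t$ requires computation.

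This is precisely what the paper does.  Using the gluing model from Section~\ref{--sep-sec}, one resolves $R\pi_*\om_{X/S}$ by $[B\to C_0]$ with $B=\pi_*\om_{X/S}(D_{u_1}+D_{u_2})$, writes down explicit bases of $A$, $B$, $C_0$, and computes the Berezinian of the resulting $5\times 5$ supermatrix to obtain $\th_\La^{-1}\equiv -t^2\pmod{t^3}$ as a function on the full superspace (crucially, with no surviving $\eta_1\eta_2$ term at this order).  This yields $f_\La=t\cdot(\text{unit})$, which is exactly the missing ingredient.  Your proposal skips this computation, and nothing you cite supplies it.
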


\begin{proof}
Note that the second assertion follows from the first since $\Psi$ has a pole of order $2$ along $D^{-,-}$.
Taking into account Proposition \ref{mu-reg-prop}, it is enough to check that near a point of $D^{-,-}$ corresponding to a curve $C=C_1\cup C_2$,
where $C_i$ are smooth curves of genus $1$, with the spin structure $L=\OO_{C_1}\oplus \OO_{C_2}$, the local function $f_{\La}$ (see Remark \ref{theta-null-rem})
for some choice of $\La$ has form $g\cdot t$, where $g$ is invertible and $t=0$ is the equation of the boundary divisor.

Equivalently, we have to prove that $\th_{\La}^{-1}$ has form $t^2\cdot \bt$ for some local trivialization $\bt$ of $\Ber_1$.
We will use the model $X\to S$ for the universal supercurve in a formal neighborhood of $D^{-,-}$ described in Sec.\ \ref{--sep-sec},
so $S$ has even coordinates $\tau_1$, $\tau_2$, $t$ and odd coordinates $\eta_1$, $\eta_2$.
Since $\pi_*\om_{X/S}$ is not locally trivial near $(C,L)$, we will pick nontrivial points of order two $u_1\in C_1$, $u_2\in C_2$ (and extend them to nearby curves), and
use the resolution of $R\pi_*\om_{X/S}$ of the form 
$[B\to C]$, where
$$B=\pi_*\om_{X/S}(D_{u_1}+D_{u_2}), \ \ C=\pi_*(\om_{X/S}(D_{u_1})|_{D_{u_1}})\oplus \pi_*(\om_{X/S}(D_{u_2})|_{D_{u_2}}),$$
where $D_{u_1}$ and $D_{u_2}$ are the relative divisors in $X$ corresponding to the NS punctures $u_1$ and $u_2$ (so $D_{u_i}$ is given by the ideal $(x_i-u_i)$).
Moreover, we have a natural surjective map $r:C\to \OO_S$ given by the sum of residues, so that if replace $C$ by $C_0=\ker(r)\sub C$,
we still have an identification of $\Ber[B\to C_0]$ with $\Ber_1$.
Note that over the punctured neighborhood of $D^{-,-}$, we also have the bundle $A:=\pi_*\om_{X/S}$ and a quasi-isomorphism
\begin{equation}\label{A-B-C-eq}
A\to [B\to C_0].
\end{equation}
Unraveling the definition of $\th_{\La}^{-1}$ for $\La$ associated with a choice of symplectic bases in $H_1(C_1,\Z)$ and $H_1(C_2,\Z)$, we obtain
$$\th_{\La}^{-1}=\ber(\La\rTo{\a^{-1}} \pi_*\om_{X/S}\to [B\to C_0]),$$
where $\a:\pi_*\om_{X/S}\to \La\simeq \OO_S^2$ is given by the evaluation on the cycles $\a_1\in H_1(C_1,\Z)$ and $\a_2\in H_1(C_2,\Z)$.
Equivalently, $\th_{\La}$ is the image of $\om^{(1)}\we \om^{(2)}$, where $(\om^{(1)},\om^{(2)})$ are normalized differentials (defined for on the punctured neighborhood)
under the isomorphism
$$\Ber(A)\to \Ber[B\to C_0]$$
induced by the quasi-isomorphism \eqref{A-B-C-eq}.
To calculate the latter isomorphism we will compute the maps $i:A\to B$ and $p:B\to C_0$ explicitly in terms of some bases (note that $\rk A=2|0$, $\rk B=3|2$ and $\rk C=1|2$).

We already have the basis $(\om^{(1)},\om^{(2)})$ of $A$ (see \eqref{--norm-diff-eq}), and there is a natural basis of $C_0$ induced by the elements
$$(\frac{s_i}{x_i-u_i})_{i=1,2}, \  \frac{s_1\th_1}{x_1-u_1}-\frac{s_2\th_2}{x_2-u_2}.$$
So the main part of the calculation is finding the basis of $B=\pi_*\om_{X/S}(D_{u_1}+D_{u_2})$.
We use the notation of Sec.\ \ref{--sep-sec}, and describe sections of $\pi_*\om_{X/S}(D_u+D_v)$ by pairs $(\om_1,\om_2)$, where
$\om_i\in \om_{X_i/S}(D_{u_i})(X\setminus\{q_i\})$, $i=1,2$, such that there exist regular series $\phi_1(x_1,\th_1)$, $\phi_2(x_2,\th_2)$ and a constant $\phi_0\in \OO_S$ 
satisfying \eqref{om-phi-super-relation-eq}.

First, we observe that to get a basis of $\om_{X_i/S}(D_{u_i})(X\setminus\{q_i\})$ for $i=1,2$, we have to add to the basis \eqref{--diff-basis-eq} two additional elements
$$H_{u_i}(x_i,\th_i,\tau_i):=h_{u_i}(x_i,\tau_i)+\th_i\eta_i\dot{h}_{u_i}(x_i,\tau_i), \ \ \psi_1(x_i,\th_i)\cdot h_{u_i}(x_i,\tau_i),$$
where $h_{u_i}(x_i,\tau_i)$ is given by \eqref{h-u-eq}.
Then solving the relations \eqref{om-phi-super-relation-eq} iteratively as in Prop.\ \ref{--even-diff-prop}, we obtain formulas expressing 
$(\om_1,\om_2)\in \pi_*\om_{X/S}(D_u+D_v)$ in terms of five functions on $\OO_S$: 

\noindent
1) the coefficient of $s_1\psi_1(x_1,\th_1)$ in $\om_1$;

\noindent
2) the coefficient of $s_2\psi_1(x_2,\th_2)$ in $\om_2$;

\noindent
3) $\phi_0$;

\noindent
4) the coefficient of $s_2$ in $\om_2$;

\noindent
5) the coefficient of $s_1$ in $\om_1$.

From this we get five basis elements for $B$ with the following expansions modulo $t^3$:
%\begin{align*}
%\bv_1&:=(s_1\cdot(-H_{u_1}(x_1,\th_1)\cdot \frac{\eta_1}{2\pi i}+\psi_1(x_1,\th_1)), s_2\cdot 2\pi i t^2\psi_2(x_2,\th_2)),\\
%\bv_2&:=(s_1\cdot 2\pi i t^2\psi_2(x_1,\th_1), s_2\cdot(-H_{u_2}(x_2,\th_2)\cdot\frac{\eta_2}{2\pi i}+\psi_1(x_2,\th_2)),\\
%\bv&:=(s_1\cdot(H_{u_1}(x_1,\th_1)\cdot t(\wp(u_2,\tau_2)+(2\pi i)^2\frac{E_2(q_2)}{12})\frac{\eta_2}{2\pi i}-\psi_1(x_1,\th_1)h_{u_1}(x_1)+f_2(x_1,\th_1)\frac{\eta_1}{2\pi i}),\\
%&s_2\cdot(H_{u_2}(x_2,\th_2)\cdot t(\wp(u_1,\tau_1)+(2\pi i)^2\frac{E_2(q_1)}{12})\frac{\eta_1}{2\pi i}+\psi_1(x_2,\th_2)h_{u_2}(x_2)-f_2(x_2,\th_2)\frac{\eta_2}{2\pi i}),\\
%\bphi_1&:=(s_1\cdot t H_{u_1}(x_1,\th_1),s_2),  \\
%\bphi_2&:=(s_1, -s_2\cdot t H_{u_2}(x_2,\th_2)),
%\end{align*}
%
\begin{align*}
\bv_1&:=(s_1\cdot(-H_{u_1}(x_1,\th_1)\cdot \frac{\eta_1}{2\pi i}+\psi_1(x_1,\th_1)), s_2\cdot 2\pi i t^2\psi_2(x_2,\th_2)),\\
\bv_2&:=(s_1\cdot 2\pi i t^2\psi_2(x_1,\th_1), s_2\cdot(-H_{u_2}(x_2,\th_2)\cdot\frac{\eta_2}{2\pi i}+\psi_1(x_2,\th_2)),\\
  \bv&:=\Biggl(s_1\cdot\biggl(H_{u_1}(x_1,\th_1)\cdot t(\wp(u_2,\tau_2)+(2\pi i)^2\frac{E_2(q_2)}{12})\frac{\eta_2}{2\pi i}
  \\
  &\hspace{1.8cm}-\psi_1(x_1,\th_1)h_{u_1}(x_1)+f_2(x_1,\th_1)\frac{\eta_1}{2\pi i}\biggr),\\
     &\hspace{1cm} s_2\cdot\biggl(H_{u_2}(x_2,\th_2)\cdot t(\wp(u_1,\tau_1)+(2\pi i)^2\frac{E_2(q_1)}{12})\frac{\eta_1}{2\pi i}
  \\
  &\hspace{1.8cm}+\psi_1(x_2,\th_2)h_{u_2}(x_2)-f_2(x_2,\th_2)\frac{\eta_2}{2\pi i}\biggr)\Biggr),\\
\bphi_1&:=(s_1\cdot t H_{u_1}(x_1,\th_1),s_2),  \\
\bphi_2&:=(s_1, -s_2\cdot t H_{u_2}(x_2,\th_2)),
\end{align*}
so that the map $i:A\to B$ is given by 
$$\om^{(1)}\mapsto \bv_1+\bphi_1\cdot \frac{1}{t}\cdot \frac{\eta_1}{2\pi i}, \ \
\om^{(2)}\mapsto \bv_2-\bphi_2\cdot \frac{1}{t}\cdot \frac{\eta_2}{2\pi i},$$
while the map $p:B\to C_0$ is given by 
\begin{align*}
&\bv_i\mapsto \frac{s_1}{x_1-u_1}\cdot \frac{\eta_i}{2\pi i}, \ i=1,2,\\
&\bv\mapsto -\frac{s_1}{x_1-u_1}\cdot [\zeta_1(u_1,\tau_1)\eta_1+t(\wp(u_2,\tau_2)+(2\pi i)^2\frac{E_2(q_2)}{12})\frac{\eta_2}{2\pi i}]+\\
&\frac{s_2}{x_2-u_2}\cdot [\zeta_1(u_2,\tau-2)\eta_2-t(\wp(u_1,\tau_1)+
(2\pi i)^2\frac{E_2(q_1)}{12})\frac{\eta_1}{2\pi i}]+\frac{s_1\th_1}{x_1-u_1}-\frac{s_2\th_2}{x_2-u_2},\\
&\bphi_1\mapsto -\frac{s_1}{x_1-u_1}\cdot t,\\
&\bphi_2\mapsto \frac{s_2}{x_2-u_2}\cdot t.
\end{align*}
We can choose a section $\si:C_0\to B$ of $p:B\to C_0$ such that
$$\si(\frac{s_1}{x_1-u_1})=-\frac{1}{t} \bphi_1, \ \ \si(\frac{s_2}{x_2-u_2})=\frac{1}{t} \bphi_2,$$
and the coefficient of $\bv$ in $\si(\frac{s_1\th_1}{x_1-u_1}-\frac{s_2\th_2}{x_2-u_2})$ is $1$.
Then $\th_{\La}^{-1}$ can be identified with the Berezinian of the map $(i,\si):A\oplus C_0\to B$. Thus, modulo $t^3$ we get
$$\th_{\La}^{-1}=\Ber\left[\begin{matrix} 1 & 0 & * & 0 & 0 \\ 0 &1 & * & 0 & 0 \\ 0 & 0 & 1 & 0 & 0 \\ \frac{\eta_1}{2\pi i t} & 0  & * & -\frac{1}{t} & 0 \\ 
0 & -\frac{\eta_2}{2\pi i t} & * & 0 & \frac{1}{t} \end{matrix}\right]=-t^2$$
(note that in this matrix the first three columns and rows correspond to even generators, while the last two columns and rows correspond to odd generators).
This implies the assertion we want.
\end{proof}

\begin{remark}
The meromorphic differentials $(\bphi_1,\bphi_2)$ restrict to meromorphic sections $(\bs_1,\bs_2)$ of the spin structure $L$ on the bosonization of the family,
constructed in Lemma \ref{--spin-lem}.  
\end{remark}

\appendix
\section{Elliptic functions}\label{ell-sec}

%When computing with elliptic curves (say, in Sections \ref{ss-2.2} and \ref{--sep-sec}), we use various elliptic and related functions.
%In this section we fix the notation and recall some identities.

For $\tau$ in the upper half-plane we denote by $\zeta(z,\tau)$ and $\wp(z,\tau)$ the Weierstrass zeta and $\wp$-functions associated with the lattice $\Z+\Z\tau$.
We will often omit $\tau$ from the notation.

Recall that 
$$\zeta(z+1,\tau)=\zeta(z,\tau)+\eta_1(\tau), \ \ \zeta(z+\tau,\tau)=\zeta(z,\tau)+\eta_2(\tau),$$
%Let $\zeta(z,\tau)$ be the Weierstrass zeta-function associated with $(\om_1=1,\om_2=\tau)$.
where 
$$\tau\eta_1(\tau)-\eta_2(\tau)=2\pi i,$$
%(Legendre's relation), and
$$\eta_1(\tau)=-(2\pi i)^2\cdot \frac{E_2(q)}{12},$$
for $q=\exp(2\pi i\tau)$, where 
$$E_2(q):=1-24\sum_{n\ge 1}\frac{nq^n}{1-q^n},
$$
As in \cite{P-superell}, it is convenient to consider the function
$$\zeta_1(z)=\zeta_1(z,\tau)=(-2\pi i)^{-1}(\zeta(z,\tau)-\eta_1(\tau)z)$$
satisfying $\zeta_1(z+1,\tau)=\zeta_1(z,\tau)$, $\zeta_1(z+\tau,\tau)=\zeta_1(z,\tau)+1$.

For each $u\in\C\setminus \Z+\Z\tau$, we set
\begin{equation}\label{h-u-eq}
h_{u}(z,\tau):=\zeta(z,\tau)-\zeta(z-u,\tau)-\zeta(u,\tau)=-\frac{\wp'(z,\tau)}{2(\wp(z,\tau)-\wp(u,\tau))}.
\end{equation}
%This is an odd doubly-periodic function of $x_1$ with order $1$ poles at $0$ and $\a_1$ and simple zeros at $\a_2$ and $\a_3$.
Note that $h_u(z,\tau)$ is $\Z+\Z\tau$-periodic in both $z$ and $u$, with poles of order $1$ at $0$ and $u$ and the residues of $h_u(z)dz$  at these points are
$1$ and $-1$.
One has
\begin{equation}\label{h-u-odd-eq}
h_{-u}(-z,\tau)=-h_u(z,\tau).
\end{equation}

We will often work with points of order $2$ on the elliptic curve $\C/(\Z+\Z\tau)$.
Recall that if $(\a_1,\a_2,\a_3)$ are nontrivial points of order $2$ then $\wp'(\a_i)=0$, and so
$\wp(\a_1)$, $\wp(\a_2)$, $\wp(\a_3)$ are the roots of the cubic polynomial $4x^3-g_2(\tau)x-g_3(\tau)$.
In particular,
\begin{equation}
4\wp(\a_1)\wp(\a_2)\wp(\a_3)=g_3(\tau).
\end{equation}
%use the function $h_{\a_1}(z)=h_{\a_1}(z,\tau)$, where

The identity \eqref{h-u-odd-eq} shows that the function $h_{\a_1}(z)$ is odd, hence, vanishes at $\a_2$ and $\a_3$. This can be used to prove the following identity:
$$h_{\a_1}(z)^2=\frac{(\wp(z)-\wp(\a_2))(\wp(z)-\wp(\a_2))}{\wp(z)-\wp(\a_1)}.$$

\end{document}